\theoremstyle{definition}
\newtheorem*{rep@theorem}{\rep@title}
\newcommand{\newreptheorem}[2]{
\newenvironment{rep#1}[1]{
 \def\rep@title{#2 \ref{##1}}
 \begin{rep@theorem}}
 {\end{rep@theorem}}}
\newtheorem{theorem}{Theorem}[section]
\newtheorem{lemma}[theorem]{Lemma}
\newtheorem{definition}[theorem]{Definition}
\newtheorem{example}[theorem]{Example}
\newtheorem{remark}[theorem]{Remark}
\numberwithin{equation}{section}
\newtheorem{corollary}[theorem]{Corollary}
\newtheorem*{moquestion}{Motivating Question}
\newtheorem{proposition}[theorem]{Proposition}
\newcommand{\sn}{\mathrm{sn}}
\newcommand{\cs}{\mathrm{cs}}
\newcommand{\II}{I\hspace{-2.5pt}I}
\newcommand{\ep}{\varepsilon}
\renewcommand{\phi}{\varphi}
\renewcommand{\epsilon}{\varepsilon}
\newcommand{\Z}{\mathbb{Z}}
\newcommand{\R}{\mathbb{R}}\newcommand{\C}{\mathbb{C}}\newcommand{\HH}{\mathbb{H}}
\newcommand{\pp}{\mathbb{P}}
\newcommand{\s}{\mathbb{S}}
\DeclareMathOperator{\Hess}{Hess}
\DeclareMathOperator{\vol}{vol}
\newcommand{\of}[1]{\left(#1\right)}
\newcommand{\floor}[1]{\left\lfloor #1 \right\rfloor}
\newcommand{\st}{~|~}
\renewcommand{\hat}{\widehat}
\renewcommand{\tilde}{\widetilde}
\newcommand{\tr}{\mathrm{tr}}
\begin{document}

\title[Weighted Sectional Curvature] 
{The weighted connection and sectional curvature for manifolds with density}

\author{Lee Kennard}
\address{601 Elm Ave., Dept. of Mathematics, University of Oklahoma, Norman, OK 73019}
\email{kennard@ou.edu}
\urladdr{www.math.ou.edu/~kennard}

\author{William Wylie}
\address{215 Carnegie Building\\
Dept. of Math, Syracuse University\\
Syracuse, NY, 13244.}
\email{wwylie@syr.edu}
\urladdr{https://wwylie.expressions.syr.edu}

\author{Dmytro Yeroshkin}
\address{921 S. 8th Ave., Stop 8085\\
Department of Mathematics and Statistics\\
Idaho State University\\
Pocatello, ID 83209}
\email{yerodmyt@isu.edu}
\urladdr{www2.cose.isu.edu/~yerodmyt}

\keywords{}

\begin{abstract}
In this paper we study sectional curvature bounds for Riemannian manifolds with density from the perspective of a weighted torsion free connection introduced recently by the last two authors. We develop two new tools for studying weighted sectional curvature bounds:  a new weighted Rauch comparison theorem  and a modified  notion of convexity for distance functions.  As applications we prove generalizations of theorems of Preissman and Byers for negative curvature,  the (homeomorphic) quarter-pinched sphere theorem, and Cheeger's finiteness theorem.  We also improve results of the first two authors for spaces of positive weighted sectional curvature and symmetry.  
\end{abstract}

\maketitle

\section{Introduction}

Let the triple $(M^n,g, \mu)$ denote an $n$-dimensional Riemannian manifold $(M,g)$ with $\mu$ a smooth measure on $M$.  In \cite{WylieYeroshkin} the last two authors introduced a natural  connection $\nabla^{g, \mu}$ that can be associated to $(M^n,g, \mu)$. It is the unique torsion free connection that both makes $\mu$ parallel and has the same geodesics as the Levi-Civita connection up to reparameterization.  The curvature of the connection gives a $(3,1)$-curvature tensor and a Ricci tensor by the standard formula.  Since  many results in the comparison theory for Riemannian manifolds are statements about geodesics and measure, it is natural to expect that  $\nabla^{g, \mu}$ can be used to give a comparison theory for manifolds with measure.  In \cites{WylieYeroshkin, Wylie} such a comparison theory for the Ricci curvature was investigated.   Despite the fact that lower bounds on the Ricci curvature of $\nabla^{g, \mu}$ are weaker than the Ricci curvature bounds for manifolds with measure that have  previously been considered,  versions of diameter, volume, and Laplacian comparison theorems are recovered.   Rigidity results such as the de Rham and Cheeger-Gromoll splitting theorems and Cheng's maximal diameter theorem are also proven. Some results for Lorentzian metrics have also been established in \cite{WoolgarWylie16}. 

In this paper we are interested in the sectional curvature comparison theory coming from $\nabla^{g, \mu}$.  The choice of the smooth measure $\mu$ is equivalent to choosing a smooth density function. We will normalize the density function $\phi$ such that $\mu = e^{-(n+1) \phi }dvol_g$ where $dvol_g$ is the Riemannian volume element and $n=\dim(M)$.  Then the connection has the  formula, 
\begin{align*}
\nabla^{g, \mu}_XY = \nabla_X Y - d\phi(X) Y - d\phi(Y)X 
\end{align*}
where $\nabla$ is the Levi-Civita connection of $g$.  We will write $\nabla^{g, \phi}$ for $\nabla^{g, \mu}$.  Since we will often think of $g$ as being fixed, we also write $\nabla^{\phi}=\nabla^{g, \phi}$. 

We denote the weighted Riemann curvature tensor by 
\begin{align*}
 R^{\nabla^{\phi}}(X,Y) Z &= \nabla^{\phi}_X \nabla^{\phi}_Y Z - \nabla^{\phi}_Y \nabla^{\phi}_ X Z - \nabla^{\phi}_{[X,Y]} Z,
\end{align*}
whose explicit formula is derived in \cite{WylieYeroshkin}*{Proposition 3.3}. Given two orthonormal vectors $U$ and $V$, we then consider weighted sectional curvature to be the quantity
\begin{align*}
g(R^{\nabla^{\phi}}(V,U)U, V) = \sec(U,V) + \mathrm{Hess} \phi(U,U) + d\phi(U)^2 = \overline{\sec}_{\phi}(U,V). 
\end{align*}
The quantity $\overline{\sec}_{\phi}$ has been studied earlier by the first two authors in \cites{Wylie15, KennardWylie17}. In fact, these works inspired the discovery of  the connection $\nabla^{\phi}$. The main tool used in \cites{Wylie15, KennardWylie17}  is a generalization of the second variation formula. We will see below that using the connection $\nabla^{\phi}$ we can simplify this formula, and use it to establish general Rauch comparison theorems for Jacobi fields.    We also identify a new notion of weighted convexity that is related to bounds on $\overline{\sec}_{\phi}$.   The notion of weighted convexity is somewhat technical  but,  roughly speaking, we show bounds on weighted curvatures give bounds on the Hessian of the distance function in a certain conformal metric, see Section \ref{sec:Convexity} for details.  

We first consider the applications in the cases of positive and negative weighted curvatures. 
  
\begin{definition} Let $(M,g)$ be a Riemannian manifold.  We say that $(M,g)$ has \emph{positive weighted sectional curvature} (PWSC) if there exists a function $\phi$ such that $\overline{\sec}_{\phi}(U,V) > 0$ for all orthonormal pairs of vectors $U,V$.  We say that $(M,g)$ has \emph{negative weighted sectional curvature} (NWSC)  if $\overline{\sec}_{\phi}(U,V) < 0$ for all orthonormal pairs of vectors $U,V$.
\end{definition}

In \cite{KennardWylie17} theorems for manifolds of positive curvature with symmetry, e.g., Weinstein's theorem, the Grove-Searle maximal symmetry rank theorem, and Wilking's connectedness lemma, are established for the weighted curvatures.  We use convexity to  improve the rigidity results in this direction to optimal equivariant diffeomorphism classifications. For example, we obtain the fixed point homogeneous classification of Grove and Searle \cite{GroveSearle97} (see Section \ref{sec:Convexity} for definitions and further remarks):

\begin{theorem}\label{thm:FPH}
Let $(M,g)$ be a simply connected, closed Riemannian manifold with PWSC  that admits a fixed-point homogeneous action by a connected Lie group $G$. Then the $G$--action on $M$ is equivariantly diffeomorphic to a linear action on a compact, rank one symmetric space.
\end{theorem}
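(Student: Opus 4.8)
The plan is to run the argument of Grove and Searle \cite{GroveSearle97}, substituting the weighted Rauch comparison theorem and the weighted convexity of Section~\ref{sec:Convexity} for the classical Rauch theorem and the convexity of distance functions in positive curvature. The first step is to reduce to the case that the density is $G$-invariant. Writing $h=e^{\phi}$, one has $\Hess h = h\,(\Hess\phi + d\phi\otimes d\phi)$, hence $\overline{\sec}_{\phi}(U,V)=\sec(U,V)+h^{-1}\Hess h(U,U)$; so $\overline{\sec}_{\phi}>0$ is equivalent to $h\,\sec(U,V)+\Hess h(U,U)>0$ for all orthonormal $U,V$, a condition that is \emph{linear} in the positive function $h$. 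Since $\sec$ is isometry invariant and $\Hess$ commutes with pullback by $g$-isometries, averaging $h$ over the compact closure of $G$ in $\mathrm{Isom}(M,g)$ produces a positive $G$-invariant function $\bar h$ satisfying the same inequality, so $\bar\phi=\log\bar h$ is a $G$-invariant density with $\overline{\sec}_{\bar\phi}>0$; from now on we assume $\phi$ is $G$-invariant. Fix a component $F_0$ of $\mathrm{Fix}(G)$ with $\dim F_0=\dim(M/G)$, so that $G$ acts transitively on each unit normal sphere of $F_0$, and recall that $F_0$ is closed and totally geodesic.

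Next I would establish an equivariant double disk bundle decomposition of $M$. Let $\rho$ be the conformally modified distance function to $F_0$ of Section~\ref{sec:Convexity}; it is $G$-invariant because $\phi$ and $F_0$ are. Compactness gives $\overline{\sec}_{\phi}\ge k>0$, so the weighted convexity results apply: an appropriate composition of $\rho$ with a model comparison function is strictly concave along the relevant geodesics up to the first weighted focal point, while the weighted Rauch comparison theorem bounds the weighted focal radius of $F_0$ from above. These are exactly the inputs needed to run Grove--Shiohama critical point theory for $\rho$, yielding the weighted analogues of the Grove--Searle lemmas: $\rho$ attains its maximum on a subset $B$, has no critical points on $M\setminus(F_0\cup B)$, and its gradient flow exhibits $M$ as the union of the normal disk bundle of $F_0$ and a tubular neighborhood of $B$ glued along their common boundary sphere bundle, all $G$-equivariantly. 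A further application of the same convexity and Rauch estimates, centered this time at $B$ and combined with a Berger-type angle estimate (replacing the positive-curvature obstruction to a pair of long normal minimal geodesics), shows $B$ is a closed totally geodesic submanifold on which $G$ again acts transitively on unit normal spheres. Thus $M=D(F_0)\cup_{\partial}D(B)$ equivariantly, with $G$ transitive on the fibers of both normal sphere bundles.

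Finally, with the curvature hypothesis fully exploited, the classification reduces to the topological and representation-theoretic part of \cite{GroveSearle97} (possibly invoking the weighted connectedness lemma of \cite{KennardWylie17}): the transitive $G$-actions on the normal spheres of $F_0$ and of $B$ are among the standard linear ones, the two normal disk bundles are therefore the corresponding homogeneous disk bundles, and gluing them compatibly while respecting $\pi_1(M)=0$ forces $(M,G)$ to be equivariantly diffeomorphic to a linear action on $S^n$, $\mathbb{CP}^n$, $\mathbb{HP}^n$, or the Cayley projective plane.

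I expect the main obstacle to be the middle step: extracting from the (rather technical) notion of weighted convexity and the weighted Rauch theorem the precise statements that in the Riemannian case follow from the second variation formula and Berger's lemma -- the absence of interior critical points of $\rho$, and especially the smoothness of the dual set $B$ together with its induced fixed-point-homogeneous structure. A secondary point to verify is that the endgame of \cite{GroveSearle97}, which classically uses Cheeger deformations, goes through with only the equivariant double disk bundle decomposition as input, or else admits a weighted substitute.
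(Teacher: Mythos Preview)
Your high-level strategy matches the paper's: reduce to $G$-invariant $\phi$ by averaging, then rerun Grove--Searle's Structure Theorem with weighted substitutes for the curvature inputs, leaving the purely topological Uniqueness Lemma untouched. The averaging argument via the linearity in $h=e^\phi$ is exactly what the paper invokes.

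However, the technical implementation you sketch diverges from the paper's in ways that matter. You propose to use the weighted Rauch theorem and focal-radius bounds to run critical point theory for a modified distance $\rho$ to $F_0$. The paper does \emph{not} use Rauch here. Instead it proves a direct convexity lemma (Lemma~\ref{lem:tilde-convex}): along any \emph{horizontal conformal} geodesic (i.e., a geodesic for $\tilde g = e^{-2\phi}g$), the ordinary $g$-distance to $B_0$ has no interior minimum. The proof is a second-variation argument in which the vector field $V$ is chosen, via Wilking's dimension count, to have $V'$ tangent to the $G$-orbits; one then applies a Cheeger deformation $g_\lambda$ to kill $|V'|$ while only improving the weighted curvature. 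So Cheeger deformations are not a peripheral endgame concern as you suggest---they are the mechanism that makes the convexity lemma work. From this lemma, the paper establishes an angle condition (your ``Berger-type estimate'') mixing $g$-minimal geodesics $c_0$ to $B_0$ with $\tilde g$-minimal geodesics $\tilde c_1$ to $B_1$, and then constructs a gradient-like vector field for $d^g_{B_0}$ that is radial near $B_0$ (for $g$) and near $B_1$ (for $\tilde g$). The interplay between the two metrics is essential and absent from your sketch.

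There is also a genuine error in your description of the dual set $B$. It is \emph{not} a submanifold on which $G$ ``again acts transitively on unit normal spheres''; rather, $B_1 = G\cdot p_1$ is a single $G$-orbit (the soul orbit), and the relevant structure is that the isotropy $G_{p_1}$ acts on the normal sphere $\s^l$ with all orbits principal (not transitive). This is what feeds into the Uniqueness Lemma. Your formulation would require a second fixed-point-homogeneous structure on $B$, which is neither claimed nor needed.
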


We also use our notion of  weighted convexity to establish results for NWSC.   In \cite{Wylie15} it was shown that a space  admitting a function $\phi$ such that $\overline{\sec}_{\phi} \leq 0$ does not have conjugate points, and thus the universal cover must be diffeomorphic to $\mathbb{R}^n$.  We show in this paper that the theorems of  Preissman and Byers for $\pi_1(M)$ are also true for NWSC. 

\begin{theorem}   \label{Thm:Byers} If $(M,g)$ is a compact manifold with NWSC then any solvable subgroup of $\pi_1(M)$ is infinite cyclic and $\pi_1(M)$ does not admit a solvable subgroup of finite index.
\end{theorem}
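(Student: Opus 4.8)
The plan is to adapt the classical proofs of Preissman's and Byers' theorems, with the Hadamard structure of the universal cover replaced by the weighted convexity of Section~\ref{sec:Convexity}. Pass to the universal Riemannian cover $(\tilde M,\tilde g)$ with the lifted density $\tilde\phi$. The deck group $\pi_1(M)$ acts on $\tilde M$ by isometries of $\tilde g$ that preserve $\tilde\phi$, and hence preserve the connection $\nabla^{\tilde\phi}$ and the conformal metric $g_\phi$ of Section~\ref{sec:Convexity}; the latter is $\pi_1$-invariant because $\tilde\phi$ is. Since $M$ is compact, $\phi$ is bounded, so $g$ and $g_\phi$ are bi-Lipschitz, $(\tilde M,g_\phi)$ is complete, and the $\pi_1$-action on it is still properly discontinuous and cocompact. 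By \cite{Wylie15}, NWSC forces the absence of conjugate points, so $\tilde M$ is diffeomorphic to $\R^n$; in particular $M$ is aspherical and $\pi_1(M)$ is torsion free. (We assume $n\ge2$; in dimension $1$ there are no orthonormal pairs and the hypothesis is void.)

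The heart of the matter is a weighted axis lemma: every nontrivial $\gamma\in\pi_1(M)$ preserves a unique complete geodesic $\ell_\gamma$, on which it acts as a fixed-point-free isometry of $(\ell_\gamma,g_\phi)\cong\R$. I would follow the Hadamard scheme, applied to an appropriate weighted displacement function $\delta_\gamma$ built from $d_{g_\phi}(x,\gamma x)$. First, the infimum $a_\gamma=\inf\delta_\gamma$ is attained: a minimizing sequence can be pushed into a fixed compact fundamental domain by deck transformations, which replaces $\gamma$ by conjugates of equal infimum, and proper discontinuity then forces attainment; moreover $a_\gamma>0$ because a nontrivial deck transformation is fixed-point free. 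Second, the convexity statement of Section~\ref{sec:Convexity} — that bounds on $\overline{\sec}_\phi$ control the Hessian of $d_{g_\phi}$ along the relevant (reparametrized) geodesics — should show that at a point realizing $a_\gamma$ the broken path through $\dots,\gamma^{-1}x,\,x,\,\gamma x,\dots$ is in fact an unbroken geodesic, namely $\ell_\gamma$. Third, the strict inequality in NWSC should yield a weighted flat-strip argument: $\gamma$ cannot preserve two distinct geodesics of equal $g_\phi$-translation length, which gives uniqueness. Uniqueness makes $\gamma\mapsto\ell_\gamma$ equivariant, $\ell_{s\gamma s^{-1}}=s(\ell_\gamma)$; from $\delta_{\gamma^{-1}}=\delta_\gamma$ one gets $\ell_{\gamma^{-1}}=\ell_\gamma$, and $a_{\gamma^k}=|k|\,a_\gamma$ for $k\ne0$. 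I expect this lemma — making the convexity and flat-strip machinery of Section~\ref{sec:Convexity} operate with the conformal metric and the $\nabla^\phi$-geodesics — to be the main obstacle; the rest is formal.

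For Preissman's conclusion, let $A\le\pi_1(M)$ be a nontrivial abelian subgroup and fix a nontrivial $\gamma_1\in A$. For every $\gamma_2\in A$, equivariance gives $\gamma_2(\ell_{\gamma_1})=\ell_{\gamma_2\gamma_1\gamma_2^{-1}}=\ell_{\gamma_1}$, so $A$ preserves the geodesic $\ell:=\ell_{\gamma_1}\cong\R$. The restriction homomorphism $A\to\mathrm{Isom}(\ell,g_\phi)=\mathrm{Isom}(\R)$ is injective, since a nontrivial deck transformation is fixed-point free and so cannot fix $\ell$ pointwise; its image is discrete, because proper discontinuity of the $\pi_1$-action near a point of $\ell$ keeps the translation amounts along $\ell$ bounded away from $0$; and each nontrivial element of $A$ fixes no point of $\ell$, hence acts as a translation. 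Thus $A$ embeds as a nontrivial discrete subgroup of the translation group $\R$, so $A\cong\Z$.

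For Byers' conclusion, let $S\le\pi_1(M)$ be a nontrivial solvable subgroup, with derived series $S=S^{(0)}\supseteq S^{(1)}\supseteq\cdots\supseteq S^{(k)}=\{1\}$ and $S^{(k-1)}\ne\{1\}$. Then $S^{(k-1)}$ is abelian and characteristic, hence normal, in $S$, so by the previous paragraph $S^{(k-1)}=\langle\gamma\rangle\cong\Z$ for some $\gamma$ with axis $\ell:=\ell_\gamma$. For any $s\in S$, $s\gamma s^{-1}=\gamma^{k}\in\langle\gamma\rangle$ satisfies $|k|\,a_\gamma=a_{\gamma^k}=a_{s\gamma s^{-1}}=a_\gamma$, so $k=\pm1$ and hence $s(\ell)=\ell_{\gamma^{\pm1}}=\ell$. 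Thus $S$ preserves $\ell$, giving an injective homomorphism $S\hookrightarrow\mathrm{Isom}(\R)$ with discrete image; a nontrivial torsion-free discrete subgroup of $\mathrm{Isom}(\R)$ is infinite cyclic, so $S\cong\Z$. Finally, if some solvable $S\le\pi_1(M)$ had finite index, then $S\cong\Z$ would make $\pi_1(M)$ torsion free and virtually cyclic, hence $\pi_1(M)\cong\Z$; but then the closed aspherical manifold $M$ would be homotopy equivalent to $S^1$, forcing $n=1$, contrary to assumption. Therefore $\pi_1(M)$ admits no solvable subgroup of finite index.
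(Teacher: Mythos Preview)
Your outline is correct and follows the same classical Preissman--Byers skeleton the paper uses: pass to the universal cover, show every deck transformation has a unique $\tilde g$-axis, and deduce the group-theoretic consequences. The paper isolates exactly the two places where curvature enters --- uniqueness of the axis and the impossibility of $\pi_1(M)\cong\Z$ --- and proves those directly, deferring the rest to the standard references.

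Two differences are worth flagging. First, your uniqueness step invokes a ``weighted flat-strip argument'' without specifying the mechanism; the paper's argument is concrete and slightly different from a flat-strip: given two $\tilde g$-axes $\tilde\sigma_1,\tilde\sigma_2$ for $F$, one restricts the strictly weighted-convex modified distance function $u_{\tilde\sigma_1}$ (built from the $g$-distance to $\tilde\sigma_1$) to $\tilde\sigma_2$, and uses that $F$ preserves both $\phi$ and the angle between $\tilde\sigma_2$ and the $g$-minimizing geodesic to $\tilde\sigma_1$ to contradict strict monotonicity of $e^{-2\phi}\tilde u'$. This is the precise place where the interplay of $g$-distance, $\tilde g$-geodesics, and the modified conformal Hessian from Section~\ref{sec:Convexity} does the work, and it would sharpen your write-up to say so rather than gesturing at a flat-strip. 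Second, your final step --- torsion-free virtually cyclic $\Rightarrow$ $\Z$, then asphericity forces $n=1$ --- is a clean topological alternative to the paper's geometric argument, which instead rules out $\pi_1\cong\Z$ directly by dropping a $g$-geodesic perpendicular to the unique common $\tilde g$-axis and using the modified distance function to contradict compactness. Both are valid; your route is shorter and avoids a second convexity computation, while the paper's stays within the comparison-geometry toolkit.
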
 

We also define non-zero weighted curvature bounds which are, like the notions of PWSC and NWSC, invariants of the metric $(M,g)$.  From the perspective of $\nabla^{\phi}$ the natural form of the curvature bound is to consider curvatures of the form  $R^{\nabla^{\phi}} ( \dot{\gamma}, U, U, \dot{\gamma})$ where $\gamma$ is a geodesic for the connection $\nabla^{\phi}$.  However, since the geodesics of $\nabla^{\phi}$ are not constant speed, this bound translates into a non-constant curvature bound of the form $\overline{\sec}_{\phi} \leq (\geq) k e^{-4\phi}$ where $k$ is a constant. See \cite{WylieYeroshkin}*{Sections 2 \& 3} and Remark \ref{Rem:CurveBound} below for the details. 

We wish to define  $\overline{K}_g$ and $\underline{\kappa}_g$ to be best upper and lower bounds, respectively of  the quantity $e^{4 \phi}\overline{\sec}_{\phi}$  achieved through varying $\phi$ over all smooth real valued functions on $(M,g)$. Rescaling considerations from the $e^{4\phi}$ factor necessitate introducing normalizations  depending on the sign of the bound. See Definition \ref{Def:CurveBound} for  the explicit details.  For the moment we say that there exist explicit  invariants $\overline{K}_g$ and $\underline{\kappa}_g$ of the Riemannian manifold $(M,g)$ such that $\overline{K}_g \leq \sec_{max}(g)$, $\underline{\kappa}_g \geq \sec_{min}(g)$ and such that a compact manifold  $(M,g)$ has PWSC if and only if $\underline{\kappa}_g >0$ and NWSC if and only if $\overline{K}_g < 0$.

For a positive lower bound, we have the following version of Myers' theorem. 

\begin{theorem}  \label{Thm:Myers} Suppose $(M,g)$ is a complete Riemannian manifold with $\underline{\kappa}_g >0$, then $M$ is compact, $\mathrm{diam}(M) \leq \frac{\pi}{\sqrt{\underline{\kappa}_g}}$  and $\pi_1(M)$ is finite. 
\end{theorem}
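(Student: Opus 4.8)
The plan is to run the classical Bonnet--Myers argument with the Riemannian index form along a minimizing geodesic replaced by a weighted version, in which the extra terms $\Hess\phi$ and $d\phi^2$ that appear in $\overline{\sec}_{\phi}$ are absorbed at the cost of a conformal rescaling of the test field and a reparametrization of the geodesic. Fix $k$ with $0<k<\underline{\kappa}_g$; by Definition \ref{Def:CurveBound} there is a smooth function $\phi$, normalized so that $\phi\le 0$ on $M$, with $\overline{\sec}_{\phi}(U,V)\ge k\,e^{-4\phi}$ for every orthonormal pair $U,V$. First I would show that every unit-speed minimizing $g$-geodesic $\gamma\colon[0,L]\to M$ has \emph{$\phi$-weighted length} $T:=\int_0^L e^{-2\phi(\gamma(s))}\,ds$ at most $\pi/\sqrt{k}$. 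Granting this, the normalization $\phi\le 0$ gives $e^{-2\phi}\ge 1$, hence $L\le T\le \pi/\sqrt{k}$; letting $k\uparrow\underline{\kappa}_g$ then yields $\mathrm{diam}(M,g)\le \pi/\sqrt{\underline{\kappa}_g}$.

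For the weighted index estimate, let $E$ be a parallel unit normal field along $\gamma$ and take the variation field $V=\lambda E$ with $\lambda(0)=\lambda(L)=0$; minimality of $\gamma$ forces the index form $I(V)=\int_0^L (\lambda')^2-\lambda^2\sec(\dot\gamma,E)\,ds$ to be nonnegative. Writing $h=\phi\circ\gamma$, and using that $\gamma$ is a $g$-geodesic so that $\Hess\phi(\dot\gamma,\dot\gamma)=h''$ and $d\phi(\dot\gamma)=h'$, I substitute $\sec(\dot\gamma,E)=\overline{\sec}_{\phi}(\dot\gamma,E)-h''-(h')^2$ (note $\dot\gamma$ sits in the slot of $\overline{\sec}_{\phi}$ carrying the $\Hess\phi$ term) and integrate the $\lambda^2h''$ term by parts; the boundary terms vanish and one is left with $I(V)=\int_0^L (\lambda'-\lambda h')^2-\lambda^2\,\overline{\sec}_{\phi}(\dot\gamma,E)\,ds$. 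Now set $\lambda=e^{h}\psi$, so that $\lambda'-\lambda h'=e^{h}\psi'$, apply the bound $\overline{\sec}_{\phi}(\dot\gamma,E)\ge k\,e^{-4h}$, and change variables to the weighted parameter $t(s)=\int_0^s e^{-2h}\,du$ with $t(L)=T$. A short computation collapses the estimate to $I(V)\le \int_0^{T}\dot\psi^2-k\,\psi^2\,dt$, which is exactly the constant-curvature-$k$ index form on $[0,T]$. If $T>\pi/\sqrt{k}$, choosing $\psi(t)=\sin(\pi t/T)$ makes the right-hand side strictly negative, contradicting $I(V)\ge 0$; hence $T\le\pi/\sqrt{k}$, as claimed.

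With the diameter bound in hand, completeness of $(M,g)$ and Hopf--Rinow give compactness of $M$. For $\pi_1(M)$, I would pass to the universal cover $\pi\colon\widetilde M\to M$ with the pulled-back metric $\widetilde g=\pi^{*}g$ and function $\widetilde\phi=\phi\circ\pi$. Because $\Hess$, $d$, and the curvature are local, $\overline{\sec}_{\widetilde\phi}=\overline{\sec}_{\phi}\circ\pi$ and $\widetilde\phi\le 0$, so the estimate above applies verbatim on $\widetilde M$ and shows $\mathrm{diam}(\widetilde M,\widetilde g)\le \pi/\sqrt{\underline{\kappa}_g}$. Since $(M,g)$ is complete so is $(\widetilde M,\widetilde g)$, hence $\widetilde M$ is compact; its fibers over $M$ are discrete and closed in a compact space, hence finite, and a fiber of the universal cover is in bijection with $\pi_1(M)$, so $\pi_1(M)$ is finite.

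The conceptual core is the single identity that turns the weighted index form into an honest constant-curvature index form once one writes the test field as $e^{h}\psi$ and reparametrizes by weighted arclength; this is the same mechanism behind the modified second-variation formula of \cite{Wylie15} and the Rauch-type comparison developed earlier in the paper, so I expect no real difficulty there. The two places to be careful will be the bookkeeping forced by the non-constant bound $k\,e^{-4\phi}$ under the change of variables, and the precise normalization of $\phi$ in Definition \ref{Def:CurveBound}, which is exactly what is needed to convert the bound on weighted length back into a genuine Riemannian diameter bound. The non-metric character of $\nabla^{\phi}$ causes no trouble, since the whole argument takes place on the Riemannian side and $\nabla^{\phi}$ enters only through the definition of $\overline{\sec}_{\phi}$.
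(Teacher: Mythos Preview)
Your proof is correct and rests on the same mechanism the paper uses: the weighted index-form identity
\[
I(\lambda E,\lambda E)=\int_0^L\bigl((\lambda'-\lambda h')^2-\lambda^2\,\overline{\sec}_{\phi}(\dot\gamma,E)\bigr)\,ds
\]
(this is exactly equation~(\ref{eqn:IndexForm}), and after your substitution $\lambda=e^{h}\psi$ it becomes Proposition~\ref{Prop:NewIndex}), together with the reparametrization $dt=e^{-2h}\,ds$ to the standard $\phi$-geodesic parameter and the normalization $\phi\le 0$ to pass from weighted length back to Riemannian length. The paper packages this as a consequence of Lemma~\ref{Lem:index}, which goes through the full Rauch comparison Theorem~\ref{Thm:Rauch1} against the constant-curvature sphere to conclude that any geodesic longer than $\pi/\sqrt{\underline{\kappa}_g}$ has index at least $n-1$; you instead run the classical Bonnet--Myers argument directly, producing a single test field with negative index. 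Your route is slightly more elementary and self-contained for the diameter statement, while the paper's route via Rauch yields the stronger index bound (needed later for the sphere theorem) essentially for free. Either way the content is the same identity and the same change of variables, so there is no genuine difference in strategy.
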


\begin{remark} As should be expected Theorem \ref{Thm:Myers} is, in fact, true for Ricci curvature, as was proven in \cite{WylieYeroshkin}*{Theorem 2.2}.  Theorem \ref{Thm:Myers} can be seen as a direct corollary of that result, or as a consequence of Lemma \ref{Lem:index} below. 
\end{remark}

Define the weighted pinching constant of a space of positive weighted sectional curvature as $\delta = \underline{\kappa}_g / \overline{K}_g.$  In the next section we will see that,  on a compact manifold,  $\delta \leq 1$. When $\delta > \frac{1}{4}$ we also have the homeomorphic sphere theorem.  

\begin{theorem}\label{Thm:QuarterPinch}
 Let $(M,g)$ be a simply connected  complete manifold  of PWSC and $\delta > \frac{1}{4}$, then $M$ is homeomorphic to the sphere. 
\end{theorem}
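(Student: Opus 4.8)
The plan is to adapt the classical proof of the topological quarter-pinched sphere theorem (via Klingenberg's injectivity radius estimate plus the Berger--Toponogov argument, or via the Rauch-comparison/distance-function approach) to the weighted setting, using the connection $\nabla^\phi$ in place of the Levi-Civita connection. First I would fix a function $\phi$ realizing (up to a small loss) the pinching $\underline{\kappa}_g/\overline{K}_g > 1/4$, so that along every $\nabla^\phi$-geodesic $\gamma$ we have a two-sided curvature bound of the form $\underline\kappa\, e^{-4\phi} \le R^{\nabla^\phi}(\dot\gamma, U, U, \dot\gamma) \le \overline K\, e^{-4\phi}$ for $U \perp \dot\gamma$. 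The key point is that the Rauch comparison theorem for $\nabla^\phi$ (the ``new weighted Rauch comparison theorem'' advertised in the introduction, which I may assume once it is proved earlier in the paper) controls the growth and decay of $\nabla^\phi$-Jacobi fields exactly as in the Riemannian case, and Myers' theorem in the form of Theorem \ref{Thm:Myers} gives compactness and a diameter bound $\mathrm{diam}(M) \le \pi/\sqrt{\underline\kappa_g}$.

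The main steps: (1) Establish a lower bound on the injectivity radius of the reparameterized (constant-speed) $\nabla^\phi$-geodesics, i.e.\ a Klingenberg-type estimate: no conjugate points before the upper-bound comparison value (from the Rauch theorem applied with the bound $\overline K e^{-4\phi}$), and no short geodesic loops (the usual second-variation/closed-geodesic argument, which only uses the geodesic structure and the curvature upper bound, both of which are available for $\nabla^\phi$). (2) Pick points $p, q$ realizing the diameter $d$ of $M$. Using the diameter bound from Theorem \ref{Thm:Myers} together with the injectivity-radius bound from step (1), conclude that every point of $M$ lies within $\mathrm{inj}(M)$ of either $p$ or $q$, so that $M = B(p, \mathrm{inj}) \cup B(q, \mathrm{inj})$ is covered by two metric balls, each diffeomorphic to $\R^n$ via the exponential map of an appropriate (conformally rescaled) metric. (3) Conclude by Brown's theorem / the standard ``union of two disks'' argument that $M$ is homeomorphic to $S^n$. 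The conformal factor $e^{2\phi}$ that converts $\nabla^\phi$-quantities into metric quantities must be tracked carefully throughout, and this is where Section \ref{sec:Convexity}'s convexity of the distance function in the conformal metric $e^{2\phi}g$ (or the relevant variant) does the work of replacing the classical Toponogov/convexity arguments.

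I expect the main obstacle to be step (1), the injectivity radius estimate. In the classical proof one needs both a conjugate-point bound (clean, it follows from Rauch with the upper curvature bound) and the exclusion of short geodesic loops, and the latter's standard proof invokes a Toponogov-type or hinge-comparison argument together with the lower curvature bound; translating this to $\nabla^\phi$ requires either the weighted convexity results of Section \ref{sec:Convexity} or a direct second-variation computation in the conformal metric. A subtlety is that $\nabla^\phi$-geodesics are not constant speed, so ``injectivity radius'' must be interpreted with respect to the correct reparameterization (or, equivalently, with respect to the metric $e^{2\phi}g$ in which $\nabla^\phi$-geodesics are closer to genuine geodesics), and the two-sided bound $\delta > 1/4$ must be shown to survive this reparameterization with the normalizations from Definition \ref{Def:CurveBound}. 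Once the injectivity radius bound and the diameter bound are both phrased in the same (conformal) units, the covering-by-two-balls argument and the topological conclusion go through essentially verbatim.
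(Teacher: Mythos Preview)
Your outline follows the classical Berger--Klingenberg ``two balls'' route, but the paper takes the other classical road: Berger's Morse-theoretic argument on the loop space. After rescaling to $\overline K_g = 1$, $\underline\kappa_g > 1/4$, the paper shows (Lemma~\ref{Lem:index}) that every $g$-geodesic of length $>\pi/\sqrt{\underline\kappa_g}$ has index $\ge n-1$, and (Lemmas~\ref{Lemma:ConjPos}--\ref{Lemma:Inj}) that $\mathrm{inj}(M,g)\ge\pi$. Hence every geodesic loop at $p$ has length $\ge 2\pi$ and therefore index $\ge n-1$; standard Morse theory on $\Omega_{p,p}$ then gives that $M$ is $(n-1)$-connected, and one finishes by the Poincar\'e conjecture. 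Note that everything is phrased for the ordinary $g$-injectivity radius and $g$-length; the normalizations $\phi\le 0$ or $\phi\ge 0$ in Definition~\ref{Def:CurveBound} give $s\ge r$ or $s\le r$ as needed to pass between the reparametrized distance $s$ and the $g$-distance $r$, so there is no need to work in a conformal metric.

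The substantive gap in your proposal is step~(2). The claim that every point lies within $\mathrm{inj}(M)$ of one of the diameter endpoints does \emph{not} follow from the diameter and injectivity bounds alone (for $\overline K=1$, $\underline\kappa>1/4$ you get $\mathrm{diam}<2\pi$ and $\mathrm{inj}\ge\pi$, which leaves room for a point at distance $\ge\pi$ from both $p$ and $q$). Classically this covering step requires Toponogov's triangle comparison with the lower bound, and nothing in Section~\ref{sec:Convexity} provides a weighted substitute: the convexity results there concern $\overline\sec_\phi\le 0$ and positive curvature with symmetry, not a global hinge or triangle comparison for PWSC. Developing a weighted Toponogov theorem from the weighted Rauch~II is plausible but is additional work; the paper deliberately sidesteps it by using the index argument, which needs only Rauch~I plus the (unweighted) Klingenberg lemma. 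Also, the obstacle you flag in step~(1) is not the hard part: once Lemmas~\ref{Lem:index} and~\ref{Lemma:ConjPos} are in hand, Klingenberg's original long-homotopy argument goes through verbatim because it uses only the conjugate-radius bound and the index-of-long-geodesics bound, both of which you already have.
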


We also obtain generalizations of Cheeger's finiteness theorems.  Our proofs require  a pointwise bound on $|d\phi|$.  For $a>0$, we define $\underline{\kappa}_g(a)$ and $\overline{K}_g(a)$ to be the best lower bound and  upper bound respectively among all normalized densities that satisfy $|d\phi|\leq a$.    

Define $\delta (a) = \underline{\kappa}(a) / \overline{K}(a).$  For positive curvature in even dimensions we have the following finiteness result. 
\begin{theorem} \label{IntroThm:FinitePosEven}
For given $n, a>0$ and $0<\delta_0 \leq 1$ the class of Riemannian $2n$-dimensional manifolds with $\underline{\kappa}(a) >0$ and $\delta(a) \geq \delta_0$ contain  only finitely many diffeomorphism types. 
\end{theorem}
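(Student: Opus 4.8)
The plan is to adapt Cheeger's original finiteness argument, replacing the role of the usual sectional curvature bounds, injectivity radius bound, and diameter bound with their weighted counterparts coming from the pinching hypothesis $\underline{\kappa}(a) > 0$, $\delta(a) \geq \delta_0$, and the auxiliary bound $|d\phi| \leq a$. The key point is that all three ingredients of Cheeger's theorem—two-sided sectional curvature bounds, an upper diameter bound, and a lower volume (equivalently, injectivity radius) bound—can be recovered in a $\phi$-independent way from the weighted hypotheses. First I would fix a density $\phi$ realizing (up to $\epsilon$) the bounds, so that $\underline{\kappa}(a) e^{-4\phi} \leq \overline{\sec}_\phi \leq \overline{K}(a) e^{-4\phi}$ pointwise, with $|d\phi| \leq a$. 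Since $M$ is compact, $\phi$ is bounded, say $|\phi| \leq C$, but a priori $C$ is not controlled; however, the diameter bound only needs the upper bound on $\overline{\sec}_\phi$ through a Myers-type estimate, which is exactly Theorem \ref{Thm:Myers} (giving $\mathrm{diam}(M) \leq \pi/\sqrt{\underline{\kappa}(a)}$, a bound depending only on $a$ and $\delta_0$ via $\underline{\kappa}(a) \geq \delta_0 \overline{K}(a)$—actually directly on $\underline{\kappa}(a)$, which we must therefore also assume bounded below, or absorb into the constant $\underline{\kappa}_0$; I will state the theorem with the normalization implicit in Definition \ref{Def:CurveBound}).

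The crux is producing a lower injectivity radius bound and two-sided ordinary curvature bounds on a \emph{conformally changed} metric. Here is where the pointwise bound $|d\phi| \leq a$ is essential. The identity $\overline{\sec}_\phi(U,V) = \sec(U,V) + \mathrm{Hess}\,\phi(U,U) + d\phi(U)^2$ shows that a two-sided bound on $\overline{\sec}_\phi$ together with a bound on $|d\phi|$ gives a two-sided bound on $\sec(U,V) + \mathrm{Hess}\,\phi(U,U)$; this is not yet a bound on $\sec$, but it is a bound on the sectional curvature of the conformal metric $\tilde g = e^{-4\phi} g$ (or the relevant conformal factor dictated by the weighted Rauch comparison of Section on convexity), \emph{provided} we also control $|\mathrm{Hess}\,\phi|$ or reinterpret the bound through the connection $\nabla^\phi$ directly. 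The cleanest route is: use the weighted Rauch comparison theorem (the ``new tool'' advertised in the abstract) to show that the exponential map of $\nabla^\phi$ has Jacobian bounds on balls of a definite radius, giving uniform two-sided volume-of-balls estimates in the conformal metric $\tilde g := e^{-2\phi} g$ whose geodesics are the $\nabla^\phi$-geodesics; combined with the diameter bound this yields a uniform lower bound on $\mathrm{vol}_{\tilde g}(M)$ and a uniform lower bound on the conjugate radius. In even dimensions, conjugate radius plus a bound on $\pi_1$ (finite by Theorem \ref{Thm:Myers}) plus Klingenberg's lemma upgrades this to an injectivity radius bound for $\tilde g$.

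With a two-sided curvature bound, a diameter bound, and an injectivity radius bound all for the single metric $\tilde g$—each depending only on $n$, $a$, $\delta_0$ (and the normalization constants)—Cheeger's finiteness theorem applies verbatim to the family $\{(M, \tilde g)\}$, yielding finitely many diffeomorphism types. Since $M$ and $(M, \tilde g)$ are the same smooth manifold, this is finitely many diffeomorphism types for the original family, which is the claim. I expect the main obstacle to be the second step: extracting a genuine two-sided \emph{sectional} curvature bound (not merely a Ricci bound, as in \cite{WylieYeroshkin}) for some fixed conformal metric, uniformly in $\phi$. The subtlety is that $\overline{\sec}_\phi$ controls $\sec_{\tilde g}$ only after one also controls the first derivatives of $\phi$ in the $\tilde g$-metric and the conformal Laplacian-type terms; the hypothesis $|d\phi| \leq a$ handles the first-derivative terms, but one must check that no uncontrolled second-derivative-of-$\phi$ term survives—this is precisely why the argument is phrased through the connection $\nabla^\phi$, for which $R^{\nabla^\phi}(\gp, U, U, \gp)$ along $\nabla^\phi$-geodesics is, by the curvature formula of \cite{WylieYeroshkin}*{Proposition 3.3}, exactly $\overline{\sec}_\phi$ times $|\gp|^2$ up to the $e^{-4\phi}$ normalization, with no residual Hessian term. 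So the weighted Rauch comparison, applied to $\nabla^\phi$, directly gives the Jacobian and conjugate-radius control needed, sidestepping the Hessian-of-$\phi$ issue entirely, and the rest is Cheeger's argument together with the even-dimensional Klingenberg estimate.
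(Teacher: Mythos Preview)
Your overall strategy---diameter bound, injectivity radius bound, and some curvature/Hessian control, then a Cheeger-type compactness---is the right shape, and your closing sentence (use the weighted Rauch comparison to get Jacobi-field/Hessian control, sidestepping the $\Hess\phi$ issue) is essentially what the paper does. But the middle of your argument takes a wrong turn that should be excised. You assert that the conformal metric $\tilde g = e^{-2\phi} g$ has as its geodesics the $\nabla^\phi$-geodesics; this is false. The $\nabla^\phi$-geodesics are reparametrizations of the $g$-geodesics (that is the defining property of the projective equivalence), whereas the $\tilde g$-geodesics are genuinely different curves. Consequently your attempt to transplant Jacobian bounds for $\exp^{\nabla^\phi}$ into volume and curvature bounds for $\tilde g$ does not go through, and you never actually produce the two-sided sectional curvature bound on $\tilde g$ that you want to feed into ``Cheeger's finiteness theorem verbatim.'' The detour through a conformal metric is both incorrect and unnecessary.

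The paper's argument stays with the original metric $g$ throughout. The diameter bound comes from Lemma~\ref{Lem:index} (equivalently Theorem~\ref{Thm:Myers}). The injectivity-radius bound in even dimensions is Lemma~\ref{Lemma:EvenInjectivity}, which uses the weighted Synge argument on the shortest closed geodesic---not merely finiteness of $\pi_1$ plus a generic Klingenberg lemma as you wrote. The step you are missing is this: once the diameter is bounded, the hypothesis $|d\phi|\le a$ together with the normalization $\phi(p_0)=0$ (Remark~\ref{Rem:Zero}) yields a uniform $C^0$ bound $|\phi|\le B$ depending only on $a$ and the diameter. With $|\phi|$ and $|d\phi|$ both uniformly bounded, the weighted Hessian Comparison (Theorem~\ref{Thm:HessianComp}, derived from the weighted Rauch theorem) gives two-sided bounds on $\Hess_g r - d\phi(\nabla r)\,g$, hence on $\Hess_g r$ itself, in balls of a definite radius. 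These Hessian bounds, together with the diameter and injectivity-radius bounds, feed directly into $C^\alpha$ precompactness for the class $\{(M,g)\}$, giving finitely many diffeomorphism types. No conformal change and no sectional-curvature bound on any auxiliary metric is needed.
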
 

As in the un-weighted setting, in the general case, we also require a lower bound on volume.

\begin{theorem} \label{IntroThm:GenFinite}
For given $n\geq 2$, $a,v, D, k >0$ the class of compact Riemannian manifolds $(M,g)$ with 
\[
\mathrm{diam}(M,g) \leq D, \quad \mathrm{vol}(M,g) \geq v, \quad  \overline{K}_g(a) \leq k, \quad \text{and} \quad  \underline{\kappa}_g(a) \geq -k
\]
contains only finitely many diffeomorphism types. 
\end{theorem}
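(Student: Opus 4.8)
The plan is to follow the strategy of Cheeger's original finiteness theorem, but to replace its two-sided sectional curvature bound on $g$ --- which is genuinely unavailable here, since the identity $\overline{\sec}_{\phi}(U,V)=\sec(U,V)+\mathrm{Hess}\,\phi(U,U)+d\phi(U)^{2}$ leaves $\mathrm{Hess}\,\phi$, hence $\sec$, uncontrolled --- by the weaker data coming from our weighted Rauch comparison theorem, the weighted convexity estimates of Section~\ref{sec:Convexity}, and the weighted volume comparison of \cite{WylieYeroshkin}. First I would normalize: for each $(M,g)$ in the class choose densities realizing, up to the normalization of Definition~\ref{Def:CurveBound}, the bounds $\overline{K}_g(a)\le k$ and $\underline{\kappa}_g(a)\ge -k$ with $|d\phi|\le a$, arranged so that $\min\phi=0$. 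Since $\mathrm{diam}(M,g)\le D$ this forces $0\le\phi\le aD$, so the conformal metric $\hat g:=e^{-4\phi}g$ is uniformly bi-Lipschitz to $g$ with constants depending only on $n,a,D$; consequently the diameters and volumes of $g$ and $\hat g$ differ by controlled factors, and every weighted comparison statement below --- intrinsically a statement about $\hat g$ governed by bounds on $\overline{\sec}_{\phi}$ --- transfers to $g$.

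Next I would produce a uniform lower bound on the conjugate radius. Applying the weighted Rauch comparison theorem with the upper bound on $\overline{\sec}_{\phi}$, a weighted Jacobi field along a $\nabla^{\phi}$-geodesic cannot vanish before the first zero of the corresponding constant-curvature model field. Since $\nabla^{\phi}$ has the same geodesics as the Levi--Civita connection of $g$ up to reparametrization, its conjugate points coincide, as points of the geodesic, with those of $g$ (this is the mechanism behind the fact that $\overline{\sec}_{\phi}\le 0$ precludes $g$-conjugate points); reparametrizing by $g$-arclength and using $0\le\phi\le aD$ to control the reparametrization yields a uniform lower bound $\rho_0=\rho_0(n,a,D,k)>0$ on the conjugate radius of $(M,g)$. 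The two-sided form of the weighted Rauch estimate moreover bounds $\|d\exp^g_p\|$ and $\|(d\exp^g_p)^{-1}\|$ on the ball of radius $\rho_0/2$, so each such exponential map is a uniformly bi-Lipschitz local diffeomorphism; together with the weighted convexity of distance functions this controls how geodesics spread inside balls of a fixed radius.

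I would then establish a uniform lower bound $i_0=i_0(n,a,D,k,v)>0$ on $\mathrm{inj}(M,g)$ by a weighted version of Cheeger's argument: were $\mathrm{inj}(M,g)$ very small, Klingenberg's lemma (which uses only the conjugate radius bound above) would give a short closed $g$-geodesic, and the resulting thin region, analyzed through the lower bound on $\overline{\sec}_{\phi}$ and the weighted volume comparison of \cite{WylieYeroshkin}, would force $\mathrm{vol}(M,\hat g)$, hence $\mathrm{vol}(M,g)$, below $v$. With $i_0$ in hand, cover $M$ by balls $B_g(p_i,i_0/4)$ with the $B_g(p_i,i_0/8)$ disjoint; their number is bounded in terms of $n,a,D,k,v$, since the small balls have volume bounded below (two-sided weighted Rauch together with $i_0$) while $\mathrm{vol}(M,g)$ is bounded above (weighted volume comparison plus the bi-Lipschitz bound). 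In the exponential charts $\exp^g_{p_i}$, which by the previous paragraph are uniformly bi-Lipschitz local diffeomorphisms with uniformly bi-Lipschitz transition maps, Cheeger's patching construction then produces a diffeomorphism between any two manifolds in the class whose overlap patterns agree (only finitely many patterns occur) and whose chart data are sufficiently close, and the chart data range in a precompact family; hence only finitely many diffeomorphism types occur.

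The main obstacle is exactly the lack of any bound on the sectional or Ricci curvature of $g$ itself, so that every geometric ingredient of Cheeger's argument must be regenerated from the weighted curvature. Two points need care: that the weighted Rauch comparison really does supply the two-sided bi-Lipschitz control of exponential charts and their transitions that the patching step consumes --- in particular one cannot invoke the cleaner Cheeger--Gromov $C^{1,\alpha}$-precompactness, which requires Ricci control we do not have --- and, most delicately, that Cheeger's injectivity radius estimate survives when the two-sided Ricci bound is replaced by the conjugate radius bound together with the weighted volume comparison. This last point, essentially a weighted Cheeger's lemma, is where I expect the real work to be.
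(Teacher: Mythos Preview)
Your overall architecture matches the paper's: normalize $\phi$ using $|d\phi|\le a$ and the diameter bound, obtain a conjugate radius estimate from the weighted Rauch theorem, reduce via Klingenberg to a lower bound on the length of the shortest closed geodesic, and finish with a compactness/finiteness argument. You also correctly identify the short-closed-geodesic step as the crux. Two points, however, deserve correction.

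First, your claim that one ``cannot invoke the cleaner Cheeger--Gromov $C^{1,\alpha}$-precompactness'' is mistaken. The paper does exactly this. The version of $C^\alpha$ compactness used requires only a diameter bound, an injectivity radius bound, and a two-sided bound on the Hessian of the distance function in balls of a fixed radius (see e.g.\ \cite{Petersen97}); no Ricci bound is needed. The weighted Hessian comparison, Theorem~\ref{Thm:HessianComp}, supplies precisely this two-sided Hessian control once $|\phi|$ is bounded. Your bi-Lipschitz estimates on $d\exp_p$ are essentially the same information, so reverting to Cheeger's original patching construction is unnecessary extra work.

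Second, and more seriously, the tool you invoke for the ``weighted Cheeger's lemma'' is not the right one. The volume comparison in \cite{WylieYeroshkin} is a Bishop--Gromov type estimate for metric balls around a point. Cheeger's argument, however, needs a \emph{tube} volume estimate around the short closed geodesic: one shows $\mathrm{vol}(M)\le \mathrm{length}(\gamma)\cdot C(n,\kappa,D)$ since $M$ lies in the $D$-tube about $\gamma$, and this requires a Heintze--Karcher type inequality for $H$-Jacobi fields with $H=\gamma$ a one-dimensional submanifold. The paper develops exactly this (Lemma~\ref{JacobiVolumeLemma}, Theorem~\ref{Thm:TubeVol}, Corollary~\ref{Cor:VolCompSubMan}), and there is a further subtlety you miss: a $g$-geodesic is not totally geodesic in the weighted sense for $(g,\phi)$, so the paper passes to the conformal pair $(\widetilde g,-\phi)$, where the weighted second fundamental form of $\gamma$ vanishes, before applying the tube estimate (Corollary~\ref{Cor:WeightedCheeger}). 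Without this weighted Heintze--Karcher machinery your injectivity-radius step does not go through, and this is precisely ``the real work'' you anticipated.
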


The paper is organized as follows. In the next section we give the complete definitions of $\underline{\kappa}_g$ and $\overline{K}_g$ and summarize how some earlier results are related to these invariants.  We also discuss some basic examples.  In section 3 we discuss the notion of weighted convexity and apply it to prove Theorems \ref{thm:FPH} and \ref{Thm:Byers} as well as other results about positive and negative weighted curvatures.  In section 4 we prove the Jacobi field comparison theorems for the weighted curvature, including versions of the first and second Rauch theorems (Theorems \ref{Thm:Rauch1} \& \ref{Thm:Rauch2}) as well as a more general weighted version of a Jacobi field comparison due to Heintze-Karcher (Lemma \ref{JacobiVolumeLemma}) which also implies a general comparison for weighted tube volumes (Theorem \ref{Thm:TubeVol}) which may be of independent interest. We also use these comparisons to prove Theorems \ref{Thm:QuarterPinch}, \ref{IntroThm:FinitePosEven} and \ref{IntroThm:GenFinite}.  We finish the paper by also deriving a weighted version of  the Radial Curvature Equation for general hypersurfaces. 

\subsection*{Acknowledgements}
This work was partially supported by NSF Grant DMS-1440140 while the first and second authors were in residence at MSRI in Berkeley, California, during the Spring 2016 semester. The first author was partially supported by NSF Grant DMS-1622541. The second author was supported  by a grant from the Simons Foundation (\#355608, William Wylie) and a grant from the National Science Foundation (DMS-1654034). The third author was partially supported by a grant from the College of Science and Engineering at Idaho State University.
\section{Preliminaries and Examples}

\subsection{Definition of weighted curvature bounds}

In this section we define our weighted generalization of upper and lower curvature bounds.  We consider bounds of the form
\begin{equation} \label{eqn:CurveBound}
\kappa e^{-4\phi} \leq \overline{\sec}_{\phi}\leq K e^{-4\phi} 
\end{equation}
where $\kappa$ and $K$ are constants.  To see that normalization of $\phi$ is needed in (\ref{eqn:CurveBound}) consider adding a constant to $\phi$.  Let  $\psi = \phi + c$ for some constant $c$, then 
\begin{equation}
\label{eqn:AddConstant}
\left(\kappa e^{4c}\right) e^{-4\psi} =  \kappa e^{-4\phi} \leq \overline{\sec}_{\psi} = \overline{\sec}_{\phi} \leq K e^{-4\phi} \leq \left(Ke^{4c}\right) e^{-4\psi}
\end{equation}
This gives us the following proposition. 

\begin{proposition} \label{Prop:NormalizationNeeded}
Let $(M,g)$ be a compact Riemannian manifold, then 
\begin{align*} 
\sup \left\{ \kappa : \exists \phi \text{ s.t. } \overline{ \sec}_{\phi}\geq \kappa e^{-4\phi} \right\}= 0 \text{ or } \infty.
\end{align*}
Moreover, the supremum  $= \infty$ if and only if $(M,g)$ has PWSC. Similarly, 
\begin{align*}
\inf \left\{ K: \exists \phi \text{ s.t. } \overline{ \sec}_{\phi}\leq K e^{-4\phi} \right\} = 0 \text{ or } -\infty. 
\end{align*}
Moreover, the infimum $= -\infty$ if and only if $(M,g)$ has NWSC.
\end{proposition}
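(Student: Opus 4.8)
The plan is to read off everything from the scaling identity already displayed in (\ref{eqn:AddConstant}). Set $S = \{\kappa : \exists\,\phi \text{ with } \overline{\sec}_\phi \geq \kappa e^{-4\phi}\}$. That identity (with $\psi = \phi + c$) says precisely that $S$ is invariant under multiplication by every positive constant $e^{4c}$. I would record two further elementary facts: $S$ is downward closed, since $e^{-4\phi} > 0$ forces $\overline{\sec}_\phi \geq \kappa e^{-4\phi} \geq \kappa' e^{-4\phi}$ whenever $\kappa' \leq \kappa$; and $S$ is nonempty, in fact unbounded below, because $\phi \equiv 0$ gives $\overline{\sec}_0 = \sec$, which on a compact manifold has a finite minimum, so every $\kappa \leq \min_M \sec$ lies in $S$.

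These three properties force the dichotomy. If some $\kappa_0 \in S$ is positive, then $\kappa_0 t \in S$ for all $t > 0$, hence $S \supseteq (0,\infty)$ and $\sup S = \infty$. If instead $S \subseteq (-\infty, 0]$, pick $\kappa_0 \in S$ with $\kappa_0 < 0$ (possible since $S$ is unbounded below); then $\kappa_0 t \in S$ for $0 < t < 1$ and $\kappa_0 t \to 0^-$ as $t \to 0^+$, so $\sup S = 0$. This is the first displayed equation. For the second, the symmetric argument applied to $T = \{K : \exists\,\phi \text{ with } \overline{\sec}_\phi \leq K e^{-4\phi}\}$ — which is upward closed, unbounded above, and scale-invariant — gives $\inf T \in \{0, -\infty\}$.

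To finish I must identify when $\sup S = \infty$, equivalently when $S$ meets $(0,\infty)$. One direction is immediate: if $\overline{\sec}_\phi \geq \kappa e^{-4\phi}$ with $\kappa > 0$, then $\overline{\sec}_\phi > 0$ since $e^{-4\phi} > 0$, so $(M,g)$ has PWSC. For the converse, suppose some $\phi$ satisfies $\overline{\sec}_\phi(U,V) > 0$ for every orthonormal pair. Here I would invoke compactness twice: the bundle of orthonormal $2$-frames over $M$ is compact, and $\overline{\sec}_\phi$ is a continuous, strictly positive function on it, hence bounded below by some $m > 0$; and $e^{-4\phi}$ is continuous on the compact manifold $M$, hence bounded above by some $B > 0$. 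Then $\overline{\sec}_\phi \geq m \geq (m/B)\,e^{-4\phi}$, so $m/B$ is a positive element of $S$. The same compactness argument relates $\inf T = -\infty$ to NWSC, completing the proof.

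The only step with any content is this last one: promoting the pointwise strict inequality in the definition of PWSC (resp. NWSC) to a uniform inequality of the normalized form $\kappa e^{-4\phi} \leq \overline{\sec}_\phi$ (resp. $\overline{\sec}_\phi \leq K e^{-4\phi}$). This is where compactness of $M$ is genuinely used; the remainder is just bookkeeping around (\ref{eqn:AddConstant}).
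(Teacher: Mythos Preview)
Your proof is correct and follows essentially the same approach as the paper: both arguments rest on the scaling identity (\ref{eqn:AddConstant}) to get the $0$-or-$\infty$ dichotomy, and on compactness (of $M$, equivalently of the orthonormal $2$-frame bundle) to promote the strict pointwise inequality defining PWSC to a uniform bound of the form $\overline{\sec}_\phi \geq \kappa e^{-4\phi}$ with $\kappa > 0$. Your phrasing in terms of the abstract properties of the set $S$ (downward closed, scale-invariant, unbounded below) is a clean way to organize the same content the paper presents more directly.
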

 
\begin{proof}
Let $\phi$ be a function such that $\overline{\sec}_{\phi} \geq -\kappa e^{-4\phi}$ for some $\kappa>0$.  Let $\psi_c = \phi + c$. Then, from (\ref{eqn:AddConstant}),  $ \overline{\sec}_{\phi_c} \geq (-\kappa e^{4c}) e^{-4\psi}$.  Letting $c \rightarrow -\infty$ gives us $\sup \left\{ \kappa : \exists \phi \text{ s.t. } \overline{ \sec}_{\phi}\geq \kappa e^{-4\phi} \right\} \geq 0$. 
 
The supremum being greater than zero is equivalent to PWSC by compactness. Then there is a $\kappa >0$ and a $\phi$ such that $\sec_{\phi} \geq \kappa e^{-4\phi}$.  Letting  $\psi_c = \phi + c$ and  $c \rightarrow \infty$ in (\ref{eqn:AddConstant}) gives $\sup \left\{ \kappa : \exists \phi \text{ s.t. } \overline{ \sec}_{\phi}\geq \kappa e^{-4\phi} \right\} = \infty$ in this case. 
 
The second statement about upper bounds is proved in the completely  analogous way. 
\end{proof}
 
Proposition \ref{Prop:NormalizationNeeded} motivates the following definition. 

\begin{definition}   \label{Def:CurveBound} Let $(M,g)$ be a Riemannian manifold.  If $(M,g)$ has PWSC define  
\[ \underline{\kappa}_g = \sup \left\{ \kappa : \exists \phi:M \rightarrow(-\infty, 0] , \overline{ \sec}_{\phi}\geq \kappa e^{-4\phi} \right\},  \]
otherwise, define 
 \[ \underline{\kappa}_g = \sup \left\{ \kappa : \exists \phi:M \rightarrow[0, \infty) , \overline{ \sec}_{\phi}\geq \kappa e^{-4\phi} \right\}.   \]
If $(M,g)$ has NWSC define
\[ \overline{K}_g = \inf \left\{ K: \exists \phi:M \rightarrow (-\infty, 0],\overline{ \sec}_{\phi}\leq K e^{-4\phi} \right\},\]
otherwise define
\[ \overline{K}_g = \inf \left\{ K: \exists \phi:M \rightarrow [0, \infty),\overline{ \sec}_{\phi}\leq K e^{-4\phi} \right\}. \]
\end{definition}

Let   $\sec_{\max}$ and $\sec_{\min}$ be the supremum and infimum of the sectional curvatures of $(M,g)$.  Then by taking $\phi = 0$ we obtain that $\overline{K} \leq \sec_{\max}$ and $\underline{\kappa} \geq \sec_{\min}$.  The bounds on $\phi$ ensure that we can not make the supremums and infimums blow up or shrink to zero simply by adding a constant to the density as in the proof of Proposition \ref{Prop:NormalizationNeeded}.  

The choice of the bounds $\phi \leq 0$ or $\phi \geq 0$ as opposed to some other constant serves to fix a scale for the metric.  For example, if  $(M,g)$ has PWSC and there is a function $\phi$ which is bounded above such that $\overline{\sec}_{\phi} \geq \kappa e^{-4 \phi}$ for some $\kappa>0$, then if we rescale the metric by  $\widehat{g} = e^{-2\phi_{\max}} g$ and modify the density by $\widehat{\phi} = \phi - \phi_{\max}$ then we have $\overline{\sec}_{\widehat g, \widehat{\phi}} = e^{4\phi_{\max}} \overline{\sec}_{\phi, g} \geq \kappa e^{-4\widehat{\phi}}$.  So the rescaled metric will have $\underline{\kappa}_{\widehat{g}} \geq \kappa$. 

Define $\underline{\kappa}_g(a)$ and $\overline{K}_g(a)$ in exactly the same way as $\underline{\kappa}_g$ and $\overline{K}_g$ with the additional assumption that the function $\phi$ must satisfy the derivative bound $|d\phi|_g \leq a$ on $M$.  Then $\underline{\kappa}_g(0)=\sec_{min}$, $\overline{K}_g(0) = \sec_{\max}$,  $\displaystyle \lim_{a \rightarrow \infty} \underline{\kappa}_g(a)=\underline{\kappa}_g$,  and $\displaystyle \lim_{a \rightarrow \infty} \overline{K}_g(a)=\overline{K}_g$. 

We also note the following property which shows, in particular that the pinching constants $\delta$ and $\delta(a)$ mentioned in the introduction are less than or equal to $1$. 

\begin{proposition}
Let $(M,g)$ be a compact manifold then $\underline{\kappa}_g(a) \leq \overline{K}_g(a)$ for all $a \geq 0$. 
\end{proposition}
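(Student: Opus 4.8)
The plan is to reduce the inequality to a pointwise comparison at one well-chosen point. Fix $a\ge 0$. Let $\phi$ be a density normalized as in Definition~\ref{Def:CurveBound}, with $|d\phi|_g\le a$ and $\overline{\sec}_{\phi}\ge\kappa e^{-4\phi}$ for $\kappa$ close to $\underline{\kappa}_g(a)$, and let $\psi$ be a density with $|d\psi|_g\le a$ and $\overline{\sec}_{\psi}\le K e^{-4\psi}$ for $K$ close to $\overline{K}_g(a)$. Since $M$ is compact, $\phi-\psi$ attains a maximum at some $q\in M$; there $d\phi=d\psi$ and the Hessian of $\phi-\psi$ is negative semidefinite, so $\Hess\phi\le\Hess\psi$ as symmetric bilinear forms at $q$. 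Feeding this into $\overline{\sec}_{\phi}(U,V)=\sec(U,V)+\Hess\phi(U,U)+d\phi(U)^2$ gives $\overline{\sec}_{\phi}(U,V)\le\overline{\sec}_{\psi}(U,V)$ at $q$ for every orthonormal pair $U,V$, so, since $\phi(q)-\psi(q)=\max_M(\phi-\psi)$,
\[
\kappa\, e^{-4\phi(q)} \;\le\; \overline{\sec}_{\phi}(U,V)\big|_q \;\le\; \overline{\sec}_{\psi}(U,V)\big|_q \;\le\; K\, e^{-4\psi(q)}
\qquad\Longrightarrow\qquad \kappa \;\le\; K\, e^{4\max_M(\phi-\psi)}.
\]

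It remains to upgrade this to $\kappa\le K$, and here the sign conventions of Definition~\ref{Def:CurveBound} enter: they tie the sign of the optimizing density to whether $(M,g)$ has PWSC or NWSC. Two observations help. First, the same maximum-point comparison shows that a compact manifold of dimension $\ge 2$ is never both PWSC and NWSC: densities witnessing both would yield $0<\overline{\sec}_{\phi}|_q\le\overline{\sec}_{\psi}|_q<0$. Second, if $(M,g)$ is not PWSC then $\underline{\kappa}_g(a)\le 0$ (else a density realizing $\overline{\sec}_{\phi}\ge\tfrac12\underline{\kappa}_g(a)e^{-4\phi}>0$ would witness PWSC), and dually if $(M,g)$ is not NWSC then $\overline{K}_g(a)\ge 0$.

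Then I would finish by cases. If $(M,g)$ is PWSC it is not NWSC, so $\overline{K}_g(a)\ge 0$ and Definition~\ref{Def:CurveBound} forces $\phi\le 0\le\psi$; hence $e^{4\max_M(\phi-\psi)}\le 1$ and, since $K>\overline{K}_g(a)\ge 0$, the displayed inequality gives $\kappa\le K$. If $(M,g)$ is NWSC it is not PWSC, so $\underline{\kappa}_g(a)\le 0$; if $\overline{K}_g(a)\ge 0$ there is nothing to prove, and if $\overline{K}_g(a)<0$ one works with $K<0$, where $\phi\ge 0\ge\psi$ forces $e^{4\max_M(\phi-\psi)}\ge 1$, so $Ke^{4\max_M(\phi-\psi)}\le K$ and again $\kappa\le K$. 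If $(M,g)$ is neither, then $\underline{\kappa}_g(a)\le 0\le\overline{K}_g(a)$ outright. In every case, letting $\kappa\uparrow\underline{\kappa}_g(a)$ and $K\downarrow\overline{K}_g(a)$ (the sets of admissible constants being downward-, resp.\ upward-closed, so witnessing densities exist for all $\kappa<\underline{\kappa}_g(a)$ and $K>\overline{K}_g(a)$) yields $\underline{\kappa}_g(a)\le\overline{K}_g(a)$.

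The only genuine obstacle is this sign-normalization bookkeeping. All the geometry is in the one-line fact that the difference of Hessians is negative semidefinite at a maximum of $\phi-\psi$; the care lies in arranging the cases so that the conformal scale factor $e^{4\max_M(\phi-\psi)}$ always points the favorable way, which is precisely what the normalization $\phi\le 0$ (when PWSC) or $\phi\ge 0$ (otherwise) in Definition~\ref{Def:CurveBound} is there to ensure.
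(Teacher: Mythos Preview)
Your proof is correct and follows essentially the same strategy as the paper: pick near-optimal densities $\phi,\psi$, look at a maximum of $\phi-\psi$, and use $d\phi=d\psi$, $\Hess(\phi-\psi)\le 0$ there together with the sign normalizations of Definition~\ref{Def:CurveBound}. The paper organizes the bookkeeping a bit more economically by applying the normalization first---observing that in every case $\kappa e^{-4\phi}\ge\kappa$ and $K e^{-4\psi}\le K$, so one may work with the constant bounds $\overline{\sec}_{\phi}\ge\kappa$ and $\overline{\sec}_{\psi}\le K$ from the outset---which collapses your case analysis into a single line and avoids tracking the factor $e^{4\max(\phi-\psi)}$.
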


\begin{proof}
We first claim that there exists a real number $\kappa$ arbitrarily close to $\underline\kappa_g(a)$ and a function $\phi_1$ such that $\overline\sec_{\phi_1} \geq \kappa$. Indeed, if $(M,g)$ has PWSC, then we may chose $\kappa > 0$ and $\phi_1 \leq 0$, and these imply that $\kappa e^{-4\phi_1} \geq \kappa$. If $(M,g)$ does not have PWSC, then we may choose $\kappa \leq 0$ and $\phi_1 \geq 0$ and again conclude that $\kappa e^{-4\phi_1} \geq \kappa$. Second, a similar argument shows that there exist a real number $K$ arbitrarily close to $\overline K_g(a)$ and a function $\phi_2$ such that $\overline\sec_{\phi_2} \leq K$.

Now assume that $\underline\kappa_g(a)  >  \overline K_g(a)$. By the previous paragraph, we may choose $\kappa > K$ and functions $\phi_1$ and $\phi_2$ such that $\overline\sec_{\phi_1} \geq \kappa > K \geq \overline\sec_{\phi_2}$. Subtracting, we obtain the inequality
\begin{align} \label{eqn:PosHess} \mathrm{Hess} (\phi_1-\phi_2) (U,U) + d\phi_1(U)^2 - d\phi_2(U)^2 > 0.
\end{align}
for all unit vectors $U$. Since $M$ is compact, the function $\phi_1 - \phi_2$ achieves a maximum at some point $p \in M$. Since $d\phi_1 = d\phi_2$ and $\Hess(\phi_1 - \phi_2) \leq 0$ at $p$, this is a contradiction.
\end{proof}

\begin{remark} \label{Rem:Zero} A final simple remark about the definitions of $\underline{\kappa}_g$ and $\overline{K}_g$ that comes from (\ref{eqn:AddConstant}) is that we can always assume that our density is normalized so that $\phi(p) = 0$ for some $p \in M$.  This is because if there is not a zero, a constant can be added to the density to give it one, improve the curvature bound,  and preserve  $\phi \leq 0$ or $\phi \geq 0$. 
\end{remark}

\subsection{Examples} 

In this section we discuss some basic examples of what our results and the  earlier results of \cite{KennardWylie17, WylieYeroshkin,Wylie16,Wylie} tell us about PWSC, NWSC,  $\underline{\kappa}, \overline{K},$ and $\delta$.   To organize the exposition in this section, we ask the following question. 

\begin{moquestion} Let $(M,g)$ be a compact Riemannian manifold. If $(M,g)$ has PSWC (NWSC) is there another metric $\widehat{g}$ on $M$ such that $\sec_{\widehat{g}}>0 (<0)$? Is there a metric, $\widehat{g}$, on $M$ such that $\sec^{\widehat{g}}_{\min} = \underline{\kappa}_g$,  $\sec^{\widehat{g}}_{\max} = \overline{K}_g$, or $\frac{\sec^{\widehat{g}}_{\min}}{\sec^{\widehat{g}}_{\max}} = \delta_g$? 
\end{moquestion}

All the results of this paper can be organized as partial results towards understanding this question.  For example,  Theorem \ref{Thm:QuarterPinch} shows the answer is yes up to homeomorphism when $\delta_g> 1/4$.  There is also simple answer to this question when the metric $g$ is locally homogeneous. 

\begin{proposition} \label{Prop:LocallyHomogeneous}
Let $(M,g)$ be a compact locally homogeneous space.  Then $\underline{\kappa} = \sec_{min}$ and $\overline{K} = \sec_{max}$.
\end{proposition}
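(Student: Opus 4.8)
The plan is to establish the two reverse inequalities $\underline{\kappa}_g \le \sec_{\min}$ and $\overline{K}_g \ge \sec_{\max}$; the opposite inequalities $\underline{\kappa}_g \ge \sec_{\min}$ and $\overline{K}_g \le \sec_{\max}$ were already noted (take $\phi \equiv 0$) right after Definition \ref{Def:CurveBound}. For the first, I would fix a constant $\kappa$ together with a normalized density $\phi$ satisfying $\overline{\sec}_{\phi} \ge \kappa e^{-4\phi}$ --- so $\phi \le 0$ if $(M,g)$ has PWSC and $\phi \ge 0$ otherwise --- and evaluate at a point $p$ where $\phi$ attains its maximum, which exists since $M$ is compact. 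There $d\phi_p = 0$ and $\Hess\phi_p \le 0$, so the identity $\overline{\sec}_{\phi}(U,V) = \sec(U,V) + \Hess\phi(U,U) + d\phi(U)^2$ gives $\sec(U,V)|_p \ge \overline{\sec}_{\phi}(U,V)|_p \ge \kappa e^{-4\phi(p)}$ for every orthonormal pair $U,V$ at $p$.

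Next I would observe that the normalization forces $\kappa e^{-4\phi(p)} \ge \kappa$. If $(M,g)$ has PWSC then $\kappa > 0$ and $\phi(p) = \max\phi \le 0$, whence $e^{-4\phi(p)} \ge 1$; if not, then $\kappa \le 0$ (otherwise $\overline{\sec}_{\phi} \ge \kappa e^{-4\phi} > 0$ would make $(M,g)$ PWSC), while now $\phi(p) = \max\phi \ge 0$ and $e^{-4\phi(p)} \le 1$, so multiplying the nonpositive number $\kappa$ again yields $\kappa e^{-4\phi(p)} \ge \kappa$. In either case $\sec(U,V)|_p \ge \kappa$ for all orthonormal $U,V$ at $p$. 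Local homogeneity now upgrades this to a global bound: given any $2$-plane $\sigma$ at any $q \in M$, a local isometry from a neighborhood of $q$ onto a neighborhood of $p$ sends $\sigma$ to a $2$-plane at $p$ of the same sectional curvature, so $\sec(\sigma) \ge \kappa$. Hence $\sec_{\min} \ge \kappa$, and taking the supremum over admissible $\kappa$ gives $\sec_{\min} \ge \underline{\kappa}_g$.

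The equality $\overline{K}_g = \sec_{\max}$ is proved symmetrically: evaluate an admissible pair $(\phi,K)$ at a point $p$ where $\phi$ attains its \emph{minimum} (so $d\phi_p = 0$ and $\Hess\phi_p \ge 0$, hence $\sec(U,V)|_p \le \overline{\sec}_{\phi}(U,V)|_p \le K e^{-4\phi(p)}$), run the same sign bookkeeping with $K \ge 0$ resp.\ $K \le 0$ to obtain $K e^{-4\phi(p)} \le K$, and apply local homogeneity to conclude $\sec_{\max} \le K$, hence $\sec_{\max} \le \overline{K}_g$. The computations are elementary; the only point needing care is the sign bookkeeping in the second paragraph --- one must split into the PWSC and non-PWSC cases and use that the sign of the bound constant is forced to be compatible with the sign constraint on $\phi$, so that the $e^{-4\phi}$ factor never works against the estimate. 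Beyond this I foresee no obstacle: the extremum trick cancels the $\Hess\phi$ and $d\phi$ terms exactly, and local homogeneity is precisely what promotes the pointwise curvature bound at $p$ to the global one.
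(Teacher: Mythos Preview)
Your proof is correct and follows essentially the same approach as the paper: evaluate at an extremum of $\phi$ (where $d\phi$ vanishes and $\Hess\phi$ has a sign), then use local homogeneity to promote the pointwise curvature bound to a global one. Your version is in fact more complete than the paper's, which omits the $e^{-4\phi}$ bookkeeping entirely; the one small point worth tightening is that in the PWSC case you assert ``$\kappa>0$'' as if it were automatic for every admissible $\kappa$, whereas what you really mean (and what suffices) is that since $\underline{\kappa}_g>0$ in this case you may restrict attention to admissible $\kappa>0$ when computing the supremum.
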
 
\begin{proof}
Let $\phi$ be an arbitrary function on $(M,g)$. At a maximum of $\phi$,  $\sec_{\phi}(U,V) \leq  \sec(U,V)$ for all $U,V$.  Similarly, at a minimum of $\phi$, $\sec_{\phi}(U,V) \geq \sec(U,V)$.  Since the sectional curvatures do not depend on the point, this implies the proposition. 
\end{proof}

\begin{remark} Spaces of constant curvature and  symmetric spaces with their canonical metrics are locally homogeneous, so Proposition \ref{Prop:LocallyHomogeneous} applies to all of these nice spaces.
\end{remark} 

Explicit examples of metrics with  PWSC but $\sec^g_{min}<0$ are constructed in \cite[Propositions 2.11 \& 2.16]{KennardWylie17}.  These metrics are rotationally symmetric metrics on the sphere and cohomogeneity one metrics on $\mathbb{C}P^n$.  These examples show that the space of metrics with PWSC is larger than the space of metrics with positive sectional curvature, but  does not address the question of whether there are topologies which support PWSC but not positive sectional curvature.  On the other hand,  in dimensions 2 and 3 a compact manifold has PWSC if and only if there is a metric on $M$ with positive sectional curvature. This follows from that fact that $\pi_1(M)$ must be finite \cite[Theorem 1.6]{Wylie15},  along with the Gauss-Bonnet Theorem and geometrization of $3$-manifolds.  
 
 In the case of non-positive curvature, there is a weighted Cartan-Hadamard Theorem  \cite[Theorem 1.2]{Wylie15} which implies that if there is a function such that $\overline{\sec}_{\phi} \leq 0$ then the metric has no conjugate points and thus $M$ must be a $K(\pi,1)$ space.  This combined with the Myers' theorem shows that a given compact manifold $M$ can not admit separate metrics with PWSC and $\overline{K} \leq 0$.
 
There is also a Cheeger-Gromoll type splitting theorem for the condition $\overline{\sec}_{\phi} \geq 0$, \cite[Theorem 6.3]{Wylie}.  The statement of this result is complicated somewhat by a loss of rigidity in the conclusion to a warped product splitting instead of the traditional direct product  as well as necessary boundedness conditions on the density.  However,  the classical  topological obstructions to a compact manifold admitting a metric of non-negative sectional curvature that $b_1(M) \leq n$ with equality only if it is flat and $\pi_1(M)$ has an abelian subgroup of finite index are shown to be true for $\overline{\sec}_{\phi} \geq 0$ \cite[Theorem 1.5]{Wylie}.  
 
 These previous results, along with a deep result of Burago and Ivanov, we  obtain the following information for the torus. 

\begin{example}  Let $(T^n, g)$  be any Riemannian metric on a torus.   Then by the weighted Myers' theorem $g$ does not have PWSC, and by Byers' theorem it does not have NWSC.    Moreover, by the splitting theorem there is a density with $\overline{\sec}_{\phi} \geq 0$  if and only if $g$ is a flat metric.  On the other hand, by the weighted Cartan-Hadamard Theorem if there is a density with $\overline{\sec}_{\phi} \leq 0$  then the metric has no conjugate points.   Burago and Ivanov \cite{BuragoIvanov94} have proven  that a metric on the torus with out conjugate points must be flat. Therefore,  there  is a density with $\overline{\sec}_{\phi} \leq 0$  if and only if $g$ is a flat metric \end{example}

  Using Theorem \ref{Thm:Byers}, we can generalize the torus example to any manifold admitting a flat metric. 

\begin{example} Let $M^n$ be a compact manifold which admits a flat metric.  By the first Bieberbach theorem, $\pi_1(M)$ contains a free abelian group on $n$-generators.  Therefore, by Theorem \ref{Thm:Myers}, the manifold does not admit PWSC and by Theorem \ref{Thm:Byers}, it does not admit NWSC. \end{example}

This example along with the Myers' and Cartan Hadamard Theorems shows that for a compact surface  the topologies that admit PWSC, NWSC, $\overline{\sec}_{\phi} \geq 0$ or $\overline{\sec}_{\phi} \leq 0$ are all equivalent to the standard topologies admitting the corresponding unweighted curvature conditions.

Another well known application of Theorem \ref{Thm:Byers} is the following. 

\begin{example} Let $M_1$, $M_2$ be compact manifolds then $M_1 \times M_2$ does not admit NWSC.  If it did, then by the Weighted Cartan-Hadamard Theorem, $\pi_1(M_1)$ and $\pi_1(M_2)$ must both be infinite.  Then, taking one generator in each factor of $\pi_1(M_1 \times M_2) = \pi_1(M_1) \times \pi_1(M_2)$ gives an abelian subgroup which is not cyclic, contradicting Theorem  \ref{Thm:Byers}. 
\end{example}

On the other hand, the question of whether $M_1 \times M_2$ can admit PWSC is a difficult question, which is a generalization of the famous Hopf conjecture that $S^2 \times S^2$ does not admit a metric of positive sectional curvature.   

We also note that  totally geodesic submanifolds can obstruct improving the curvature by adding a density. 

 \begin{proposition} \label{Prop:TotGeodTorus}
 Let $(M,g)$ be a complete Riemannian manifold.  Let $(N,h)$ be a compact, totally geodesic submanifold.  Then $(N,h)$ must contain points with $p$ and $q$ with  $\sec_N(p) \geq \underline{\kappa}$ and $\sec_N(q) \leq \overline{K}$. In particular, a metric admitting a totally geodesic flat torus can not have PWSC nor NWSC.  \end{proposition}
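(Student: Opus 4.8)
The plan is to exploit that a totally geodesic submanifold inherits the ambient weighted sectional curvature unchanged along its own tangent planes, and then to locate the required points at extrema of the density restricted to the compact submanifold $N$. The first ingredient is the identity $\overline{\sec}^{M}_{\phi}(U,V)=\overline{\sec}^{N}_{\phi|_N}(U,V)$, valid for every smooth $\phi$ on $M$ and every orthonormal pair $U,V\in T_pN$ with $p\in N$. This is forced by the vanishing of the second fundamental form $\II$ of $(N,h)\subset(M,g)$: the Gauss equation gives $\sec_M(U,V)=\sec_N(U,V)$; the standard restriction formula $\Hess_M\phi(U,V)=\Hess_N(\phi|_N)(U,V)-\inner{(\nabla\phi)^{\perp},\II(U,V)}$ collapses to $\Hess_M\phi(U,V)=\Hess_N(\phi|_N)(U,V)$; and $d\phi(U)=d(\phi|_N)(U)$ since $U$ is tangent to $N$. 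Hence any ambient bound $\overline{\sec}^{M}_{\phi}\geq\kappa e^{-4\phi}$ restricts to $\overline{\sec}^{N}_{\phi|_N}\geq\kappa e^{-4\phi|_N}$ on $N$, and dually for upper bounds.

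Next I would convert such a bound on the compact manifold $N$ into a genuine pointwise bound on $\sec_N$ by evaluating at an extremum of $\phi|_N$. We may assume $\underline{\kappa}_g>-\infty$, the claim being vacuous otherwise. Fix $\kappa<\underline{\kappa}_g$ (allowing $\kappa=\underline{\kappa}_g$ if that value is attained) and a density $\phi$ realizing $\overline{\sec}^{M}_{\phi}\geq\kappa e^{-4\phi}$ subject to the sign constraint of Definition~\ref{Def:CurveBound}: $\phi\leq 0$ with $\kappa\geq 0$ when $(M,g)$ has PWSC, and $\phi\geq 0$ with $\kappa\leq 0$ otherwise. Since $N$ is compact, $\phi|_N$ attains a maximum at some $q\in N$, where $d(\phi|_N)=0$ and $\Hess_N(\phi|_N)\leq 0$; therefore, for orthonormal $U,V\in T_qN$,
\[
\sec_N(U,V)=\overline{\sec}^{N}_{\phi|_N}(U,V)-\Hess_N(\phi|_N)(U,U)-d(\phi|_N)(U)^2\geq\overline{\sec}^{N}_{\phi|_N}(U,V)\geq\kappa\,e^{-4\phi(q)}.
\]
In the PWSC case $\phi(q)\leq 0$ so $e^{-4\phi(q)}\geq 1$, while in the other case $\phi(q)\geq 0$ so $e^{-4\phi(q)}\leq 1$; combined with the matching sign of $\kappa$ this gives $\kappa e^{-4\phi(q)}\geq\kappa$ in both cases. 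Thus every sectional curvature of $N$ at $q$ is at least $\kappa$. Letting $\kappa\uparrow\underline{\kappa}_g$ yields points $q_\kappa\in N$ with this property, and since $N$ is compact and $x\mapsto\min\{\sec_N(U,V):U,V\in T_xN\text{ orthonormal}\}$ is continuous, a subsequential limit $p$ satisfies $\sec_N(U,V)\geq\underline{\kappa}_g$ for all orthonormal $U,V\in T_pN$ (this also shows $\underline{\kappa}_g\leq\max_N\sec_N<\infty$). Running the mirror argument with an upper bound $\overline{\sec}^{M}_{\phi}\leq K e^{-4\phi}$, evaluated at a \emph{minimum} of $\phi|_N$ where $\Hess_N(\phi|_N)\geq 0$, and letting $K\downarrow\overline{K}_g$, produces a point $q$ with $\sec_N(U,V)\leq\overline{K}_g$ for all orthonormal $U,V\in T_qN$.

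Finally, if $(N,h)$ is a flat torus then $\sec_N\equiv 0$, so the two bounds just established force $\underline{\kappa}_g\leq 0\leq\overline{K}_g$; in particular $(M,g)$ can have neither PWSC nor NWSC. (For emphasis, this last point also follows directly: a density $\phi$ with $\overline{\sec}^{M}_{\phi}>0$ everywhere would give $\overline{\sec}^{N}_{\phi|_N}>0$, but at a maximum of $\phi|_N$ one has $\overline{\sec}^{N}_{\phi|_N}(U,V)=\Hess_N(\phi|_N)(U,U)\leq 0$, a contradiction, and symmetrically for $\overline{\sec}^{M}_{\phi}<0$ at a minimum.)

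I do not expect a genuine obstacle. The one geometric fact needed --- that the weighted sectional curvature of a totally geodesic submanifold agrees with that of the ambient space on tangent planes --- is immediate once $\II\equiv 0$, and everything else is bookkeeping: tracking the sign constraint on $\phi$ through the $e^{-4\phi}$ normalization of Definition~\ref{Def:CurveBound}, and the compactness/continuity step that passes from the family $\{q_\kappa\}$ to a single point of $N$ realizing $\underline{\kappa}_g$ (respectively $\overline{K}_g$).
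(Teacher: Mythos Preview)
Your argument is correct and follows the same strategy as the paper: use that total geodesy forces the ambient and intrinsic Hessians (and sectional curvatures) to agree on tangent planes, then evaluate at a maximum (resp.\ minimum) of $\phi|_N$ to strip off the Hessian and gradient terms. The paper's proof is a terse version of yours; you additionally spell out the bookkeeping with the $e^{-4\phi}$ normalization and the sign constraints from Definition~\ref{Def:CurveBound}, and you include the compactness/subsequence step needed to pass from $\kappa<\underline{\kappa}_g$ to $\underline{\kappa}_g$ itself, both of which the paper leaves implicit.
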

 
 \begin{proof}
Since $N$ is totally geodesic the Hessian on $(N,h)$ of the restriction of $\phi$  to $N$ is equal to the restriction of  $\mathrm{Hess}_g \phi$ to $T_pN$. $\phi$ restricted to $N$ has a maximum and minimum as $N$ is compact.  Let $p$ be a local maximum of $\phi$ restricted to $N$, and let $U,V \in T_pN$.  Then
\begin{align*}
 \overline{\sec}_{\phi}(U,V) &= \sec_M(U,V) + \mathrm{Hess} \phi (U,U) + d\phi(U)^2 \\
 &\leq \sec_N(U,V)
 \end{align*}
 Similarly, if we take $q$ to be a local minimum of $\phi$ restricted to $N$, we obtain the opposite inequality. 
 \end{proof}

There are a number of metrics with non-negative sectional curvature which have totally geodesic flat tori.  For example,  in \cite{Wilhelm01} Wilhelm constructs metrics on the Gromoll-Meyer sphere with $\sec \geq 0$ and $\sec>0$ almost everywhere, which contain flat tori.  Proposition \ref{Prop:TotGeodTorus} shows that these metrics do not have PWSC.  Furthermore, in \cite{Wilking02} Wilking showed that any normal biquotient that has some flat planes must have an immersed totally geodesic flat submanifold, usually this is a torus.  Also see \cite{EschenburgKerin08, PetersenWilhelm-pre}.

\subsection{Non-compact case}

While the main focus of this paper is compact manifolds, and Definition \ref{Def:CurveBound} is intended mainly for the compact case, we include a few remarks here in the non-compact case for completeness.

 Note that Theorem \ref{Thm:Myers} shows that $\underline{\kappa}_g =0$ for any noncompact manifold with PWSC. On the other hand, there are a number of simple examples of noncompact metrics with PWSC which do not have positive curvature.  First we consider the flat Euclidean space

\begin{example} \label{Ex:Euclidean}
Consider the $\mathbb{R}^n$ with the flat metric.  Let $\phi (x)= \frac{\kappa}{2} |x|^2$  then \begin{align*}
\overline{\sec}_{\phi}(U,V) &= \mathrm{Hess} \phi(U,U) + d\phi(U)^2 \\ 
&= \kappa  + d\phi(U)^2 \\
& \geq \kappa e^{-4\phi}. 
\end{align*} 
This shows that $\mathbb{R}^n$ has PWSC, however the density $\phi$ is not bounded above.   In fact, in this case it is easy to see directly that  $\underline{\kappa} =0$.  Let $\phi$ be a function such that $\overline{\sec}_{\phi} >0$, then restricting $\phi$ along a geodesic we have $\phi'' + (\phi')^2 >0$. Set $u = e^{\phi}$. Then we have $u''>0$.  However, if $\phi$ is bounded above then so is $u$.  This is not possible if $u$ is defined along the whole line.  

Similarly,  $\mathbb{R}^n$ has $\overline{K}_g = 0$.  In fact, there is no density $\phi$, bounded or not, such that $\overline{\sec}_{\phi}<0$.  To see this suppose $\phi$ were such a density. Then, restricting $\phi$ to a geodesic, we'd have a non-constant function of $1$-variable defined on the entire real line such that $\phi'' + \phi^2 <0$.  Then the function $u = e^{\phi}$ satisfies $u''\leq 0$ and $u>0$, which is not possible.  
\end{example}

Generalizing this example, any Cartan-Hadamard space of bounded curvature has PWSC. 

\begin{example}  \label{Ex:CH}
Let $(M,g)$ be a simply connected manifold with $A \leq \sec_g \leq 0$.  Pick a point $p$ and let $\phi(x) = \frac{B}{2} r^2_p$ where $r_p$ is the distance function to $p$.  Then $\mathrm{Hess} \phi \geq B g$, so if $B>A$ then $\sec_{\phi} >0$.   On the other hand,  these metrics all have lines,  so by the splitting theorem there is no function $\phi$  bounded above such that $\overline{\sec}_{\phi} >0$. 
\end{example}

If we dot not assume a bound on $\phi$, the next example shows in the non-compact case that there are topologies which support PWSC but have no metric of positive sectional curvature. 

\begin{example}  \label{Ex:RN} \cite[Proposition 2.8]{KennardWylie17}  gives   metrics on $\mathbb{R} \times N$ where $N$ is a manifold admitting non-negative sectional with PWSC. The metrics are simple warped products of the form 
\[ g = dr^2 + e^{2r} g_N \qquad \phi = Ar.  \]   Moreover, if $N$ is compact then $\mathbb{R} \times N$ has two ends, so by the splitting theorem they can not admit $\sec_{\phi} >0$ for any $\phi$ which is bounded above. 
 \end{example}
 
These examples indicate that there should be many examples of non-compact spaces of PWSC, if one does not make any assumptions about the function $\phi$.   On the other hand, Theorem \cite[Theorem 2.9]{WylieYeroshkin} shows that if a complete Riemannian manifold  admits a function $\phi$ such that $\overline{\sec}_{\phi} \geq \kappa e^{-4\phi}$ for some $\kappa>0$ then $\pi_1(M)$ is finite.  Thus, while Examples \ref{Ex:Euclidean} and \ref{Ex:CH} admit such densities, the manifolds in Example \ref{Ex:RN} do not in general. 

In the non-compact case it would also be interesting to study bounds on $\overline{\sec}_{\phi}$ for other  asymptotics of the density $\phi$ besides being bounded. 
\section{Weighted Convexity}
\label{sec:Convexity}
\subsection{Preliminaries}

Sectional curvature bounds give control of the Hessian of the distance function, which imply convexity properties of the underlying metric space.  In order to see what kind of convexity is implied by weighted sectional curvature bounds we consider the Hessian under a conformal change. Given $(M,g, \phi)$ let $\widetilde{g} = e^{-2\phi}g$.  Recall that for a smooth function $u$, the formula relating the Hessian in $g$ and  $\widetilde{g}$ is
\begin{align} \label{Eqn:ConfHess}
\mathrm{Hess}_{\widetilde{g}} u = \mathrm{Hess}_{g} u + d\phi \otimes du + du\otimes d\phi - g(\nabla \phi, \nabla u)g. 
\end{align}
Consider a distance function $r$ for the metric $g$ and take its Hessian with respect to the conformal metric $\widetilde{g}$. The orthogonal complement of the gradient is well defined in a conformal class since conformal change preserves angle and  modifies the gradient by a scalar factor.  Consider vectors $U,V \perp \nabla r$,  then we have 
\begin{align} \label{Eqn:PerpConfHess}
\mathrm{Hess}_{\widetilde{g}} r(U,V) = \mathrm{Hess}_g r(U,V) - g(\nabla \phi, \nabla r)g(U,V).
\end{align}
Geometrically, up to multiplying by a suitable factor of $e^{\phi}$, $\mathrm{Hess}_{\widetilde{g}} r(U,V)$ represents the second fundamental form with respect to the conformal metric  of the level sets of $r$.  We will develop the tools which allow us to control this quantity from bounds on the curvature $\overline{\sec}_{\phi}$ in an analogous way that the classical sectional curvature control Hessian of the distance function. 

In applying these results, we encounter a technical issue not present in the un-weighted setting. Namely, $\nabla r$ is a null vector for $\mathrm{Hess}_g r$, but we can see from (\ref{Eqn:ConfHess}) that this is not true for $\mathrm{Hess}_{\widetilde g} r$ as,  if $U \perp \nabla r$, then
\begin{align*}
\mathrm{Hess}_{\widetilde g} r (U, \nabla r) &= d\phi(U) \\
\mathrm{Hess}_{\widetilde g} r (\nabla r, \nabla r) &= d\phi(\nabla r). 
\end{align*}
Therefore, $\nabla r$ is an eigenvector for $\mathrm{Hess}_{\widetilde g} r$ if and only if $\phi$ is a function of $r$, and is a null vector if and only if $\phi$ is constant. In order to get around this, we will have to consider a lower order perturbation of $\mathrm{Hess}_{\widetilde{g}} r$, namely, from (\ref{Eqn:ConfHess}) we have that
\begin{align*}
 \left( \mathrm{Hess}_{\widetilde{g}} r - d\phi \otimes dr - dr\otimes d\phi \right)(U, \nabla r) &= 0 \\
 \left( \mathrm{Hess}_{\widetilde{g}} r - d\phi \otimes dr - dr\otimes d\phi \right)(\nabla r, \nabla r) &= -d\phi(\nabla r). 
\end{align*}
Therefore, $\nabla r$ is at least an eigenvector for the modified Hessian $\mathrm{Hess}_{\widetilde{g}} r - d\phi \otimes dr - dr\otimes d\phi$.   Moreover, the modified Hessian has nice convexity properties along geodesics.  Namely, of $\widetilde{\sigma}$ is a geodesic for $\widetilde{g}$ and $u$ is a smooth function then 
\begin{align}
\left( \mathrm{Hess}_{\widetilde{g}} u - d\phi \otimes du - du\otimes d\phi \right)(\widetilde{\sigma}', \widetilde{\sigma}')  = u'' - 2\phi'u'.
\end{align}
We will have to keep in mind below that $\nabla r$ is not a null vector for our modified conformal Hessian.  We will see in the next section that it is not hard to overcome this problem by using modified distance functions.  However it has the unpleasant effect of making our modified distance functions an abstract solution to an ODE involving $\phi$ instead of the simple explicit functions used in the non-weighted setting.  

\subsubsection{Modified Hessian and the weighted connection }

Now we discuss the relationship between the weighted connection $\nabla^{\phi}$ and conformal Hessian. 
The Riemannian Hessian can be expressed in terms of the Levi-Civita connection in the following two ways. 
\begin{align}
\mathrm{Hess} u(U, V) &= g(\nabla_U \nabla u, V ) \label{Hess2} \\
&= (\nabla_U du)(V)  \label{Hess3}
\end{align}

  On the other hand, if we replace  the Levi-Civita connection by the weighted connection $\nabla^{\phi}$ in (\ref{Hess2}) and (\ref{Hess3}) we get two different tensors. 
\begin{align}
g(\nabla^\phi_U \nabla u, V) &= g(\nabla_U \nabla u, V) - d\phi(U)g(\nabla u, V) - d\phi(\nabla u) g(U,V) \nonumber\\
&= \Hess u(U,V) - d\phi(U)du(V) - d\phi(\nabla u)g(U,V). \label{WHess2}\\
\left(\nabla^\phi_U du\right)(V)&= D_U du(V) - du(\nabla^\phi_U V) \nonumber\\
&= D_U du(V) - du(\nabla_U V) + d\phi(U)du(V) + d\phi(V)du(U) \nonumber\\
&= \Hess u(U,V) + d\phi(U)du(V) + d\phi(V) du(U) \label{WHess3}
\end{align}

These two Hessians are different exactly because the connection $\nabla^{\phi}$ is not compatible with the metric. Note also that (\ref{WHess2}) is not symmetric in $U$ and $V$, while (\ref{WHess3}) is.  To see the relation to the conformal Hessian, note that combining (\ref{WHess2}) and (\ref{Eqn:PerpConfHess}) for $U,V \perp \nabla u$ we have 
\begin{align}
\mathrm{Hess}_{\widetilde{g}} u (U,V) = g(\nabla^\phi_U \nabla u, V).  \label{Eqn:MetricHessian}
\end{align}
Moreover, we can see that the modified conformal Hessian we saw in the previous section is related to (\ref{WHess3}) via the formula
\begin{align}
\left(\nabla^{\widetilde{g}, -\phi}_{\cdot} du\right)(\cdot) = \mathrm{Hess}_{\widetilde{g}} u - d\phi \otimes du  - du\otimes d\phi. \label{Eqn:IntrinsicHessian}
\end{align}
Where  $\nabla^{\widetilde{g}, -\phi}$ is the weighted connection for the metric $\widetilde{g}$ with density $-\phi$.   

The conformal change $(g, \phi) \rightarrow (\widetilde{g}, -\phi)$ also has natural curvature properties as it has been observed in \cite{Wylie15} that the sign of the curvature $\overline{\sec}_{g, \phi}$ is the same as the sign of the curvature  $\overline{\sec}_{\widetilde{g}, -\phi}$.  Thus the operation  $(g, \phi) \rightarrow (\widetilde{g}, -\phi)$ is an involution on the space of metrics with density that preserves the conditions of positive and negative weighted sectional curvature. 

While equations (\ref{Eqn:MetricHessian}) and (\ref{Eqn:IntrinsicHessian}) will not be explicitly used in the proofs of our comparison theorems, abstractly they explain why the curvatures coming from the weighted connection $\nabla^{\phi}$ should control the conformal Hessian of the distance function. 

\subsection{Non-positive curvature}

Now we consider Riemannian manifolds $(M,g)$ which admit a density $\phi$ such that $\overline{\sec}_{\phi} \leq 0$. In this case we initially  do not need to make any boundedness assumptions on the density for results.  It was proven in \cite{Wylie15} that  if $\overline{\sec}_{\phi} \leq 0$ then the metric does not have conjugate points.   This follows from the following set of  formulas, derived in the proof of Theorem 4.2 in \cite{Wylie15}, which  we will also find useful.  Let $\sigma(t)$ be a unit speed geodesic and $J(t)$ a perpendicular Jacobi field along $\sigma$.  Then we have 
\begin{align} \label{Eqn:JacobiNonPos} \begin{split}
\frac{d}{dt}\left( \frac{1}{2}e^{-2\phi}|J|^2 \right) &= e^{-2\phi}g(J' - d\phi(\sigma')J,J) \\
\frac{d}{dt} g(J' - d\phi(\sigma')J,J) &\geq |J' - d\phi(\sigma')J|^2 \geq 0.  \end{split}
\end{align} 
From which it follows that if $J(0) = 0$ then $\frac{d}{dt}\frac{1}{2}e^{-2\phi}|J|^2 \geq 0$ and if $J(t_0) = 0$ then $J(t) = 0$ for all $t \in [0, t_0]$. 

We also recall the second variation of energy formula.  Given a variation $\overline \sigma:[a,b]\times(-\varepsilon,\varepsilon) \to M$ of a geodesic $\sigma = \overline\sigma(\cdot,0)$, let $V = \left.\frac{\partial\overline\sigma}{\partial s}\right|_{s=0}$ denote the variation vector field along $\sigma$. The second variation of energy is given by
	\[\left.\frac{d^2}{ds^2}\right|_{s=0} E(\sigma_s)
	= I(V,V) + \left.g\of{\frac{\partial^2\overline\sigma}{\partial s^2}, \frac{\partial \overline{\sigma}}{\partial t}}\right|_{a}^{b},\]
where $I(V,V)$ is the index form of $\sigma$.  The usual formula for the index form is 
\[ I(V,V) = \int_a^b \of{ |V'|^2 -g(R(V,\sigma') \sigma', V) } dt .\]
When $V$ is perpendicular to $\sigma$ the index form can be re-written as follows (see \cite[Section 5]{Wylie15}):
	\begin{eqnarray}
	\hspace{.1in}I(V,V)\hspace{-.1in} &=& \hspace{-.1in} \int_a^b \of{ |V' - d\phi(\sigma')V|^2 
	- g(R^{\nabla^{\phi}}(V,\sigma') \sigma', V)}dt
	+ \left.d\phi(\sigma') |V|^2\right|_{a}^{b}\label{eqn:IndexForm}
	\end{eqnarray}
Using this  formula we obtain positivity of the conformal Hessian of the distance function when applied to vectors orthogonal to the gradient.

\begin{lemma} \label{Lem:NonNegConvex}
Suppose that $(M,g)$ is a simply connected complete manifold with density, $\phi$, such that $\overline{\sec}_{\phi} \leq 0$.  Then for any point $p \in M$, 
\begin{align}
\mathrm{Hess}_{\widetilde{g}} \left( \frac{1}{2} r_p^2 \right) (U,U) > 0 \qquad \forall U \perp \nabla r_p 
\end{align}
where $\widetilde{g} = e^{-2\phi} g$ and $r_p(\cdot) = d^g(p, \cdot)$ is the distance function for the $g$-distance.
\end{lemma}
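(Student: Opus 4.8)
The plan is to compute $\mathrm{Hess}_{\widetilde g}\bigl(\tfrac12 r_p^2\bigr)(U,U)$ for $U\perp\nabla r_p$ by combining the conformal Hessian formula \eqref{Eqn:PerpConfHess} with the second variation / index form machinery. Write $f=\tfrac12 r_p^2$, so $\nabla f = r_p\,\nabla r_p$ and, by the product rule, $\mathrm{Hess}_g f(U,U) = r_p\,\mathrm{Hess}_g r_p(U,U) + |U|^2$ for $U\perp\nabla r_p$. Feeding this into \eqref{Eqn:PerpConfHess} applied to $f$ (noting $U\perp\nabla r_p \iff U\perp\nabla f$), one gets
\[
\mathrm{Hess}_{\widetilde g} f(U,U) = r_p\,\mathrm{Hess}_g r_p(U,U) + |U|^2 - r_p\, g(\nabla\phi,\nabla r_p)\,|U|^2 = r_p\,\bigl(\mathrm{Hess}_{\widetilde g} r_p(U,U)\bigr) + |U|^2,
\]
so it suffices to show $\mathrm{Hess}_{\widetilde g} r_p(U,U) \ge 0$ for $U\perp\nabla r_p$ at every point $q\ne p$ (the term $|U|^2>0$ then forces strict positivity, and at $q=p$ the statement is immediate since $f$ has a nondegenerate minimum in $\widetilde g$ as well).

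Next I would realize $\mathrm{Hess}_g r_p(U,U)$ at $q$ as the second variation of the energy of the minimizing geodesic $\sigma:[0,\ell]\to M$ from $p$ to $q$ (with $\ell=r_p(q)$), using the Jacobi field $V$ along $\sigma$ with $V(0)=0$, $V(\ell)=U$ that minimizes the index form among vector fields with those boundary values; concretely $\mathrm{Hess}_g r_p(U,U) = I(V,V)$ with the boundary term at $p$ vanishing because $V(0)=0$. Then I substitute the rewritten index form \eqref{eqn:IndexForm}: since $V(0)=0$ the boundary contribution $d\phi(\sigma')|V|^2\big|_0^\ell$ reduces to $d\phi(\sigma'(\ell))|U|^2$, and the curvature term is $-g(R^{\nabla^\phi}(V,\sigma')\sigma',V)\ge 0$ pointwise by the hypothesis $\overline{\sec}_\phi\le 0$. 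Plugging $\mathrm{Hess}_g r_p(U,U) = I(V,V)$ into $\mathrm{Hess}_{\widetilde g} r_p(U,U) = \mathrm{Hess}_g r_p(U,U) - g(\nabla\phi,\nabla r_p)|U|^2$ cancels the boundary term $d\phi(\sigma'(\ell))|U|^2$ exactly, leaving
\[
\mathrm{Hess}_{\widetilde g} r_p(U,U) = \int_0^\ell \Bigl(|V' - d\phi(\sigma')V|^2 - g(R^{\nabla^\phi}(V,\sigma')\sigma',V)\Bigr)\,dt \ge 0,
\]
which is what we need.

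The main obstacle — and the reason simple connectedness and completeness are invoked — is regularity of $r_p$: the above argument presumes $r_p$ is smooth at $q$, i.e. that $q$ is not in the cut locus of $p$. Here I would use the weighted Cartan–Hadamard theorem of \cite{Wylie15}: $\overline{\sec}_\phi\le 0$ implies no conjugate points, so on the simply connected complete $M$ the exponential map at $p$ is a diffeomorphism and $r_p$ (hence $\tfrac12 r_p^2$) is smooth on all of $M$, with a unique minimizing geodesic to each point. This legitimizes the identification of $\mathrm{Hess}_g r_p$ with the minimizing index form at every $q\ne p$, and the Jacobi-field inequalities \eqref{Eqn:JacobiNonPos} guarantee that the relevant minimizer $V$ is the genuine Jacobi field with the prescribed endpoint values (it is nonvanishing on $(0,\ell]$, so the index form is positive definite there). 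A secondary point to handle carefully is that one must argue $\mathrm{Hess}_g r_p(U,U)$ genuinely equals $I(V,V)$ rather than merely being bounded by it — this is the standard fact that the Hessian of a smooth distance function is computed by the index form of the minimizing Jacobi field, valid away from the cut locus. Assembling these pieces gives the strict inequality $\mathrm{Hess}_{\widetilde g}\bigl(\tfrac12 r_p^2\bigr)(U,U) = r_p\cdot(\text{nonnegative}) + |U|^2 > 0$.
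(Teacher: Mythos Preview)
Your overall strategy---reduce to showing $\mathrm{Hess}_{\widetilde g} r_p(U,U)\ge 0$ via the weighted index form, then get strictness---is sound, but there is a genuine algebraic slip that breaks the strictness argument as written. For $f=\tfrac12 r_p^2$ and $U\perp\nabla r_p$ one has
\[
\mathrm{Hess}_g f = dr_p\otimes dr_p + r_p\,\mathrm{Hess}_g r_p,
\qquad\text{hence}\qquad
\mathrm{Hess}_g f(U,U) = r_p\,\mathrm{Hess}_g r_p(U,U),
\]
with \emph{no} additional $|U|^2$ term (you may be thinking of the Euclidean case, where $\mathrm{Hess}_g r_p(U,U)=\tfrac{1}{r_p}|U|^2$ absorbs this). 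Consequently $\mathrm{Hess}_{\widetilde g} f(U,U) = r_p\,\mathrm{Hess}_{\widetilde g} r_p(U,U)$, and your final line ``$r_p\cdot(\text{nonnegative})+|U|^2>0$'' is not available. The fix is easy: the integral you obtained,
\[
\mathrm{Hess}_{\widetilde g} r_p(U,U)=\int_0^\ell\Bigl(|V'-d\phi(\sigma')V|^2 - g(R^{\nabla^\phi}(V,\sigma')\sigma',V)\Bigr)\,dt,
\]
is itself strictly positive, because if $V'-d\phi(\sigma')V\equiv 0$ then the initial condition $V(0)=0$ forces $V\equiv 0$, contradicting $V(\ell)=U\neq 0$. (This is exactly the content of \eqref{Eqn:JacobiNonPos}.) So $\mathrm{Hess}_{\widetilde g} r_p(U,U)>0$, and the conclusion follows.

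For comparison, the paper argues more directly: it builds a variation of $\sigma$ whose transversal curve at $q$ is a $\widetilde g$-geodesic with initial velocity $U$, so that the second variation of energy equals $\mathrm{Hess}_{\widetilde g}(\tfrac12 r_p^2)(U,U)$ on the nose; the $\widetilde g$-geodesic equation then makes the endpoint term $g(\partial_s^2\overline\sigma,\partial_t\overline\sigma)$ cancel the boundary term $d\phi(\sigma')|J|^2$ from \eqref{eqn:IndexForm}. Your route---splitting off the conformal correction via \eqref{Eqn:PerpConfHess} and identifying $\mathrm{Hess}_g r_p$ with the index form of the minimizing Jacobi field---avoids introducing the $\widetilde g$-geodesic and is arguably cleaner, but it lands in the same integral and uses the same strictness mechanism. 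Once the spurious $|U|^2$ is removed and strictness is attributed to the integral, your proof is correct.
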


\begin{proof}
For a vector $U$ based at a point $q$ and perpendicular to $\nabla r_p$ let $\sigma(t)$ be the minimizing $g$-geodesic from $p$ to $q$ and $\widetilde{\sigma}(s)$ be the $\widetilde{g}$-geodesic with $\widetilde{\sigma}(0) = q$ and $\widetilde{\sigma}'(0) = U$.  Let $\overline{\sigma}:[0,1] \times (-\varepsilon, \varepsilon) \rightarrow M$ be the variation constructed so that the curve $t \rightarrow \overline{\sigma}(t, s_0)$ is the unique minimizing $g$-geodesic from $p$ to $\widetilde{\sigma}(s_0)$.  $\overline{\sigma}$ is an orthogonal variation of the geodesic $\sigma$,  the variation field is a $g$-Jacobi field, $J$, and $\left.\frac{d^2}{ds^2}\right|_{s=0} E(\sigma_s)=\mathrm{Hess}_{\widetilde{g}} \left( \frac{1}{2} r_p^2 \right) (U,U) $. 

From (\ref{eqn:IndexForm}) we have
\begin{align*}
\left.\frac{d^2}{ds^2}\right|_{s=0} E(\sigma_s)
	&= \int_0^1 \of{ |J' - d\phi(\sigma')J|^2  - g(R^{\nabla^{\phi}}(J,\sigma') \sigma', J)}dt\\
	&\quad + \left.d\phi(\sigma')|J|^2\right|_{t=0}^{t=1}+ \left.g\of{\frac{\partial^2\overline\sigma}{\partial s^2}, \frac{\partial \overline{\sigma}}{\partial t}}\right|_{t=0}^{t=1}, 
\end{align*}

By (\ref{Eqn:JacobiNonPos}) $\int_0^1 |J' - d\phi(\sigma')J|^2 dt > 0$.  So, 
\begin{align*}
\left.\frac{d^2}{ds^2}\right|_{s=0} E(\sigma_s)
	&>d\phi(\sigma'(1)) \left|\frac{\partial \widetilde{\sigma}}{\partial s} \right|_g^2 + g\left(\nabla^g_{\frac{\partial \widetilde{\sigma}}{ \partial s}} \frac{\partial \widetilde{\sigma}}{ \partial s},  \sigma'(1)\right) .
\end{align*}
Recall that the formula for the Levi-Civita connection of $\widetilde{g}$ is 
  \begin{align}
  \widetilde{\nabla}_X Y = \nabla_X Y - d\phi(X) Y - d\phi(Y)X + g(X,Y) \nabla \phi. 
  \end{align}
  So $\widetilde{\sigma}(s)$ is a $\widetilde{g}$-geodesic implies that 
  \begin{align*}
  g\left(\nabla^g_{\frac{\partial \widetilde{\sigma}}{ \partial s}} \frac{\partial \widetilde{\sigma}}{ \partial s},  \sigma'(1)\right) + d\phi(\sigma'(1)) \left|\frac{\partial \widetilde{\sigma}}{\partial s} \right|_g^2=0.
  \end{align*}
  So  $\left.\frac{d^2}{ds^2}\right|_{s=0} E(\sigma_s) > 0$.
  \end{proof}
  
Now let $r$  be the distance to a closed subset $A$, $r(x) = d(x,A)$. $r$ is smooth on an open dense subset of $M \setminus A$, and on the set we can write the metric as $g = dr^2 + g_r$ where $g_r$ is a family of metric on the level sets of $r$.   We will say a function $u$ is a \emph{modified distance function to $A$} if there is a smooth function $h: [0, \infty) \rightarrow [0, \infty)$ with $h(0) = h'(0)=0$ and $h'(r)>0$ for $r>0$ such that $u= h\circ r$. For example $\frac{1}{2} r^2$ is a modified distance function. We have the following formula for the modified Hessian of a modified distance function.

\begin{proposition}  \label{HessianModifiedDistance}
Let $u$ be a modified distance function.  At points where $u$ is smooth, 
\begin{align*}
\mathrm{Hess}_{\widetilde{g}} u - d\phi \otimes du - du \otimes d \phi &= \left( h''  -h' \frac{\partial \phi}{\partial r}\right) dr \otimes dr +h' \left( \mathrm{Hess}_g r - g(\nabla r, \nabla \phi) g_r \right).
\end{align*}
\end{proposition}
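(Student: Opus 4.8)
The plan is to compute everything directly from the conformal Hessian formula \eqref{Eqn:ConfHess} applied to $u = h \circ r$, using the structure of the distance function $r$. First I would record the first-order data: since $u = h\circ r$ and $|\nabla r|_g = 1$, we have $du = h'\, dr$ and $\nabla u = h' \nabla r$, so $g(\nabla\phi,\nabla u) = h'\, g(\nabla\phi,\nabla r) = h'\, \partial\phi/\partial r$. Plugging into \eqref{Eqn:ConfHess} gives
\[
\mathrm{Hess}_{\widetilde g}\, u = \mathrm{Hess}_g\, u + h'\bigl(d\phi\otimes dr + dr \otimes d\phi\bigr) - h'\,\frac{\partial\phi}{\partial r}\, g.
\]
Then I would subtract the lower-order terms $d\phi\otimes du + du\otimes d\phi = h'(d\phi\otimes dr + dr\otimes d\phi)$, which exactly cancels the middle term, leaving
\[
\mathrm{Hess}_{\widetilde g}\, u - d\phi\otimes du - du\otimes d\phi = \mathrm{Hess}_g\, u - h'\,\frac{\partial\phi}{\partial r}\, g.
\]

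Next I would expand $\mathrm{Hess}_g\, u = \mathrm{Hess}_g(h\circ r)$ via the chain rule for Hessians: $\mathrm{Hess}_g(h\circ r) = h''\, dr\otimes dr + h'\,\mathrm{Hess}_g\, r$. Substituting, the right-hand side becomes
\[
h''\, dr\otimes dr + h'\,\mathrm{Hess}_g\, r - h'\,\frac{\partial\phi}{\partial r}\, g.
\]
To match the claimed form I would split the metric $g$ using the distance-function decomposition $g = dr\otimes dr + g_r$ on the open dense set where $r$ is smooth, so that $-h'\,(\partial\phi/\partial r)\, g = -h'\,(\partial\phi/\partial r)\, dr\otimes dr - h'\,(\partial\phi/\partial r)\, g_r$. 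Collecting the $dr\otimes dr$ coefficients gives $h'' - h'\,\partial\phi/\partial r$, and the remaining terms combine into $h'\bigl(\mathrm{Hess}_g\, r - (\partial\phi/\partial r)\, g_r\bigr)$, using that $g(\nabla r,\nabla\phi) = \partial\phi/\partial r$. This is exactly the stated formula.

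The computation is essentially routine once the two standard facts—the conformal change formula \eqref{Eqn:ConfHess} and the chain rule $\mathrm{Hess}_g(h\circ r) = h''\, dr^2 + h'\,\mathrm{Hess}_g\, r$—are in hand. The only point requiring a little care is the bookkeeping of where the $dr\otimes dr$ versus $g_r$ components of $g$ land: one must be sure that $\mathrm{Hess}_g\, r$ already annihilates $\nabla r$ (so there is no extra $dr\otimes dr$ hiding inside it) and that the splitting $g = dr\otimes dr + g_r$ is valid, which holds precisely on the open dense subset of $M\setminus A$ where $r$ is smooth—exactly the set on which the proposition is asserted. I expect no real obstacle; the main thing is to carry out the cancellation of the lower-order $d\phi\otimes du$ terms cleanly, which is what motivates the "modified Hessian" in the first place.
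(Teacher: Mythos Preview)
Your proposal is correct and follows essentially the same approach as the paper: apply the conformal Hessian formula \eqref{Eqn:ConfHess} together with the chain rule $\mathrm{Hess}_g(h\circ r) = h''\,dr\otimes dr + h'\,\mathrm{Hess}_g r$, then split $g = dr\otimes dr + g_r$. The paper's proof is just a slightly terser version of exactly these steps.
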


\begin{proof}	
A standard formula for $\mathrm{Hess}_g u$ is 
\begin{align*}
\mathrm{Hess}_{g}  u &= h'' dr \otimes dr + h' \mathrm{Hess}_g r 
\end{align*}
Combining this with the formula for the conformal Hessian (\ref{Eqn:ConfHess}) gives
\begin{align*}
& \mathrm{Hess}_{\widetilde{g}} u - d\phi \otimes du - du \otimes d \phi  \nonumber \\ & \qquad = \mathrm{Hess}_g u  - g(\nabla u, \nabla \phi) g \nonumber\\
& \qquad = \left( h''  -h' \frac{\partial \phi}{\partial r}\right) dr \otimes dr +h' \left( \mathrm{Hess}_g r - g(\nabla r, \nabla \phi) g_r \right) 
\end{align*}
\end{proof}

Proposition \ref{HessianModifiedDistance} combined with Lemma \ref{Lem:NonNegConvex} gives us the following. 

\begin{theorem} \label{Thm:ModifiedDistanceConvex}
Suppose that $(M,g, \phi)$ is a simply connected complete manifold with density such that $\overline{\sec}_{\phi} \leq 0$.  Then for any point $p \in M$, there is a modified distance function to $p$,  $u_p$,   such that 
\begin{align}
\mathrm{Hess}_{\widetilde{g}} u_p - d\phi \otimes du_p - du_p \otimes d \phi &> 0.
\end{align}
\end{theorem}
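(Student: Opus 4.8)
The plan is to combine the two preceding results, Lemma~\ref{Lem:NonNegConvex} and Proposition~\ref{HessianModifiedDistance}, by making a suitable choice of the radial profile $h$. Lemma~\ref{Lem:NonNegConvex} tells us that $\mathrm{Hess}_{\widetilde g}(\tfrac12 r_p^2)(U,U) > 0$ for all $U \perp \nabla r_p$, i.e. the ``level-set part'' of the modified Hessian is already positive definite on the orthogonal complement of $\nabla r_p$. Specializing Proposition~\ref{HessianModifiedDistance} to the profile $h(r) = \tfrac12 r^2$ (so $h' = r$, $h'' = 1$), we read off that the operator
\[
\mathrm{Hess}_{\widetilde g}\bigl(\tfrac12 r_p^2\bigr) - d\phi\otimes d(\tfrac12 r_p^2) - d(\tfrac12 r_p^2)\otimes d\phi
= \Bigl(1 - r\,\tfrac{\partial\phi}{\partial r}\Bigr)\,dr\otimes dr + r\bigl(\mathrm{Hess}_g r - g(\nabla r,\nabla\phi)g_r\bigr),
\]
and by Lemma~\ref{Lem:NonNegConvex} the restriction of $r(\mathrm{Hess}_g r - g(\nabla r,\nabla\phi)g_r)$ to $(\nabla r_p)^\perp$ is positive definite away from $p$. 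So the only obstacle to positive-definiteness of the whole modified Hessian is the $dr\otimes dr$ coefficient $1 - r\,\partial_r\phi$, which need not be positive since $\partial_r\phi$ can be large and positive.

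To fix this I would replace the profile $\tfrac12 r^2$ by a general $h$ and compute the $dr\otimes dr$ coefficient, which is $h'' - h'\,\partial_r\phi$. I want to choose $h$ so that this coefficient is strictly positive for all $r > 0$ while keeping $h'(r) > 0$, $h(0) = h'(0) = 0$, and $h$ smooth; then Proposition~\ref{HessianModifiedDistance} plus Lemma~\ref{Lem:NonNegConvex} (applied to the level-set part, which only gets multiplied by the positive factor $h'$ instead of $r$, and the needed positivity is homogeneous in that factor) gives $\mathrm{Hess}_{\widetilde g} u_p - d\phi\otimes du_p - du_p\otimes d\phi > 0$. The condition $h'' - h'\,\partial_r\phi > 0$ can be met by solving the ODE $h'' = h'(\partial_r\phi + 1)$ — or, to have a clean positivity margin, $h'' = h'\cdot\max\{\partial_r\phi, 0\} + h'$ — with $h'(0) = 0$ is problematic, so instead one takes, e.g., $h'(r) = \int_0^r \exp\bigl(\int_0^s (\rho(\tau)+1)\,d\tau\bigr)\,ds$ for a suitable smooth $\rho \geq \partial_r\phi$; this yields $h'(0) = 0$, $h' > 0$ for $r > 0$, $h(0) = 0$, and $h'' - h'\partial_r\phi \geq h' > 0$ for $r > 0$. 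This is exactly the ``abstract solution to an ODE involving $\phi$'' foreshadowed in the text, and the main technical care is to verify it is well-defined and smooth globally on $M\setminus\{p\}$; regularity near $r = 0$ (and the cut locus, where one argues in the barrier/support sense as usual) is the delicate point.

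One subtlety worth flagging explicitly: $\partial_r\phi$ is only defined on the open dense set where $r_p$ is smooth, and it is bounded on compact sets but a priori not globally, so $\rho$ must be chosen as a function of $r$ alone dominating $\sup\{\partial_r\phi : r_p = r\}$; on a complete manifold this supremum is finite for each $r$ (it is a continuous function of $r$ on the smooth locus and extends; if $M$ is noncompact one needs local finiteness, which suffices since the ODE is solved on $[0,\infty)$ and we only need the conclusion pointwise). Since Lemma~\ref{Lem:NonNegConvex} already requires $(M,g,\phi)$ simply connected and complete with $\overline\sec_\phi \leq 0$, and these hypotheses are inherited here, no new global assumption is introduced.

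The hard part will be the bookkeeping at $r = 0$ and across the cut locus of $p$: ensuring $h\circ r_p$ is genuinely a modified distance function in the sense defined (so that Proposition~\ref{HessianModifiedDistance} applies verbatim on the smooth locus) and that the strict inequality on the open dense smooth set, together with the eigenvector property of $\nabla r_p$ for the modified Hessian established in the ``Preliminaries'' subsection, upgrades to the strict inequality of the full bilinear form everywhere in the appropriate support sense. The curvature input is entirely packaged inside Lemma~\ref{Lem:NonNegConvex}; the remaining work is choosing $h$ and handling regularity.
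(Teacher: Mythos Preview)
Your overall strategy is exactly the paper's: extract from Lemma~\ref{Lem:NonNegConvex} that the level-set block $\mathrm{Hess}_g r - g(\nabla r,\nabla\phi)g_r$ is positive definite on $(\nabla r_p)^\perp$, then choose the profile $h$ so that the radial coefficient $h'' - h'\,\partial_r\phi$ in Proposition~\ref{HessianModifiedDistance} is strictly positive, with $h$ depending only on a radial function dominating $|d\phi|$ on metric balls.

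There is, however, a genuine gap in your construction of $h$. With $h'(r)=\int_0^r \exp\bigl(\int_0^s(\rho+1)\bigr)\,ds$ and $\rho$ a nondecreasing radial majorant of $\partial_r\phi$, your claimed bound $h''-h'\,\partial_r\phi \geq h'$ amounts to $e^{F(r)} \geq F'(r)\int_0^r e^{F}$ where $F'=\rho+1$. Differentiating $P(r)=e^{F(r)}-F'(r)\int_0^r e^{F}$ gives $P'=-\rho' h'$, so for any strictly increasing $\rho$ one has $P(r)=1-\int_0^r \rho' h'$, which becomes negative once $\int_0^r \rho' h'>1$; already for $\rho(s)=s$ this fails near $r=1$. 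So the inequality you invoke does not hold, and your argument for positivity of the $dr\otimes dr$ block breaks down. The paper sidesteps this by solving the \emph{inhomogeneous} linear ODE $h''-a\,h'=1$ with $h(0)=h'(0)=0$, where $a(r)>\sup_{B(p,r)}|d\phi|$; then $h''-h'\,\partial_r\phi \geq h''-a\,h'=1$ directly, and the integrating-factor identity $(h'e^{-\int a})'=e^{-\int a}>0$ gives $h'>0$ for $r>0$.

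Two smaller points. First, your cut-locus worry is unnecessary here: by the weighted Cartan--Hadamard theorem already cited, $\overline{\sec}_\phi\leq 0$ on a simply connected complete manifold forces $r_p$ to be smooth on all of $M\setminus\{p\}$, so no barrier/support argument is needed. Second, the existence of the radial majorant $a$ (your $\rho$) uses compactness of closed balls in $(M,g)$, which follows from the assumed completeness via Hopf--Rinow; this is the only place completeness enters beyond Lemma~\ref{Lem:NonNegConvex}.
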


\begin{proof}
By  Lemma \ref{Lem:NonNegConvex} and Proposition \ref{HessianModifiedDistance},  $\mathrm{Hess}_g r - g(\nabla r, \nabla \phi) g_r > 0$ on the orthogonal complement to $\nabla r$. Let $a:[0, \infty) \rightarrow [0, \infty)$ be a smooth function  such that $|d\phi_{q}| < a(r)$ for all $q \in B(p,r).$ Such a function exists by the compactness of $B(p,r)$.  Then define $u_p= h \circ r_p$ where $h$ is the solution to be the solution to $h''-h'a = 1$, $h(0) = 0$, $h'(0) = 0$.  Since
\begin{align*}
0< e^{\int a}\left( h'' - h'a\right) = (h' e^{\int a})', 
\end{align*}
$h'(r) >0$ for $r>0$, so $h$ is a modified distance function.  Then  $h'' - h' \frac{\partial \phi}{\partial r}\geq h'' - h'a=1$. So by Proposition \ref{HessianModifiedDistance}, the theorem follows.   
\end{proof}

Recall the result of Cartan that an isometry of finite order of a space with nonpositive curvature must have a fixed point. 
We generalize this to manifolds with density under the extra technical assumption that $\widetilde{g}$ is complete. Note that this condition is satisfied for the universal cover of a compact space with $\overline{\sec}_{\phi} \leq 0$. 

We will call a function $u$ such that $\mathrm{Hess}_{\widetilde{g}} u - d\phi \otimes du - du \otimes d \phi > 0$ a weighted strictly convex function (with respect to $(g, \phi)$).  For such a function along a $\widetilde{g}$-geodesic  $\widetilde{\sigma}(t)$ we have 
\begin{align}  \label{Eqn:WeightConvex}
(u \circ \widetilde{\sigma}) '' -2 \phi' (u \circ \widetilde{\sigma})' > 0.
\end{align}
Letting $s$ be the new parameter along $\widetilde{\sigma}$ such that $ds = e^{2\phi} dt$ we can see that (\ref{Eqn:WeightConvex}) is equivalent to 
\begin{align*}
\frac{d^2}{ds^2} \left( u \circ \widetilde{\sigma}\right) > 0.
\end{align*}
Thus the restriction of $u$ to $\widetilde{g}$-geodesics is convex in the $s$ parameter.  Since $s$ only depends on the metric $g$ and function $\phi$ we can apply standard results from  the theory of $1$-dimensional convex function to weighted convex functions.  For example, it follows that the maximum of a finite collection of strictly weighted convex functions  is strictly weighted convex and  if $\widetilde{g}$ is complete then any proper, nonnegative and strictly weighted convex function has a unique minimum.

Now we can modify the construction in Theorem \ref{Thm:ModifiedDistanceConvex} slightly to define a weighted notion of $L^{\infty}$ center of mass when the metric $\widetilde{g}$ is complete.   For a finite collection of points $p_1, \dots p_k$ let $a(r)$ be a smooth function such that $|d\phi|_{q} \leq a(r)$ for all $q \in \cup_{i=1}^k B(p_i, r)$ and let $h$ be the function solving $h''-ah'=1$, $h(0)=0$, $h'(0)=0$.  Then by the same argument as in the proof of Theorem \ref{Thm:ModifiedDistanceConvex}, the function $h(r_{p_i})$ is weighted strictly convex. Define $u_{p_1,\dots p_k} = \max \{ h(r_{p_1}), \dots, h(r_{p_k}) \}$.  Then we define the $L^{\infty}$ weighted center of mass  of $\{p_1, \dots p_k\}$, $cm_{\infty}^{\phi}\{ p_1, p_2, \dots, p_k \}$ as the unique maximum of $u_{p_1, \dots, p_k}$.  This notion allows us to generalize the proof  of Cartan. 

\begin{theorem}
Suppose that $(M,g, \phi)$ is a simply connected complete manifold with density such that $\overline{\sec}_{\phi} \leq 0$.  Suppose in addition that the metric $\widetilde{g}$ is complete, then any isometry of finite order has a fixed point. 
\end{theorem}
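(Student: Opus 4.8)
The plan is to imitate Cartan's classical circumcenter argument, with the ordinary circumcenter of an orbit replaced by the $L^\infty$ weighted center of mass $cm_\infty^\phi$ constructed in the discussion preceding the statement. Here an isometry of the manifold with density $(M,g,\phi)$ should be read as an isometry $f$ of $(M,g)$ satisfying $\phi\circ f=\phi$. Given such an $f$ with $f^k=\id$, I would fix an arbitrary point $p\in M$ and let $O=\{p,f(p),\dots,f^{k-1}(p)\}$ be its (finite) orbit. Feeding $O$ into the construction above produces a proper, nonnegative, weighted strictly convex function $u_O=\max_{q\in O}h\of{r_q}$, where $a$ is a smooth function with $|d\phi|_g\le a(r)$ on $\bigcup_{q\in O}B(q,r)$ and $h$ solves $h''-ah'=1$, $h(0)=h'(0)=0$. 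Since $\widetilde g$ is complete, the one-dimensional convexity of $u_O$ along $\widetilde g$-geodesics (in the reparametrization $s$ with $ds=e^{2\phi}\,dt$ used above) forces $u_O$ to have a unique critical point, namely $c:=cm_\infty^\phi(O)$. The goal is then to show $f(c)=c$, which proves the theorem since $c$ is a genuine point of $M$.

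The crux is the identity $u_O\circ f=u_O$. First, $O$ is $f$-invariant as a subset of $M$, hence $\bigcup_{q\in O}B(q,r)$ is $f$-invariant for every $r$, and since $\phi\circ f=\phi$ the same $a$ bounds $|d\phi|_g$ on this union; thus the auxiliary data $a$ and $h$ used to build $u_O$ from $O$ may be taken identical to those one would use for $f^{-1}(O)=O$. Second, $f$ being a $g$-isometry gives $r_q\circ f=r_{f^{-1}(q)}$ for each $q\in O$, and $q\mapsto f^{-1}(q)$ permutes $O$, so
\[
u_O\circ f=\max_{q\in O}h\of{r_q\circ f}=\max_{q\in O}h\of{r_{f^{-1}(q)}}=\max_{q\in O}h\of{r_q}=u_O.
\]
Since $f$ is a diffeomorphism with $u_O\circ f=u_O$, it carries the critical set of $u_O$ onto itself; that set is $\{c\}$, so $f(c)=c$.

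The main obstacle, and the only point that goes beyond Cartan's argument, is ensuring that the weighted center of mass is attached to the orbit in a canonical — hence automatically $f$-equivariant — manner, despite the construction of $cm_\infty^\phi$ involving the non-canonical choices of $a$ and $h$; this is resolved exactly by the $f$-invariance of the set $O$, which permits those choices to be made $f$-invariant. The completeness of $\widetilde g$ — the extra hypothesis in the statement — is precisely what guarantees that the proper, weighted strictly convex function $u_O$ has a (unique) critical point at all, and the curvature hypothesis $\overline{\sec}_\phi\le 0$ enters only indirectly, through Theorem \ref{Thm:ModifiedDistanceConvex} and the weighted convexity of the modified distance functions it furnishes.
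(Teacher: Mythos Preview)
Your proposal is correct and follows essentially the same circumcenter argument as the paper. One remark: the extra hypothesis $\phi\circ f=\phi$ you impose is unnecessary, and the worry about non-canonical choices of $a$ and $h$ is misplaced. Once the orbit $O$ is fixed, \emph{any} single choice of $a$ (and hence $h$) produces a specific weighted strictly convex function $u_O$ with a unique minimum $c$; the identity $u_O\circ f=u_O$ then follows purely from $f$ being a $g$-isometry that permutes $O$ --- indeed your own computation uses only $r_q\circ f=r_{f^{-1}(q)}$ and the permutation of $O$, and never invokes $\phi\circ f=\phi$. So the paper's version of the theorem applies to arbitrary $g$-isometries of finite order, with no condition on how they interact with $\phi$.
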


\begin{proof}
Let $F$ be an isometry of $g$ and let $k$ be the order of $F$.  For any $p\in M$, let $q = cm_{\infty}^{\phi} \{p, F(p), F^2(p), \dots F^{k-1}(p)\}$. We claim that $q$ is a fixed point. Since $F$ is an isometry we have
\begin{align*}
u_{p, F(p), \dots, F^{k-1}(p)} (F(q)) & = \max \left\{ h(d(p, F(q)), h(d(F(p), F(q)), \dots h(d(F^{k-1}(p), F(q)) \right\}\\
&= \max \left\{h(d(F^{k-1}(p), q), h(d(p,q)), \dots, h(d(F^{k-2}(p), q) \right\} \\
&=u_{p, F(p), \dots, F^{k-1}(p)}(q).
\end{align*}
Since $q$ is the unique maximum, $F(q) = q$. 
\end{proof}

Recall that manifolds with non-positive sectional curvature not only have no conjugate points, but also satisfy the stronger condition of having no focal points, meaning that any geodesic does not have focal points. We have the following modification of this property in terms of the conformal change $\widetilde{g}$ for $\overline{\sec}_{\phi} \leq 0$. 

\begin{lemma} Suppose that $(M,g)$ is a Riemannian manifold  admitting a density such that $\overline{\sec}_{\phi} \leq 0$.  Let $H$ be a totally geodesic submanifold for the metric $\widetilde{g}$, then $H$ has no focal points with respect to the $g$-metric.  If, in addition, $M$ is simply connected, then the normal exponential map of $H$ in the metric $g$,  $exp^{\perp}: \nu(H) \rightarrow M$, is a diffeomorphism. 
\end{lemma}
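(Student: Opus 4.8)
The plan is to extract both statements from the Riccati-type inequalities (\ref{Eqn:JacobiNonPos}), after first converting the hypothesis ``$H$ is totally geodesic for $\widetilde g$'' into information about the $g$-geometry of $H$. A focal point of $H$ along a normal $g$-geodesic $\sigma$ is a point $\sigma(t_0)$, $t_0>0$, at which a nontrivial $H$-Jacobi field $J$ vanishes; since $g(J,\sigma')$ is affine in $t$ and vanishes at $t=0$ (because $J(0)\in T_{\sigma(0)}H\perp_g\sigma'(0)$), any such $J$ satisfies $J\perp\sigma'$, so it suffices to treat perpendicular $H$-Jacobi fields. Next I would compute the $g$-second fundamental form of $H$: writing the Levi-Civita connection of $\widetilde g=e^{-2\phi}g$ as $\widetilde\nabla_XY=\nabla_XY-d\phi(X)Y-d\phi(Y)X+g(X,Y)\nabla\phi$ and splitting $TM|_H=TH\oplus\nu(H)$ $g$-orthogonally (which agrees with the $\widetilde g$-orthogonal splitting, since conformal changes preserve angles), the vanishing of the normal part of $\widetilde\nabla_XY$ for $X,Y$ tangent to $H$ says precisely $(\nabla_XY)^{\perp}=-g(X,Y)(\nabla\phi)^{\perp}$. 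Hence $H$ is $g$-umbilic with shape operator $A_N=-d\phi(N)\,\id$ for each normal $N$. In particular, for a unit-speed $g$-geodesic $\sigma$ with $\sigma(0)=p\in H$, $\sigma'(0)=N\perp_gH$, and a perpendicular $H$-Jacobi field $J$, the initial data obey $J(0)\in T_pH$ and $(J'(0))^{\top}=-A_N(J(0))=d\phi(\sigma'(0))\,J(0)$, so $J'(0)-d\phi(\sigma'(0))J(0)=(J'(0))^{\perp}\in\nu_pH$ is $g$-orthogonal to $J(0)$.

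With this in hand, set $\psi(t)=g\bigl(J'-d\phi(\sigma')J,\,J\bigr)$. The second inequality of (\ref{Eqn:JacobiNonPos}) gives $\psi'\ge |J'-d\phi(\sigma')J|^2\ge 0$, and the previous step gives $\psi(0)=0$, so $\psi\ge 0$ on $[0,\infty)$; then the first identity of (\ref{Eqn:JacobiNonPos}) gives $\tfrac{d}{dt}\bigl(e^{-2\phi(\sigma(t))}|J(t)|^2\bigr)=2e^{-2\phi}\psi\ge 0$. Thus $e^{-2\phi}|J|^2$ is non-decreasing; if it vanished at some $t_0>0$ it would vanish on all of $[0,t_0]$, forcing $J\equiv J'\equiv 0$ there, hence $J\equiv 0$ by uniqueness for the Jacobi equation, contradicting nontriviality. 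So $H$ has no focal points with respect to $g$.

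Finally, absence of focal points means $\exp^{\perp}\colon\nu(H)\to M$ is a local diffeomorphism; pulling back $g$ makes it a local isometry, and a local isometry out of a complete space is a Riemannian covering map, so (with $M$ simply connected and $H$, hence $\nu(H)$, connected) it remains to see that the pulled-back metric is complete, whence $\exp^{\perp}$ is the universal covering and therefore a diffeomorphism. For this I would mimic Lemma \ref{Lem:NonNegConvex} and Theorem \ref{Thm:ModifiedDistanceConvex} for the $g$-distance function $r_H=d^g(\cdot,H)$: the monotonicity just proved shows $\Hess_g r_H-g(\nabla r_H,\nabla\phi)g_{r_H}\ge 0$ on $\nabla r_H^{\perp}$, so, choosing $h$ with $h''-ah'=1$ as there, the modified distance $u_H=h\circ r_H$ is weighted convex and positive off $H$; weighted convexity of $u_H$ should then force the normal $g$-geodesics to realize $d^g(\cdot,H)$ and prevent refocusing, giving completeness of the pulled-back metric (compactness of closed normal disk bundles when $H$ is compact, a direct exhaustion in general). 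I expect the first two steps to be routine once the second fundamental form computation is in place; the genuine difficulty is this last globalization, where, in contrast to the unweighted non-positive curvature case, $H$ is totally geodesic only for $\widetilde g$ while the normal geodesics and Jacobi analysis live in $g$, the convex exhaustion function is the ODE-defined $u_H$ rather than $\tfrac12 r_H^2$, and an (implicit) completeness hypothesis on $g$ must be invoked to make sense of $\exp^{\perp}$ on all of $\nu(H)$.
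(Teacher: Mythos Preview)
Your treatment of the focal-point part is essentially the paper's argument, with the $\widetilde g$-total-geodesy translated into the shape-operator identity $S^g_N = d\phi(N)\,\mathrm{id}$ and then fed into (\ref{Eqn:JacobiNonPos}); this is correct and in fact more explicit than the paper's version.

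The globalization step, however, has a genuine gap. Your covering-space strategy requires the pulled-back metric on $\nu(H)$ to be complete, and your proposed route to this---building a weighted-convex modified distance function $u_H = h \circ r_H$ via $h'' - a h' = 1$---needs a smooth function $a(r)$ bounding $|d\phi|$ on $r$-tubes around $H$. When $H$ is a point this exists by compactness of metric balls, but for a noncompact $H$ (the generic case here: $H$ could be a complete $\widetilde g$-geodesic, for instance) no such $a$ need exist. The paper flags exactly this obstruction in the lemma immediately following this one, where it \emph{adds} the hypothesis $|d\phi| \le a$ precisely to make the analogous construction go through. Even granting $u_H$, the leap from ``$u_H$ is weighted convex'' to ``normal geodesics realize $d^g(\cdot,H)$, hence $(\nu(H),\hat g)$ is complete'' is not justified; weighted convexity alone does not force properness.

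The paper sidesteps all of this with a short direct argument for injectivity. Suppose $\exp^{\perp}$ is not injective; then some $p \in M$ has two minimizing $g$-geodesics to $H$, meeting $H$ at distinct points $a,b$ (distinctness by the weighted Cartan--Hadamard theorem). Since $H$ is $\widetilde g$-totally geodesic, there is a $\widetilde g$-geodesic $\widetilde\gamma$ in $H$ from $a$ to $b$. Now use the modified distance function $u_p = h \circ r_p$ to the \emph{point} $p$---which exists unconditionally by Theorem~\ref{Thm:ModifiedDistanceConvex}---restricted to $\widetilde\gamma$: strict weighted convexity gives $(e^{-2\phi}\widetilde u')' > 0$, yet $\widetilde u' = h'\, g(\nabla r_p,\widetilde\gamma')$ vanishes at both endpoints because the minimizing geodesics from $p$ meet $H$ orthogonally. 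This contradiction finishes the proof. The key maneuver you are missing is to take the modified distance function to a point rather than to $H$, so that the required bound on $|d\phi|$ comes for free from compactness of balls.
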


\begin{proof}
First we show that $H$ does not have focal points. Let $\sigma$ be a $g$-geodesic with $p=\sigma(0)\in H$ and $\sigma'(0) \perp H$.  A  Jacobi field $J$ along $\sigma$   is called an  $H$-Jacobi field  if it satisfies $J(0)\in T_p H$ and $J'(0) - S_{\sigma'(0)} (J(0)) \in (T_pH)^{\perp}$, where $S_{N}(X) = ( \nabla_X N)^{T}$ is the second fundamental form of $g$ with respect to the normal vector $N$.   $\sigma(t_0)$ is a focal point of $H$ if there is an $H$-Jacobi field along $\sigma$ with $J(t_0)= 0$.  The second fundamental form of $H$ with respect to  $\widetilde{g}$  is given by 
\begin{align*}
(\nabla^{\widetilde{g}}_X N)^{T} &= \left( \nabla_X N - d\phi(X) N - d\phi(N)X + g(X,N) \nabla \phi\right)^{T}\\
&= S_{N} X - d\phi(N) X
\end{align*}
Therefore, if $H$ it $\widetilde{g}$-totally geodesic, an $H$-Jacobi field satisfies $J(0) \in T_pH$ and $J'(0) - d\phi(\sigma'(0)) J(0) \in (T_pH)^{\perp}$.  In particular, $g(J'(0) - d\phi(\sigma'(0))J(0),J(0)) = 0$.  Then from (\ref{Eqn:JacobiNonPos}) $\frac{d}{dt}\left( \frac{1}{2}e^{-2\phi}|J|^2 \right) \geq 0$ for any $H$-Jacobi field and thus can never vanish.  Therefore $H$ does not have focal points and the normal exponential map is a local diffeomorphism. 

Now we have to  show that $exp^{\perp}$ is one to one when $M$ is simply connected.  Suppose not.  Then there is a point $p$ and two minimizing geodesics from $p$ to $H$ that minimize the distance from $p$ to $H$. By the weighted Cartan-Hadamard theorem, these two geodesics must hit different points on $H$, call them $a$ and $b$.   Let $\widetilde{\gamma}$ be the $\widetilde{g}$-geodesic connecting $a$ to $b$ which must lie on $H$.   Then consider the modified distance function to $p$, $u = h \circ r_p$ and its restriction to $\widetilde{\gamma}$,  $\widetilde{u} = (u \circ \widetilde{\gamma})$.  We have
\begin{align*}
\widetilde{u}'' - 2 \phi' \widetilde{u}' > 0.  
\end{align*}
Thus  $(e^{-2\phi} \widetilde{u}')' >0$ so that 
\begin{align*}
e^{-2\phi(\widetilde{\gamma}(t))} \widetilde{u}'(t) > e^{-2\phi(\widetilde{\gamma}(0))} \widetilde{u}'(0) 
\end{align*}
However, $u' = g(\nabla u, \widetilde{\gamma}') = h' g(\nabla r, \widetilde{\gamma}')$ which is zero on each endpoint, since the minimal geodesics from $p$ to $a$ and $b$ meet $H$ perpendicularly and $\widetilde{\gamma}$ is on $H$.  This gives a contradiction. 
\end{proof}

This lemma now tells us that, in a simply connected space with $\overline{\sec}_{\phi} \leq 0$, for  any $\widetilde{g}$-totally geodesic submanifold, $H$,  with $r_H$ the distance function to $H$, any modified distance function $u_H = h \circ r_H$ is smooth.  We can also show that if $|d\phi|\leq a$ then  there is a modified distance function to $H$ which is convex. 

\begin{lemma} Suppose that $(M,g)$ is a simply connected Riemannian manifold  admitting a density such that $\overline{\sec}_{\phi} \leq 0$ with $|d\phi|\leq a$ for some constant $a$.  Let $H$ be a totally geodesic submanifold in $\widetilde{g}$ metric, then there is a modified distance function  to $H$,  $u_H$, which is weighted convex.  Moreover, if $\overline{\sec}_{\phi} <0$ then $u_H$ is strictly weighted convex. 
\end{lemma}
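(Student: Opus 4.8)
The plan is to run the argument of Theorem~\ref{Thm:ModifiedDistanceConvex} with the point $p$ replaced by the submanifold $H$ and the radial geodesics from $p$ replaced by the $g$-geodesics meeting $H$ orthogonally. By the preceding lemma, the hypotheses ($M$ simply connected, $\overline{\sec}_\phi\le 0$, and $H$ totally geodesic for $\widetilde{g}$) guarantee that the $g$-normal exponential map $\exp^\perp\colon\nu(H)\to M$ is a diffeomorphism, so $r_H=d^g(\cdot,H)$ is smooth on $M\setminus H$ and, as already noted, any modified distance function $u_H=h\circ r_H$ is smooth on all of $M$. In view of Proposition~\ref{HessianModifiedDistance} it then suffices to (i) establish the pointwise inequality
\[ \mathrm{Hess}_g r_H-g(\nabla r_H,\nabla\phi)\,g_{r_H}\ \ge\ 0\quad\text{on }(\nabla r_H)^\perp, \]
with strict inequality when $\overline{\sec}_\phi<0$, and (ii) choose $h$ so that $h''-h'\,\partial\phi/\partial r>0$; since the two tensors appearing in Proposition~\ref{HessianModifiedDistance} are supported on the mutually $g$-orthogonal subspaces $\operatorname{span}(\nabla r_H)$ and $(\nabla r_H)^\perp$, the stated positivity of their sum will follow.

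For (i), fix $q\in M\setminus H$ and $0\ne U\perp\nabla r_H(q)$, and let $\sigma\colon[0,\ell]\to M$ be the unit-speed minimizing $g$-geodesic from $H$ to $q$, so $\sigma(0)=p\in H$, $\sigma'(0)\perp T_pH$, and $\sigma'(\ell)=\nabla r_H(q)$. Because $H$ has no $g$-focal points, there is a unique $H$-Jacobi field $J$ along $\sigma$ with $J(\ell)=U$; as $g(J,\sigma')$ is affine in $t$ and vanishes at $t=0$ (since $J(0)\in T_pH$) and at $t=\ell$ (since $U\perp\sigma'(\ell)$), the field $J$ is everywhere perpendicular to $\sigma$. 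The usual identification of the Hessian of a distance function with the shape operator of its level sets gives $\mathrm{Hess}_g r_H(U,U)=g(J'(\ell),J(\ell))$, so the left-hand side of the displayed inequality, evaluated on $U$, equals $g\bigl(J'(\ell)-d\phi(\sigma'(\ell))J(\ell),\,J(\ell)\bigr)$. Now, exactly as computed in the proof of the preceding lemma, the $\widetilde{g}$-totally-geodesic hypothesis forces $J(0)\in T_pH$ and $J'(0)-d\phi(\sigma'(0))J(0)\in(T_pH)^\perp$, hence $g\bigl(J'(0)-d\phi(\sigma'(0))J(0),J(0)\bigr)=0$. By (\ref{Eqn:JacobiNonPos}) the function $t\mapsto g(J'-d\phi(\sigma')J,J)$ is nondecreasing, so its value at $t=\ell$ is $\ge 0$, which proves (i). If in addition $\overline{\sec}_\phi<0$ then $J\not\equiv 0$, so by the propagation-of-zeros consequence of (\ref{Eqn:JacobiNonPos}) the field $J$ is nowhere zero on $(0,\ell]$; there the curvature term $-g(R^{\nabla^{\phi}}(J,\sigma')\sigma',J)$ is strictly positive, so the derivative in (\ref{Eqn:JacobiNonPos}) is strictly positive and the value at $t=\ell$ is $>0$.

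For (ii), take $h$ to be the solution of $h''-ah'=1$, $h(0)=h'(0)=0$, just as in Theorem~\ref{Thm:ModifiedDistanceConvex}, except that now $a$ is a genuine constant so no compactness input is needed. From $(h'e^{-ar})'=e^{-ar}>0$ we get $h'>0$ for $r>0$, so $u_H=h\circ r_H$ is a modified distance function; and since $|\partial\phi/\partial r|=|d\phi(\nabla r_H)|\le|d\phi|\le a$ and $h'>0$, we have $h''-h'\,\partial\phi/\partial r\ge h''-ah'=1>0$. Substituting into Proposition~\ref{HessianModifiedDistance} and using (i): the $dr\otimes dr$ term is $\ge 1$ on $\operatorname{span}(\nabla r_H)$ while the term $h'(\mathrm{Hess}_g r_H-g(\nabla r_H,\nabla\phi)g_{r_H})$ is $\ge 0$ on $(\nabla r_H)^\perp$, and since the decomposition is block-diagonal with respect to $\operatorname{span}(\nabla r_H)\oplus(\nabla r_H)^\perp$ we obtain $\mathrm{Hess}_{\widetilde{g}}u_H-d\phi\otimes du_H-du_H\otimes d\phi\ge 0$ on $M\setminus H$, and then everywhere by continuity; thus $u_H$ is weighted convex. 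When $\overline{\sec}_\phi<0$ the second term is strictly positive on $(\nabla r_H)^\perp$, so the modified Hessian is positive on $M\setminus H$, i.e.\ $u_H$ is strictly weighted convex (at points of a positive-dimensional $H$ the modified Hessian degenerates along $TH$, but this is immaterial for the applications, where only the behavior of $u_H$ along $\widetilde{g}$-geodesics entering $M\setminus H$ is used).

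The one step requiring care is the boundary contribution at $t=0$: one must arrange the comparison --- either through the shape-operator identity $\mathrm{Hess}_g r_H(U,U)=g(J'(\ell),U)$ used above, or, mirroring Lemma~\ref{Lem:NonNegConvex}, through the second variation of $g$-energy along the family of minimizing $g$-geodesics joining $H$ to a $\widetilde{g}$-geodesic through $q$ together with the index form (\ref{eqn:IndexForm}) --- so that the term coming from the $g$-second fundamental form of $H$ at $p$ is precisely absorbed by the $\widetilde{g}$-totally-geodesic hypothesis, leaving the clean initial condition $g(J'(0)-d\phi(\sigma'(0))J(0),J(0))=0$. Once this is in place, the remainder is a direct transcription of the proof of Theorem~\ref{Thm:ModifiedDistanceConvex}.
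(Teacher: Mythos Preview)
Your proof is correct and follows essentially the same route as the paper's: generalize Lemma~\ref{Lem:NonNegConvex} and Theorem~\ref{Thm:ModifiedDistanceConvex} from a point to a $\widetilde g$-totally geodesic submanifold, using that the boundary contribution at $t=0$ vanishes precisely because of the $\widetilde g$-totally-geodesic hypothesis, and noting that the global bound $|d\phi|\le a$ replaces the compactness argument used to produce $a(r)$ in Theorem~\ref{Thm:ModifiedDistanceConvex}. The only cosmetic difference is that the paper phrases step~(i) via the second variation / index form (\ref{eqn:IndexForm}) as in Lemma~\ref{Lem:NonNegConvex}, whereas you use the equivalent Jacobi-field monotonicity (\ref{Eqn:JacobiNonPos}) together with the identity $\mathrm{Hess}_g r_H(U,U)=g(J'(\ell),J(\ell))$; you yourself note this equivalence in your final paragraph.
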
 

\begin{proof}
The proof is completely analogous to the proof of Theorem \ref{Thm:ModifiedDistanceConvex}. A similar second variation of energy argument  generalizes Lemma \ref{Lem:NonNegConvex} where the extra term at $t=0$ can be seen to vanish from $H$ being $\widetilde{g}$-totally geodesic.  We assume that $|d\phi| \leq a$  since, in repeating the proof of Theorem \ref{Thm:ModifiedDistanceConvex}, the function $a(t)$ may not exist in general since $H$ will not be compact unless it is a point. 
\end{proof}

With these preliminaries, we can establish Theorem \ref{Thm:Byers}.  The proof follows from a  similar  series of geometric and topological lemmas  as in the classical case, see \cite{Byers70} or \cite[Chapter 13, sec 2]{doCarmo76}.   In fact, there are only two parts of the argument that use curvature that we need to establish for the weighted curvatures:  that any covering transformation preserves at most one geodesic and that it is not possible for $\pi_1(M)$ to be cyclic if $M$ is compact.    

We fix some notation.  Consider $(M,g)$ to  be a compact manifold supporting a function $\phi$ with $\overline{\sec}_{\phi} < 0$. Let $\widehat{M}$ be the universal cover of $M$ with covering metric $\widehat{g}$ and let $\widehat{\phi}$ be the pullback of $\phi$ under the covering map. Let $F$ a covering transformation of $(\widehat{M},\widehat{g})$, since $\widehat{F}$ preserves $\phi$, $\widehat{F}$ is also an isometry of the conformal metric $\widetilde{g} = e^{-2\widehat{\phi}}\widehat{g}$.  An isometry $F$ of a Riemannian manifold is called a translation if it leaves invariant some geodesic which is called an axis of the translation.  For  the universal cover of a compact manifold, every covering transformation is a translation. Now we can prove the  two lemmas needed to prove Theorem \ref{Thm:Byers}.
 
\begin{lemma} Let $(M,g)$ be a compact manifold with NWSC.  Let $F$ be a covering transformation as above which is a translation along a $\widetilde{g}$-geodesic $\widetilde{\sigma}$.  Then $\widetilde{\sigma}$ is the unique $\widetilde{g}$-geodesic left invariant by $F$. 
\end{lemma}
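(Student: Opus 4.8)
The plan is to adapt the classical argument that a translation in strictly negative curvature has a single axis, with ``convexity of the distance function'' replaced by weighted strict convexity of a modified distance function, and the ``flat strip'' phenomenon of nonpositive curvature excluded by the strict inequality $\overline{\sec}_{\phi}<0$. Suppose, for contradiction, that $F$ also leaves invariant a $\widetilde g$-geodesic $\widetilde\tau$ whose image differs from that of $\widetilde\sigma$. Since $F$ is a nontrivial covering transformation it acts freely, so it has no fixed point on $\widetilde\tau$; being a fixed-point-free isometry of the line $\widetilde\tau$, it acts there as a translation by some period, which we may take to be $T>0$.

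First I would record that the hypotheses of the preceding lemma hold on $(\widehat M,\widehat g,\widehat\phi)$. Compactness of $M$ gives $a:=\max_M|d\phi|_g<\infty$, and since $\widehat\phi=\phi\circ\pi$ with $\pi$ a local isometry, $|d\widehat\phi|_{\widehat g}\le a$ on $\widehat M$; moreover $\widehat M$ is simply connected with $\overline{\sec}_{\widehat\phi}<0$. The image $H$ of $\widetilde\sigma$ is a $\widetilde g$-totally geodesic submanifold: using that $\widetilde g$ is complete here and that the involution $(g,\phi)\mapsto(\widetilde g,-\phi)$ preserves negativity of the weighted curvature, the weighted Cartan--Hadamard theorem applied to $(\widetilde g,-\widehat\phi)$ shows that $\widetilde g$-geodesics do not self-intersect, so $H$ is an embedded line. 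Hence the preceding lemma yields a strictly weighted convex modified distance function $u_H=h\circ r_H$ to $H$, and by the diffeomorphism statement for $\exp^{\perp}$ of $H$, the function $r_H$, hence $u_H$, is smooth on $\widehat M\setminus H$.

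Next I would restrict $u_H$ to $\widetilde\tau$ and exploit periodicity. Since $F$ is a $\widehat g$-isometry fixing the set $H$, the function $r_H$ is $F$-invariant, so $f(t):=u_H(\widetilde\tau(t))$ satisfies $f(t+T)=u_H(F(\widetilde\tau(t)))=f(t)$; thus $f$ is continuous, nonnegative and $T$-periodic, and attains its maximum at some $t^\ast\in\R$. If $f(t^\ast)=0$, then $\widetilde\tau$ lies in $H$, and a complete geodesic contained in the line $H$ must have the same image as $\widetilde\sigma$, contradicting our choice of $\widetilde\tau$. Therefore $f(t^\ast)>0$, so $\widetilde\tau(t^\ast)\notin H$ and $u_H$ is smooth near that point; weighted strict convexity, in the form $(\ref{Eqn:WeightConvex})$, then gives $\frac{d^2}{ds^2}(u_H\circ\widetilde\tau)>0$ in the reparametrization $ds=e^{2\widehat\phi}\,dt$. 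A positive-speed reparametrization preserves local maxima, so $s(t^\ast)$ would be a local maximum of a $C^2$ function having positive second derivative there --- impossible. This contradiction proves that $\widetilde\sigma$ is the only $F$-invariant $\widetilde g$-geodesic.

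I expect the main obstacle to be the bookkeeping that legitimizes the appeal to the preceding lemma, rather than the convexity step itself: one must check that a single $\widetilde g$-geodesic in $\widehat M$ is a bona fide (embedded, $\widetilde g$-totally geodesic) submanifold, and that the ``$|d\phi|\le a$'' hypothesis survives passage to the universal cover. Once those are in place, the conclusion is just the elementary fact that a periodic function cannot be strictly convex in any smooth positive-speed parameter in a neighborhood of its maximum.
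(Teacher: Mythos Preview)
Your proof is correct and follows essentially the same strategy as the paper: both restrict the strictly weighted convex modified distance function $u_H$ (with $H$ the image of one axis) to the second axis and exploit $F$-invariance to obtain a contradiction. The only cosmetic difference is in the final step: the paper observes that $(e^{-2\phi}\widetilde u')'>0$ forces $e^{-2\phi}\widetilde u'$ to be strictly increasing while $F$-invariance makes it take the same value at $p$ and $F(p)$, whereas you note that $u_H\circ\widetilde\tau$ is periodic and hence has a maximum, which strict convexity in the $s$-parameter forbids. These are equivalent manifestations of ``strictly convex $+$ periodic is impossible,'' and your version has the virtue of handling more carefully the auxiliary checks (embeddedness of $H$, the bound $|d\widehat\phi|\le a$ on the cover, smoothness of $u_H$ at the relevant point) that the paper leaves implicit.
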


\begin{proof}
Suppose that there are two $\widetilde{g}$-axes for $F$.  Call them $\widetilde{\sigma}_1$ and $\widetilde{\sigma}_2$.  Let $p \in \widetilde{\sigma}_2$.  Then there is a $g$-geodesic $\sigma$ which minimizes the distance from $p$ to $\widetilde{\sigma}_1$. Let $\alpha$ be the angle made by $\sigma$ and $\widetilde{\sigma}_2$ at $p$.  Consider the point $F(p)$.  Then $F\circ \alpha$ is a minimizing $g$-geodesic from $F(p)$ to $\widetilde{\sigma}_1$.  Moreover,  since $F$ is an isometry of both the $g$ and $\widetilde{g}$ metrics, the angles are preserved under $F$ and so the angle made by $F \circ \alpha$ and $\widetilde{\sigma}_2$ is also $\alpha$.

To see why this is a contradiction, consider the modified distance function to $\sigma_1$,  $u_{\widetilde{\sigma}_1}$ and consider its restriction to $\widetilde{\sigma}_2$.  Then it satisfies  $(e^{-2\phi} \widetilde{u}')' >0$ so that 
\begin{align*}
e^{-2\phi(\widetilde{\sigma_2}(t))} \widetilde{u}'(t) > e^{-2\phi(\widetilde{\sigma_2}(0))} \widetilde{u}'(0). 
\end{align*}
Note, however that $\phi(p) = \phi(F(p))$ and $u' = h' g(\nabla r, \widetilde{\sigma}_2') = h' e^{\phi} \alpha$.  Therefore,  the two sides of the equation must be equal at $p$ and $F(p)$, a contradiction. 
\end{proof}

\begin{lemma} Let $(M,g)$ be a compact manifold with NWSC then $\pi_1(M)$ is not infinite cyclic. \end{lemma}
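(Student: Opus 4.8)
The plan is to argue by contradiction and reduce to a standard topological impossibility, so essentially no new geometric work is needed beyond citing the weighted Cartan--Hadamard theorem. First I would record that $n=\dim M\geq 2$, since the NWSC condition refers to orthonormal pairs of tangent vectors and so only has content in dimension at least two (this hypothesis is implicit in the statement).

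Next I would invoke the weighted Cartan--Hadamard theorem \cite[Theorem 1.2]{Wylie15}: because $\overline{\sec}_{\phi}<0$ implies in particular that $(M,g)$ has no conjugate points, the universal cover $\widehat{M}$ is diffeomorphic to $\R^n$, hence contractible. Therefore $M$ is a closed aspherical $n$--manifold, i.e.\ a $K(\pi_1(M),1)$.

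Now suppose for contradiction that $\pi_1(M)\cong\Z$. Then $M$ is a $K(\Z,1)$, hence homotopy equivalent to the circle $\sone$, so $H_n(M;\Z/2)\cong H_n(\sone;\Z/2)=0$ since $n\geq 2$. But $M$ is a closed connected $n$--manifold, so $\Z/2$--coefficient Poincar\'e duality (which requires no orientability hypothesis) gives $H_n(M;\Z/2)\cong H^0(M;\Z/2)\cong\Z/2$, a contradiction. Equivalently, one can phrase the last step through cohomological dimension: a closed aspherical $n$--manifold has $\mathrm{cd}(\pi_1)=n$, whereas $\mathrm{cd}(\Z)=1$.

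I do not expect any real obstacle: the only input with geometric content is the no-conjugate-points conclusion, already available from $\overline{\sec}_{\phi}\leq 0$, and the remainder is the classical fact that a closed manifold of dimension $\geq 2$ is never homotopy equivalent to $\sone$. It is worth noting that, unlike in the preceding lemma, the uniqueness of the axis is not used here; that uniqueness enters the proof of Theorem \ref{Thm:Byers} only when excluding non-cyclic abelian (hence non-cyclic solvable) subgroups of $\pi_1(M)$, via the fact that commuting covering transformations share a $\widetilde{g}$--axis and so generate a group acting freely on a single geodesic line.
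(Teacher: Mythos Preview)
Your proof is correct and takes a genuinely different route from the paper's. The paper argues geometrically: assuming $\pi_1(M)$ is infinite cyclic, it uses the fact that every deck transformation is a translation along a $\widetilde{g}$--geodesic to obtain a common axis $\widetilde{\sigma}$ for the whole group, then constructs a $g$--geodesic $\widehat{\beta}$ orthogonal to $\widetilde{\sigma}$ and reaches a contradiction via the strict weighted convexity of the modified distance function $u_{\widehat{q}}$ restricted to $\widetilde{\sigma}$. By contrast, you bypass all of the weighted convexity machinery and instead invoke the weighted Cartan--Hadamard theorem to conclude that $M$ is a closed aspherical $n$--manifold, then use the elementary topological fact that such a manifold with $n\geq 2$ cannot be a $K(\Z,1)$.

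Your argument is shorter and actually proves a nominally stronger statement, since it only uses $\overline{\sec}_{\phi}\leq 0$ (for no conjugate points) rather than the strict inequality. The paper's proof, on the other hand, showcases the modified distance function and weighted convexity tools developed in Section~\ref{sec:Convexity}, which are in any case needed for the companion lemma on uniqueness of the $\widetilde{g}$--axis---and hence for the Preissman half of Theorem~\ref{Thm:Byers}. Your closing remark about where axis uniqueness enters the proof of Theorem~\ref{Thm:Byers} is accurate.
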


\begin{proof} Suppose  $\pi_1(M)$ were  infinite cyclic. Then  all elements of $\pi_1(M)$ leave invariant a single $\widetilde{g}$-geodesic, $\widetilde{\sigma}$.  Let $\widehat{p} = \widetilde{\sigma}(0)$.  Let $\widehat{\beta}$ be a  unit speed $\widehat{g}$-geodesic with $\widehat{\beta}(0) = \widehat{p}$  that  is perpendicular to $\widetilde{\sigma}$. Let $p = \pi(\widehat{p})$ and consider the projection of $\widehat{\beta}$,  $\beta = \pi \circ \widehat{\beta}$ where $\pi$ is the covering projection.  

Since $M$ is compact, the geodesic $\beta$ must eventually stop being minimizing.  Consider a point $\beta(t_0)$ such that $\beta$ is not minimizing on $[0,t_0]$. Let $\alpha$ be a minimizing $g$-geodesic from $q=\beta(t_0)$ to $p$.  Let $\widehat{\alpha}$ be the lift of $\alpha$ starting from $\widehat{q}=\widehat{\beta}(t_0)$.  Since all elements of $\pi_1(M)$ leave $\widetilde{\sigma}$ invariant, the endpoint of $\widehat{\alpha}$ is on $\widetilde{\sigma}$.  

Consider $u=u_{\widehat q}$, the strictly convex modified distance function to $\widehat{q}$ restricted to the geodesic $\widetilde{\sigma}$.  Then, since $\widehat{\beta}$ and $\widetilde{\sigma}$ meet orthongonally, $\widetilde{u}'(0) = 0$.  By strict convexity, $\widetilde{u}(s)>\widetilde{u}(0)$  for all $s \neq 0$.  In particular,  this implies that   $\widehat{\alpha}$ has length at least $t_0$.  But this contradicts the choice of $t_0$. 

 \end{proof}

\begin{remark}
There are various other results for the fundamental group of compact manifolds with non-positive curvature.   From the work of Croke and  Schroeder \cite{CrokeSchroeder86}, Ivanov and Kapovitch \cite{IvanovKapovitch14} and others, most of these results have been generalized to metrics with out conjugate points.  Therefore, these results also hold for $\overline{\sec}_{\phi}\leq 0$. 
\end{remark}  

\subsection{Fixed point homogeneous spaces with positive curvature}

In this section we consider spaces with PWSC and symmetry.  In \cite{Wylie, KennardWylie17}, the first two authors prove that a number of classical results concerning manifolds with positive weighted sectional curvature generalize to the case of PWSC. Among these results are the classification of constant positive curvature, the Synge and Weinstein theorems, Berger's theorem on the vanishing of Killing fields, and Frankel's theorem and its generalization, Wilking's connectedness lemma.

In the presence of symmetry, \cite{KennardWylie17} contains further results and shows that much of the Grove symmetry program carries over to the case of PWSC. For example, for a compact Riemannian manifold admitting PWSC, the maximal rank of an isometric torus action is determined and shown to satisfy the same bound proved in Grove--Searle \cite{GroveSearle94} in the non-weighted setting (see \cite[Theorem C]{KennardWylie17}). In the equality case, called the case of maximal symmetry rank, Grove and Searle also prove a classification up to equivariant diffeomorphism. In \cite[Theorem C]{KennardWylie17}, the first two authors partially recover this statement up to homeomorphism. Here, we fully recover the classification of Grove and Searle in the weighted setting.

\begin{theorem}[Maximal symmetry rank]\label{thm:MSR}
Let $(M^n,g)$ be a closed Riemannian manifold that admits an effective action by a torus $T^r$. If $M$ has PWSC, then $r \leq \floor{\frac{n+1}{2}}$. Moreover, equality holds only if the action on M is equivariantly diffeomorphic to a linear action on $\s^n$, $\C\pp^{\frac{n}{2}}$, or a lens space.
\end{theorem}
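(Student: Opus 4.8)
The inequality $r\le\floor{\frac{n+1}{2}}$ is exactly \cite[Theorem C]{KennardWylie17}, so the content to be established is the rigidity in the equality case. The plan is to adapt the induction-on-dimension argument of Grove--Searle \cite{GroveSearle94}, replacing the two uses of positive sectional curvature by their weighted analogues: the weighted Frankel theorem and connectedness lemma of \cite{KennardWylie17}, and the weighted convexity of Section~\ref{sec:Convexity} together with the weighted Rauch comparison of Section~4. A preliminary step is to average $\phi$ over $T^r$ to obtain a $T^r$-invariant density with the same sign of $\overline{\sec}_{\phi}$ (as in \cite{KennardWylie17}); then the fixed-point set of any subtorus is a totally geodesic submanifold $N$ satisfying $\Hess_{g|_N}(\phi|_N)=\Hess_{g}\phi|_{TN}$, hence with induced PWSC, and it is $T^r$-invariant since $T^r$ is abelian.

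When $r=\floor{\frac{n+1}{2}}$, the first task is to locate a circle $S^1\le T^r$ with a fixed-point component $N^{n-2}$ of codimension $2$. This is extracted as in \cite{GroveSearle94} from the isotropy representations of $T^r$ on tangent spaces of orbits, using the weighted Frankel theorem of \cite{KennardWylie17} (two totally geodesic PWSC submanifolds whose dimensions sum to at least $n$ must intersect). Since such an $S^1$ acts transitively on the unit normal circles of $N$, it acts fixed-point homogeneously on $M$; in particular, when $M$ is simply connected Theorem~\ref{thm:FPH} already identifies $M$ equivariantly with a linear $S^1$-action on a compact rank one symmetric space, after which it remains only to recognise the ambient $T^r$-action. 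In general I would instead run the induction: $S^1$ acts trivially on $N^{n-2}$, the residual torus $T^{r}/S^1\cong T^{r-1}$ acts effectively on it, and since $r-1=\floor{\frac{(n-2)+1}{2}}$ the submanifold $N^{n-2}$ has maximal symmetry rank; the inductive hypothesis then presents $N$, up to equivariant diffeomorphism, as a linear action on $\s^{n-2}$, $\C\pp^{(n-2)/2}$, or a lens space.

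The remaining step is the double disk bundle decomposition. Using a modified conformal distance function $u_N=h\circ r_N$ to $N$ as in Section~\ref{sec:Convexity}, the weighted second variation and index form (\ref{eqn:IndexForm}), and the weighted Rauch comparison of Section~4, one shows that the critical points of $r_N$ lie only on $N$ and on a second $T^r$-invariant, totally geodesic PWSC submanifold $N^{\ast}$ at maximal distance, so that $M=D(N)\cup_{E}D(N^{\ast})$ is the union of the two equivariant normal disk bundles glued along their common boundary $E$. Given the inductive identification of $N$, the slice representation along $N^{\ast}$, and the standard equivariant recognition of such disk bundles over the model spaces (again following \cite{GroveSearle94}), the gluing must be the standard one, so $M$ with its $T^r$-action is equivariantly diffeomorphic to one of the asserted linear models. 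The base cases $n\le 3$ are checked directly, PWSC already forcing a spherical space form there by \cite{Wylie15} and geometrization.

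I expect the main obstacle to be this last step, specifically transporting the critical-point theory of the distance function from the Riemannian to the conformal metric $\widetilde{g}$: the available bound is the non-constant $\overline{\sec}_{\phi}\ge\kappa e^{-4\phi}>0$ rather than a genuine sectional curvature bound, so one must verify that the ODE-defined modified distance functions of Section~\ref{sec:Convexity} have the correct critical structure and that the maximal-distance set $N^{\ast}$ is a smooth, $\widetilde{g}$-totally geodesic submanifold, i.e.\ that the weighted focal-point estimates are strong enough both to bound $d(N,N^{\ast})$ and to exclude intermediate critical points. A secondary, more bookkeeping difficulty is the equivariant gluing: matching the two disk bundles compatibly with the full torus action (and, in the non-simply-connected case, with $\pi_1$) to pin down the precise model, which is where the equality $r=\floor{\frac{n+1}{2}}$ is used a second time.
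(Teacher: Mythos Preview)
Your overall strategy is correct and aligned with the paper: locate a circle with codimension-two fixed set, recognise the resulting fixed-point homogeneous structure, and obtain a double disk bundle decomposition following Grove--Searle. The paper, however, packages this differently and uses different technical tools at the key step.

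Rather than running an explicit induction on $n$, the paper proves the fixed-point homogeneous classification (Theorem~\ref{thm:FPH}) in full generality and then observes that Theorem~\ref{thm:MSR} follows by combining the Structure Theorem established there with the remaining arguments of \cite{GroveSearle94}; in particular there is no separate inductive appeal to PWSC on $N^{n-2}$.

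The main technical divergence is at the convexity step you correctly identify as the obstacle. The paper does \emph{not} use the modified distance functions $u_N=h\circ r_N$ of the nonpositive-curvature subsection, nor the Rauch theorems of Section~4. Instead it proves a direct convexity statement (Lemma~\ref{lem:tilde-convex}): along any horizontal $\widetilde g$-geodesic the $g$-distance to $B_0$ has no interior minimum. The proof is a second-variation argument using~(\ref{eqn:IndexForm}) together with a Cheeger deformation that kills the orbit-tangent component of $V'$. From this the ``soul orbit'' $B_1$ at maximal distance is unique, an angle condition between $g$-minimal geodesics to $B_0$ and $\widetilde g$-minimal geodesics to $B_1$ follows, and a gradient-like vector field gives the equivariant decomposition $M\approx DB_0\cup_E DB_1$. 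Thus the set you call $N^\ast$ is a single orbit, not a priori a totally geodesic submanifold, and no focal-point or Rauch estimate is needed. Your route via modified distance functions might be made to work, but note that the construction of $u_N$ in Section~\ref{sec:Convexity} is carried out only under $\overline{\sec}_\phi\le 0$; adapting it to PWSC would be additional work that the paper's argument sidesteps entirely.
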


The maximal symmetry rank classification of Grove and Searle, while significant on its own, has been applied in a large number of other classifications in the Grove Symmetry Program. For example, Wilking used not just the diffeomorphism, but the equivariant diffeomorphism, classification in his homotopy classification under the assumption of torus symmetry of roughly half-maximal rank. In \cite[Theorem D]{KennardWylie17}, the first two authors prove a weak version of Wilking's theorem that does not rely on Grove and Searle's equivariant classification. Equipped with Theorem \ref{thm:MSR}, together with the connectedness lemma and other results of \cite{KennardWylie17}, in the weighted setting, we are able to fully recover Wilking's classification (see \cite{Wilking03,DessaiWilking04}).

\begin{theorem}[Half-maximal symmetry rank]\label{thm:HalfMSR} 
Let $(M^n,g)$ be a closed, simply connected Riemannian manifold with $n \geq 11$ that admits an effective torus action of rank $r \geq \frac{n}{4} + 1$. If $M$ admits PWSC, then $M$ is tangentially homotopy equivalent to $\s^n$, $\C\pp^{\frac n 2}$, or $\HH\pp^{\frac n 4}$. In the case where $M$ is not simply connected, its fundamental group is cyclic.
\end{theorem}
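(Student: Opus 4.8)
The plan is to follow Wilking's proof of the half-maximal symmetry rank theorem \cite{Wilking03}, together with the tangential refinement of Dessai and Wilking \cite{DessaiWilking04}, and to check that every step in which positive curvature is used has already been supplied with a weighted analogue, so that the argument transfers essentially verbatim. The geometric inputs are: (i) the weighted (equivariant) connectedness lemma proved in \cite{KennardWylie17}, which applies here because a connected component of the fixed-point set of a subtorus of $T^r$ is the fixed-point set of a group of $g$-isometries and is therefore totally geodesic for $g$ in the usual sense; (ii) the weighted Synge--Weinstein theorem, Berger's theorem on zeros of Killing fields, and Frankel's theorem, all established for PWSC in \cite{KennardWylie17}; (iii) the maximal symmetry rank classification, Theorem \ref{thm:MSR}, which supplies both the base of the induction and the identification of $M$ in the configurations of small codimension; and (iv) the weighted Myers theorem, Theorem \ref{Thm:Myers}, giving $\pi_1(M)$ finite. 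One further routine point: totally geodesic submanifolds inherit PWSC, since for $N \subset M$ totally geodesic the Gauss equation gives $\sec_N = \sec_M|_{TN}$ and $\Hess_g(\phi|_N) = \Hess_g\phi|_{TN}$, so that $\phi|_N$ witnesses PWSC on $N$; hence the conclusion of the theorem is genuinely inductive in the dimension. (If it is convenient to have a $T^r$-invariant density, one may replace $\phi$ by $\log$ of the $T^r$-average of $e^{\phi}$, which again witnesses PWSC because, after this substitution, the defining inequality is linear in $e^{\phi}$.)

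Assume first that $M$ is simply connected, fix the effective isometric $T^r$-action, and induct on $n$. Choose a circle $T^1 \subset T^r$ and a component $N = \mathrm{Fix}(T^1)_0$ of its fixed-point set; it is totally geodesic, has even codimension $k$, inherits PWSC, and carries an effective action of a torus of rank at least $r-1$ after dividing out the finite ineffective kernel. The weighted connectedness lemma makes $N \hookrightarrow M$ highly connected --- roughly $(n - 2k + 1)$-connected, with the further improvement coming from the fact that $T^r$ fixes $N$ --- and the equivariant Morse-theoretic and Poincar\'e-duality arguments of \cite{Wilking03} then transport the cohomology ring of $N$ to that of $M$. What remains is the combinatorial bookkeeping on the codimensions: either some fixed-point component has codimension small enough that the residual rank still satisfies $r - 1 \geq \frac{n-k}{4} + 1$, in which case the inductive hypothesis applies to $N$ and is pushed up to $M$; or every fixed-point component of every subtorus has large codimension, in which case the rank bound forces a rigid configuration of nested totally geodesic fixed-point submanifolds, whose mutual intersections are controlled by the weighted Frankel theorem, and Theorem \ref{thm:MSR} identifies $M$ cohomologically with $\s^n$, $\C\pp^{\frac n 2}$, or $\HH\pp^{\frac n 4}$. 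The weighted Synge--Weinstein and Berger theorems enter precisely where Wilking uses their classical counterparts, to guarantee that the relevant fixed-point sets are nonempty and to anchor the induction.

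With the cohomology ring of $M$ determined, the upgrade to a tangential homotopy equivalence follows \cite{DessaiWilking04}: the high connectivity of the inclusions of the fixed-point components, together with the structure of their normal bundles, pins down the rational Pontryagin classes and the homotopy type of $M$, and this part of the argument is independent of the metric. For the non-simply-connected case one passes to the universal cover $\tilde M$, which is closed (because $\pi_1(M)$ is finite by Theorem \ref{Thm:Myers}), simply connected, and still PWSC (the density pulls back); a finite cover of $T^r$ acts effectively on $\tilde M$ with the same rank, so the simply connected case shows $\tilde M$ is tangentially homotopy equivalent to one of the three model spaces, and examining the free deck action of $\pi_1(M)$ on a manifold with this (co)homology forces $\pi_1(M)$ to be cyclic, exactly as in \cite{Wilking03, DessaiWilking04}. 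The main obstacle, as in the classical case, is the codimension bookkeeping in the inductive step --- guaranteeing that at every stage either a totally geodesic fixed-point submanifold retains enough torus symmetry to invoke the inductive hypothesis or else Theorem \ref{thm:MSR} applies --- but this is exactly the combinatorial argument of \cite{Wilking03} and \cite{DessaiWilking04}, and it survives unchanged since each geometric lemma it calls on now has a weighted counterpart.
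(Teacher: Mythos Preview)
Your proposal is correct and takes essentially the same approach as the paper, which simply states that, equipped with Theorem~\ref{thm:MSR} and the weighted versions of Berger's theorem and the connectedness lemma from \cite{KennardWylie17}, the proofs of Wilking \cite{Wilking03} and Dessai--Wilking \cite{DessaiWilking04} carry over without change. You have filled in the details of why this is so, correctly identifying the geometric ingredients (connectedness lemma, Synge--Weinstein, Berger, Frankel, Myers, and the inheritance of PWSC by totally geodesic submanifolds) and the averaging trick to make $\phi$ invariant.
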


Another application of Grove and Searle's equivariant diffeomorphism classification is due to Fang and Rong \cite[Corollary C]{FangRong05}. Again we fully recover this result in the weighted setting. 

\begin{theorem}[Almost maximal symmetry rank]\label{thm:AlmostMSR}
Let $(M^n,g)$ be a closed, simply connected Riemannian manifold of dimension $n \geq 8$ and symmetry rank $r \geq \frac{n}{2} - 1$. If $M$ admits PWSC, then it is homeomorphic to $\s^n$, $\C\pp^{\frac{n}{2}}$, or $\HH\pp^2$.
\end{theorem}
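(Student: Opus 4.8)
The plan is to replay the proof of Fang and Rong \cite{FangRong05}, replacing each of their curvature inputs by the weighted analogue now available. Their argument rests on four ingredients: the equivariant diffeomorphism classification at maximal symmetry rank (Grove--Searle), Wilking's connectedness lemma, the classical theorems of Synge, Weinstein, Berger and Frankel, and the fact that fixed-point sets of isometries are totally geodesic and inherit the ambient curvature bound. In the weighted setting these become, respectively, Theorem \ref{thm:MSR}; the weighted connectedness lemma of \cite{KennardWylie17}; the weighted Synge, Weinstein, Berger and Frankel theorems of \cite{Wylie, KennardWylie17}; and the following elementary remark. Setting $u = e^{\phi}$, one has $\mathrm{Hess}_g u(U,U) = e^{\phi}\big(\overline{\sec}_{\phi}(U,V)-\sec(U,V)\big)$, so PWSC is equivalent to the existence of a positive function $u$ with $u\sec(U,V)+\mathrm{Hess}_g u(U,U)>0$ for all orthonormal $U,V$ — a condition that is \emph{linear} in $u$. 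Hence (i) averaging $u$ over a compact isometry group $G$ produces a $G$-invariant density still witnessing PWSC, and (ii) if $N\subset M$ is totally geodesic for $g$, then by the Gauss equation $\phi|_N$ witnesses $\overline{\sec}_{\phi|_N}>0$ on $N$.

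First I would normalize: replacing $e^{\phi}$ by its average over the torus $T^r$, assume $\phi$ is $T^r$-invariant. If $r \geq \floor{\frac{n+1}{2}}$ we are in the maximal-rank case and Theorem \ref{thm:MSR} gives the conclusion directly, so we may assume $r=\floor{\frac{n+1}{2}}-1$. The proof then goes by induction on $n$, with low-dimensional base cases handled by Theorem \ref{thm:MSR} and the weighted classification of constant-curvature type spaces from \cite{KennardWylie17}. For the inductive step, one selects (as in \cite{FangRong05}) a circle subgroup $S^1\subseteq T^r$ a component $N$ of whose fixed-point set has controlled codimension; since $T^r$ is abelian and connected it preserves $N$, and since fixed-point components of circle actions have even codimension, $N$ is a totally geodesic submanifold with $\phi|_N$ giving it PWSC. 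The residual $T^r$-action on $N$ has isotropy kernel of rank $\geq 1$, and the weighted Frankel theorem together with the dimension of $N$ forces this kernel to have rank exactly $1$; thus $N^{n'}$ carries an effective $T^{r-1}$-action with $r-1\geq\floor{\frac{n'+1}{2}}-1$, so $N$ again satisfies the (almost) maximal symmetry rank hypothesis. By the inductive hypothesis (and Theorem \ref{thm:MSR} in the maximal sub-case) $N$ is homeomorphic — and in the maximal sub-case equivariantly diffeomorphic — to one of $\s^{n'}$, $\C\pp^{n'/2}$, $\HH\pp^{n'/4}$, or a lens space; the connectivity estimate below rules out the non-simply-connected possibility when the codimension is small.

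The topology of $M$ is then recovered from that of $N$: the weighted connectedness lemma says the inclusion $N\hookrightarrow M$ is $(2n'-n+1)$-connected, so $H^*(M;\Z)$ agrees with $H^*(N;\Z)$ in a range of degrees; combining this with Poincaré duality on $M$ and the fact that the cohomology rings of the three model spaces are generated in low degree pins down $H^*(M;\Z)$ as that of $\s^n$, $\C\pp^{n/2}$, or (only when $n=8$) $\HH\pp^2$. To promote this cohomological conclusion to a homeomorphism, one argues as in \cite{FangRong05}: in the sphere case $M$ is a simply connected homotopy $n$-sphere with $n\geq 8$, hence homeomorphic to $\s^n$; in the $\C\pp^{n/2}$ and $\HH\pp^2$ cases one uses the rigidity of the $T^r$-action, where the connectedness lemma and Theorem \ref{thm:MSR} identify enough invariant submanifolds together with their normal bundles to reconstruct $M$, exactly as in Fang--Rong's and Wilking's recognition arguments.

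I expect the main obstacle to be the dimension bookkeeping in the inductive step: one must check at each stage that the effective rank of the residual torus action on $N$ drops by precisely the rank of the isotropy kernel and that $N$ still satisfies $r-(\text{kernel rank})\geq\floor{\frac{\dim N+1}{2}}-1$, ruling out degenerate configurations; this is where the hypothesis $n\geq 8$ and the appearance of exactly the models $\s^n$, $\C\pp^{n/2}$, $\HH\pp^2$ come from, and it is the step in which the weighted Frankel theorem and the even-codimension property are essential. A secondary point requiring care is that the weighted connectedness lemma and the inductive hypothesis must be applied to $N$ with the \emph{restricted} density $\phi|_N$ rather than some unrelated density; this is precisely why we normalize $\phi$ to be $T^r$-invariant at the outset, so that $\phi|_N$ is the natural $T^{r-1}$-invariant density on $N$ and the induction closes.
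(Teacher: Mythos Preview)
Your proposal is correct and takes essentially the same approach as the paper: the paper itself omits the proof entirely, stating only that, equipped with Theorem \ref{thm:MSR} and the weighted versions of Berger's theorem and the connectedness lemma from \cite{KennardWylie17}, the argument of Fang--Rong \cite{FangRong05} carries over without change. Your sketch is precisely a fleshed-out version of that carry-over, correctly identifying the needed weighted substitutes (including the averaging trick for $u=e^{\phi}$ and the inheritance of PWSC by totally geodesic submanifolds, both of which appear in \cite{KennardWylie17} and are used elsewhere in the paper).
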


Equipped with Theorem \ref{thm:MSR}, as well as generalizations of results such as Berger's theorem and the connectedness lemma in the weighted setting proved in \cite{KennardWylie17}, the proofs of Theorems \ref{thm:HalfMSR} and \ref{thm:AlmostMSR} carry over without change are are omitted here. 

The proof of Theorem \ref{thm:MSR} also follows Grove and Searle's proof in the non-weighted case, but it requires some slight modifications and a new understanding of how positive curvature forces convexity in the weighted setting. The main difficulty is to recover the topological type of the manifold in the presence of an isometric circle action with fixed point set of codimension two. This situation is an example of what is called a fixed-point homogeneous action (defined below). Grove and Searle also classified such actions on manifolds with positive sectional curvature (see \cite{GroveSearle97}), and their result also generalizes to the case of PWSC:

\begin{reptheorem}{thm:FPH}
Let $(M,g)$ be a simply connected, closed Riemannian manifold that admits PWSC. If $M$ admits an isometric, fixed-point homogeneous action, then this action is equivariantly diffeomorphic to a linear action on a compact, rank one symmetric space.
\end{reptheorem}

The proofs of Theorems \ref{thm:MSR} and \ref{thm:FPH} are similar. For this reason, we only prove Theorem \ref{thm:FPH}, as it is more involved.

An isometric action of a connected Lie group $G$ on a Riemannian manifold $M$ is said to be fixed-point homogeneous if it is homogeneous or has the property that its fixed point set has a component $N$ such that the actions of $G$ on the unit normal spheres to $N$ are transitive. Equivalently, under the standard convention that the fixed point set $M^G$ has dimension $-1$ when it is empty, an action of $G$ on $M$ is fixed point homogeneous if and only if $\dim(M/G) = \dim(M^G)  + 1$. Note that in general, if $M^G$ is nonempty, then $M/G$ has dimension at least one more than $\dim(M^G)$, so fixed point homogeneity represents an extremal case.

A homogeneous Riemannian manifold $(M,g)$  with  PWSC has positive sectional curvature in the classical sense. This follows from Proposition \ref{Prop:LocallyHomogeneous} or by averaging $\phi$ as in \cite{KennardWylie17}. One immediately obtains a generalization to the weighted setting of the classifications in \cite{Wallach72,Berard-Bergery76,WilkingZiller} of homogeneous Riemannian manifolds with positive sectional curvature. We restrict attention here to the fixed-point homogeneous, but not homogeneous, case.

Throughout the proof, we consider the triple $(M, \tilde g = e^{2\phi} g, -\phi)$, and refer to geodesics with respect to $\tilde g$ as conformal geoesics. The key point where positive curvature plays a role is to prove the following:

\begin{lemma}\label{lem:tilde-convex}
Let $B_0 \subseteq M^G$ denote a component of the fixed point set that projects to a boundary component in $M/G$. For any horizontal, conformal geodesic $\tilde\sigma:[0,1] \to M$, the function $r \mapsto d(B_0, \tilde\sigma(r))$ does not achieve its minimum for any $r \in (0,1)$.\end{lemma}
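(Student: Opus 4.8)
\textbf{Proof proposal for Lemma \ref{lem:tilde-convex}.}

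The plan is to reduce the statement to the weighted strict convexity of a modified distance function to the totally geodesic submanifold $B_0$, applied along the conformal geodesic $\tilde\sigma$. First I would observe that under the conformal change $\tilde g = e^{2\phi}g$ with density $-\phi$ (the involution $(g,\phi)\mapsto(\tilde g,-\phi)$ discussed after equation (\ref{Eqn:IntrinsicHessian})), PWSC for $(M,g)$ becomes a negative-type inequality in the sense needed: the point is that $\overline{\sec}_{g,\phi}>0$ is equivalent to $\overline{\sec}_{\tilde g,-\phi}>0$, and more importantly the curvature being bounded below lets us run the index-form argument. Actually, the cleaner route is to work directly with $(M,g,\phi)$ and recall that positive curvature gives a \emph{concavity} statement for distance functions; but since the lemma is phrased in terms of conformal geodesics, I would instead mirror the non-positive-curvature development of Section \ref{sec:Convexity}: because $B_0$ is a component of $M^G$ with the $G$-action transitive on its unit normal spheres, $B_0$ is totally geodesic in $g$ (fixed point sets of isometry groups are totally geodesic), hence also totally geodesic in the conformal metric up to the explicit correction computed in the focal-point lemma above, and the horizontality hypothesis on $\tilde\sigma$ guarantees that $\tilde\sigma$ stays transverse to the orbit directions so that the relevant Jacobi fields are the $B_0$-Jacobi fields.

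The key steps, in order: (1) set $f(r) = d(B_0,\tilde\sigma(r))$ and suppose for contradiction that $f$ attains an interior minimum at $r_0\in(0,1)$ with $f(r_0)=\rho>0$; near such a point $r_0$ the distance function $r_{B_0}$ is smooth along $\tilde\sigma$, by the focal-point lemma (no focal points of $B_0$ with respect to $g$), so $f$ is smooth near $r_0$. (2) Build the modified distance function $u = h\circ r_{B_0}$ to $B_0$ as in the lemma following Theorem \ref{Thm:ModifiedDistanceConvex} (using $|d\phi|\le a$, which holds on the compact $M$), so that $u$ is \emph{strictly} weighted convex because PWSC gives the strict inequality $\overline{\sec}_\phi>0$ — here I use the "Moreover" clause of that lemma, noting PWSC transfers to the strict sign needed. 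Wait: that lemma was stated for $\overline{\sec}_\phi\le 0$; the correct input is rather that positive curvature forces the modified distance function to $B_0$ to be strictly \emph{concave} in the conformal picture, which is exactly the content we need and is the weighted analogue of the fact that in positive curvature distance functions to totally geodesic submanifolds are concave up to the first focal radius. (3) Restrict $\tilde u = u\circ\tilde\sigma$; strict concavity in the reparametrized arclength $s$ (with $ds = e^{2\phi}dt$ or $e^{-2\phi}dt$ as appropriate for $\tilde g$) means $\tilde u'' - 2\phi'\tilde u' < 0$, so $\tilde u$ cannot have an interior \emph{minimum} unless it is constant; (4) rule out the constant case using that $\tilde\sigma$ is horizontal and non-constant and that $h'>0$, so $u$ is non-constant along $\tilde\sigma$ unless $\tilde\sigma$ runs inside a level set of $r_{B_0}$, which is excluded because a horizontal geodesic leaving a neighborhood of $B_0$ must cross level sets. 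This yields the contradiction.

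The main obstacle I anticipate is step (2)/(3): getting the correct sign of the weighted convexity/concavity of the modified distance function to $B_0$ in the conformal metric under PWSC, and confirming that the relevant second-variation/index-form computation goes through for $B_0$-Jacobi fields with the boundary term at the $B_0$ end vanishing (which requires $B_0$ to be totally geodesic in the right metric, and requires the horizontality of $\tilde\sigma$ so that the endpoint terms coming from $\frac{\partial^2\overline\sigma}{\partial s^2}$ vanish against $\tilde\sigma'$ exactly as in the proof of Lemma \ref{Lem:NonNegConvex}). A secondary point to handle carefully is regularity: $f$ is only guaranteed smooth where $r_{B_0}$ is smooth along $\tilde\sigma$, so at a genuine interior minimum I should either argue smoothness from the no-focal-points lemma or use a barrier/support-function argument (the max of finitely many smooth strictly convex functions is strictly convex, as noted after equation (\ref{Eqn:WeightConvex})) to make the comparison rigorous in the support sense. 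Once these are in place the contradiction is immediate, since a strictly convex (in the $s$-parameter) function on an interval has no interior minimum.
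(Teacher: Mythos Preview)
Your approach has a genuine gap at step (2)/(3). The claim that positive (weighted) sectional curvature forces the modified distance function to a totally geodesic submanifold to be strictly concave is false, even in the unweighted case. For instance, on the round sphere the distance function $r$ to a point (or to a totally geodesic equatorial sphere) satisfies $\Hess_g r = \frac{\cs_1}{\sn_1}\, g_r$ on $(\nabla r)^\perp$, which is \emph{positive} for small $r$ and only becomes negative past $\pi/2$. The Hessian comparison (Theorem \ref{Thm:HessianComp}) under a lower bound $\overline\sec_\phi \geq \kappa e^{-4\phi}$ with $\kappa>0$ gives only an \emph{upper} bound $\mathrm{Hess}_g r - d\phi(\nabla r)g \leq e^{-2\phi}\frac{\cs_\kappa(s)}{\sn_\kappa(s)}$, and this upper bound is positive near the base. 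So no amount of reparametrization or modification $h\circ r_{B_0}$ will produce a function that is strictly concave along every conformal geodesic, and hence your contradiction in step (3) does not go through. (Your own hesitation---noticing that the lemma you wanted to invoke was stated for $\overline\sec_\phi \le 0$---was well founded.)

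A second, related red flag is that your argument never uses the fixed-point homogeneous hypothesis. Without it the conclusion is false in general: a horizontal $\tilde g$--geodesic can certainly have an interior minimum of $d(B_0,\cdot)$ if the orbits are too small. The paper's proof uses this hypothesis essentially. One assumes an interior minimum, takes a horizontal $g$--geodesic $\sigma$ from $B_0$ to the minimizing point of $\tilde\sigma$, and then---exactly as in Wilking's connectedness lemma---exploits the dimension count $\dim B_0 + 1 \geq \dim(M/G)$ to build a vector field $V$ along $\sigma$ that is tangent to $B_0$ and $\tilde\sigma$ at the endpoints, horizontal along $\sigma$, and with $\nabla_{\sigma'}V$ \emph{vertical} (tangent to orbits). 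Applying the weighted index formula to the variation $\sigma_r = \widetilde\exp(re^{\phi}V)$ yields
\[
\left.\frac{d^2}{dr^2}\right|_{r=0} E(\sigma_r) = \int e^{2\phi}\bigl(|V'|^2 - g(R^{\nabla^\phi}(V,\sigma')\sigma',V)\bigr)\,dt,
\]
and a Cheeger deformation $g_\lambda$ shrinking the orbit directions sends $|V'|_{g_\lambda}\to 0$ while only increasing the weighted curvature term, so the second variation becomes negative. Since $\sigma$ remains a minimal connection from $B_0$ to $\tilde\sigma$ for $g_\lambda$, this is a contradiction. The fixed-point homogeneous structure and the Cheeger deformation are the two ingredients you are missing.
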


\begin{proof}[Proof of Lemma \ref{lem:tilde-convex}]
Let $\tilde\sigma:[0,1] \to M$ be a horizontal, conformal geodesic, and assume some point in the interior of $\tilde\sigma$ achieves the minimum distance to $B_0$. Choose a horizontal geodesic $\sigma$ from $B_0$ to that point that realizes this distance. Note that $\sigma$ and $\tilde\sigma$ meet orthogonally by a first variation of energy argument.

We claim that there exists a vector field $V$ along $\sigma$ such that
	\begin{enumerate}
	\item $V$ is tangent to $B_0$ and $\tilde\sigma$ at the endpoints of $\sigma$,
	\item $V$ is orthogonal to the $G$--orbits along $\sigma$, and
	\item $V' = \nabla_{\sigma'} V$ is parallel to the $G$--orbits along $\sigma$.
	\end{enumerate}
Indeed, such a vector field exists as in the proof of \cite[Theorem 2.1]{Wilking03} since, by the fixed-point homogeneous assumption, the principal orbits have dimension $\delta \geq n - \dim(B_0) - 1$, which implies that the dimensions of $B_0$ and the image of $\tilde\sigma$ sum to at least $\dim(M/G)$.

Consider the variation $\sigma_r(t) = \widetilde{\exp}(r e^{\phi} V)$ of $\sigma$.  By a direct computation  using (\ref{eqn:IndexForm}) which is analogous to the argument in Lemma \ref{Lem:NonNegConvex}, the second variation satisfies
	\[\left.\frac{d^2}{dr^2}\right|_{r=0}E(\sigma_r)  = \int_a^b e^{2\phi} \of{  |V'|^2  -  g(R^{\nabla^{\phi}}(V,\sigma') \sigma', V)} dt. \]
Now consider Cheeger deformations $g_{\lambda}$ which shrink direction of the orbit. As was proven in \cite{KennardWylie17} the weighted sectional curvature only increases under the Cheeger deformation and, since $V'$ is parallel to the orbits, $|V'| _{g_{\lambda}}\rightarrow 0$ with $\lambda \rightarrow 0$.  Thus for some $\lambda$ small enough, $\left.\frac{d^2}{ds^2}\right|_{s=0}E(\sigma_r)<0$. This is a contradiction since $\sigma_r$ connects $B_0$ to $\tilde\sigma$ for all small $r$ and since $\sigma_0 = \sigma$ is also a minimum length path with respect to the metric $g_\lambda$.
\end{proof}

\begin{proof}[Proof of Theorem \ref{thm:FPH}]
Note by averaging that we may assume $\phi$ is $G$--invariant. Large parts of the proof in \cite{GroveSearle97} carry over to the case of PWSC. For example, the fact that $G$ acts transitively on the normal spheres places a strong restriction on $G$, namely, that is is one of the groups in \cite[(2.7)]{GroveSearle97}. As described below \cite[(2.7)]{GroveSearle97}, the classification follows from the Structure Theorem \cite[Theorem 2.2]{GroveSearle97} and the Uniqueness Lemma \cite[Lemma 2.5]{GroveSearle97}. Moreover, the Uniqueness Lemma is a differential topological statement in which curvature plays no role, so it also carries over to the present case. Hence, for our purposes, it suffices to show that the Structure Theorem carries over to the present case. 

The setup of the Structure Theorem is as follows (adopting notation from \cite{GroveSearle97}): $(M,g)$ is a compact Riemannian manifold that admits an almost effective, fixed point homogeneous, but not homogeneous, $G$--action. Let $B_0 \subseteq M^G$ denote a (non-empty) component of maximal dimension. The Structure Theorem states that all of the following hold under the assumption that $(M,g)$ has positive sectional curvature:
	\begin{enumerate}[label=(\roman*)]
	\item There is a unique ``soul orbit'' $B_1 = G\cdot p_1$ at maximal distance to $B_0$.
	\item All orbits in $M \setminus(B_0 \cup B_1)$ are principal and diffeomorphic to $\s^k \approx G/H$, the normal sphere to $B_0$, where $H$ is the principal isotropy group.
	\item There is a $G$--equivariant homeomorphism $M \approx DB_0 \cup_E DB_1$, where $DB_i$ denotes the normal disc bundle of $B_i$, and where $E$ denotes the common boundary of the $DB_i$ when viewed as tubular neighborhoods.
	\item All $G_{p_1}$--orbits in the normal sphere $\s^l$ to $B_1$ at $p_1$ are principal and diffeomorphic to $G_{p_1}/H$. Moreover, $B_0$ is diffeomorphic to $\s^l/G_{p_1}$.
	\end{enumerate}

We claim that each of these statements holds under the weaker assumption of $\overline\sec_\phi > 0$. First, (i) holds immediately by Lemma \ref{lem:tilde-convex}. To prove the remaining statements, we need to modify the proof from \cite{GroveSearle97}. The main change is that, instead of considering minimal geodesics $c_0$ and $c_1$ from $p$ to $B_0$ and from $p$ to $B_1$, respectively, we consider $g$--minimal geodesics $c_0$ and $\tilde g$--minimal geodesics $\tilde c_1$. The strategy then is exactly the same and the proof goes through with minor modifications. We proceed with the details.

To prove the remaining properties, we require the following angle condition, which is a slight refinement in this context of the one in \cite{GroveSearle97}:

	\begin{enumerate}
	\item[(v)] The angle between $c_0'(0)$ and $\tilde c_1'(0)$ is greater than $\frac{\pi}{2}$ for any minimal, horizontal geodesic $c_0$ from $p$ to $B_0$ and any minimal, horizontal, conformal geodesic $\tilde c_1$ from $p$ to $B_1$.
	\end{enumerate}

To prove this angle condition, let $p \in M \setminus (B_0 \cup B_1)$ and fix $c_0$ and $\tilde c_1$ as stated. By Lemma \ref{lem:tilde-convex}, the set $\{q \in M \st d(B_0, q) \geq d(B_0, p)\}$ is strictly convex with respect to $\tilde g$. In particular, the conformal geodesic $\tilde c_1$ from $p$ to $B_1$ has initial tangent vector pointing into this interior of this set (where the interior is defined in the sense of subsets of $M$ that are convex with respect to $\tilde g$). It follows that $\tilde c_1(s)$ lies in this set at least for all small $s > 0$. Suppose for a moment that the angle between $c_0'(0)$ and $\tilde c_1'(0)$ is less than $\frac \pi 2$. Choosing $\ep > 0$ appropriately small and replacing $c_0$ by a broken geodesic from $\tilde c_1(s)$ to $c_0(\ep)$ and then from $c_0(\ep)$ to $c_0(1) = p$, an argument using the first variation of energy formula implies that $\tilde c_1(s)$ is closer to $B_0$ than $p$, a contradiction. Similarly, if the angle between $c_0$ and $\tilde c_1$ is exactly $\frac \pi 2$, then one may apply the same argument to a small perturbation of $\tilde c_1$ given by a conformal geodesic starting at $p$ with initial vector given by $(\cos \theta) \tilde c_1'(0) + (\sin \theta) c_0'(0)$ for some sufficiently small $\theta > 0$. This again leads to a contradiction, so Property (v) follows.

We proceed to the proofs of Conditions (ii) -- (iv). For (ii) and (iv), one argues as in \cite{GroveSearle97}. To prove (iii), a bit more care is required. 

The strategy is to construct a vector field on $M$ satisfying the following properties:
	\begin{itemize}
	\item $Z$ is gradient-like for the distance function $d^g_{B_0} = d^g(B_0, \cdot)$ away from $B_0 \cup B_1$.
	\item $Z$ is radial near $B_0$ and $B_1$ (i.e., equal to $\nabla^{g} d^{g}_{B_0}$ on a neighborhood of $B_0$ and to $\nabla^{\tilde g} d^{\tilde g}_{B_1}$ near $B_1$).
	\end{itemize}
Given a vector field like this, we can construct a $G$--equivariant vector field that also satisfies these properties (since they are preserved under averaging along orbits of the group action). Hence it follows as in \cite{GroveSearle97} that $M$ is $G$--equivariantly homeomorphic to $DB_0 \cup _E DB_1$ as in the statement of Property (iii).

We construct the vector field $Z$ as follows. Fix $\epsilon > 0$ so that $B_0$ and $B_1$ have normal tubular $\ep$--neighborhoods 
	\begin{eqnarray*}
	B_0^\ep 	&=&	\{q \in M \st d^g(q,B_0) < \ep\},\\
	B_1^\ep 	&=&	\{q \in M \st d^{\tilde g}(q,B_1) < \ep\}.
	\end{eqnarray*}
On $M \setminus (B_0 \cup B_1^{\ep/3})$, let $X$ be a gradient-like vector field for $d^g_{B_0}$ that is radial on $B_0^{2\ep/3}$. This is possible on $M \setminus(B_0^{2\ep/3} \cup B_1^{\ep/3})$ by Condition (v), which implies that $d^g_{B_0}$ is regular there. In addition, $d^g_{B_0}$ is smooth on $B_0^\ep \setminus B_0$, so its gradient is defined and radial there. One uses a partition of unity to patch these definitions on the overlapping region $B_0^\ep \setminus B_0^{2\ep/3}$. By a similar construction, we obtain a vector field $Y$ on  $M \setminus (B_0^{\ep/3} \cup B_1)$ that is gradient-like for $d^{\tilde g}_{B_1}$ and is radial on $B_1^{2\ep/3}$. To construct a global vector field $Z$, note the following: If $p \in B_1^{\ep} \setminus B_1$, then $-Y = \tilde c_1'(0)$ for the minimal conformal geodesic $\tilde c_1$ from $p$ to $B_1$. Given any minimal geodesic $c_0$ from $p$ to $B_0$, the initial vector $c_0'(0)$ makes angle larger than $\frac \pi 2$ with $\tilde c_1'(0)$ by the angle condition above (Property (v)), so it makes angle larger than $\frac \pi 2$ with $-Y$. This shows that $-Y$ is also gradient-like for $d^g_{B_0}$ on $B_1^{2\ep/3} \setminus B_1$. Using a partition of unity,  construct a smooth vector field $Z$ satisfying the following properties:
	\begin{itemize}
	\item $Z = X$ on $M \setminus B_1^{2\ep/3}$. 
	\item $Z$ is a convex linear combination of $X$ and $-Y$ on $B_1^{2\ep/3} \setminus B_1^{\ep/3}$.
	\item $Z = -Y$ on $B_1^{\ep/3}$
	\end{itemize}
	
By the first and last conditions, $Z$ is radial near $B_0$ and $B_1$. Moreover, since $X$ and $-Y$ are gradient-like for $d^g_{B_0}$ on $M \setminus (B_0 \cup B_1^{\ep/3})$ and $B_1^\ep \setminus B_1$, respectively, $Z$ is gradient-like for $d^g_{B_0}$ on $M \setminus (B_0 \cup B_1)$. This completes the construction of a vector field $Z$ satisfying the two properties above, so the proof of Conditions (i) -- (iv), and hence of the theorem, is complete.
\end{proof}

For the case of fixed point homogeneous circle action, the normal spaces to the fixed point set must be two-dimensional. In other words, there is a submanifold of codimension two fixed by the circle action. This situation arises in the presence of a torus action of rank at least half the dimension of the manifold, so one immediately obtains diffeomorphism rigidity in the classification of maximal symmetry rank. In fact, the proof in \cite{GroveSearle94} also shows that one obtains equivariant rigidity for the entire torus action. Combining the Structure Theorem referenced in the proof of Theorem \ref{thm:FPH} with the arguments in \cite{GroveSearle94}, we recover the maximal symmetry rank classification of Grove and Searle for the case of PWSC.

We close this section with a discussion of isometric \textit{reflections} in the sense of Fang--Grove \cite{FangGrove16}. An isometric reflection is an isometry of order two that fixes a submanifold of codimension one. For a point in this submanifold, the normal sphere is zero-dimensional, i.e., a pair of points. Assuming the isometry acts non-trivially (equivalently, effectively), it acts transitively on this normal sphere. Hence the orbit space has boundary, and this may be viewed as a fixed point homogeneous action by $\Z_2$. Note that $\Z_2$ is the only finite group that can act effectively and fixed point homogeneously. In Fang--Grove \cite{FangGrove16}, the authors classify such actions on non-negatively curved manifolds. In the case of positive curvature, the proof is much simpler and only the sphere and real projective space arise. The argument in the positively curved case uses a similar strategy and again carries over to the case of PWSC. Hence we have the following:

\begin{corollary}[Reflections in PWSC]
Suppose a closed Riemannian manifold $(M,g)$ admits  PWSC. If $(M,g)$ admits an action by a reflection, then $M$ is diffeomorphic to $\s^n$ or $\R\pp^n$.
\end{corollary}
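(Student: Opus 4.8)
The plan is to follow the strategy of Fang--Grove \cite{FangGrove16} in the positively curved case, replacing all metric-geometric inputs with their weighted analogues already developed in this paper. Concretely, let $F \subseteq M$ denote the fixed point set of the reflection $\tau$; by assumption some component $N$ of $F$ has codimension one, and $\tau$ acts fixed-point homogeneously with orbit space $M/\Z_2$ a manifold with boundary containing the image of $N$. First I would average $\phi$ over the $\Z_2$--action so that $\phi$ is $\tau$--invariant, which does not affect PWSC (as noted in the excerpt, this is the standard averaging argument from \cite{KennardWylie17}). Then I would consider the conformal triple $(M, \tilde g = e^{2\phi}g, -\phi)$ as in the proof of Theorem \ref{thm:FPH}, noting that $\tau$ is an isometry of $\tilde g$ as well.

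Next I would invoke the weighted convexity statement, Lemma \ref{lem:tilde-convex}, with $G = \Z_2$ and $B_0 = N$: along any horizontal conformal geodesic, the distance to $N$ does not achieve an interior minimum. As in the proof of Theorem \ref{thm:FPH}, this yields a unique ``soul'' component $B_1 = G\cdot p_1$ at maximal distance from $N$; since $G = \Z_2$ acts trivially on $M$ away from $F$ except by swapping the two sheets of a tubular neighborhood, $B_1$ is either a single fixed point or another codimension-one fixed component, and in the positively curved (hence simply connected transversal) situation one argues exactly as in \cite{FangGrove16} that $B_1$ is a point and $N$ has codimension one, forcing $\dim B_1 = 0$ and $B_0 = N$ with normal sphere $\s^{n-1}$. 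The analogue of the Structure Theorem then gives a $G$--equivariant homeomorphism $M \approx DN \cup_E DB_1$, where $DN$ is a twisted (or trivial) $D^1$--bundle over $N$ and $DB_1 = D^n$. I would construct the gluing vector field $Z$ exactly as in the proof of Theorem \ref{thm:FPH}: gradient-like for $d^g_N$ away from $N\cup B_1$, radial near $N$ in the $g$-metric, and radial near $B_1$ in the $\tilde g$-metric, patched via partitions of unity and the angle condition (Property (v)) that holds here by the same first-variation argument.

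From the decomposition $M \approx DN \cup_E D^n$ one reads off the topology: if the $D^1$--bundle $DN \to N$ is trivial then $M$ is a twisted double which, capped by $D^n$, is $\s^n$; if it is the nontrivial (Möbius-type) bundle then $M \approx \R\pp^n$ by the standard identification of $\R\pp^n$ as the union of a tubular neighborhood of $\R\pp^{n-1}$ with a disk. To upgrade homeomorphism to diffeomorphism one uses that $N$ is itself positively curved in the weighted sense (being totally geodesic for $\tilde g$, or by restricting $\phi$), hence by induction on dimension, together with the smooth uniqueness of such bundle decompositions, the identification is smooth; the low-dimensional base cases are handled by classical results. The main obstacle I anticipate is verifying the angle condition (Property (v)) and the convexity of sublevel sets of $d^{\tilde g}_N$ in this degenerate setting where the ``normal sphere'' to $N$ is just two points, so that the first-variation arguments need to be phrased carefully for the conformal metric; once that is in place, the rest is a direct transcription of \cite{FangGrove16} using Lemma \ref{lem:tilde-convex} and the weighted index form \eqref{eqn:IndexForm} in place of their curvature inputs.
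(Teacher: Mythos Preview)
Your proposal is essentially correct and follows the same route the paper indicates: the paper gives no detailed proof, only the remark that the Fang--Grove argument in the positively curved case ``uses a similar strategy and again carries over to the case of PWSC,'' and your outline is precisely that transcription using the weighted Structure Theorem machinery from the proof of Theorem~\ref{thm:FPH}.

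Two small corrections are worth noting. First, you cannot invoke Lemma~\ref{lem:tilde-convex} verbatim, because its proof uses a Cheeger deformation that shrinks the $G$--orbit directions, which is vacuous for a discrete group. However, the argument is actually \emph{simpler} for $G=\Z_2$: the condition that $V'$ be parallel to the (zero-dimensional) orbits forces $V'=0$, so $V$ is parallel, and the second variation integrand $e^{2\phi}(|V'|^2 - g(R^{\nabla^\phi}(V,\sigma')\sigma',V))$ is immediately negative by PWSC with no deformation needed. The dimension count $\dim B_0 + 1 \geq \dim(M/G)$ holds with equality, and parallel transport of $\tilde\sigma'$ back along $\sigma$ lands in $T_{\sigma(0)}N = \sigma'(0)^\perp$, so the required $V$ exists. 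Second, your description of $B_1$ is slightly off: $B_1 = \Z_2\cdot p_1$ is a single orbit, hence either one fixed point or a free two-point orbit, never a codimension-one component; the double-disc-bundle decomposition then reads $M \approx DN \cup_E DB_1$ with $E$ an $S^0$--bundle over $N$ on one side and the normal sphere bundle of $B_1$ on the other, from which the $\s^n/\R\pp^n$ dichotomy follows as in \cite{FangGrove16}. With these adjustments your argument goes through.
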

\section{Comparison estimates}

\subsection{Preliminaries}

Having established convexity results above for positive and negative weighted curvature, we now turn our attention to deriving optimal comparison estimates for non-zero curvature bounds.  While we have not directly used  $\nabla^{\phi}$ in the previous section, we must use the weighted connection for the more quantitative estimates in this section.  
Specifically, it turns out in our comparison estimates that the comparison functions must be parametrized in terms of the re-parametrization of geodesics coming from the connection $\nabla^{\phi}$.     We discuss this in  the next sub-section and apply it to the second variation formulas.  We also give   some simple examples showing that the use of the reparametrization is necessary for Jacobi field estimates.  

\subsubsection{Re-parametrization of geodesics and second variation of Energy}

Any connection gives rise to a notion of geodesics, which are the curves with zero acceleration.  We will call the geodesics for the connection $\nabla^{\phi}$ the $\phi$-geodesics and call the Riemannian geodesics $g$-geodesics.  Since $\nabla^{\phi}$ is projectively equivalent to the Levi-Civita connection, the $\phi$-geodesics are just reparametrizations of the  $g$-geodesics.  Given a $g$-geodesic, $\sigma(t)$, the parameter $s = \int e^{-2\phi( \sigma(t))} dt$ is the parameter of a $\phi$-geodesic.  We will say that a $\phi$-geodesic, $\gamma(s)$, is \emph{the standard re-parametrization of $\sigma(t)$} if the parameter $s$ is given by this formula in terms of $t$.   Below we will denote the $t$ derivative of a $g$-geodesic by $'$ and the $s$ derivative of the standard re-parametrization by $\dot{ }$.  We will use $\sigma$ for $g$-geodesics and $\gamma$ for $\phi$-geodesics. Any connection defines a notion of (geodesic) completeness, which is the condition that all geodesics can be extended for all time.  We say $(M,g, \phi)$ is $g$-complete is the Levi-Civita connection is complete and we say it is $\phi$-complete if $\nabla^{\phi}$ is complete. 

The \emph{re-parametrized distance} $s(p,q)$ is the globally defined function 
\begin{align} \label{eqn:sdistance}
  s(p,q) = \inf \left\{ s: \gamma(0) = p, \gamma(s) = q\right\},
\end{align}
where the infimum is taken over all normalized ${\phi}$-geodesics $\gamma$. That is $\phi$-geodesics which are the standard re-parametization of a minimizing unit speed $g$-geodesic. The function $s$ acts like the distance function in comparison estimates.

We apply this re-parametrization to the second variation formula.  Recall  equation (\ref{eqn:IndexForm}) for the index form which was proven in \cite{Wylie15}, 

	\begin{eqnarray*}
	\hspace{.1in}I(V,V)\hspace{-.1in} &=& \hspace{-.1in} \int_a^b \of{ |V' - d\phi(\sigma')V|^2 
	- R^{\nabla \phi}(V,\sigma',  \sigma', V)}dt
	+ \left.d\phi(\sigma') |V|^2\right|_{t=a}^{t=b}.	\end{eqnarray*}
	
This formula looks even closer to the standard formula for the second variation if we write it  in terms of $\phi$-geodesics. 

\begin{proposition}\label{Prop:NewIndex}
Given a manifold with density $(M,g,\phi)$ and a $\phi$-geodesic $\gamma:[a,b]\to M$ with standard parametrization in terms of $s$, and $V$ a vector-field along $\gamma$ everywhere orthogonal to $\dot\gamma$, then 
\[
I(e^\phi V,e^\phi V) = \int_a^b \left(\left|\nabla_{\dot\gamma} V\right|^2 - g(R^{\nabla^\phi}(V,\dot\gamma)\dot\gamma,V)\right)ds + d\phi(\dot\gamma)|V|^2\Big|_a^b.
\]
\end{proposition}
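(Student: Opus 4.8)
The plan is to start from the known index-form identity \eqref{eqn:IndexForm} applied to the $g$-geodesic $\sigma$ underlying $\gamma$, substitute the test field $V$ there by $e^{\phi}V$, and then convert every $t$-derivative and the $dt$ measure into the $s$-parametrization via the relation $ds = e^{-2\phi}dt$, i.e.\ $\frac{d}{dt} = e^{-2\phi}\frac{d}{ds}$. The key algebraic input is how the ``weighted derivative'' $W' - d\phi(\sigma')W$ that appears in \eqref{eqn:IndexForm} transforms: writing $W = e^{\phi}V$, one computes
\[
W' - d\phi(\sigma')W = e^{\phi}\bigl(V' + d\phi(\sigma')V - d\phi(\sigma')V\bigr) = e^{\phi}V',
\]
where $'$ is still the $t$-derivative. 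Converting to $s$, $e^{\phi}V' = e^{\phi}e^{-2\phi}\dot V = e^{-\phi}\dot V$, so $|W' - d\phi(\sigma')W|^2 = e^{-2\phi}|\dot V|^2$. I should double-check here whether $\dot V$ is the plain $s$-derivative of the components or the covariant derivative $\nabla_{\dot\gamma}V$; since the statement writes $\nabla_{\dot\gamma}V$, I expect the correct reading is that $V'$ (Levi-Civita covariant $t$-derivative) transforms to $e^{-2\phi}\nabla_{\dot\gamma}V$ up to a connection correction term, and part of the computation is verifying that the $\nabla^\phi$ versus $\nabla$ discrepancy in the covariant derivative along the curve is absorbed correctly given that $V \perp \dot\gamma$ and $\gamma$ is a $\phi$-geodesic.

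\textbf{Main steps, in order.} First, recall \eqref{eqn:IndexForm} for a unit-speed $g$-geodesic and a perpendicular field. Second, set $V_{\mathrm{test}} = e^{\phi}V$ and compute the weighted-derivative term, obtaining the clean cancellation above so that $|V_{\mathrm{test}}' - d\phi(\sigma')V_{\mathrm{test}}|^2 = e^{2\phi}|V'|^2$ in terms of the $t$-derivative (note $e^{\phi}\cdot e^{\phi}$ from squaring). Third, handle the curvature term: $g(R^{\nabla^\phi}(V_{\mathrm{test}},\sigma')\sigma',V_{\mathrm{test}}) = e^{2\phi}g(R^{\nabla^\phi}(V,\sigma')\sigma',V)$ by bilinearity. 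Fourth, change variables $t \mapsto s$ with $dt = e^{2\phi}ds$ and $\sigma' = \frac{d\sigma}{dt} = e^{2\phi}\dot\gamma$ (wait --- I need to be careful about the direction: if $s = \int e^{-2\phi}dt$ then $\frac{ds}{dt} = e^{-2\phi}$, so $\sigma' = \frac{d\gamma}{ds}\frac{ds}{dt} = e^{-2\phi}\dot\gamma$, hence $\dot\gamma = e^{2\phi}\sigma'$ and $dt = e^{2\phi}ds$). Substituting into the integral, the $e^{2\phi}$ factors from the integrand and the $e^{2\phi}$ from $dt = e^{2\phi}ds$ need to combine to give exactly $|\nabla_{\dot\gamma}V|^2$ and $g(R^{\nabla^\phi}(V,\dot\gamma)\dot\gamma,V)$ --- this is the bookkeeping heart of the proof, since $R^{\nabla^\phi}(V,\dot\gamma)\dot\gamma$ carries two factors of $\dot\gamma = e^{2\phi}\sigma'$, while $\nabla_{\dot\gamma}V$ carries one. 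Fifth, handle the boundary term: $d\phi(\sigma')|V_{\mathrm{test}}|^2 = d\phi(\sigma')e^{2\phi}|V|^2$, and since $d\phi(\dot\gamma) = e^{2\phi}d\phi(\sigma')$, this is exactly $d\phi(\dot\gamma)|V|^2$, matching the claimed boundary term.

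\textbf{Anticipated obstacle.} The main point of friction is the precise conversion of the covariant-derivative term and keeping the powers of $e^{2\phi}$ straight through the reparametrization --- in particular, confirming that $V' - d\phi(\sigma')V$ (with $'$ the Levi-Civita $t$-derivative) rescales to $\nabla_{\dot\gamma}V$ rather than to $\nabla^\phi_{\dot\gamma}V$ or some hybrid. The resolution should use that $\gamma$ is a $\phi$-geodesic (so $\nabla^\phi_{\dot\gamma}\dot\gamma = 0$) together with the formula $\nabla^\phi_XY = \nabla_XY - d\phi(X)Y - d\phi(Y)X$: along $\gamma$, $\nabla^\phi_{\dot\gamma}V = \nabla_{\dot\gamma}V - d\phi(\dot\gamma)V - d\phi(V)\dot\gamma$, and since we may assume $V\perp\dot\gamma$ is preserved appropriately, the ``extra'' terms are exactly what is needed to reconcile the two expressions. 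Once the exponent accounting is pinned down, the identity falls out; I expect the entire proof to be under half a page of computation with no conceptual surprises beyond this substitution.
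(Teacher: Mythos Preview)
Your proposal is correct and follows essentially the same route as the paper: start from \eqref{eqn:IndexForm}, plug in $e^{\phi}V$, observe the cancellation $(e^{\phi}V)' - d\phi(\sigma')(e^{\phi}V) = e^{\phi}\nabla_{\sigma'}V$, pull $e^{2\phi}$ out of the integrand and boundary term, and change variables via $ds = e^{-2\phi}\,dt$ and $\dot\gamma = e^{2\phi}\sigma'$. Your ``anticipated obstacle'' is an overcomplication: the symbol $\nabla_{\dot\gamma}V$ in the statement is the ordinary Levi--Civita covariant derivative, and the conversion $\nabla_{\sigma'}V = e^{-2\phi}\nabla_{\dot\gamma}V$ is just linearity of $\nabla$ in the subscript---no appeal to the $\phi$-geodesic equation or to $\nabla^{\phi}$ is needed at that step.
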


\begin{proof}
Using formula (\ref{eqn:IndexForm}) for  $\gamma:[a,b]\to M$ the standard re-parametrization of $\sigma$ we have 
\begin{align*}
I(e^\phi V, e^\phi V) &= \int_0^T \left(\left|e^\phi \nabla_{\sigma'} V + e^\phi d\phi(\sigma')V - e^\phi d\phi(\sigma')V\right|^2 - e^{2\phi} g(R^{\phi}(V,\sigma')\sigma',V)\right)dt\\
&\qquad\qquad\qquad+ e^{2\phi}d\phi(\sigma')|V|^2\Big|_{t=0}^{t=T}\\
&= \int_0^T e^{2\phi}\left(\left|\nabla_{\sigma'} V\right|^2 - g(R^{\nabla^\phi}(V,\sigma')\sigma',V)\right)dt + e^{2\phi}d\phi(\sigma')|V|^2\Big|_{t=0}^{t=T}\\
&= \int_0^T e^{-2\phi}\left(\left|\nabla_{\dot\gamma}V\right|^2 - g(R^{\nabla^\phi}(V,\dot\gamma)\dot\gamma,V)\right)dt + d\phi(\dot\gamma)|V|^2\Big|_{t=0}^{t=T}\\
&= \int_a^b \left(\left|\nabla_{\dot\gamma} V\right|^2 - g(R^{\nabla^\phi}(V,\dot\gamma)\dot\gamma,V)\right)ds + d\phi(\dot\gamma)|V|^2\Big|_{s=a}^{s=b}.
\end{align*}
\end{proof}

\begin{remark}  If we combine this formula with the arguments using the conformal change metric as in Lemma \ref{Lem:NonNegConvex} and Lemma \ref{lem:tilde-convex} we obtain the following second variation formula: Given a $\phi$-geodesic $\gamma:[a,b]\to M$ with standard parametrization in terms of $s$, and $V$ an orthogonal  vector-field along $\gamma$ ,
 then the variation $\gamma_r = \widetilde{\exp}(r e^{\phi} V)$ of $\gamma$ satisfies
	\[\left.\frac{d^2}{dr^2}\right|_{r=0}E(\gamma_r)  =  \int_a^b \left(\left|\nabla_{\dot\gamma} V\right|^2 - g(R^{\nabla^\phi}(V,\dot\gamma)\dot\gamma,V)\right)ds.	\]
\end{remark}

\begin{remark}  \label{Rem:CurveBound} The curvature term $g(R^{\nabla^\phi}(V,\dot\gamma)\dot\gamma,V)$ explains why is is natural to consider variable curvature bounds of the form $\overline{\sec}_{\phi} \geq \kappa e^{-4\phi}$ as we have $\dot{\gamma} = e^{2\phi} \sigma'$ so that the inequality  $g(R^{\nabla^\phi}(V,\dot\gamma)\dot\gamma,V) \geq \kappa$ holds for all standard reparametrizations of unit speed $g$-geodesics $\sigma$ and all $V$, unit perpendicular vector fields along $\gamma$, if and only if   $\overline{\sec}_{\phi} \geq \kappa e^{-4 \phi}$. \end{remark}

\subsubsection{Constant Radial curvatures} 
Now we note a fundamental difference between the usual sectional curvature and weighted sectional curvature.  Recall the result of Cartan-Ambrose-Hicks which states roughly that if we  have  two points $p$ and $q$ in two different Riemannian manifolds, with the property that all of the corresponding ``radial" sectional curvatures that involve planes containing geodesics emanating from points $p$ and $q$ are the same then the metrics are locally isometric.  In particular, if a point has constant sectional curvature for  all radial two planes, then a space has constant curvature.  

This result underlies many rigidity phenomena in comparison geometry as to show rigidity one shows that all radial curvatures are constant.  The following  examples show that this kind of rigidity is not true in the weighted case.

Let  $\phi$ be any function on the real line. Consider the metric 
\begin{align*}
g=dr^2 + e^{2\phi} \sn^2_k(s) g_{S^{n-1}} 
\end{align*}
where $s(r) = \int_0^r e^{-2\phi(t)} dt$  and  
\begin{align}
\label{Eqn:CompFunc} \sn_{\kappa}(s) = \left \{ \begin{array}{ccc} \frac{ \sin\left(\sqrt{\kappa}s\right) }{\sqrt{\kappa}} & \kappa>0 \\ s & \kappa= 0 \\ \frac{\sinh\left(\sqrt{-\kappa}s\right) }{\sqrt{-\kappa}} & \kappa<0 \end{array} \right. 
\end{align}
Recall that $\sn_{\kappa}$ is the solution to $\sn''_{\kappa} = - \kappa \sn_{\kappa}$, $\sn_{\kappa}(0) = 0$, $\sn'_{\kappa}(0)=1$. The motivation for defining $g$ is the following.  

\begin{proposition}
For the pair, $(g, \phi)$ as above if $X$ is a unit vector perpendicular to $\frac{\partial}{\partial r}$ then 
\begin{align*} \overline{\sec}_{\phi} \left( \frac{\partial}{\partial r}, X \right)  = k e^{-4\phi} \end{align*}
\end{proposition}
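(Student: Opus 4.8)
The plan is to recognize $g$ as a warped product over an interval and thereby reduce the statement to a one-variable ODE computation. Since $\phi$ is (pulled back from) a function of $r$ alone and the coordinate field $\frac{\partial}{\partial r}$ is the unit tangent to the radial geodesics — so that $\nabla_{\partial/\partial r}\frac{\partial}{\partial r}=0$ — we have $d\phi\!\left(\frac{\partial}{\partial r}\right)=\phi'$ and $\mathrm{Hess}_g\,\phi\!\left(\frac{\partial}{\partial r},\frac{\partial}{\partial r}\right)=\phi''$, where $'=\frac{d}{dr}$. By the definition of $\overline{\sec}_\phi$ it therefore suffices to prove that $\sec\!\left(\frac{\partial}{\partial r},X\right)=k\,e^{-4\phi}-\phi''-(\phi')^2$ for every unit $X\perp\frac{\partial}{\partial r}$.

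Set $f(r):=e^{\phi(r)}\sn_k(s(r))$, so that $g=dr^2+f(r)^2 g_{S^{n-1}}$ is a warped product with round-sphere fiber. Any unit $X\perp\frac{\partial}{\partial r}$ is tangent to the sphere factor, and the standard warped-product curvature formula gives $\sec\!\left(\frac{\partial}{\partial r},X\right)=-\frac{f''}{f}$. Thus the proposition is equivalent to the identity $\frac{f''}{f}=\phi''+(\phi')^2-k\,e^{-4\phi}$.

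To verify this, differentiate $f=e^\phi\sn_k(s)$ using $s'=e^{-2\phi}$, obtaining $f'=\phi'e^\phi\sn_k(s)+e^{-\phi}\sn'_k(s)$. Differentiating once more and using the defining ODE $\sn''_k=-k\,\sn_k$, the two terms $\pm\,\phi'e^{-\phi}\sn'_k(s)$ cancel, and one is left with $f''=e^\phi\sn_k(s)\bigl(\phi''+(\phi')^2-k\,e^{-4\phi}\bigr)$, i.e. $\frac{f''}{f}=\phi''+(\phi')^2-k\,e^{-4\phi}$. Plugging back in gives $\overline{\sec}_\phi\!\left(\frac{\partial}{\partial r},X\right)=-\frac{f''}{f}+\phi''+(\phi')^2=k\,e^{-4\phi}$, as claimed.

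The only point requiring care is the bookkeeping in computing $f''$: correctly applying the chain rule through the reparametrization $r\mapsto s$ and observing that the $\phi'\,\sn'_k(s)$ contributions drop out. There is no conceptual obstacle; indeed, that cancellation is precisely the mechanism making the weighted radial curvature equal to $k\,e^{-4\phi}$ even though the warping function $f$ — and hence the unweighted radial curvature $-f''/f$ — is not constant.
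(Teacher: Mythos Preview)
Your proof is correct and follows essentially the same approach as the paper: both recognize the metric as a warped product $dr^2 + f^2 g_{S^{n-1}}$, use the standard radial curvature formula $\sec(\partial/\partial r, X) = -f''/f$, and compute $f''$ via the chain rule through $s' = e^{-2\phi}$ together with $\sn_k'' = -k\,\sn_k$, observing that the mixed $\phi' e^{-\phi}\sn_k'(s)$ terms cancel. Your write-up is slightly more explicit about why the Hessian term reduces to $\phi''$, but the argument is otherwise identical.
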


\begin{proof}
For a metric of the form $g_M = dr^2 + h^2(r,x)g_{S^{n-1}}$, 
\begin{align*}
\sec\left(\frac{\partial}{\partial r}, X \right) = -\frac{ \frac{\partial^2 h}{\partial r^2}}{h} 
\end{align*}

In our case,  $h = e^{\phi} \sn_k(s)$ where $s = \int e^{-2\phi} dr$.  So 
\begin{align*}
\frac{\partial h}{\partial r} &= \frac{\partial \phi}{\partial r} e^{\phi}  \sn_k(s) + e^{-\phi} \sn_k'(s), \\
\frac{\partial^2 h}{\partial r^2} &= e^{\phi}  \sn_k(s)\left( \frac{\partial^2 \phi}{\partial r^2} + \left(\frac{\partial \phi}{\partial r}\right)^2 \right) - ke^{-3\phi}\sn_k(s),
\end{align*}
from which the result follows. 

\end{proof}

The Jacobi fields of the metric $g$ are exactly $J(r) = e^{\phi}\sn_{\kappa}(s) E$ where $E$ is a perpendicular parallel field.  This shows that we can not expect uniform control on Jacobi fields that depends on the $g$-geodesic parametrization $r$.  We will prove an optimal Rauch comparison theorem  depending on the parameter $s$  in the next section. 

\subsection{Weighted Rauch theorems}

In this section we will prove the analogues of Rauch comparison theorems in the setting of manifolds with density. Recall that these theorems relate the growth rates of Jacobi fields on different manifolds, utilizing curvature bounds. Therefore, in order to prove analogues of the Rauch comparison theorems, we need to be able to compare the vector fields on two different manifolds with density. In particular, we need to be able to compare the index forms of two vector fields, provided that they satisfy certain conditions.

\begin{lemma}\label{Lem:Transfer}
Let $(M^n, g, \phi)$ and $(\widehat{M}^n, \widehat{g}, \widehat{\phi})$ be two manifolds with density. Let $\gamma,\widehat{\gamma}$ be geodesics with standard parametrization defined on $[0,S]$ on $M,\widehat{M}$ respectively.  Let $e_i,\widehat{e}_i$ be $g,\widehat{g}$-parallel, orthonormal basis along $\gamma,\widehat{\gamma}$ with $e_1\parallel\dot\gamma$ and  $\widehat{e}_1\parallel\dot{\widehat{\gamma}}$. Let
\[
V = \sum_{i=2}^n u_i(s)e_i(s)\qquad \widehat{V} = \sum_{i=2}^n u_i(s)\widehat{e}_i(s)
\]
and assume that $R^{\nabla^{\phi}}(V, \dot{\gamma}, \dot{\gamma}, V) \geq R^{\widehat{\nabla}^{\widehat{\phi}}}(\widehat{V}, \dot{\widehat{\gamma}},  \dot{\widehat{\gamma}}, \widehat{V})$ at each corresponding point $\gamma(s)$ and $\widehat{\gamma}(s)$, then
\[
I(e^\phi V, e^\phi V) \leq I(e^{\widehat\phi} \widehat V, e^{\widehat\phi}\widehat V) + \left[d\phi(\dot\gamma(s)) - d\widehat{\phi}(\dot{\widehat{\gamma}}(s))\right]\left|\widehat{V}(s)\right|^2\Big|_0^S.
\]
Moreover, equality occurs iff $R^{\nabla^{\phi}}(V, \dot{\gamma}, \dot{\gamma}, V) = R^{\widehat{\nabla}^{\widehat{\phi}}}(\widehat{V}, \dot{\widehat{\gamma}},  \dot{\widehat{\gamma}}, \widehat{V})$.
\end{lemma}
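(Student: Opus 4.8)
The plan is to apply the index-form identity of Proposition~\ref{Prop:NewIndex} to each of the two vector fields and then compare the resulting expressions term by term. First I would note that Proposition~\ref{Prop:NewIndex} does apply: since $e_1\parallel\dot\gamma$ and the $e_i$ are orthonormal, $V=\sum_{i=2}^n u_i e_i$ is everywhere orthogonal to $\dot\gamma$, and likewise $\widehat V\perp\dot{\widehat\gamma}$. The crucial observation is that, because $V$ and $\widehat V$ are built from the \emph{same} coefficient functions $u_i(s)$ and the frames $e_i,\widehat e_i$ are parallel along $\gamma,\widehat\gamma$, the non-curvature contributions to the two index forms coincide identically: $\nabla_{\dot\gamma}V=\sum_{i=2}^n u_i'(s)\,e_i(s)$, so that
\[
|\nabla_{\dot\gamma}V|^2=\sum_{i=2}^n u_i'(s)^2=|\nabla_{\dot{\widehat\gamma}}\widehat V|^2,\qquad |V(s)|^2=\sum_{i=2}^n u_i(s)^2=|\widehat V(s)|^2
\]
for every $s\in[0,S]$.

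Next I would write out, using Proposition~\ref{Prop:NewIndex},
\[
I(e^\phi V,e^\phi V)=\int_0^S\Big(\sum_{i=2}^n u_i'^2-g(R^{\nabla^\phi}(V,\dot\gamma)\dot\gamma,V)\Big)\,ds+d\phi(\dot\gamma)\,|V|^2\Big|_0^S
\]
together with the analogous identity for $I(e^{\widehat\phi}\widehat V,e^{\widehat\phi}\widehat V)$, and subtract. The $\sum_i u_i'^2$ terms in the two integrals cancel; the two boundary terms combine, using $|V(s)|^2=|\widehat V(s)|^2$ at $s=0$ and $s=S$, into exactly $\big[d\phi(\dot\gamma(s))-d\widehat\phi(\dot{\widehat\gamma}(s))\big]\,|\widehat V(s)|^2\big|_0^S$; and the remaining integral contribution is $\int_0^S\big(R^{\widehat\nabla^{\widehat\phi}}(\widehat V,\dot{\widehat\gamma},\dot{\widehat\gamma},\widehat V)-R^{\nabla^\phi}(V,\dot\gamma,\dot\gamma,V)\big)\,ds$. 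By the curvature hypothesis this integrand is $\leq 0$ at every $s$, so the integral is $\leq 0$, which yields precisely the asserted inequality.

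For the equality clause, I would observe that the integrand just displayed is continuous and everywhere $\leq 0$; hence the integral vanishes if and only if $R^{\nabla^\phi}(V,\dot\gamma,\dot\gamma,V)=R^{\widehat\nabla^{\widehat\phi}}(\widehat V,\dot{\widehat\gamma},\dot{\widehat\gamma},\widehat V)$ at each corresponding pair of points. Since all the other terms in the comparison matched identically, this is exactly the condition for equality in the lemma.

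There is no substantive obstacle here beyond bookkeeping; the only point requiring care is to record that the prescribed normalization of the frames — parallel, orthonormal, with first vector along the geodesic — together with the identical coefficient functions $u_i$ is exactly what forces the kinetic and norm terms of the two index forms to agree, so that the inequality is driven entirely by the curvature comparison and the boundary term measures only the discrepancy between $d\phi(\dot\gamma)$ and $d\widehat\phi(\dot{\widehat\gamma})$.
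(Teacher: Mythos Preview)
Your proposal is correct and follows essentially the same argument as the paper: apply Proposition~\ref{Prop:NewIndex} to both fields, use the parallel orthonormal frames with identical coefficients $u_i$ to match the kinetic and norm terms, and reduce the comparison to the curvature integrand. The equality discussion via nonpositivity and continuity of the integrand is likewise the paper's reasoning.
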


\begin{proof}
Note that 
\begin{align*}
\nabla_{\dot\gamma} V &= \nabla_{\dot\gamma} \left(\sum_{i=2}^n u_i(s)e_i(s)\right)  = \sum_{i=2}^n \dot u_i(s)e_i(s)\\
\widehat{\nabla}_{\dot{\widehat{\gamma}}} \widehat{V} &= \widehat{\nabla}_{\dot{\widehat{\gamma}}} \left(\sum_{i=2}^n u_i(s)\widehat{e}_i(s)\right)  = \sum_{i=2}^n \dot u_i(s)\widehat{e}_i(s).
\end{align*}
So that $|\nabla_{\dot\gamma} V|_g = |\widehat{\nabla}_{\dot{\widehat{\gamma}}} V|_{\widehat{g}}$.  We also clearly have $| V|_g = | \widehat{V}|_{\widehat{g}}$.
 
From Proposition~\ref{Prop:NewIndex}, we know that
\begin{align*}
  I(e^\phi V,e^\phi V) &= \int_0^S \left(\left|\nabla_{\dot\gamma} V\right|^2 - g(R^{\nabla^\phi}(V,\dot\gamma)\dot\gamma,V)\right)ds + d\phi(\dot\gamma)|V|^2\Big|_0^S\\
&\leq \int_0^S \left(\left|\widehat{\nabla}^{\widehat{\phi}}_{\dot{\widehat{\gamma}}}\widehat{V}\right|^2 - \widehat{g}(\widehat{R}^{\widehat{\nabla}^{\widehat\phi}}(\widehat{V},\dot{\widehat{\gamma}})\dot{\widehat{\gamma}},\widehat{V})\right)ds\\
&\qquad+ d\phi(\dot\gamma(S))|\widehat{V}(S)|^2 - d\phi(\dot\gamma(0))|\widehat{V}(0)|^2\\
&= I(e^{\widehat\phi} \widehat V, e^{\widehat\phi}\widehat V) + \left[d\phi(\dot\gamma(s)) - d\widehat{\phi}(\dot{\widehat{\gamma}}(s))\right]\left|\widehat{V}(s)\right|^2\Big|_0^S.
\end{align*}

The condition for equality follow immediately from the above comparison, since the one inequality corresponds precisely to the difference of curvatures.
\end{proof}

\begin{theorem}[First Rauch Comparison Theorem for Manifolds with Density]\label{Thm:Rauch1}
Let $(M^n, g, \phi)$ and $(\widehat{M}^n, \widehat{g}, \widehat{\phi})$ be two manifolds with density. Let $\gamma,\widehat{\gamma}$ be $\phi,\widehat{\phi}$-geodesics with standard parametrization defined on $[0,S]$ on $M,\widehat{M}$ respectively, with $\gamma$ having no conjugate points for $s\in[0,S]$.  Suppose that $R^{\nabla^{\phi}}(V, \dot{\gamma}, \dot{\gamma}, V) \geq R^{\widehat{\nabla}^{\widehat{\phi}}}(\widehat{V}, \dot{\widehat{\gamma}},  \dot{\widehat{\gamma}}, \widehat{V})$ for all unit vectors $V, \widehat{V}$ at the corresponding points $\gamma(s)$ and $\widehat{\gamma}(s)$

Let $J$ and $\widehat{J}$ be Jacobi fields along $\gamma,\widehat{\gamma}$ respectively. If
\[
J(0) = \widehat{J}(0) = 0\qquad |J'(0)|=|\widehat{J}'(0)|\qquad J'(0)\perp\dot\gamma(0)\qquad \widehat{J}'(0)\perp \dot{\widehat{\gamma}}(0),
\]
then
\[
e^{\phi(\gamma(0))-\phi(\gamma(s))}|J(s)| \leq e^{\widehat{\phi}(\widehat{\gamma}(0))-\widehat\phi(\widehat{\gamma}(s))}|\widehat{J}(s)|.
\]
\end{theorem}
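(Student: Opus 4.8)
The plan is to adapt the classical proof of the first Rauch comparison theorem, replacing the ordinary index form with the weighted index form of Proposition~\ref{Prop:NewIndex} and carrying out the curvature comparison with Lemma~\ref{Lem:Transfer}. First I would normalize: adding a constant to $\phi$ changes neither the connection $\nabla^{\phi}$ nor its curvature nor any of the hypotheses, and multiplies both sides of the desired inequality by the same positive factor, so I may assume $\phi(\gamma(0)) = \widehat\phi(\widehat\gamma(0)) = 0$. Writing $\sigma,\widehat\sigma$ for the underlying unit-speed $g$-geodesics, put $\rho(s) = e^{-\phi(\gamma(s))}|J(s)|$ and $\widehat\rho(s) = e^{-\widehat\phi(\widehat\gamma(s))}|\widehat J(s)|$; the assertion is exactly that $\rho \leq \widehat\rho$ on $[0,S]$. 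We may assume $J'(0) \neq 0$, since otherwise $J \equiv 0$ and both sides vanish. Because $J$ and $\widehat J$ vanish at $0$, are perpendicular, and satisfy $|J'(0)| = |\widehat J'(0)|$, a Taylor expansion gives $\rho(s)/s \to |J'(0)|$ and $\widehat\rho(s)/s \to |\widehat J'(0)|$ as $s \to 0^{+}$, hence $\rho(s)/\widehat\rho(s) \to 1$. Since $\gamma$ has no conjugate points, $J$, and therefore $\rho$, is positive on $(0,S]$, and then so is $\widehat\rho$ by the usual argument. It therefore suffices to prove $(\log\rho)'(s_0) \leq (\log\widehat\rho)'(s_0)$ for each $s_0 \in (0,S]$.

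The next step is to identify this logarithmic derivative with a reduced weighted index form. Fix $s_0 \in (0,S]$ and set $V(s) = e^{-\phi(\gamma(s))}J(s)/\rho(s_0)$, so that $V \perp \dot\gamma$, $V(0) = 0$, $|V(s_0)| = 1$, and $e^{\phi}V = J/\rho(s_0)$ is a constant multiple of the $g$-Jacobi field $J$. Starting from the identity $I(J,J) = g(\nabla_{\sigma'}J, J)\big|_{0}^{t_0}$ valid for $g$-Jacobi fields vanishing at the initial point, converting the $t$-derivative to an $s$-derivative via $ds = e^{-2\phi}\,dt$, and comparing with Proposition~\ref{Prop:NewIndex}, one obtains
\[ (\log\rho)'(s_0) \;=\; \int_{0}^{s_0} \Big( |\nabla_{\dot\gamma}V|^{2} - g\big(R^{\nabla^{\phi}}(V,\dot\gamma)\dot\gamma, V\big) \Big)\, ds =: \mathcal{I}_{s_0}(V), \]
and, by the same computation on $\widehat M$, $(\log\widehat\rho)'(s_0) = \widehat{\mathcal{I}}_{s_0}(\widehat V)$ for $\widehat V(s) = e^{-\widehat\phi(\widehat\gamma(s))}\widehat J(s)/\widehat\rho(s_0)$.

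Finally I would run the comparison. Transfer $\widehat V$ to a vector field $W$ along $\gamma$ in the manner of Lemma~\ref{Lem:Transfer}: expand $\widehat V$ in a parallel orthonormal frame of $\widehat\gamma$ adapted to $\dot{\widehat\gamma}$, and pair the same coefficient functions with a parallel orthonormal frame of $\gamma$ adapted to $\dot\gamma$, chosen so that $W(s_0) = V(s_0)$ (possible because $|W(s_0)| = |\widehat V(s_0)| = 1$ and $V(s_0)$ is a unit vector orthogonal to $\dot\gamma(s_0)$). Then $W(0) = 0$, $|W(s_0)| = 1$, $W \perp \dot\gamma$, $|\nabla_{\dot\gamma}W| = |\nabla_{\dot{\widehat\gamma}}\widehat V|$, $|W| = |\widehat V|$, and, because the curvature term is quadratic in its vector argument, the hypothesis gives $R^{\nabla^{\phi}}(W,\dot\gamma,\dot\gamma,W) \geq R^{\widehat\nabla^{\widehat\phi}}(\widehat V,\dot{\widehat\gamma},\dot{\widehat\gamma},\widehat V)$ at corresponding points. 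Two inequalities now combine. First, since $\gamma$ has no conjugate points, $e^{\phi}V = J/\rho(s_0)$ is the unique $g$-Jacobi field with its endpoint values and hence minimizes the ordinary index form over all fields along $\sigma|_{[0,t_0]}$ with those endpoints; as $e^{\phi}W$ has the same endpoints (both vanish at $\sigma(0)$ and both equal $J(t_0)/\rho(s_0)$ at $\sigma(t_0)$) and the boundary term $d\phi(\dot\gamma)|W|^{2}$ in Proposition~\ref{Prop:NewIndex}, evaluated between $0$ and $s_0$, equals $d\phi(\dot\gamma(s_0))$ for both, this yields $\mathcal{I}_{s_0}(V) \leq \mathcal{I}_{s_0}(W)$. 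Second, Lemma~\ref{Lem:Transfer} on $[0,s_0]$, with its boundary term evaluated using $W(0) = \widehat V(0) = 0$ and $|W(s_0)| = |\widehat V(s_0)| = 1$, gives $\mathcal{I}_{s_0}(W) \leq \widehat{\mathcal{I}}_{s_0}(\widehat V)$. Chaining these, $(\log\rho)'(s_0) = \mathcal{I}_{s_0}(V) \leq \mathcal{I}_{s_0}(W) \leq \widehat{\mathcal{I}}_{s_0}(\widehat V) = (\log\widehat\rho)'(s_0)$, which completes the argument.

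I expect the main obstacle to lie in the middle step: keeping the two parametrizations (the $g$-parameter $t$ along $\sigma$ and the standard $\phi$-parameter $s$ along $\gamma$) and the various $e^{\phi}$-rescalings straight so as to match $(\log\rho)'$ cleanly with the reduced weighted index form, and, in the comparison, making the frame choice $W(s_0) = V(s_0)$ that allows the classical index-minimality of a Jacobi field --- the one place the no-conjugate-points hypothesis is used --- to apply verbatim with all the $d\phi(\dot\gamma)$ boundary terms agreeing. Once that is set up, the transfer inequality is a direct application of Lemma~\ref{Lem:Transfer}. The degenerate cases ($J'(0) = 0$, and $\widehat J$ failing to be nonzero on $(0,S]$) should be disposed of at the outset as indicated.
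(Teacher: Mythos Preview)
Your proposal is correct and follows essentially the same strategy as the paper's proof: compare the logarithmic derivative of the weighted Jacobi field norm via the weighted index form, apply the classical Index Lemma for the Jacobi-field minimality step, and invoke Lemma~\ref{Lem:Transfer} for the curvature transfer. The paper works with $v=|J|^2$ and the full index form $I$ (carrying the $d\phi(\dot\gamma)$ boundary terms explicitly), whereas you package the same computation by normalizing $\phi(\gamma(0))=\widehat\phi(\widehat\gamma(0))=0$, working directly with $\rho=e^{-\phi}|J|$, and absorbing the boundary terms into a reduced functional $\mathcal I_{s_0}$; these are purely cosmetic differences.
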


\begin{proof}
Let $v(s) = |J(s)|^2, \widehat{v}(s)=|\widehat{J}(s)|^2$. For an arbitrary $s_0\in[0,S]$, define two new Jacobi fields:
\[
U(s) = \frac{1}{\sqrt{v(s_0)}}J(s)\qquad \widehat{U}(s) = \frac{1}{\sqrt{\widehat{v}(s_0)}}\widehat{J}(s).
\]
Observe that
\begin{align*}
\frac{\dot{v}(s_0)}{v(s_0)} &=e^{2\phi(\gamma(s_0))}\frac{v'(s_0)}{v(s_0)} = 2e^{2\phi(\gamma(s_0))} I(U,U). 
\end{align*}
Similarly,
\[
\frac{\dot{\widehat{v}}(s_0)}{\widehat{v}(s_0)} = 2e^{2\widehat{\phi}(\widehat{\gamma}(s_0))} I(\widehat{U},\widehat{U}).
\]

Choose parallel orthonormal bases along $\gamma,\widehat{\gamma}$ such that $U(s_0) = e_2(s_0)$ and $\widehat{U}(s_0) = \widehat{e}_2(s_0)$. We now apply Lemma~\ref{Lem:Transfer} with $\widehat{V}(s) = e^{\widehat{\phi}(\widehat{\gamma}(s_0))-\widehat{\phi}(\widehat{\gamma}(s))}\widehat{U}(s)$ and $V$ the corresponding field along $\gamma$, so
\[
I(e^\phi V, e^\phi V) \leq I(e^{\widehat\phi}\widehat{V},e^{\widehat\phi}\widehat{V}) + \left[d\phi(\dot\gamma(s)) - d\widehat\phi(\dot{\widehat{\gamma}}(s))\right]|\widehat{V}(s)|^2\Big|_0^{s_0}.
\]
However, we know that $\widehat{V}(0) = 0$ and $|\widehat{V}(s_0)| = 1$, so we get
\[
I(e^\phi V, e^\phi V) \leq e^{2\widehat{\phi}(\widehat{\gamma}(s_0))}I(\widehat{U},\widehat{U}) + \left[d\phi(\dot\gamma(s_0)) - d\widehat\phi(\dot{\widehat{\gamma}}(s_0))\right].
\]

We now consider another vector field along $\gamma$ defined by $W = e^{-\phi(\gamma(s_0))}V$, then $I(e^\phi W, e^\phi W) = e^{-2\phi(\gamma(s_0))} I(e^\phi V, e^\phi V)$ and $(e^\phi W)(s_0) = U(s_0)$, so
\begin{equation}\label{ineq:Rauch1}
e^{2\phi(\gamma(s_0))} I(U,U) \leq I(e^\phi V,e^\phi V) \leq e^{2\widehat{\phi}(\widehat{\gamma}(s_0))}I(\widehat{U},\widehat{U}) + \left[d\phi(\dot\gamma(s_0)) - d\widehat\phi(\dot{\widehat{\gamma}}(s_0))\right],
\end{equation}
which we re-write as
\begin{align}
\frac{\dot{v}(s_0)}{v(s_0)} - 2d\phi(\dot\gamma(s_0)) \leq \frac{\dot{\widehat{v}}(s_0)}{\widehat{v}(s_0)} - 2d\widehat\phi(\dot{\widehat{\gamma}}(s_0)). \label{Eqn:RauchDerivative}
\end{align}
Since $s_0$ was arbitrary, we can solve this differential inequality as follows:
\[
e^{2\phi(\gamma(0))-2\phi(\gamma(s))}|J(s)|^2 \leq e^{2\widehat{\phi}(\widehat{\gamma}(0))-2\widehat\phi(\widehat{\gamma}(s))}|\widehat{J}(s)|^2.
\]
as was claimed.
\end{proof}

We now move on to the second Rauch Comparison Theorem, also called Berger Comparison Theorem.

\begin{theorem}[Second Rauch Comparison Theorem for Manifolds with Density]\label{Thm:Rauch2}
Let $(M^n,g,\phi)$ and $(\widehat{M}^n,\widehat{g},\widehat{\phi})$ be manifolds with density. Let $\gamma:[0,S]\to M$ and $\widehat{\gamma}:[0,S]\to\widehat{M}$ be $\phi$-geodesics with standard parametrization, and $\gamma$ having no focal points to the geodesic submanifold given by $\exp_{\gamma(0)} \dot\gamma(0)^\perp$. Suppose that $R^{\nabla^{\phi}}(V, \dot{\gamma}, \dot{\gamma}, V) \geq R^{\widehat{\nabla}^{\widehat{\phi}}}(\widehat{V}, \dot{\widehat{\gamma}},  \dot{\widehat{\gamma}}, \widehat{V})$ for all unit vectors $V, \widehat{V}$ at the corresponding points $\gamma(s)$ and $\widehat{\gamma}(s)$. Furthermore, let $J$ and $\widehat{J}$ be Jacobi fields along $\gamma,\widehat{\gamma}$ respectively, parametrized in terms of $s$. If
\[
J'(0) = \widehat{J}'(0) = 0 \qquad |J(0)| = |\widehat{J}(0)| \qquad J(0)\perp\dot\gamma(0)\qquad \widehat{J}(0)\perp\dot{\widehat{\gamma}}(0),
\]
then
\[
e^{\phi(\gamma(0))-\phi(\gamma(s))}|J(s)|\leq e^{\widehat\phi(\widehat{\gamma}(0))-\widehat\phi(\widehat{\gamma}(s))}|\widehat{J}(s)| e^{(d\widehat{\phi}(\dot{\widehat{\gamma}}(0))-d\phi(\dot\gamma(0)))\tau(s)},
\]
where
\[
\tau(s) = \int_0^s \frac{|e^{-\widehat\phi}\widehat{J}|^2(0)}{|e^{-\widehat\phi}\widehat{J}|^2(\xi)}d\xi.
\]
\end{theorem}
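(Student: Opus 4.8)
The plan is to imitate the proof of Theorem~\ref{Thm:Rauch1}, with the Dirichlet normalization $J(0)=0$ there replaced by the Neumann normalization $J'(0)=0$, the ``no conjugate points'' hypothesis replaced by ``$\gamma$ has no focal points of $H:=\exp_{\gamma(0)}\dot\gamma(0)^\perp$'', and with careful bookkeeping of the boundary term of Lemma~\ref{Lem:Transfer} at $s=0$, which no longer vanishes and is precisely what produces the correction factor $e^{(d\widehat\phi(\dot{\widehat\gamma}(0))-d\phi(\dot\gamma(0)))\tau(s)}$. Write $v(s)=|J(s)|^2$, $\widehat v(s)=|\widehat J(s)|^2$, and let $\sigma$ denote the $g$--geodesic underlying $\gamma$. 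We may assume $J(0)\neq 0$ (otherwise $J\equiv 0$); then $J$ is a non-trivial $H$--Jacobi field, so the no-focal-point hypothesis gives $v>0$ on $(0,S]$, and, exactly as in the unweighted case, $J(0)\perp\dot\gamma(0)$ together with $J'(0)=0$ forces $J\perp\dot\gamma$ on all of $[0,S]$ (and likewise $\widehat J\perp\dot{\widehat\gamma}$). We also assume $\widehat J$ is nowhere zero on $(0,S]$, so that $\tau$ is finite; otherwise the right-hand side of the assertion is $+\infty$.

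The first step is to record, for each fixed $s_0\in(0,S]$, the identity
\[
\frac{\dot v(s_0)}{v(s_0)}=2e^{2\phi(\gamma(s_0))}I(U,U),\qquad U:=\tfrac{1}{\sqrt{v(s_0)}}J,
\]
and its analogue on $\widehat M$. This is obtained exactly as in Theorem~\ref{Thm:Rauch1}: integrating by parts in the $g$--index form and invoking the $g$--Jacobi equation reduces $I(J,J)$ to the boundary value $g(\nabla_{\sigma'}J,J)$ at the two endpoints, and the endpoint at $0$ now drops out because $J'(0)=0$ rather than because $J(0)=0$; the factor $e^{2\phi(\gamma(s_0))}$ converts the $t$--derivative of $|J|^2$ into $\dot v$.

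The second step is the transfer. Choose $g$-- and $\widehat g$--parallel orthonormal frames $(e_i)$ along $\gamma$ and $(\widehat e_i)$ along $\widehat\gamma$ with $e_1\parallel\dot\gamma$, $\widehat e_1\parallel\dot{\widehat\gamma}$, $e_2(s_0)=U(s_0)$, and $\widehat e_2(s_0)=\widehat U(s_0)$. Put $\widehat V(s)=e^{\widehat\phi(\widehat\gamma(s_0))-\widehat\phi(\widehat\gamma(s))}\widehat U(s)$, and let $V(s)$ be the field along $\gamma$ with the same components in $(e_i)$ as $\widehat V(s)$ has in $(\widehat e_i)$; then $|V(s)|=|\widehat V(s)|$ for all $s$ and $V(s_0)=U(s_0)$. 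Since $V\mapsto R^{\nabla^\phi}(V,\dot\gamma,\dot\gamma,V)$ is quadratic in $V$, the curvature hypothesis (stated for unit vectors) gives $R^{\nabla^\phi}(V,\dot\gamma,\dot\gamma,V)\ge R^{\widehat\nabla^{\widehat\phi}}(\widehat V,\dot{\widehat\gamma},\dot{\widehat\gamma},\widehat V)$ along $\gamma$, so Lemma~\ref{Lem:Transfer} applies on $[0,s_0]$. One checks $e^{\widehat\phi}\widehat V=e^{\widehat\phi(\widehat\gamma(s_0))}\widehat U$, hence $I(e^{\widehat\phi}\widehat V,e^{\widehat\phi}\widehat V)=e^{2\widehat\phi(\widehat\gamma(s_0))}I(\widehat U,\widehat U)$; and, with $W:=e^{-\phi(\gamma(s_0))}V$ so that $(e^\phi W)(s_0)=U(s_0)$, the Neumann (focal) index lemma for the $g$--index form — available since $\gamma$ has no focal points of $H$ on $(0,s_0]$ — gives $I(U,U)\le I(e^\phi W,e^\phi W)=e^{-2\phi(\gamma(s_0))}I(e^\phi V,e^\phi V)$. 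Feeding these into the conclusion of Lemma~\ref{Lem:Transfer}, using $|\widehat V(s_0)|^2=1$ and $|\widehat V(0)|^2=|e^{-\widehat\phi}\widehat J|^2(0)\big/|e^{-\widehat\phi}\widehat J|^2(s_0)$, and then substituting the identity from the first step, I would obtain
\[
\frac{\dot v(s_0)}{v(s_0)}-2\,d\phi(\dot\gamma(s_0))\ \le\ \frac{\dot{\widehat v}(s_0)}{\widehat v(s_0)}-2\,d\widehat\phi(\dot{\widehat\gamma}(s_0))\ -\ 2\bigl(d\phi(\dot\gamma(0))-d\widehat\phi(\dot{\widehat\gamma}(0))\bigr)\frac{|e^{-\widehat\phi}\widehat J|^2(0)}{|e^{-\widehat\phi}\widehat J|^2(s_0)}.
\]

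The final step is to integrate this differential inequality over $s_0\in(0,s)$. Both ratios $\dot v/v$ and $\dot{\widehat v}/\widehat v$ extend continuously to $s_0=0$ with value $0$ (because $J'(0)=\widehat J'(0)=0$), so the lower limit causes no difficulty; the integral of the last term is $-2\bigl(d\phi(\dot\gamma(0))-d\widehat\phi(\dot{\widehat\gamma}(0))\bigr)\tau(s)$ by the definition of $\tau$; and the constants $\log v(0)$, $\log\widehat v(0)$ cancel because $|J(0)|=|\widehat J(0)|$. Exponentiating and taking square roots then yields the stated inequality. The main obstacle, compared with Theorem~\ref{Thm:Rauch1}, is the surviving $s=0$ boundary term of Lemma~\ref{Lem:Transfer}: one must identify its value as $|e^{-\widehat\phi}\widehat J|^2(0)/|e^{-\widehat\phi}\widehat J|^2(s_0)$ and recognize that its antiderivative in $s_0$ is $\tau(s)$, which is exactly what pins down the precise exponential correction; the rest is a direct transcription of the Dirichlet argument.
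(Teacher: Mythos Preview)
Your proposal is correct and follows essentially the same approach as the paper: define $v,\widehat v,U,\widehat U,V,\widehat V$ as in Theorem~\ref{Thm:Rauch1}, apply Lemma~\ref{Lem:Transfer}, track the now-nonvanishing boundary term at $s=0$ as $|\widehat V(0)|^2=|e^{-\widehat\phi}\widehat J|^2(0)\big/|e^{-\widehat\phi}\widehat J|^2(s_0)$, invoke the (focal) index lemma, and integrate the resulting differential inequality. Your version is in fact slightly more careful than the paper's---you correctly carry the factor of $2$ on the extra term in the differential inequality (needed so that integration and the square root produce the exponent $(d\widehat\phi(\dot{\widehat\gamma}(0))-d\phi(\dot\gamma(0)))\tau(s)$ rather than half of it), and you note the continuity at $s_0=0$ and the degenerate case where $\widehat J$ vanishes.
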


\begin{remark}
In the special case where $\widehat\phi=0$ and $\widehat{M}$ has $\sec\equiv K$, $\tau$ is a generalized tangent:
\[
\tau(s) = \begin{cases}
s & K = 0\\
\frac{1}{\sqrt{K}}\tan(\sqrt{K}s) & K>0\\
\frac{1}{\sqrt{-K}}\tanh(\sqrt{-K}s) & K<0
\end{cases}
\]
\end{remark}

\begin{proof}
Define $v,\widehat{v},U,\widehat{U},V,\widehat{V}$ as in the Proof of Theorem~\ref{Thm:Rauch1}, then we get:
\[
I(e^\phi V, e^\phi V) \leq I(e^{\widehat\phi}\widehat{V},e^{\widehat\phi}\widehat{V}) + [d\phi(\dot\gamma(s)) - d\widehat{\phi}(\dot{\widehat{\gamma}}(s))]|\widehat{V}(s)|^2\Big|_0^{s_0},
\]
We still have $|\widehat{V}(s_0)| = 1$, however, this time we have $|\widehat{V}(0)| = e^{\widehat\phi(\widehat{\gamma}(s_0))-\widehat\phi(\widehat{\gamma}(0))}|\widehat{U}(0)| = e^{\widehat\phi(\widehat{\gamma}(s_0))-\widehat\phi(\widehat{\gamma}(0))} \frac{|\widehat{J}(0)|}{|\widehat{J}(s_0)|} = \frac{|e^{-\widehat\phi}\widehat{J}|(0)}{|e^{-\widehat\phi}\widehat{J}|(s_0)}$. As before, define $W= e^{-\phi(\gamma(s_0))}V$, then using the Index Lemma, we get:
\begin{align}
\begin{split}\label{ineq:Rauch2}
e^{2\phi(\gamma(s_0))}I(U,U) &\leq I(e^\phi V, e^\phi V) \\
 &\leq e^{2\widehat{\phi}(\widehat{\gamma}(s_0))} I(\widehat{U},\widehat{U}) + [d\phi(\dot\gamma(s_0))-d\widehat\phi(\dot{\widehat{\gamma}}(s_0))] + [d\widehat\phi(\dot{\widehat{\gamma}}(0)) - d\phi(\dot\gamma(0))]\left[\frac{|e^{-\widehat\phi}\widehat{J}|(0)}{|e^{-\widehat\phi}\widehat{J}|(s_0)}\right]^2,
 \end{split}
\end{align}
which can be re-written as:
\[
\frac{\dot{v}(s_0)}{v(s_0)} - 2d\phi(\dot\gamma(s_0)) \leq \frac{\dot{\widehat{v}}(s_0)}{\widehat{v}(s_0)} - 2d\widehat\phi(\dot{\widehat{\gamma}}(s_0)) + [d\widehat\phi(\dot{\widehat{\gamma}}(0)) - d\phi(\dot\gamma(0))]\left[\frac{|e^{-\widehat\phi}\widehat{J}|(0)}{|e^{-\widehat\phi}\widehat{J}|(s_0)}\right]^2
\]
Since $s_0$ was arbitrary, we can solve this and obtain the claimed result.
\end{proof}

\begin{proposition}
Equality in Theorem~\ref{Thm:Rauch1} and Theorem~\ref{Thm:Rauch2} occurs when the following conditions are satisfied:

\begin{enumerate}
\item\label{Cond:par} $J(s) = |J(s)|e_2(s)$, $\widehat{J}(s) = |\widehat{J}(s)|\widehat{e}_2(s)$, where $e_2,\widehat{e}_2$ are $g,\widehat{g}$-parallel unit vectors orthogonal to $\dot{\gamma},\dot{\widehat{\gamma}}$ respectively.
		
\item\label{Cond:curv} $R^{\nabla^\phi}(e_2(s),\dot{\gamma},\dot{\gamma},e_2(s)) = \widehat{R}^{{\widehat{\nabla}}^{\widehat{\phi}}}(\widehat{e}_2(s),\dot{\widehat{\gamma}},\dot{\widehat{\gamma}},\widehat{e}_2(s)$ for all $s\in[0,T]$.
\end{enumerate}
\end{proposition}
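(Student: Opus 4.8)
The plan is to prove that conditions (1) and (2) are \emph{sufficient} for equality by re-running the proofs of Theorems~\ref{Thm:Rauch1} and~\ref{Thm:Rauch2} and checking that, under these hypotheses, every inequality used along the way is an equality. In both proofs there are only two places where an inequality enters: (a) the pointwise curvature comparison $R^{\nabla^{\phi}}(V,\dot\gamma,\dot\gamma,V)\ge R^{\widehat{\nabla}^{\widehat{\phi}}}(\widehat V,\dot{\widehat\gamma},\dot{\widehat\gamma},\widehat V)$ built into Lemma~\ref{Lem:Transfer}, and (b) the Index Lemma step $e^{2\phi(\gamma(s_0))}I(U,U)\le I(e^{\phi}V,e^{\phi}V)$, which enters through the auxiliary field $W=e^{-\phi(\gamma(s_0))}V$ whose rescaling $e^{\phi}W$ agrees with the Jacobi field $U$ at the endpoints $0$ and $s_0$. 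So it suffices to show that (1) forces (b) to be an equality and that (1) together with (2) forces (a) to be an equality.

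For (a): under condition (1) the comparison fields $V$ and $\widehat V$ constructed inside the proofs are a common scalar function times the $g$- and $\widehat g$-parallel unit fields $e_2$ and $\widehat e_2$; indeed $\widehat J=|\widehat J|\widehat e_2$ makes $\widehat U$, and hence $\widehat V=e^{\widehat\phi(\widehat\gamma(s_0))-\widehat\phi(\widehat\gamma(\cdot))}\widehat U$, point along $\widehat e_2$, and $V$ is the corresponding field along $\gamma$. Since $R^{\nabla^{\phi}}$ and $R^{\widehat{\nabla}^{\widehat{\phi}}}$ are tensorial, condition (2) promotes the integrand inequality in the proof of Lemma~\ref{Lem:Transfer} — and therefore the transfer inequality itself — to an equality. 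This is precisely the equality clause already recorded in Lemma~\ref{Lem:Transfer}.

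For (b): the key computation is a reduction to a scalar ODE. Writing $w(s)=e^{-\phi(\gamma(s))}|J(s)|$ and $\widehat w(s)=e^{-\widehat\phi(\widehat\gamma(s))}|\widehat J(s)|$, differentiating $J=|J|e_2$ along $\gamma$ using that $e_2$ is $g$-parallel, and passing from the affine parameter $t$ to the standard parameter $s$ (so that $\dot\gamma=e^{2\phi}\sigma'$), one finds that condition (1) implies $w$ solves $\ddot w+\kappa(s)\,w=0$ with $\kappa(s)=R^{\nabla^{\phi}}(e_2,\dot\gamma,\dot\gamma,e_2)(s)$, and likewise $\widehat w$ solves $\ddot{\widehat w}+\widehat\kappa(s)\,\widehat w=0$ with $\widehat\kappa(s)=R^{\widehat{\nabla}^{\widehat{\phi}}}(\widehat e_2,\dot{\widehat\gamma},\dot{\widehat\gamma},\widehat e_2)(s)$. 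By condition (2), $\kappa\equiv\widehat\kappa$, so $w$ and $\widehat w$ satisfy the same second-order linear equation; running the same substitution backwards, $e^{\phi\circ\gamma}\,\widehat w\cdot e_2$ is then a genuine $g$-Jacobi field along $\gamma$, so the auxiliary field $e^{\phi}W$ is a $g$-Jacobi field with the same boundary data as $U$ on $[0,s_0]$. Since $\gamma$ has no conjugate points (Theorem~\ref{Thm:Rauch1}), resp.\ no focal points to $\exp_{\gamma(0)}\dot\gamma(0)^\perp$ (Theorem~\ref{Thm:Rauch2}), on $[0,S]$, the Index Lemma forces $e^{\phi}W=U$, i.e., equality in (b).

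Finally, feeding equalities in (a) and (b) back into the differential inequality (\ref{Eqn:RauchDerivative}) (resp.\ into its analogue in the proof of Theorem~\ref{Thm:Rauch2}) turns it into an identity; integrating it with the initial data prescribed by the hypotheses on $J(0),J'(0)$ and $\widehat J(0),\widehat J'(0)$ gives equality in the conclusion of the respective theorem. I expect the main obstacle to be the scalar-ODE reduction in (b): one must verify that the potential governing $e^{-\phi\circ\gamma}|J|$ in the $s$-parameter is exactly the radial weighted curvature $R^{\nabla^{\phi}}(e_2,\dot\gamma,\dot\gamma,e_2)$ with no lower-order correction, and — in the setting of Theorem~\ref{Thm:Rauch2} — one must match the initial derivative carefully, since it is the mismatch between the $g$-minimal and $\widetilde g$-minimal boundary conditions at $s=0$ (equivalently, the quantity $d\widehat\phi(\dot{\widehat\gamma}(0))-d\phi(\dot\gamma(0))$) that produces the $\tau$-factor in the conclusion, and verifying that it appears with the claimed coefficient is the delicate bookkeeping point.
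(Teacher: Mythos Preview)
You and the paper argue opposite implications. The paper's proof assumes equality in \eqref{ineq:Rauch1} (resp.\ \eqref{ineq:Rauch2}) and \emph{derives} conditions (1) and (2): the equality case of the Index Lemma forces $e^{\phi}V=e^{\phi(\gamma(s_0))}U$, and since the parallel frames $e_i,\widehat e_i$ for $i\ge 3$ were chosen arbitrarily and independently on the two manifolds, the only way the transferred field can coincide with the Jacobi field is that $\widehat u_i\equiv 0$ for $i\neq 2$, which gives (1); condition (2) then drops out of the equality clause of Lemma~\ref{Lem:Transfer}. You instead assume (1) and (2) and try to collapse every inequality in the proof.

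For Theorem~\ref{Thm:Rauch1} your route does go through: the scalar-ODE reduction together with $w(0)=\widehat w(0)=0$ makes $e^{\phi}W$ a genuine Jacobi field with the correct boundary data, so the Index-Lemma step becomes an equality. For Theorem~\ref{Thm:Rauch2}, however, the point you flag as ``delicate bookkeeping'' is an actual obstruction, not bookkeeping. Under (1) and (2) your ODE does show that $e^{\phi}\widehat w\,e_2$ is a $g$-Jacobi field along $\gamma$, but a direct computation gives
\[
(e^{\phi}\widehat w\,e_2)'(0)\;=\;-\,e^{-\phi(\gamma(0))}\,\widehat w(0)\,\bigl(d\widehat\phi(\dot{\widehat\gamma}(0))-d\phi(\dot\gamma(0))\bigr)\,e_2(0),
\]
which is nonzero precisely when the quantity producing the $\tau$-factor is nonzero. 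Hence $e^{\phi}W$ does not satisfy the initial condition $J'(0)=0$, it is not the $H$-Jacobi field with value $U(s_0)$ at $s_0$, the Index-Lemma inequality is strict, and equality in Theorem~\ref{Thm:Rauch2} does \emph{not} follow from (1) and (2) alone. The proposition is meant (and proved in the paper) as a statement of necessary conditions; your argument establishes the converse only in the first case.
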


\begin{proof}
In the proof of Theorem~\ref{Thm:Rauch1}, the only inequalities we had were the two in \eqref{ineq:Rauch1}. In the proof of Theorem~\ref{Thm:Rauch2}, the only inequalities were the two in \eqref{ineq:Rauch2}
	
The inequality on the left in \eqref{ineq:Rauch1} and \eqref{ineq:Rauch2} arises from the Index Lemma, and leads to condition \eqref{Cond:par} above. Equality in Index Lemma occurs precisely when the vector field in question equals the Jacobi field, so we can conclude that $e^\phi V = e^{\phi(\gamma(s_0))} U$. Let $U(s) = \sum_{i=2}^n u_i(s)e_i(s)$ and $\widehat{U}(s) = \sum_{i=2}^n \widehat{u}_i(s)\widehat{e}_i(s)$, where $e_i,\widehat{e}_i$ are $g$ and $\hat{g}$ parallel orthonormal basis along $\gamma$ and $\widehat{\gamma}$ respectively. By construction,
\begin{align*}
\widehat{V}(s) &= \sum_{i=2}^n e^{\widehat{\phi}(\widehat{\gamma}(s_0)) - \widehat{\phi}(\widehat{\gamma}(s))}\widehat{u}_i(s) \widehat{e}_i(s)\\
V(s) &= \sum_{i=2}^n e^{\widehat{\phi}(\widehat{\gamma}(s_0)) - \widehat{\phi}(\widehat{\gamma}(s))}\widehat{u}_i(s) e_i(s).
\end{align*}
Therefore,
\[
U(s) = \sum_{i=2}^n e^{\widehat{\phi}(\widehat{\gamma}(s_0)) - \widehat{\phi}(\widehat{\gamma}(s)) - \phi(\gamma(s))}\widehat{u}_i(s) e_i(s).
\]
However, the choice of $e_i,\widehat{e}_i$ other than $i=2$ was completely arbitrary and independent of each other. Therefore, the only way this can happen is if $\widehat{u}_i(s) = 0$ for $i\neq 2$. Therefore, $U,\widehat{U}$ are $g,\widehat{g}$-parallel up to scaling, and so are $J,\widehat{J}$ as claimed.
	
The inequality on right of \eqref{ineq:Rauch1} and \eqref{ineq:Rauch2} arises from Lemma~\ref{Lem:Transfer} and leads to condition \eqref{Cond:curv} above by the equality case of Lemma~\ref{Lem:Transfer}.
\end{proof}

\subsection{The Sphere Theorem}

As an application of the Rauch comparison theorem we will prove the sphere theorem mentioned in the introduction.  In fact, using the Rauch comparison theorems we get the same conjugate and injectivity radius estimates  as are used in the classical case.  

  For submanifolds $A$ and $B$ in $M$ define the path space as  
 \[ \Omega_{A,B}(M) = \{ \gamma:[0,1] \rightarrow M, \gamma(0) = A, \gamma(1) = B \}
 \]  
 We consider the Energy $E:  \Omega_{A,B}(M) \rightarrow \mathbb{R}$ and  variation fields tangent to $A$ and $B$ at the end points. The  critical points are then  the geodesics perpendicular to $A$ and $B$ and  we say that the index of such a geodesic is $\geq k$ if there is a $k$-dimensional space of variation fields along the geodesic which have negative second variation.  The first result is the following. We state the results in this subsection in terms of the the invariants $\underline{\kappa}_g$ and $\overline{K}_g$ defined in Definition \ref{Def:CurveBound}. 

\begin{lemma} \label{Lem:index} Suppose that $(M,g)$ is a Riemannian manifold such that $\underline{\kappa}_g >0$.  Let $\sigma$ be a unit speed geodesic of length $> \frac{\pi}{\sqrt{\underline{\kappa}_g}}$  then the index of $\sigma$ is greater than or equal to $(n-1)$.
\end{lemma}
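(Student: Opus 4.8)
The plan is to run the classical Morse-theoretic argument --- a geodesic of length $>\pi/\sqrt{\kappa}$ in a manifold with $\sec\ge\kappa$ has index at least $n-1$ --- through the geodesic reparametrization of the weighted connection, producing an $(n-1)$-dimensional family of proper variation fields on which the index form is negative definite, using the $s$-parametrized index form of Proposition~\ref{Prop:NewIndex} in place of the usual one.

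First I would note that $\underline{\kappa}_g>0$ forces $(M,g)$ to have PWSC: otherwise Definition~\ref{Def:CurveBound} computes $\underline{\kappa}_g$ as a supremum over densities $\phi\ge 0$, and any such $\phi$ realizing a value $\kappa>0$ would satisfy $\overline{\sec}_\phi\ge\kappa e^{-4\phi}>0$, i.e.\ PWSC after all. Writing $L$ for the length of $\sigma$, the hypothesis $L>\pi/\sqrt{\underline{\kappa}_g}$ then lets me fix a constant $\kappa$ with $\pi^2/L^2<\kappa<\underline{\kappa}_g$ together with a density $\phi\colon M\to(-\infty,0]$ satisfying $\overline{\sec}_\phi\ge\kappa e^{-4\phi}$. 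Let $\gamma\colon[0,S]\to M$ be the standard reparametrization of $\sigma$, so that $S=\int_0^L e^{-2\phi(\sigma(t))}\,dt$; because $\phi\le 0$ we have $e^{-2\phi}\ge 1$, hence $S\ge L>\pi/\sqrt{\kappa}$. (This monotonicity of the reparametrized length is the only place the sign normalization $\phi\le 0$ enters.) By Remark~\ref{Rem:CurveBound}, the curvature bound becomes $g(R^{\nabla^{\phi}}(V,\dot\gamma)\dot\gamma,V)\ge\kappa$ for every unit $V\perp\dot\gamma$ along $\gamma$, and hence $g(R^{\nabla^{\phi}}(V,\dot\gamma)\dot\gamma,V)\ge\kappa|V|^2$ for all $V\perp\dot\gamma$ by homogeneity.

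Next I would build the variation fields. Choose a $g$-parallel orthonormal frame $e_1,\dots,e_n$ along $\gamma$ with $e_1\parallel\dot\gamma$ (possible since $\dot\gamma$ is a positive multiple of the $g$-parallel field $\sigma'$, so $(\dot\gamma)^\perp$ is $g$-parallel), set $f(s)=\sin(\pi s/S)$, which vanishes at $s=0,S$ and is positive in between, and for $i=2,\dots,n$ put $W_i=e^{\phi}f e_i$; these vanish at the endpoints of $\sigma$, so they are admissible variation fields. Applying Proposition~\ref{Prop:NewIndex} with $V=f e_i$ (the boundary term drops out since $f$ vanishes at the endpoints), using $|\nabla_{\dot\gamma}(f e_i)|^2=\dot f^2$ together with the curvature bound, gives
\[
I(W_i,W_i)=\int_0^S\Bigl(\dot f^2-g(R^{\nabla^{\phi}}(e_i,\dot\gamma)\dot\gamma,e_i)\,f^2\Bigr)\,ds\le\int_0^S\bigl(\dot f^2-\kappa f^2\bigr)\,ds=\frac{\pi^2}{2S}-\frac{\kappa S}{2}<0,
\]
the last inequality because $S>\pi/\sqrt{\kappa}$. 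The identical estimate applied to an arbitrary nonzero $W=\sum_{i=2}^n c_i W_i=e^{\phi}f\bigl(\sum_{i=2}^n c_i e_i\bigr)$ yields $I(W,W)\le\bigl(\sum_i c_i^2\bigr)\bigl(\frac{\pi^2}{2S}-\frac{\kappa S}{2}\bigr)<0$, since $\sum_i c_i e_i$ is $g$-parallel, orthogonal to $\dot\gamma$, and of constant norm. Hence $I$ is negative definite on the $(n-1)$-dimensional space $\mathrm{span}\{W_2,\dots,W_n\}$, so the index of $\sigma$ is at least $n-1$.

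I do not expect a genuine obstacle here; the only subtleties are that one must pass to the $s$-parametrized index form of Proposition~\ref{Prop:NewIndex} so that the comparison function $\sin(\pi s/S)$ pairs cleanly with the $\nabla^{\phi}$-curvature term, and that the sign convention $\phi\le 0$ built into Definition~\ref{Def:CurveBound} is exactly what guarantees $S\ge L$, making the parameter interval $[0,S]$ long enough to house the comparison function.
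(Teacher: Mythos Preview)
Your proof is correct, and it takes a somewhat different route from the paper's. The paper applies the weighted Rauch comparison theorem (Theorem~\ref{Thm:Rauch1}) with the constant-curvature sphere as the model: since the model Jacobi field $\widehat J(s)=\sin(\sqrt{\underline\kappa_g-\varepsilon}\,s)$ vanishes at $\pi/\sqrt{\underline\kappa_g-\varepsilon}$, every orthogonal Jacobi field along $\gamma_\varepsilon$ must vanish in $[0,\pi/\sqrt{\underline\kappa_g-\varepsilon}]$, and then the (implicit) Morse index theorem gives index $\ge n-1$. You instead bypass both Rauch and the index theorem by constructing the $(n-1)$-dimensional space $\mathrm{span}\{e^{\phi}\sin(\pi s/S)\,e_i\}_{i=2}^n$ directly and checking negative-definiteness of $I$ via the $s$-parametrized formula of Proposition~\ref{Prop:NewIndex}. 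Your argument is more self-contained and elementary, while the paper's is shorter once the Rauch machinery is in hand and illustrates that machinery as the common engine behind the sphere-theorem estimates. Both rely on the same two ingredients you identify: the $s$-parametrized index form and the normalization $\phi\le 0$ forcing $S\ge L$.
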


\begin{proof}
From the definition of $\underline{\kappa}_g$,  for each $\varepsilon>0$, we have densities  $\phi_{\varepsilon}\leq 0$ such that  $\overline{\sec}_{\phi_\varepsilon} \geq (\underline{\kappa}_g - \varepsilon) e^{-4\phi_{\varepsilon}}$. Let $\gamma_{\varepsilon}(s_{\varepsilon})$ be the standard reparametrization of $\sigma(r)$ with respect to $\phi_{\varepsilon}$.  Since $\phi_\varepsilon \leq 0$, $s_{\varepsilon} \geq r$, so for $\varepsilon$ sufficiently small, $\gamma_{\varepsilon}$ is defined for $s_{\varepsilon} \in [0, T_0]$, for some $T_0> \frac{\pi}{\sqrt{\underline{\kappa}_g}}$. 

Take $\varepsilon$ sufficiently small that  so that $T_0> \frac{\pi}{\sqrt{\underline{\kappa}_g-\varepsilon}}$. Apply Theorem \ref{Thm:Rauch1} to the geodesic $\gamma_{\varepsilon}$ in the space with density $(M,g, \phi_{\varepsilon})$ and  $\widehat{M}$  the sphere with metric of  constant curvature $\underline{\kappa}_g - \varepsilon$ and $\widehat{\phi_\varepsilon} \equiv 0$. Then $\widehat{J}(s) = \sin(\sqrt{\underline{\kappa}_g - \varepsilon} s)$.  Since $\widehat{J}$ has a zero at $\frac{\pi}{\sqrt{\underline{\kappa}_g-\varepsilon}}$, every orthogonal Jacobi field to $\gamma_{\varepsilon}$ must have a zero in the interval $[0, \frac{\pi}{\sqrt{\underline{\kappa}_g-\varepsilon}}]$.  Since $\sigma$ is just a re-parametrization of $\gamma_{\varepsilon}$ this implies $\sigma$ must have  index $\geq n-1$. 
\end{proof}

\begin{remark} Lemma \ref{Lem:index} implies Theorem \ref{Thm:Myers}. \end{remark}

We can also obtain a lower bound on the conjugate radius from an upper bound on $\overline{K}_g$.

\begin{lemma} \label{Lemma:ConjPos} Suppose that $(M,g)$ is a Riemmanian manifold with $\overline{K}_g >0$.  Let $\sigma$ be a unit speed $g$-geodesic, then any two conjugate points of $\sigma$ are distance $\geq \frac{\pi}{\sqrt{\overline{K}_g}}$ apart. 
\end{lemma}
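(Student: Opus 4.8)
The plan is to follow the proof of Lemma \ref{Lem:index}, but with the roles of the two manifolds interchanged so that the curvature \emph{upper} bound produces a \emph{lower} bound on the growth of Jacobi fields, which is the feature underlying the classical conjugate radius estimate for $\sec \le K$. First, observe that if $(M,g)$ has NWSC then by \cite{Wylie15} (the formulas (\ref{Eqn:JacobiNonPos})) it has no conjugate points at all and the statement is vacuous; so I may assume $(M,g)$ does not have NWSC. Then by Definition \ref{Def:CurveBound}, for every $\varepsilon>0$ there is a density $\phi_\varepsilon : M \to [0,\infty)$ with $\overline{\sec}_{\phi_\varepsilon} \le (\overline{K}_g+\varepsilon)e^{-4\phi_\varepsilon}$, which by Remark \ref{Rem:CurveBound} says $g(R^{\nabla^{\phi_\varepsilon}}(V,\dot\gamma)\dot\gamma,V) \le \overline{K}_g+\varepsilon$ for every standard reparametrization $\gamma$ of a unit-speed $g$-geodesic and every unit $V \perp \dot\gamma$. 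Crucially $\phi_\varepsilon \ge 0$, so the standard reparametrization slows arclength down: $s(r)\le r$.

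Suppose for contradiction that $\sigma$ has conjugate points $\sigma(t_1)$ and $\sigma(t_2)$ with $t_1<t_2$ and $\ell := t_2-t_1 < \pi/\sqrt{\overline{K}_g}$. Choose $\varepsilon>0$ small enough that $\ell < \pi/\sqrt{\overline{K}_g+\varepsilon}$, which is possible since $\pi/\sqrt{\overline{K}_g+\varepsilon} \to \pi/\sqrt{\overline{K}_g}$ as $\varepsilon\to 0^+$. Replace $\sigma$ by the unit-speed $g$-geodesic $\bar\sigma(t)=\sigma(t_1+t)$ on $[0,\ell]$ and let $\bar J$ be a nontrivial $g$-Jacobi field along $\bar\sigma$ with $\bar J(0)=\bar J(\ell)=0$; since $\bar J$ vanishes at two distinct points, its tangential component vanishes, so $\bar J \perp \dot{\bar\sigma}$ and $\bar J'(0) \perp \dot{\bar\sigma}(0)$. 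Let $\gamma_\varepsilon:[0,S_\varepsilon]\to M$ be the standard $\phi_\varepsilon$-reparametrization of $\bar\sigma$; then $S_\varepsilon = \int_0^\ell e^{-2\phi_\varepsilon(\bar\sigma(t))}\,dt \le \ell < \pi/\sqrt{\overline{K}_g+\varepsilon}$, and $\bar J$ transports to a Jacobi field $\widehat J$ along $\gamma_\varepsilon$, still orthogonal to $\dot{\gamma_\varepsilon}$, with $\widehat J(0)=\widehat J(S_\varepsilon)=0$, which I normalize so that $|\widehat J'(0)|=1$.

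Now I would apply Theorem \ref{Thm:Rauch1} with $(\widehat M,\widehat g,\widehat\phi)=(M,g,\phi_\varepsilon)$, $\widehat\gamma=\gamma_\varepsilon$ on $[0,S_\varepsilon]$, and $\widehat J$ as above, while $(M,g,\phi)$ is taken to be the round sphere of constant curvature $\overline{K}_g+\varepsilon$ with $\phi\equiv 0$, $\gamma$ any of its geodesics on $[0,S_\varepsilon]$ (which has no conjugate points there, as $S_\varepsilon < \pi/\sqrt{\overline{K}_g+\varepsilon}$), and $J(s)=\sin(\sqrt{\overline{K}_g+\varepsilon}\,s)\,E(s)$ for a unit parallel field $E\perp\dot\gamma$. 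The curvature hypothesis of Theorem \ref{Thm:Rauch1} holds because the model's radial curvature is exactly $\overline{K}_g+\varepsilon \ge g(R^{\nabla^{\phi_\varepsilon}}(\widehat V,\dot{\gamma_\varepsilon})\dot{\gamma_\varepsilon},\widehat V)$, and the initial conditions hold by construction. The conclusion gives
\[
|J(s)| = e^{\phi(\gamma(0))-\phi(\gamma(s))}|J(s)| \le e^{\phi_\varepsilon(\gamma_\varepsilon(0))-\phi_\varepsilon(\gamma_\varepsilon(s))}\,|\widehat J(s)|
\]
for all $s\in[0,S_\varepsilon]$. Evaluating at $s=S_\varepsilon$, where $\widehat J(S_\varepsilon)=0$, forces $J(S_\varepsilon)=0$; but $0<S_\varepsilon<\pi/\sqrt{\overline{K}_g+\varepsilon}$, so $J(S_\varepsilon)\neq 0$, a contradiction. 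Hence $t_2-t_1 \ge \pi/\sqrt{\overline{K}_g}$.

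I do not expect a substantive obstacle: the argument is essentially identical to Lemma \ref{Lem:index}, and the only points requiring care are bookkeeping — that a $g$-Jacobi field vanishing at two points transports under the standard reparametrization to a field satisfying the orthogonality hypotheses of Theorem \ref{Thm:Rauch1}, and that $\phi_\varepsilon \ge 0$ keeps the reparametrized length $S_\varepsilon$ below the first conjugate value $\pi/\sqrt{\overline{K}_g+\varepsilon}$ of the comparison geodesic. If one instead wanted the conclusion phrased as a conjugate-radius bound from a fixed point, one could skip the shift to $\bar\sigma$ and argue directly that a Jacobi field with $\widehat J(0)=0$ cannot vanish for $s\in(0,\pi/\sqrt{\overline{K}_g+\varepsilon})$, using $s(r)\le r$ to translate back to the $g$-arclength parameter.
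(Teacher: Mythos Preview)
Your proposal is correct and follows exactly the approach the paper takes: apply Theorem \ref{Thm:Rauch1} with the roles reversed from Lemma \ref{Lem:index}, taking the constant-curvature sphere (with $\widehat\phi\equiv 0$) as the non-hat space with no conjugate points and $(M,g,\phi_\varepsilon)$ with $\phi_\varepsilon\ge 0$ as the hat space, using $s\le r$ to keep $S_\varepsilon<\pi/\sqrt{\overline K_g+\varepsilon}$. The paper's own proof is a two-line sketch to precisely this effect; your version simply fills in the bookkeeping (the only cosmetic slip is the normalization of $J$ on the sphere, which does not affect the vanishing argument).
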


\begin{proof}
Let $\gamma_{\varepsilon}(s)$ be the standard reparametrization of $\sigma(r)$ with respect to densities $\phi_{\varepsilon}$ with $\overline{\sec}_{\phi_{\varepsilon}} \leq (\overline{K}_g + \varepsilon)e^{-4\phi_{\varepsilon}}$ and $\phi_{\varepsilon} \geq 0$, $s \leq r$.  Apply Theorem \ref{Thm:Rauch1} in the opposite way than in the previous lemma. 
\end{proof}

Now that we have Lemmas \ref{Lem:index} and \ref{Lemma:ConjPos} we have the same control on the index of long geodesics and the conjugate radius as one has for un-weighted curvature bounds.  These facts, along with the resolution of the Poincare conjecture, allow us to prove the quarter-pinched sphere theorem using a classical argument of Berger.  The key observation is that if all geodesics in $\Omega_{p,p}$ have index at least $(n-1)$, then $\Omega_{p,p}$ is $(n-2)$-connected and hence $M$ is $(n-1)$-connected.  If $M$ is compact, this implies the manifold is a homotopy sphere, and thus homeomorphic to the sphere by the resolution of the Poincare conjecture, see \cite[Theorem 6.5.3]{Petersen16} for details.      
 
In order to use this result we must prove the injectivity radius estimate. These now follow with the classical proofs which we sketch for  completeness. 

\begin{lemma} \label{Lemma:EvenInjectivity} Suppose that $(M^n, g)$ is an even dimensional orientable manifold with $\underline{\kappa}_g >0$ and $\overline{K}_g \leq 1$  then $\mathrm{inj}_M \geq \pi$. 
\end{lemma}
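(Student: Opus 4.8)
The plan is to adapt Klingenberg's classical proof of the even-dimensional injectivity radius estimate; the only curvature-dependent ingredient there is Synge's second-variation argument, and that is the step I will replace by a weighted version built from the index form (\ref{eqn:IndexForm}).

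I would begin with the set-up. Since $\underline{\kappa}_g>0$, Theorem~\ref{Thm:Myers} shows that $M$ is compact and that $(M,g)$ has PWSC, so there is a density $\phi$ with $\overline{\sec}_\phi>0$; by compactness the function $e^{2\phi}\,\overline{\sec}_\phi$ then has a positive lower bound $c>0$. Suppose, for contradiction, that $\mathrm{inj}_M<\pi$. Since $\overline{K}_g\ge\underline{\kappa}_g>0$ and $\overline{K}_g\le 1$, Lemma~\ref{Lemma:ConjPos} gives that any two conjugate points along a $g$-geodesic are at distance $\ge\pi/\sqrt{\overline{K}_g}\ge\pi$, so the conjugate radius of $(M,g)$ is $\ge\pi>\mathrm{inj}_M$. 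Klingenberg's lemma --- a purely Riemannian statement applied at a closest cut point (see \cite[Ch.~13]{doCarmo76} or \cite{Petersen16}) --- then produces a \emph{smooth} closed $g$-geodesic $\sigma\colon[0,L]\to M$, parametrized by arclength with $L=2\,\mathrm{inj}_M<2\pi$, which realizes the injectivity radius.

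The crux is the weighted Synge step. Because $M$ is orientable, parallel transport once around $\sigma$ lies in $\SO(n)$ and fixes $\sigma'(0)$, hence restricts to an element of $\SO(n-1)$ on the $(n-1)$-dimensional normal space of $\sigma$; since $n$ is even, $n-1$ is odd, so this rotation has a $(+1)$-eigenvector. Transporting it along $\sigma$ yields a $g$-parallel, periodic unit normal field $V$ along $\sigma$. Now set $W=e^{\phi\circ\sigma}\,V$, which is a smooth periodic field along $\sigma$, perpendicular to $\sigma'$, and satisfies $W'=d\phi(\sigma')\,W$, i.e.\ $W'-d\phi(\sigma')W=0$. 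Substituting $W$ into (\ref{eqn:IndexForm}), the first term of the integrand vanishes identically, while the boundary term $d\phi(\sigma')\,|W|^2\big|_0^L$ vanishes by periodicity of $\sigma$ and $W$; what remains is
\[
I(W,W)=-\int_0^L g\!\left(R^{\nabla^{\phi}}(W,\sigma')\sigma',W\right)dt=-\int_0^L e^{2\phi}\,\overline{\sec}_\phi(\sigma',V)\,dt\ \le\ -c\,L\ <\ 0 .
\]
Equivalently, one applies Proposition~\ref{Prop:NewIndex} to the standard $\phi$-reparametrization of $\sigma$ together with the field $V$. Since $I$ is the ordinary $g$-index form of the closed geodesic $\sigma$ and $W$ is periodic, this exhibits a direction of strictly negative second variation for the $g$-energy of $\sigma$ on the free loop space, so there are closed curves arbitrarily close to $\sigma$ of length strictly less than $L=2\,\mathrm{inj}_M$.

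Finally I would invoke the classical endgame unchanged: a short closed curve near $\sigma$ splits, after bisecting by arclength, into two arcs of length $<\mathrm{inj}_M$ joining two points at distance $<\mathrm{inj}_M$, hence both arcs lie in a ball on which the exponential map is a diffeomorphism; letting the variation parameter tend to $0$ and using that the two halves of $\sigma$ are the only minimal geodesics joining $\sigma(0)$ and $\sigma(L/2)$ forces these arcs to coincide up to homotopy, contradicting the structure of the closest cut point (see \cite[Ch.~13]{doCarmo76}). The only genuinely new point --- and the step I expect to be the main obstacle --- is the Synge argument above: with the naive $g$-parametrization the term $|V'-d\phi(\sigma')V|^2$ in (\ref{eqn:IndexForm}) reduces to $d\phi(\sigma')^2|V|^2$ (rather than vanishing) for a $g$-parallel field $V$, which spoils the sign of $I(V,V)$; the remedy is to use the $e^{\phi}$-weighted parallel field $W$, equivalently to pass to $\phi$-geodesics as in Proposition~\ref{Prop:NewIndex}, and this is exactly what makes the weighted index form manifestly negative.
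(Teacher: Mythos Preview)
Your proof is correct and follows essentially the same route as the paper's: compactness via Theorem~\ref{Thm:Myers}, conjugate radius $\ge\pi$ via Lemma~\ref{Lemma:ConjPos}, Klingenberg's lemma to produce a shortest closed geodesic, and then a weighted Synge argument to derive the contradiction. The only difference is that the paper outsources the Synge step to \cite[Theorem~5.4]{Wylie15}, whereas you carry it out explicitly with the field $W=e^{\phi}V$; your computation is exactly the content of Proposition~\ref{Prop:NewIndex} applied to a closed geodesic, and your final ``endgame'' paragraph is more than is needed --- once $\sigma$ is the shortest closed geodesic, $I(W,W)<0$ already contradicts local minimality of energy on the free loop space.
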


\begin{proof}
Since $\overline{K} \leq 1$, for all $\varepsilon>0$ there are densities $\phi_{\varepsilon}$ such that $\overline{\sec}_{\phi_{\varepsilon}} \leq (1+\varepsilon)e^{-4\phi_{\varepsilon}}$.  Suppose that $\mathrm{inj}_M  < \pi$ and let $p,q$ be two points such that $d(p,q) =\mathrm{inj}_M$. Let $\sigma$ be a unit speed minimizing geodesic from $p$ to $q$.   Let $s_{\varepsilon}$ be the standard re-parametrization parameter of $\sigma$ with respect to the density $\phi_{\varepsilon}$.  Since $\phi_{\varepsilon} \geq 0$, we have  $s_{\varepsilon} < d(p,q) < \pi$, so for $\varepsilon$ small enough we have $s_{\varepsilon} < d(p,q) < \frac{\pi}{\sqrt{1+\varepsilon}}$.   By Lemma \ref{Lemma:ConjPos} $\sigma$ does not have conjugate points.  Then, using a standard argument, there must be a closed geodesic through $p$ and $q$ which is the shortest closed geodesic in $M$.  But this is impossible with $\underline{\kappa}>0$ by the Synge argument, which shows under the dimension and orientability hypothesis that any closed geodesic can be homotoped to have smaller length, see \cite[Theorem 5.4]{Wylie15}.
\end{proof}

In the odd dimensional case we have the following injectivity radius estimate. 

\begin{lemma} \label{Lemma:Inj} Suppose that $(M^n,g)$ is a simply connected manifold with $\underline{\kappa}_g >\frac{1}{4}$ and $\overline{K}_g \leq 1$ then $\mathrm{inj}(M,g) \geq \pi$. \end{lemma}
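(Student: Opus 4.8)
The plan is to follow Klingenberg's classical injectivity--radius estimate for simply connected pinched manifolds, substituting the weighted comparison theorems established above for the usual sectional curvature bounds. First I would record two preliminary facts. Since $\underline{\kappa}_g > \frac{1}{4} > 0$, Theorem \ref{Thm:Myers} makes $M$ compact, so the classical theory of closed geodesics and cut loci is available; and since $0 < \underline{\kappa}_g \le \overline{K}_g \le 1$, Lemma \ref{Lemma:ConjPos} applies and shows that any two conjugate points along a unit speed $g$-geodesic are at distance at least $\pi/\sqrt{\overline{K}_g} \ge \pi$, i.e.\ the conjugate radius of $(M,g)$ is at least $\pi$.

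Next I would argue by contradiction, assuming $\mathrm{inj}(M,g) < \pi$. As in the proof of Lemma \ref{Lemma:EvenInjectivity}, since the injectivity radius is strictly less than the conjugate radius, the classical lemma of Klingenberg --- a statement about the metric $g$ alone, requiring no curvature hypothesis --- produces a smooth closed $g$-geodesic $\gamma$ of length $2\,\mathrm{inj}(M,g) < 2\pi$ which is a shortest closed geodesic of $M$. In the even-dimensional orientable case one excludes this by the Synge argument; here I would instead use the strict lower bound $\underline{\kappa}_g > \frac{1}{4}$ through Klingenberg's long-homotopy argument.

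For that step, on the one hand the purely metric half of Klingenberg's argument, using only that the conjugate radius is $\ge \pi$, shows that every contraction of the (contractible, since $M$ is simply connected) loop $\gamma$ must pass through a loop of $g$-length $\ge 2\pi$. On the other hand, I would build an explicit contraction of $\gamma$ through loops of $g$-length strictly below $2\pi$ by Klingenberg's halving construction, repeatedly replacing a loop by one assembled from minimizing geodesics between its successive ``midpoints'', where the correction segments shrink geometrically because geodesics issuing from a point spread no faster than on the model space of constant curvature $\underline{\kappa}_g$. This spreading estimate is where the lower bound enters, and it comes from the first weighted Rauch theorem, Theorem \ref{Thm:Rauch1}: comparing a $\phi_\varepsilon$-geodesic (the standard reparametrization of a unit speed $g$-geodesic, with $\phi_\varepsilon \le 0$ and $\overline{\sec}_{\phi_\varepsilon} \ge (\underline{\kappa}_g - \varepsilon)e^{-4\phi_\varepsilon}$, as in the proof of Lemma \ref{Lem:index}) against the round sphere of curvature $\underline{\kappa}_g - \varepsilon$ bounds the relevant Jacobi fields, hence the distance between nearby geodesic endpoints, by their spherical counterparts. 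Plugging this into the halving construction and letting $\varepsilon \to 0$ yields a contraction of $\gamma$ through loops shorter than $2\pi$, contradicting the long-homotopy lemma, so $\mathrm{inj}(M,g) \ge \pi$.

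The part I expect to be the main obstacle is the reparametrization bookkeeping, which has no analogue in the unweighted setting. Theorem \ref{Thm:Rauch1} is phrased for $\phi$-geodesics in their standard $s$-parametrization and its conclusion carries the conformal weights $e^{\phi(\gamma(0)) - \phi(\gamma(s))}$, whereas the conjugate radius bound and the long-homotopy lemma are statements about $g$-arclength. The normalization $\phi_\varepsilon \le 0$ gives $s \ge r$ (the $g$-arclength), which is the inequality needed to make the two scales align in the correct direction in each estimate, and one must check that the weight factors are $\le 1$ on the relevant range so that they only help. Once the spreading estimate is re-expressed in terms of $g$-distance in this way, the topological portion of Klingenberg's argument carries over verbatim; in particular the Poincar\'e conjecture is not needed here, only later in Theorem \ref{Thm:QuarterPinch}.
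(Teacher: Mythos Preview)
Your strategy and the paper's agree on the first half: $M$ is compact by Theorem \ref{Thm:Myers}, and Lemma \ref{Lemma:ConjPos} gives conjugate radius at least $\pi$, so if $\mathrm{inj}(M,g)<\pi$ there is a shortest closed geodesic of length $<2\pi$, and Klingenberg's long-homotopy lemma applies. The divergence is in how the lower bound $\underline\kappa_g>\tfrac14$ is fed into the second half. The paper does not attempt to re-run a curve-shortening or halving construction with Rauch estimates; it simply invokes Lemma \ref{Lem:index}, already proved, to conclude that there is $\delta>0$ so that every geodesic of length $\geq 2\pi-\delta$ has index $\geq n-1\geq 2$. It then observes that the conjugate-radius bound and this index bound are the \emph{only} places curvature enters Klingenberg's classical proof (as in \cite[Theorem 6.5.5]{Petersen16}), so that proof carries over verbatim. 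This is considerably cleaner, and, more importantly, the index statement needs only the \emph{location of zeros} of Jacobi fields, which Theorem \ref{Thm:Rauch1} controls via $s\geq r$ without any appeal to the weight factors $e^{\phi(\gamma(0))-\phi(\gamma(s))}$.

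Your route, by contrast, requires a genuine spreading estimate, i.e.\ an upper bound on $|J(s)|$ itself, to make the halving construction converge. From Theorem \ref{Thm:Rauch1} with $\widehat\phi\equiv 0$ you get $|J(s)|\leq e^{\phi(\gamma(s))-\phi(\gamma(0))}|\widehat J(s)|$, and the factor $e^{\phi(\gamma(s))-\phi(\gamma(0))}$ is \emph{not} bounded by $1$ just from $\phi\leq 0$: you would need $\phi(\gamma(s))\leq\phi(\gamma(0))$, which you have no control over, and you cannot normalize $\phi$ to vanish at every basepoint simultaneously. So the sentence ``one must check that the weight factors are $\le 1$ on the relevant range so that they only help'' is exactly the step that does not go through. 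The paper's approach via Lemma \ref{Lem:index} avoids this issue entirely because conjugate points, hence index, are detected by zeros of $J$ and are insensitive to the multiplicative weight. I would recommend you replace the halving argument by a direct citation of Lemma \ref{Lem:index} and the classical Klingenberg proof.
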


\begin{proof}  From Lemma \ref{Lemma:ConjPos} we know that every geodesic of length $\leq \pi$ does not have conjugate points. While from Lemma \ref{Lem:index} we know that there is a positive constant $\delta$ such that  every geodesic of length $\geq 2\pi - \delta$ has index $\geq 2$.  These are the only two facts about curvature used in Klingerberg's original proof of the injectivity radius estimate, so his proof goes through.  See, for example, \cite[Theorem 6.5.5]{Petersen16}.
\end{proof}
This now gives us the sphere theorem. 
 
 \begin{theorem} 
 Let $(M,g)$ be a simply connected smooth complete manifold with $\underline{\kappa}_g>0$ and $\delta > \frac{1}{4}$, then $M$ is homeomorphic to the sphere. 
\end{theorem}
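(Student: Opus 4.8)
The plan is to run the classical Berger--Klingenberg proof of the quarter-pinched sphere theorem, now that all of the required geometric inputs have been established in weighted form. First I would reduce to a normalized situation. Since $\underline{\kappa}_g > 0$, Theorem \ref{Thm:Myers} shows $M$ is compact (and has finite, hence here trivial, fundamental group), so the proposition that $\underline{\kappa}_g(a) \le \overline{K}_g(a)$ applies with $a = \infty$ and gives $\overline{K}_g \ge \underline{\kappa}_g > 0$. Rescaling $g \mapsto \lambda^2 g$ multiplies both $\underline{\kappa}_g$ and $\overline{K}_g$ by $\lambda^{-2}$ and leaves $\delta$ unchanged, so after rescaling I may assume $\overline{K}_g = 1$, and then $\underline{\kappa}_g = \delta > \tfrac14$.

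Next I would establish the injectivity radius estimate $\mathrm{inj}(M,g) \ge \pi$. Since $M$ is simply connected it is orientable. If $\dim M$ is even, this is Lemma \ref{Lemma:EvenInjectivity} (applied with $\underline{\kappa}_g > 0$ and $\overline{K}_g \le 1$); if $\dim M$ is odd, it is Lemma \ref{Lemma:Inj} (applied with $\underline{\kappa}_g > \tfrac14$ and $\overline{K}_g \le 1$). In either parity we conclude $\mathrm{inj}(M,g) \ge \pi$.

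Then I would run the Morse-theoretic argument of Berger on the based loop space. Fix $p \in M$ and consider the energy functional on $\Omega_{p,p}(M)$, whose critical points are the geodesic loops at $p$. The constant loop is the absolute minimum, and every nonconstant geodesic loop at $p$ has length $\ge 2\,\mathrm{inj}(M,g) \ge 2\pi$. Because $\underline{\kappa}_g = \delta > \tfrac14$ we have $2\pi > \pi/\sqrt{\underline{\kappa}_g}$, so Lemma \ref{Lem:index} applies to every such loop and shows it has index $\ge n-1$. By the standard Morse theory of path spaces (see \cite[Theorem 6.5.3]{Petersen16}), $\Omega_{p,p}(M)$ then has the homotopy type of a CW complex with a single $0$-cell and all further cells of dimension $\ge n-1$, hence is $(n-2)$-connected; since $\pi_k(\Omega_{p,p}(M)) = \pi_{k+1}(M)$, the manifold $M$ is $(n-1)$-connected. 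Finally, a closed $(n-1)$-connected $n$-manifold is a homotopy sphere: Hurewicz and Poincar\'e duality give $\pi_n(M) \cong H_n(M;\Z) \cong \Z$, a generator is represented by a degree-one map $\s^n \to M$ that induces isomorphisms on homology, and Whitehead's theorem upgrades this to a homotopy equivalence. The topological Poincar\'e conjecture (valid in every dimension) then yields that $M$ is homeomorphic to $\s^n$.

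The content here is essentially an assembly of the earlier lemmas, so there is no single hard new step; the genuinely load-bearing points are (a) the pinching inequality $\underline{\kappa}_g > \tfrac14$, which after normalization is exactly what forces every nonconstant loop at $p$ to be long enough for Lemma \ref{Lem:index} to give index $\ge n-1$, and (b) the odd-dimensional injectivity radius bound of Lemma \ref{Lemma:Inj}, whose proof (via Klingenberg's argument) is the deepest ingredient being invoked. One should also check the low-dimensional cases $n \le 3$ are not special: the argument formally goes through, and the topological conclusion in those dimensions is covered by the resolved Poincar\'e conjecture.
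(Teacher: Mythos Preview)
Your proof is correct and follows essentially the same route as the paper: rescale to $\overline K_g = 1$, invoke the injectivity radius estimate (Lemma~\ref{Lemma:Inj}) to force every nonconstant geodesic loop to have length $\ge 2\pi > \pi/\sqrt{\underline\kappa_g}$, apply Lemma~\ref{Lem:index} to get index $\ge n-1$, and finish with the Berger--Morse argument and the Poincar\'e conjecture. Your even/odd case split for the injectivity radius is harmless but unnecessary, since Lemma~\ref{Lemma:Inj} carries no parity hypothesis; otherwise your write-up is a slightly more detailed version of the paper's own argument.
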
 

\begin{proof}
 By rescaling the metric (but not the density) we can assume that $\overline{K}= 1$ and $\underline{\kappa} > \frac{1}{4}$.  By Lemma \ref{Lemma:Inj} we have $\mathrm{inj}_M \geq \pi$.  Thus any geodesic loop, $\sigma$,  in $\Omega_{p,p}$ must that length $\geq 2\pi$.  Let $\phi$ be a density with $\overline{\sec}_{\phi} > \frac{1}{4} e^{-4\phi}$ and $\phi \leq 0$.  Then $s = \int_0^t e^{-2\phi(\sigma(t))} dt \geq \mathrm{length}(\sigma) \geq 2\pi$.  By Lemma \ref{Lem:index}  any closed geodesic must then have index at least $(n-1)$. 
   \end{proof}

\subsection{Hessian comparison theorem} 
In this section we link the weighted Rauch comparison theorem to the Hessian of the distance function in the conformal metric as was discussed in Section 2. Consider a point $p$ and $r_p(x) = r(x) = d(p,x)$ the $g$-distance to $p$.  Let $q$ be a point so that $r_p$ is smooth in a neighborhood of $q$, and let $Y \in T_q M$  with $Y \perp \nabla r$.  In Section 2 we considered the quantity 
\begin{align*}
\left( \mathrm{Hess}_g r - d\phi(\nabla r) g \right)(Y,Y) =  \left( \mathrm{Hess}_{\widetilde{g}} r - d\phi\otimes dr - dr \otimes d\phi \right)(Y,Y)  
\end{align*}
where $\widetilde{g} = e^{-2\phi} g$.  

Recall that for any Jacobi field $J$ which is perpendicular to $\nabla r$ at a point $x$ where $r$ is smooth  it follows from the second variation of energy formula that
\begin{align*}
g(J', J) = \mathrm{Hess} r(J,J)
\end{align*}
where $J'$ is the derivative of $J$ along the unique unit speed geodesic from $p$ to $x$. Then if we consider the Jacobi field $J(s)/|J(s_0)|$ we have 
\begin{align}
e^{2\phi} \left( \mathrm{Hess} r \left(\frac{J(s)}{|J(s_0)|},\frac{J(s)}{|J(s_0)|} \right) - d\phi(\nabla r) \right) 
&=e^{2\phi} \left( \frac{ g(J', J)}{|J(s_0)|^2} - d\phi(\nabla r) \right) \nonumber \\
&=\frac{1}{2}\left( \frac{\dot{v}(s_0)}{v (s_0)} - 2 d\phi\left( \frac{d}{ds} \right) \right) \label{Eqn:JacobiHessian}
\end{align}
where $v(s) = |J(s)|^2$. This is exactly the quantity that we bounded in the proof of Theorem \ref{Thm:Rauch1}.

Putting this together gives us the following comparison.  Recall that $\sn_{\kappa}$ is the standard comparison function as defined in equation (\ref{Eqn:CompFunc}).  Let  $\cs_{\kappa} = \sn_{\kappa}'$ and  recall that the Hessian of the distance function in a simply connected space of constant curvature $\kappa$ is given by $\frac{\cs_{\kappa}}{\sn_{\kappa}}.$

\begin{theorem}[Hessian Comparison]\label{Thm:HessianComp}   Suppose that $(M,g, \phi)$ is a Riemannian manifold with density. Fix a point $p$ and let $r$ be the distance to $p$.  Let $q$ be a point such that the distance function to $p$ is smooth at $q$ and let $Y \in T_q M$ be a unit length vector such that $Y \perp \nabla r$.  \begin{enumerate}
\item  If, for all unit vectors $Z$ perpendicular to the minimizing geodesic from $p$ to $q$, 
$\overline{\sec}_{\phi}(Z, \nabla r)\geq  \kappa e^{-4\phi}$, 
then 
\begin{align*}
\left( \mathrm{Hess}_g r - d\phi(\nabla r) g \right)(Y,Y) \leq e^{-2\phi(q)} \frac{\cs_{\kappa} (s(p,q))}{\sn_{\kappa}(s(p,q))}.
\end{align*}
\item  If, for all unit vectors $Z$ perpendicular to the minimizing geodesic from $p$ to $q$, 
 $\overline{\sec}_{\phi}(Z, \nabla r) \leq Ke^{-4\phi}$, then 
\begin{align*}
\left( \mathrm{Hess}_g r - d\phi(\nabla r) g \right)(Y,Y)  \geq e^{-2\phi(q)} \frac{\cs_{K}(s(p,q))}{\sn_K(s(p,q))}.
\end{align*}
\end{enumerate}
Where $s(p,q)$ is the reparametrized distance defined in (\ref{eqn:sdistance}). 

\end{theorem}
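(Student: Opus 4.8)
The plan is to read the modified Hessian off a Jacobi field using identity~(\ref{Eqn:JacobiHessian}) and then to bound it with the differential inequality~(\ref{Eqn:RauchDerivative}) obtained in the proof of Theorem~\ref{Thm:Rauch1}, comparing $(M,g,\phi)$ against the simply connected space form of constant curvature $\kappa$ (resp.\ $K$) carrying the trivial density $\widehat\phi\equiv0$. First I would set up the geometry: let $\sigma$ be the minimizing unit speed $g$-geodesic from $p$ to $q$, let $\gamma$ be its standard reparametrization, a $\phi$-geodesic on $[0,s_0]$ with $s_0=s(p,q)$ the reparametrized distance of~(\ref{eqn:sdistance}), and note that $\gamma$ has no conjugate points on $[0,s_0]$ because $r$ is smooth near $q$. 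Hence the Jacobi fields $J$ along $\gamma$ with $J(0)=0$ and $J'(0)\perp\dot\gamma(0)$ fill out $\dot\gamma(s_0)^{\perp}$ at $s=s_0$, and I would fix one such $J$ with $J(s_0)$ a positive multiple of the given unit vector $Y$ (the length of $J'(0)$ is immaterial, since the ratio $\dot v/v$ appearing below, with $v(s)=|J(s)|^2$, is scale invariant). Then $J(s_0)/|J(s_0)|=Y$, so (\ref{Eqn:JacobiHessian}) gives
\[
e^{2\phi(q)}\left(\mathrm{Hess}_g r - d\phi(\nabla r)\,g\right)(Y,Y)=\frac12\left(\frac{\dot v(s_0)}{v(s_0)}-2\,d\phi(\dot\gamma(s_0))\right).
\]

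For part~(1), by Remark~\ref{Rem:CurveBound} the hypothesis $\overline{\sec}_{\phi}(Z,\nabla r)\ge\kappa e^{-4\phi}$ along $\sigma$ is precisely $R^{\nabla^{\phi}}(V,\dot\gamma,\dot\gamma,V)\ge\kappa$ for all unit $V\perp\dot\gamma$. I would take $\widehat M$ to be the space form of curvature $\kappa$ with $\widehat\phi\equiv0$, so that $\widehat\gamma$ is unit speed, the Jacobi field $\widehat J$ with $\widehat J(0)=0$, $|\widehat J'(0)|=1$, $\widehat J'(0)\perp\dot{\widehat\gamma}(0)$ has $\widehat v(s)=\sn_{\kappa}^2(s)$ and $d\widehat\phi\equiv0$, and $R^{\widehat{\nabla}^{\widehat{\phi}}}(\widehat V,\dot{\widehat\gamma},\dot{\widehat\gamma},\widehat V)=\kappa$. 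With $\gamma$ in the role of ``$\gamma$'' (no conjugate points, as noted) and $\widehat\gamma$ in the role of ``$\widehat\gamma$'', Theorem~\ref{Thm:Rauch1} applies, and the inequality~(\ref{Eqn:RauchDerivative}) extracted in its proof reads
\[
\frac{\dot v(s_0)}{v(s_0)}-2\,d\phi(\dot\gamma(s_0))\le \frac{\dot{\widehat v}(s_0)}{\widehat v(s_0)}-2\,d\widehat\phi(\dot{\widehat\gamma}(s_0))=2\,\frac{\cs_{\kappa}(s_0)}{\sn_{\kappa}(s_0)}.
\]
Plugging this into the previous display gives part~(1).

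For part~(2) the hypothesis becomes $R^{\nabla^{\phi}}(V,\dot\gamma,\dot\gamma,V)\le K$, so I would swap the roles: the space form of curvature $K$ with trivial density now plays the role of ``$\gamma$'' in Theorem~\ref{Thm:Rauch1} (its geodesic has no conjugate points on $[0,s_0]$ since $s_0<\pi/\sqrt K$ when $K>0$, and there is nothing to check when $K\le0$), while $(M,g,\phi)$ plays the role of ``$\widehat\gamma$''. The inequality~(\ref{Eqn:RauchDerivative}), with its two sides interchanged, then yields $2\,\cs_K(s_0)/\sn_K(s_0)\le \dot v(s_0)/v(s_0)-2\,d\phi(\dot\gamma(s_0))$, and substituting into the first display proves part~(2); here $\cs_K/\sn_K$ is precisely the Hessian of the distance function in the $K$-space form, as recalled just before the statement.

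I do not expect a serious obstacle: the analytic heart is already packaged in~(\ref{Eqn:JacobiHessian}) and~(\ref{Eqn:RauchDerivative}). The only points demanding care are the bookkeeping of the conformal factors $e^{\pm2\phi}$ and of the reparametrization ($s$ versus $r$, equivalently $\dot\gamma$ versus $\sigma'$), keeping the $d\phi(\dot\gamma)$ boundary term correctly matched, and, in part~(2) with $K>0$, guaranteeing that the constant-curvature comparison geodesic is free of conjugate points on $[0,s(p,q)]$.
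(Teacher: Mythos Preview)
Your proposal is correct and matches the paper's proof essentially line for line: the paper also reads off the modified Hessian via~(\ref{Eqn:JacobiHessian}), takes $(\widehat M,\widehat g,\widehat\phi)$ to be the space form of constant curvature $\kappa$ with $\widehat\phi\equiv0$, and invokes the differential inequality~(\ref{Eqn:RauchDerivative}) from the proof of Theorem~\ref{Thm:Rauch1} to conclude. The paper only writes out part~(1) and declares part~(2) ``completely analogous'', so your swap of roles there, together with the remark that the space-form geodesic must be conjugate-point free on $[0,s_0]$, is exactly the expected completion.
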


\begin{proof}
We outline the proof of the first inequality.  The second is completely analogous. Consider Theorem \ref{Thm:Rauch1} with $(M,g, \phi)$ our given manifold with density and $(\widehat{M}^n, \widehat{g}, \widehat{\phi})$ the standard model space of constant curvature $\kappa$ and $\widehat{\phi} \equiv 0$.  Let $J$ be the unique Jacobi field with $J(0) = 0$ and $J(s_0)= Y$ where $s_0 = s(p,q)$.  Let $\widehat{J} = |J'(0)| \sn_{\kappa}(s)E$ be the corresponding Jacobi field in $\widehat{M}$.   Then letting $\widehat{v} = |\widehat{J}|^2 = |J'(0)|^2 |\sn_{\kappa}(s)|^2$ and combining (\ref{Eqn:JacobiHessian}) and (\ref{Eqn:RauchDerivative})  we obtain
\begin{align*}
e^{2\phi(q)} \left( \mathrm{Hess}_g r - d\phi(\nabla r) g \right)(Y,Y)  &= \frac{e^{2\phi}}{2}\left( \frac{\dot{v}(s_0)}{v (s_0)} - 2 d\phi\left( \frac{d}{ds} \right) \right) \\
&\leq  \frac12 \frac{\dot{\widehat{v}}(s_0)}{\widehat{v}(s_0)} - d\widehat\phi(\dot{\widehat{\gamma}}(s_0)) \\
&= \frac{\cs_{\kappa}(s(p,q))}{\sn_{\kappa}(s(p,q))}
\end{align*}
\end{proof}

\begin{remark} Theorem 5.17 is optimal in general.  However, note that the function 
\[ t \rightarrow \frac{\cs_{\kappa}(t)}{\sn_{\kappa}(t)} \]
is monotonically decreasing in $t$.   Thus,  if $\phi \leq 0$, then $s(p,q) \geq  r_p(q)$ and $\frac{\cs_{\kappa} (s(p,q))}{\sn_{\kappa}(s(p,q))} \leq \frac{\cs_{\kappa} (r(q))}{\sn_{\kappa}(r(q))}$.  Thus if we assume $\underline{\kappa}_g \geq \kappa>0$ we can replace the $s(p,q)$ on the right hand side of (1) with the distance $r(q)$.  All these inequalities are reversed if we assume $\overline{K}_g \leq K$, for some $K>0$, so we can similarly replace $s(p,q)$ with $r(q)$ in (2) in this case. 
\end{remark}

We can now prove the Cheeger finiteness theorem for positive curvature and even dimensions. 

\begin{reptheorem}{IntroThm:FinitePosEven}
For given $n, a>0$ and $0<\delta_0 \leq 1$ the class of Riemannian $2n$-dimensional manifolds with $\underline{\kappa}(a) >0$ and $\delta(a) \geq \delta_0$ contain  only finitely many diffeomorphism types. 
\end{reptheorem} 

\begin{proof}

As is standard in convergence theory, we can show there are only finitely many diffeomorphism types by showing the class is compact in the $C^{\alpha}$ topology.  Moreover,  such compactness is true is there is a uniform upper bound on diameter, lower bound on injectivity radius, and two-sided bound on the Hessian of the distance function inside balls of a uniform fixed radius. See, for example, \cite{Petersen97} for a survey.   

Lemma \ref{Lem:index} gives the upper bound on diameter and  Lemma \ref{Lemma:EvenInjectivity} gives a lower bound on injectivity radius. Once we have the upper bound on diameter since, by Remark \ref{Rem:Zero}, we can choose $\phi$ so that there is a point where $\phi(p) = 0$, the assumption $|d\phi| \leq a$ implies  there is a constant $B$, depending on $a$ and the diameter bound, such that $|\phi| \leq B$.  Then Theorem \ref{Thm:HessianComp} provides the required two sided bounds on the Hessian of the distance function.  
\end{proof}

In order to prove the more general finiteness theorem, Theorem \ref{IntroThm:GenFinite}, the only further ingredient we require is a lower bound on the length of a closed geodesic that depends on a two-sided bound on $\phi$,  a lower bound on $\overline{\sec}_{\phi}$, an upper bound on diameter, and lower bound on volume.  We establish such an estimate in the next section.  In fact, the bound will follow from a more general set of formulas for volumes of tubes around submanifolds of arbitrary codimension in a manifold with weighted sectional curvature lower bounds.  In the non-weighted setting these estimates are due to Heintze and Karcher \cite{HeintzeKarcher78}.

\subsection{Tube volumes}

In this section we prove the weighted Heintze-Karcher theorem \cite{HeintzeKarcher78} which is an estimate for the volume of tubes around a sub-manifold which depends on the ambient sectional curvatures and the second fundamental form of the submanifold.  Here we, of course, must use ``weighted" notions of all of these quantities.  In the exposition below we will highlight how the arguments in \cite{HeintzeKarcher78} need to be modified in the weighted setting, and refer to the original text for background information. 

Let $H$ be a submanifold of Riemannian manifold $(M,g)$.  If $N$ is a  normal vector field to $H$, we will use the convention that the second fundamental form  of $H$ with respect to the metric $g$ and field $N$ is 
\begin{align*} 
\II^{g}_N(X,Y) = -g(N, \nabla_X Y) = g(\nabla_X N, Y)
\end{align*}
where $X,Y \in T_pH$.  Note that, traditionally $N$ is assumed to be a unit normal field, but it will aid our notation below to allow $N$ to be any normal field. The shape operator with respect to $g$ is then $S^{g}_N(X) = (\nabla_X N)^T$ so that $\II^{g}_N(X,Y)= g(S_N(X), Y)$. Clearly the shape operator only depends on the value of $N$ at the point.   Our estimates will not depend on the shape operator of $H$ with respect to $g$, but with respect to the conformal metric $\widetilde{g} = e^{-2\phi} g$. If $N$ is a  normal vector to $H$ with respect to $g$ then it is also a normal field in the metric $\widetilde{g}$.  Then there is a simple formula for  the second fundamental form and shape operator under conformal change:
\begin{align*}
\II^{\widetilde{g}}_{N}(X,Y) =  \II^{g}_{N}(X,Y) - d\phi(N)g(X,Y)\qquad S^{\widetilde{g}}_{N}(X) = S_N(X) - d\phi(N)X.
\end{align*}

The estimate for the volume of tubes we are after will depend on a Jacobi field comparison similar to the Rauch comparison theorem for $H$-Jacobi fields along focal point free geodesics.    Let $\sigma$ be a $g$-geodesic with $\sigma(0) \in H$ and $\sigma'(0) \in (T_pH)^{\perp}$. An $H$-Jacobi field  along $\sigma$ is a Jacobi field $J$ with $J(0) \in T_p H$ and $J'(0) - S^g_{\sigma'(0)} (J(0)) \in \left(T_pH\right)^{\perp}$. Equivalently, the $H$-Jacobi fields are the variation fields coming from variations of geodesics normal to $H$.  

For any vector field $V$ along $\sigma$ with $V(0) \in T_{\sigma(0)}H$, the $H$-index of $V$ is 
\begin{align*}
I^H(V,V)=  \II^g_{\sigma'}(V(0), V(0))  + \int_0^{t_0} |V'|^2 - g(R(V, \sigma')\sigma', V )dt
\end{align*}
A standard calculation shows that if $V(t_0) = 0$ then $\frac{d^2 E}{ds^2}\Big|_{s=0} = I^H(V,V)$.   $H$-Jacobi fields describe the derivative of the normal exponential map and are the minimizers of the $H$-index.   This implies the $H$-index lemma which states that if  $V$ is a vector field along $\sigma$ with $V(0) \in T_{\sigma(0)}H$ and $J$ is an $H$-Jacobi field such that $V(t_0) = J(t_0)$, then $I^H(J,J) \leq I^H(V,V)$ with equality if and only if $V=J$. See \cite[Chapter 10, section 4]{doCarmo76} for details.   From the proof of Proposition \ref{Prop:NewIndex}, we have the following formula for the $H$-index involving the weighted curvtaures. 
\begin{proposition}\label{Prop:NewHIndex}
Let $H$ be a submanifold of a manifold with density $(M,g,\phi)$.  Let $\gamma(s)$ be a  $\phi$-geodesic $\gamma:[a,b]\to M$ with standard re-parametrization which is normal to $H$ at $\gamma(a)$.  Then if  $V$ is a vector-field along $\gamma$ everywhere orthogonal to $\dot\gamma$, we have
\begin{align*}
I^H(e^\phi V,e^\phi V) & =  \II^{\phi}_{\dot{\gamma}(a)} (V,V) + d\phi(\dot\gamma(b))|V(b)|^2  \\
& \quad + \int_a^b \left(\left|\nabla_{\dot\gamma} V\right|^2 - g(R^{\nabla^\phi}(V,\dot\gamma)\dot\gamma,V)\right)ds.  
\end{align*}
where $\II^{\phi}_{\dot{\gamma}(a)} (X,Y)$ is defined to be $\II^{\widetilde{g}}_{\dot{\gamma}(a)} (X,Y).$ 
\end{proposition}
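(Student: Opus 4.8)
The plan is to deduce this from the definition of the $H$--index form together with Proposition~\ref{Prop:NewIndex} and the conformal transformation law $\II^{\widetilde g}_N = \II^g_N - d\phi(N)g$ for the second fundamental form. Throughout, let $\sigma$ be the unit-speed $g$--geodesic whose standard re-parametrization is $\gamma$, so that $\dot\gamma = e^{2\phi}\sigma'$ and in particular $\dot\gamma(a) = e^{2\phi(\gamma(a))}\sigma'(a)$; note also that, for $I^H(e^\phi V, e^\phi V)$ to be defined and for the right-hand side to make sense, $V(a)$ must lie in $T_{\gamma(a)}H$, which I take to be part of the hypothesis. The first step is simply to split the $H$--index according to its definition, $I^H(W,W) = \II^g_{\sigma'(a)}(W(a),W(a)) + I(W,W)$, where $I$ is the ordinary index form; applying this with $W = e^\phi V$ and substituting Proposition~\ref{Prop:NewIndex} for the term $I(e^\phi V, e^\phi V)$ gives
\[
I^H(e^\phi V, e^\phi V) = \II^g_{\sigma'(a)}\big((e^\phi V)(a),(e^\phi V)(a)\big) + \int_a^b\big(|\nabla_{\dot\gamma}V|^2 - g(R^{\nabla^\phi}(V,\dot\gamma)\dot\gamma,V)\big)\,ds + d\phi(\dot\gamma)|V|^2\Big|_a^b.
\]

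It then remains to rewrite the two boundary pieces. Since $\II^g_{\bullet}(X,Y)$ is bilinear in $(X,Y)$ and linear in the normal argument, and since $\dot\gamma(a) = e^{2\phi(\gamma(a))}\sigma'(a)$, the first term equals $e^{2\phi(\gamma(a))}\II^g_{\sigma'(a)}(V(a),V(a)) = \II^g_{\dot\gamma(a)}(V(a),V(a))$, and the conformal change formula turns this into $\II^{\widetilde g}_{\dot\gamma(a)}(V(a),V(a)) + d\phi(\dot\gamma(a))\,|V(a)|^2$. By the definition $\II^\phi_{\dot\gamma(a)} := \II^{\widetilde g}_{\dot\gamma(a)}$, and writing $d\phi(\dot\gamma)|V|^2\big|_a^b = d\phi(\dot\gamma(b))|V(b)|^2 - d\phi(\dot\gamma(a))|V(a)|^2$, the two $d\phi(\dot\gamma(a))|V(a)|^2$ terms cancel, leaving precisely
\[
I^H(e^\phi V, e^\phi V) = \II^\phi_{\dot\gamma(a)}(V(a),V(a)) + d\phi(\dot\gamma(b))|V(b)|^2 + \int_a^b\big(|\nabla_{\dot\gamma}V|^2 - g(R^{\nabla^\phi}(V,\dot\gamma)\dot\gamma,V)\big)\,ds,
\]
which is the claimed identity.

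Once Proposition~\ref{Prop:NewIndex} is available the argument is pure bookkeeping, so I do not expect a genuine obstacle; the only place to be careful is the consistent translation between the unit $g$--normal $\sigma'(a)$, which appears in the classical definition of $I^H$, and the vector $\dot\gamma(a) = e^{2\phi(\gamma(a))}\sigma'(a)$, which is the natural normal in the $s$--parametrization and is used to define $\II^\phi$. One should verify that the factor $e^{2\phi(\gamma(a))}$ produced by extracting the two copies of $e^\phi$ from the slots of $\II^g$ exactly matches the rescaling $\dot\gamma = e^{2\phi}\sigma'$, so that no spurious exponential factor survives and the cancellation of the $t=a$ contributions is exact.
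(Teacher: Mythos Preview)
Your proof is correct and is exactly the argument the paper has in mind: the paper does not write out a proof of this proposition, saying only that it follows ``from the proof of Proposition~\ref{Prop:NewIndex},'' and your derivation---splitting $I^H = \II^g_{\sigma'} + I$, invoking Proposition~\ref{Prop:NewIndex} for the second summand, and then absorbing the $s=a$ boundary term into the conformal second fundamental form via $\II^g_{\dot\gamma(a)} = \II^{\widetilde g}_{\dot\gamma(a)} + d\phi(\dot\gamma(a))g$---is precisely how that deferred computation unfolds. Your care in tracking the rescaling $\dot\gamma(a) = e^{2\phi(\gamma(a))}\sigma'(a)$ against the two factors of $e^{\phi(\gamma(a))}$ coming out of $\II^g_{\sigma'(a)}(e^\phi V, e^\phi V)$ is the only nontrivial bookkeeping point, and you handle it correctly.
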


\begin{remark}
Given a $\phi$ geodesic as in the proposition, we will call $ \II^{\phi}_{\dot{\gamma}(a)}(X,Y)$ the \emph{weighted second fundamental form} with respect to  $\dot{\gamma}(a)$.  Similarly we call $S^{\phi}_{\dot{\gamma}}(X) = S^{\widetilde{g}}_{\dot{\gamma}}(X) $ the \emph{weighted shape operator} with respect to  $\dot{\gamma}(a)$.  The weighted second fundamental form and shape operator are a rescaling of the standard second fundamental form and shape operator of $\widetilde{g}$ with respect to a unit normal field. That is,  $\widetilde{N} = e^{-\phi(\gamma(a))} \dot{\gamma}$ is a unit vector in the $\widetilde{g}$ metric so that  
\[  \II^{\phi}_{\dot{\gamma}(a)} (X,Y)= e^{\phi(\gamma(a))} \II^{\widetilde{g}}_{\widetilde{N}} (X,Y) \qquad  S^{\phi}_{\dot{\gamma}(a)}(X) = e^{\phi(\gamma(a))}S^{\widetilde{g}}_{\widetilde{N}}(X)\]
\end{remark}
 
We now can state the weighted version of the Heintze-Karcher  comparison which measures the distortion of the volume form when pulled back via the normal exponential map by estimating the logarithmic derivative of a wedge product of $n-1$ linearly independent orthogonal Jacobi fields. 

\begin{lemma} \label{JacobiVolumeLemma}  Let $(M^n,g, \phi), (\widehat{M}^n, \widehat{g}, \widehat{\phi})$ be a Riemannian manifold with density  and let $H, \widehat{H}$ be a submanifolds of the same dimension of $M$ and $\widehat{M}$ respectively.  Let $\gamma, \widehat{\gamma}:[0,S] \rightarrow M, \widehat{M}$ be $\phi$-geodesics with standard reparametrization meeting $H, \widehat{H}$ perpendicularly at $s=0$ with no focal points on $[0,S]$.   
Let $Y_1, Y_2, \dots, Y_{n-1}$ be $n-1$ linearly independent $H$-Jacobi fields along $\gamma$ which are all perpendicular to $\dot{\gamma}$ and define  $\widehat{Y}_1, \widehat{Y}_2, \dots, \widehat{Y}_{n-1}$ similarly.  Suppose that $R^{\phi}\left(V, \frac{d\gamma}{ds}, \frac{d\gamma}{ds},  V\right) \geq \widehat{R} ^{\widehat{\phi}}\left(U,\frac{d \widehat{\gamma}}{ds}, \frac{d \widehat{\gamma}}{ds}, U\right)$ for all unit vectors $U$ and $V$ perpendicular $\gamma$ and $\widehat{\gamma}$ respectively. Suppose also that the eigenvalues of the weighted shape operators $\lambda_i$ and $\widehat{\lambda}_i$ satisfy $\lambda_i \leq \widehat{\lambda}_i$ for some ordering of the eigenvalues.  Then 
\begin{align*}
\frac{d}{ds}  \log \left( e^{-(n-1)\phi} \left| Y_1(s) \wedge \dots \wedge Y_{n-1}(s) \right| \right) \leq \frac{d}{ds}  \log \left( e^{-(n-1)\widehat{\phi}} \left| \widehat{Y}_1(s) \wedge \dots \wedge \widehat{Y}_{n-1}(s) \right| \right)
\end{align*}
In particular, 
\begin{align*}
 e^{-(n-1)\left(\phi(s)-\phi(0) \right)} \left| Y_1(s) \wedge \dots \wedge Y_{n-1}(s) \right| \leq  e^{-(n-1)\left(\widehat{\phi}(s)- \widehat{\phi}(0)\right)} \left| \widehat{Y}_1(s) \wedge \dots \wedge \widehat{Y}_{n-1}(s) \right|
\end{align*}
\end{lemma}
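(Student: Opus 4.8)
The plan is to adapt the Heintze--Karcher argument \cite{HeintzeKarcher78}, using Proposition \ref{Prop:NewHIndex} as the weighted replacement for the classical $H$--index formula and transplanting comparison fields in the spirit of Lemma \ref{Lem:Transfer}. It is convenient to pass to the rescaled fields $V_i = e^{-\phi}Y_i$ and $\widehat{V}_i = e^{-\widehat{\phi}}\widehat{Y}_i$: then $|V_1(s)\wedge\dots\wedge V_{n-1}(s)| = e^{-(n-1)\phi(\gamma(s))}|Y_1(s)\wedge\dots\wedge Y_{n-1}(s)|$, so the asserted inequality of logarithmic derivatives is exactly $\frac{d}{ds}\log|V_1\wedge\dots\wedge V_{n-1}| \le \frac{d}{ds}\log|\widehat{V}_1\wedge\dots\wedge\widehat{V}_{n-1}|$, and the ``in particular'' statement follows by integrating this. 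The $V_i$ are everywhere orthogonal to $\dot\gamma$, stay linearly independent on $[0,S]$ since $\gamma$ has no focal points, and satisfy the conformal form of the $H$--Jacobi boundary condition, $\nabla_{\dot\gamma}V_i(0) - S^\phi_{\dot\gamma(0)}V_i(0) \perp T_{\gamma(0)}H$ (obtained from the condition on $Y_i$ using $\II^\phi = \II^{\widetilde{g}}$ and $\II^{\widetilde{g}}_N = \II^g_N - d\phi(N)g$).

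Because $|V_1\wedge\dots\wedge V_{n-1}|$ is, up to a nonzero constant, independent of the chosen basis of the $(n-1)$--dimensional space of weighted $H$--Jacobi fields orthogonal to $\dot\gamma$, its logarithmic derivative is basis--independent and it suffices to check the inequality at a fixed $s_0 \in (0,S]$. Choose a $g$--parallel orthonormal frame $\{e_1,\dots,e_n\}$ along $\gamma$ with $e_1 = \dot\gamma/|\dot\gamma|$ (which is $g$--parallel, since $\nabla_{\dot\gamma}\dot\gamma$ is a multiple of $\dot\gamma$ along a standardly parametrized $\phi$--geodesic), arranged so that at $s=0$ the sub-frame $e_2(0),\dots,e_{k+1}(0)$, $k = \dim H$, is an orthonormal eigenbasis of $S^\phi_{\dot\gamma(0)}$ with eigenvalues $\lambda_1,\dots,\lambda_k$; take $V_i$ to be the weighted $H$--Jacobi field with $V_i(s_0) = e_{i+1}(s_0)$. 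A Gram--matrix computation gives $\frac{d}{ds}\log|V_1\wedge\dots\wedge V_{n-1}|$ at $s_0$ equal to $\sum_i g(\nabla_{\dot\gamma}V_i, V_i)(s_0)$; applying Proposition \ref{Prop:NewHIndex} to each $V_i$ on $[0,s_0]$, integrating by parts once and using the boundary condition and $|V_i(s_0)| = 1$, one finds $g(\nabla_{\dot\gamma}V_i,V_i)(s_0) = I^H(e^\phi V_i, e^\phi V_i) - d\phi(\dot\gamma(s_0))$. Hence the claim reduces to
\[
\sum_i I^H(e^\phi V_i, e^\phi V_i) - (n-1)\,d\phi(\dot\gamma(s_0)) \;\le\; \sum_i \widehat{I}^{\widehat{H}}(e^{\widehat\phi}\widehat{V}_i, e^{\widehat\phi}\widehat{V}_i) - (n-1)\,d\widehat\phi(\dot{\widehat\gamma}(s_0)),
\]
where $\widehat{V}_i$ is built from an $S^{\widehat\phi}_{\dot{\widehat\gamma}(0)}$--eigenbasis $\widehat{e}_{i+1}(0)$ with eigenvalue $\widehat\lambda_i$, the eigenvalues paired so that $\lambda_i \le \widehat\lambda_i$ (the ``some ordering'' in the hypothesis).

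To prove this, transplant $\widehat{V}_i = \sum_{j\ge 2}\widehat{a}_{ij}(s)\widehat{e}_j(s)$ to the field $\widetilde{V}_i = \sum_{j\ge 2}\widehat{a}_{ij}(s)e_j(s)$ along $\gamma$. Then $\widetilde{V}_i(0) \in T_{\gamma(0)}H$ (since $\widehat{V}_i(0) \in T_{\widehat\gamma(0)}\widehat{H}$), $\widetilde{V}_i(s_0) = V_i(s_0)$, $|\widetilde{V}_i| \equiv |\widehat{V}_i|$ and $|\nabla_{\dot\gamma}\widetilde{V}_i| \equiv |\widehat{\nabla}_{\dot{\widehat\gamma}}\widehat{V}_i|$. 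Since $e^\phi V_i = Y_i$ is an $H$--Jacobi field and $e^\phi \widetilde{V}_i$ agrees with it at $s_0$, the $H$--index lemma yields $I^H(e^\phi V_i, e^\phi V_i) \le I^H(e^\phi\widetilde{V}_i, e^\phi\widetilde{V}_i)$. Expanding the right-hand side by Proposition \ref{Prop:NewHIndex}: the integral term is dominated by the corresponding term for $\widehat{V}_i$ by the curvature hypothesis (rescaled to unit vectors, using $|\widetilde{V}_i| = |\widehat{V}_i|$); the second fundamental form term equals $\II^\phi_{\dot\gamma(0)}(\widetilde{V}_i(0),\widetilde{V}_i(0)) = \sum_l \lambda_l\, \widehat{a}_{i,l+1}(0)^2 \le \sum_l \widehat\lambda_l\, \widehat{a}_{i,l+1}(0)^2 = \II^{\widehat\phi}_{\dot{\widehat\gamma}(0)}(\widehat{V}_i(0),\widehat{V}_i(0))$ by the shape--operator hypothesis; and the boundary term at $s_0$ contributes $d\phi(\dot\gamma(s_0))$. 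Putting these together gives $I^H(e^\phi V_i, e^\phi V_i) \le \widehat{I}^{\widehat{H}}(e^{\widehat\phi}\widehat{V}_i, e^{\widehat\phi}\widehat{V}_i) + d\phi(\dot\gamma(s_0)) - d\widehat\phi(\dot{\widehat\gamma}(s_0))$; summing over the $n-1$ values of $i$ yields the displayed inequality, and hence the lemma.

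I expect the main obstacle to be the bookkeeping at $s=0$: one must choose the parallel frames so that simultaneously $\{V_i(s_0)\}$ is orthonormal (needed for the logarithmic--derivative identity) and the frames at $s=0$ are $S^\phi$- and $S^{\widehat\phi}$-eigenbases paired by the ordering $\lambda_i \le \widehat\lambda_i$ (needed so the transplanted $\widetilde{V}_i$ are admissible $H$-competitors and so the weighted second fundamental form is controlled), and then verify that the terms produced by Proposition \ref{Prop:NewHIndex} beyond the classical index formula --- the $d\phi(\dot\gamma)$ contributions at $s_0$ and the weighted second fundamental form at $0$ --- combine so that exactly the two hypotheses are used and nothing else. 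The $n-1-k$ fields $V_i$ with $V_i(0) = 0$ require no shape--operator input, and the argument for them reduces to the first Rauch comparison, Theorem \ref{Thm:Rauch1}.
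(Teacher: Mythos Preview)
Your proposal is correct and follows essentially the same approach as the paper: both arguments adapt Heintze--Karcher by transplanting fields from $\widehat{M}$ to $M$ via parallel frames matched through an isometry at $s=0$, invoke the $H$--index lemma against the transplant, expand via Proposition~\ref{Prop:NewHIndex}, and use the curvature and shape--operator hypotheses to compare. Your choice to work throughout with $V_i = e^{-\phi}Y_i$ is a cosmetic repackaging of the paper's argument (the paper carries the $e^{\phi}$ factors explicitly and defines the transplant $W_i$ with a compensating scaling), and your handling of the shape--operator term---pairing $S^\phi$-- and $S^{\widehat\phi}$--eigenbases at $s=0$ and comparing coefficient by coefficient---is exactly the content of the paper's step $\II^\phi_{\gamma'(0)}(W_i(0),W_i(0)) = \lambda_i|W_i(0)|^2 \le \widehat\lambda_i|\widehat{Y}_i(0)|^2$, stated a bit more explicitly.
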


\begin{remark}  The inequality $\lambda_i \leq \widehat{\lambda_i}$ can also be  re-phased in terms of the second fundamental forms of $H$ and $\widehat{H}$ in the conformal metrics $e^{-2\phi}g$ and $e^{-2\widehat{\phi}}\widehat{g}$
with respect to unit normal vectors.  Call the eigenvalues of the conformal shape operators $\nu_i$ and $\widehat{\nu}_i$, then 
\[\lambda_i \leq \widehat{\lambda}_i \qquad  \Longleftrightarrow \qquad e^{\phi(\gamma(a))}\nu_i \leq e^{\widehat{\phi}(\widehat{\gamma}(a))}\widehat{\nu}_i.\]
In particular, we see that the inequality holds if $H$ and $\widehat{H}$ are both totally geodesic submanifolds with respect to the conformal metrics. 
\end{remark}

\begin{proof}
Fix $s=s_1$, by taking linear combinations of the $Y_i$ we can assume that $Y_i, \widehat{Y}_i$ are orthonormal at $s_1$ without changing the logarithmic derivative.  Then, at $s_1$ we obtain 
\begin{align}
\frac{d}{ds}\Big|_{s=s_1}&\log \left( e^{-(n-1) \phi} \left| Y_1 \wedge Y_2 \wedge \dots \wedge Y_{n-1} \right| \right) \nonumber\\
&= -(n-1)  d \phi \left( \frac{d \gamma}{ds} \right)(s_1) + \sum_{i=1} ^{n-1} g\left ( \frac{d}{ds} Y_i, Y_i\right)(s_1) \nonumber\\
&= \sum_{i=1}^{n-1} \left(   -d \phi \left( \frac{d \gamma}{ds}(s_1) \right) + e^{2\phi(s_1)} I^H_{s_1} (Y_i, Y_i) \right) \label{eqn:LogDerivative}
\end{align}
and similarly for the $\widehat{Y_i}$.  We can further assume by taking linear combinations  that either $\widehat{Y}_i(0) = 0$ or $\widehat{Y}_i(0)$ is an eigenvector for $S_{\widehat{\gamma}'(0)}^{\phi}$. Following \cite[3.4.7]{HeintzeKarcher78}, define 
\begin{align*}
W_i(s) = e^{-\phi(s_1)+ \widehat{\phi}(s_1)} P_{s} \circ \iota \circ \widehat{P}_{-s}\left(e^{-\widehat{\phi}(s)} \widehat{Y}_i(s)\right)
\end{align*}
 where  $\iota$ is a linear isometry from $T_{\gamma(0)} M$ to $T_{\widehat{\gamma}(0)} \widehat{M}$ which takes $T_{\gamma(0)} H$ to $T_{\widehat{\gamma}(0)} \widehat{H}$ and such that $\iota\left(\dot{\gamma}(0) \right)$  is parallel to  $\dot{\widehat{\gamma}}(0)$,  $P_{s}$ is ($\nabla$)-parallel translation along $\gamma$, and $\widehat{P}$ is $\widehat{\nabla}$-parallel translation along $\widehat{\gamma}$.   

Then $W_i$ is a variation field along $\sigma$ with $W_i(0)\in T_{\sigma(0)} H$.  We also have that $\{W_i(s_1)\}_{i=1}^{n-1}$ is an orthogonal basis of the normal space to $\dot{\gamma}(s_1)$ with  $|W_i(s_1)| = e^{-\phi(s_1)}$, so by changing the $Y_i$ again via constant coefficients, we can assume that $Y_i(s_1) = e^{\phi(s_1)} W_i(s_1)$.

By the $H$-index Lemma and Proposition \ref{Prop:NewIndex} we have
\begin{align*}
I^H_{s_1}(Y_i, Y_i)& \leq I^H_{s_1}(e^{\phi(s)}W_i, e^{\phi(s)}W_i) \\
&= \II^{\phi}_{\gamma'(0)}(W_i(0), W_i(0)) + d\phi(\dot\gamma)(s_1) |W_i|^2(s_1) \\
& \qquad  + \int_0^{s_1}\left(\left|\nabla_{\dot\gamma} W_i\right|^2 - g(R^\phi(W_i,\dot\gamma)\dot\gamma,W_i)\right)ds.
\end{align*}
Combing this with (\ref{eqn:LogDerivative}) gives
\begin{align*}
\frac{d}{ds}\Big|_{s=s_1}&\log \left( e^{-(n-1) \phi} \left| Y_1 \wedge Y_2 \wedge \dots \wedge Y_{n-1} \right| \right)\\
&\leq e^{2\phi(s_1)}   \sum_{i=1}^{n-1} \left(  \II^{\phi}_{\gamma'(0)}(W_i(0), W_i(0))  + \int_0^{s_1}\left(\left|\nabla_{\dot\gamma} W_i\right|^2 - g(R^\phi(W_i,\dot\gamma)\dot\gamma,W_i)\right)ds\right)\\
&= e^{2\phi(s_1)}   \sum_{i=1}^{n-1} \left( \lambda_i |W_i(0)|^2 +  \int_0^{s_1}\left(\left|\nabla_{\dot\gamma} W_i\right|^2 - |W_i|^2g\left(R^\phi\left(\frac{W_i}{|W_i|},\dot\gamma\right)\dot\gamma,\frac{W_i}{|W_i|}\right)\right)ds\right)
\end{align*}
On the other hand, applying  Proposition \ref{Prop:NewIndex} to $V=e^{-\widehat{\phi}(s)} \widehat{Y_i}(s)$ gives
\begin{align*}
I^{\widehat{H}}_{s_1}(\widehat{Y_i}, \widehat{Y_i})
&= e^{-2\widehat{\phi}(0)} \II^{\widehat{\phi}}_{\widehat{\sigma}'(0)}(\widehat{Y_i}(0), \widehat{Y_i}(0))  +e^{-2\widehat{\phi}(s_1)} d\widehat{\phi}\left(\dot{\widehat{\gamma}}(s_1)\right)\\
& \quad +  \int_0^{s_1} \left(\left|\nabla_{\dot{\widehat{\gamma}}} e^{-\widehat{\phi}} \widehat{Y}_i\right|^2 - e^{-2\widehat{\phi}(s)} \widehat{g}(\widehat{R}^{\widehat{\phi}}(\widehat{Y}_i,\dot{\widehat{\gamma}})\dot{\widehat{\gamma}},\widehat{Y_i})\right)ds 
\end{align*}
So we have
\begin{align*}
\frac{d}{ds}\Big|_{s=s_1}& \log \left( e^{-(n-1)\widehat{\phi}} \left| \widehat{Y}_1(s) \wedge \dots \wedge \widehat{Y}_{n-1}(s) \right| \right)\\
 &= \sum_{i=1}^{n-1} e^{2\left(\widehat{\phi}(s_1) -\widehat{\phi}(0)\right)} \II^{\widehat{\phi}}_{\widehat{\sigma}'(0)}(\widehat{Y_i}(0), \widehat{Y_i}(0)) \\
 & \quad   + e^{2\widehat{\phi}(s_1)}  \int_0^{s_1} \left(\left|\nabla_{\dot{\widehat{\gamma}}} e^{-\widehat{\phi}} \widehat{Y}_i\right|^2 - e^{-2\widehat{\phi}(s)} \widehat{g}(\widehat{R}^{\widehat{\phi}}(\widehat{Y}_i,\dot{\widehat{\gamma}})\dot{\widehat{\gamma}},\widehat{Y_i})\right)ds  \\
 &= \sum_{i=1}^{n-1} e^{2\left(\widehat{\phi}(s_1) -\widehat{\phi}(0)\right)}\widehat{\lambda}_i |\widehat{Y}_i(0)|^2 \\
 & \quad   + e^{2\widehat{\phi}(s_1)}  \int_0^{s_1} \left(\left|\nabla_{\dot{\widehat{\gamma}}} e^{-\widehat{\phi}} \widehat{Y}_i\right|^2 - e^{-2\widehat{\phi}(s)} |\widehat{Y}_i|^2 \widehat{g}\left(\widehat{R}^{\widehat{\phi}}\left(\frac{\widehat{Y}_i}{|\widehat{Y}_i|},\dot{\widehat{\gamma}}\right)\dot{\widehat{\gamma}},\frac{\widehat{Y_i}}{|\widehat{Y}_i|}\right)\right)ds \\
 &\geq  e^{2\phi(s_1)}   \sum_{i=1}^{n-1} \left( \lambda_i |W_i(0)|^2 +  \int_0^{s_1}\left(\left|\nabla_{\dot\gamma} W_i\right|^2 - |W_i|^2g\left(R^\phi\left(\frac{W_i}{|W_i|},\dot\gamma\right)\dot\gamma,\frac{W_i}{|W_i|}\right)\right)ds\right)
 \end{align*}
 Where, in the last line,  we have used the hypotheses of the theorem along with the fact that 
  \begin{align*}
 e^{\widehat{\phi}(s_1) - \widehat{\phi}(s)} |\widehat{Y}_i(s)| & = e^{\phi(s_1)}|W_i(s)| \\
 e^{\widehat{\phi}(s_1)} | \nabla_{\dot{\widehat{\gamma}}} e^{-\widehat{\phi}(s)} \widehat{Y}_i(s) | &= e^{\phi(s_1)} |\nabla_{\dot{\gamma}} W_i(s)|
 \end{align*}
 which comes from the definition of $W_i$. This proves the lemma.
 \end{proof}

 Now we consider volume comparison.  There are two special cases where only a Ricci curvature assumption is needed to control the tube volume: when $H$ is a point and  when $H$ is a hypersurface.  These cases have already appeared in the literature, see \cite[Theorem 4.5]{WylieYeroshkin} and  \cite[Theorem 1.3]{LiXia},   \cite[Theorem 1.4]{Sakurai} respectively.  
 
 Otherwise, let $H$ be an isometrically immersed submanifold of $M$ with normal bundle $\pi: \nu(H) \rightarrow H$.   Let $\exp^{\perp}: \nu(H) \rightarrow M$ be the normal exponential map of $H$.  For a full exposition on how the wedge of Jacobi fields controls the volume distortion of the normal exponential map, see \cite[Sections 2 \& 3]{HeintzeKarcher78}.  The comparison space for our volume comparison will be the same as is used in \cite{HeintzeKarcher78}, a tube with constant radial curvatures around $H$ (with no density).   
  
We also require a weighted version of the mean curvature vector to state our most general results. Recall that the vector-valued second fundamental form is the unique map $T_pH \times T_pH \rightarrow (T_pH)^\perp$ such that $g(h(X,Y), N) = \II_N(X,Y)$ for all $N \in T_pH^{\perp}$.  We define the weighted version $h^{\phi}$ via the same formula with respect to $\II^{\phi}$.  Then we obtain 
\begin{align*}
g(h^{\phi}(X,Y), N) &= \II^{\phi}_N(X,Y) \\
&= \II_N(X,Y) - d\phi(N)g(X,Y) \\
&= g(h(X,Y) - g(X,Y)\nabla{\phi}, N)
\end{align*}
so that $h^{\phi}(X,Y) = h(X,Y) - g(X,Y) (\nabla \phi)^{\perp}$ where $\perp$ denotes the orthogonal projection from $T_pM$ to $(T_pH)^{\perp}$. Define the weighted mean curvature vector as  $\eta^{\phi} = \frac{ \tr(h^{\phi})}{\dim(H)} = \eta- (\nabla \phi)^{\perp}$. Where $\eta$ is the usual mean curvature vector. Following the notation of \cite{HeintzeKarcher78}, also let $A^{\phi} = |\eta|$ and $\Lambda^{\phi}(H) = \sup_H A^{\phi}$.

The appearance of the reparametrized distance parameter $s$ in Lemma \ref{JacobiVolumeLemma} also adds some technical considerations.   We will have two different versions of the volume comparison.  The first will be for distance tubes that is  $T(H, r) = \{ x : d(x, H) \leq r\}$ which we call that tube around $H$ with radius $r$. This comparison will be in terms of the $f$-volume $e^{-f} dvol_g$ where $f =(n-1) \phi$.  
The second volume comparison will be for the re-parametrized tubes around $H$.  We then define the re-parametrized tube as 
\begin{align*}
\widetilde{T}(H,s) = \{ x: \exists y \in H, s(x, y) \leq s \} 
\end{align*}
For the re-parametrized tubes we also use re-parametrized volume $\mu(A) = \int_A e^{-(n+1) \phi} dvol_g$.  
 
We  define the comparison function $J^{\phi}_{\kappa}$ as  
\begin{align*}
J^{\phi}_{\kappa}(p,r,  \theta) &= \left(cs_{\kappa}(s(p, r, \theta)) - g(\eta^{\phi}(p), \theta) sn_{\kappa}(s(p,r,  \theta)) \right)^m sn_{\kappa}(s(p, r, \theta))^{n-m-1}
\end{align*}
Where $\mathrm{dim}(H)=m$, $p \in H$, $s(p, r, \theta)$ is the re-parametrized distance between the point $p$ and the point of distance $r$ from $p$ along a geodesic with initial velocity $\theta$, and $\eta^{\phi}(p)$ is the weighted mean curvature normal vector to $H$ at $p$.  We also define 
\begin{align*}
J_{\kappa}(p, s) &=\left(cs_{\kappa}(s) - g(\eta^{\phi}(p), \theta) sn_{\kappa}(s) \right)^m sn_{\kappa}(s)^{n-m-1}
\end{align*}
Define $z_{\kappa} (p,\theta)$ to be the smallest positive number $r_0$ such that $J_{\kappa}(p, r_0, \theta) = 0$ and $\widetilde{z}_{\kappa}(p, \theta)$ be the value of $s$ defined similarly for $J_{\kappa}(p, s)$. 
Our volume comparison theorem is the following. 

\begin{theorem} \label{Thm:TubeVol}
Suppose that $H$ is an $m$-dimensional isometrically immersed in a manifold $(M^n,g,\phi)$ with $\overline{sec}_{\phi} \geq \kappa e^{-4\phi}$ then 
\begin{enumerate}
\item 
\[ \mathrm{vol}_f(T(H, r)) \leq \int_N \left( \int_{S^{n-m-1}} d\theta \int_{0}^{\min\{r, z(p, \theta)\}}  J^{\phi}_{\kappa}(p,r,  \theta) dr\right) e^{-f(p)} dvol_N\] 
\item 
\[ \mu(\widetilde{T}(H, s)) \leq \int_N \left( \int_{S^{n-m-1}} d\theta \int_{0}^{\min\{s, \widetilde{z}(p,\theta)\}} J_{\kappa}(p,s) ds \right)e^{-f(p)} dvol_N\]
\end{enumerate}
\end{theorem}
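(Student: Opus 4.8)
The plan is to run the Heintze--Karcher argument through the normal exponential map of $H$, with Lemma~\ref{JacobiVolumeLemma} replacing the classical Jacobi-field Jacobian comparison. First I would fix notation: write $\nu^{1}(H)$ for the unit normal bundle of the immersion, and for $(p,\theta)\in\nu^{1}(H)$ let $\sigma_{p,\theta}$ be the $g$-geodesic with $\sigma_{p,\theta}(0)=p$, $\sigma_{p,\theta}'(0)=\theta$, and $\gamma_{p,\theta}$ its standard $\phi$-reparametrization. Choosing a $g$-orthonormal basis $e_{1},\dots,e_{m}$ of $T_{p}H$ that diagonalizes the weighted shape operator $S^{\phi}_{\theta}$, with eigenvalues $\lambda_{1},\dots,\lambda_{m}$, together with an orthonormal basis $e_{m+1},\dots,e_{n-1}$ of $\theta^{\perp}\cap(T_{p}H)^{\perp}$, gives $n-1$ linearly independent $H$-Jacobi fields $Y_{1},\dots,Y_{n-1}$ along $\sigma_{p,\theta}$, all orthogonal to $\dot\sigma_{p,\theta}$, whose wedge measures the distortion of $\exp^{\perp}$; as recalled in \cite[Sections 2--3]{HeintzeKarcher78}, $(\exp^{\perp})^{*}(dvol_{g})=|Y_{1}\wedge\dots\wedge Y_{n-1}|(r)\,dr\,d\theta\,dvol_{N}$ in the coordinates $(p,\theta,r)$. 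Using $\exp^{\perp}$ restricted to the complement of the cut locus then reduces each inequality to a pointwise estimate along a single focal-point-free geodesic.

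For the pointwise estimate I would apply Lemma~\ref{JacobiVolumeLemma} along $\gamma_{p,\theta}$, taking for $(\widehat M,\widehat g,\widehat\phi)$ the model tube of constant radial curvature $\kappa$ with $\widehat\phi\equiv0$ around an $m$-dimensional submanifold whose weighted (equivalently, ordinary) shape operator in the relevant normal direction has eigenvalues $\widehat\lambda_{i}=\lambda_{i}$, as constructed in \cite{HeintzeKarcher78}; its $\widehat H$-Jacobi fields are the explicit fields built from $\cs_{\kappa},\sn_{\kappa}$ in a parallel frame, whose wedge is $\prod_{i=1}^{m}(\cs_{\kappa}(s)\pm\lambda_{i}\sn_{\kappa}(s))\,\sn_{\kappa}(s)^{\,n-m-1}$. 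The curvature hypothesis of Lemma~\ref{JacobiVolumeLemma} is precisely $R^{\nabla^{\phi}}(V,\dot\gamma,\dot\gamma,V)\ge\kappa=\widehat R^{\widehat\nabla}(\widehat V,\dot{\widehat\gamma},\dot{\widehat\gamma},\widehat V)$, which holds by $\overline{\sec}_{\phi}\ge\kappa e^{-4\phi}$ together with Remark~\ref{Rem:CurveBound}, and the shape-operator hypothesis holds by construction. Since $\widehat\phi\equiv0$, Lemma~\ref{JacobiVolumeLemma} bounds $e^{-(n-1)(\phi(\gamma(s))-\phi(p))}|Y_{1}\wedge\dots\wedge Y_{n-1}|(s)$ by this model wedge, and then the arithmetic--geometric mean inequality applied to its first $m$ factors, together with $\tfrac1m\sum_{i}\lambda_{i}=g(\eta^{\phi}(p),\theta)$, yields $e^{-(n-1)\phi(\gamma(s))}|Y_{1}\wedge\dots\wedge Y_{n-1}|(s)\le e^{-(n-1)\phi(p)}J_{\kappa}(p,s)$ with $s=s(p,r,\theta)$, and likewise $e^{-(n-1)\phi(\sigma(r))}|Y_{1}\wedge\dots\wedge Y_{n-1}|(r)\le e^{-(n-1)\phi(p)}J^{\phi}_{\kappa}(p,r,\theta)$.

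Next I would integrate. For (1), parametrizing $T(H,r_{0})$ by $\exp^{\perp}$ off the cut locus gives $\mathrm{vol}_{f}(T(H,r_{0}))=\int_{N}\int_{S^{n-m-1}}\int_{0}^{\min\{r_{0},\,\mathrm{cut}(p,\theta)\}}e^{-f(\sigma(r))}|Y_{1}\wedge\dots\wedge Y_{n-1}|(r)\,dr\,d\theta\,dvol_{N}$ with $f=(n-1)\phi$; inserting the pointwise bound, pulling the $p$-dependent factor $e^{-f(p)}$ out of the $\theta$- and $r$-integrals, and truncating at $z_{\kappa}(p,\theta)$ as in \cite[Sections 3--4]{HeintzeKarcher78} gives case (1). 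For (2), parametrizing $\widetilde T(H,s_{0})$ by $(p,\theta,s)\mapsto\gamma_{p,\theta}(s)$ and using $dr=e^{2\phi(\gamma(s))}\,ds$ along $\gamma_{p,\theta}$, the re-parametrized volume satisfies $e^{-(n+1)\phi}\,dvol_{g}=e^{-(n+1)\phi}e^{2\phi}|Y_{1}\wedge\dots\wedge Y_{n-1}|(s)\,ds\,d\theta\,dvol_{N}=e^{-(n-1)\phi}|Y_{1}\wedge\dots\wedge Y_{n-1}|(s)\,ds\,d\theta\,dvol_{N}$, so the same pointwise estimate and the same truncation (now with $\widetilde z_{\kappa}$) give case (2).

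The genuinely new content is already packaged in Lemma~\ref{JacobiVolumeLemma}, so the work here is assembly, and the step needing care is the conformal-factor accounting: the $g$-distance tube pairs naturally with the $f$-volume ($f=(n-1)\phi$), the re-parametrized tube pairs with the re-parametrized volume $e^{-(n+1)\phi}dvol_{g}$, and after the substitution $r\leftrightarrow s$ both reduce to the common effective weight $e^{-(n-1)\phi}$ on the left-hand side of Lemma~\ref{JacobiVolumeLemma}. The remaining subtlety, also present in \cite{HeintzeKarcher78}, is that the truncation radii $z_{\kappa},\widetilde z_{\kappa}$ are defined through the comparison function rather than the true focal distance; I expect the main obstacle to be checking --- via a Rauch-type comparison of the form of Theorem~\ref{Thm:Rauch2} --- that focal points of $\gamma_{p,\theta}$ occur no later than those of the model geodesic, so that on the relevant interval $J^{\phi}_{\kappa}$ (resp.\ $J_{\kappa}$) is nonnegative and dominates the Jacobian and integrating only up to $z_{\kappa}$ (resp.\ $\widetilde z_{\kappa}$) still captures the whole tube.
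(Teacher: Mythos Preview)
Your approach is essentially the same as the paper's: pull back the volume forms via the normal exponential map, apply Lemma~\ref{JacobiVolumeLemma} against the constant-curvature model tube of \cite{HeintzeKarcher78} with $\widehat\phi\equiv 0$, and then integrate. You are in fact more explicit than the paper about the AM--GM step replacing $\prod_i(\cs_\kappa-\lambda_i\sn_\kappa)$ by $(\cs_\kappa-g(\eta^\phi,\theta)\sn_\kappa)^m$; the paper absorbs this into its citation to \cite[3.1.1]{HeintzeKarcher78}.

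Your final worry about the truncation is not a real obstacle and does not require a separate Rauch--II argument. The pointwise inequality from Lemma~\ref{JacobiVolumeLemma} already forces the true focal radius to be no larger than the first zero of the model wedge: on the common interval where both wedges are positive, the true wedge is bounded above by the model wedge, so when the model wedge tends to zero the true wedge must vanish as well. From there, AM--GM shows that on $[0,z^*]$ (the first zero of the $\lambda_i$-model) each factor $\cs_\kappa-\lambda_i\sn_\kappa$ is nonnegative, hence so is their mean $\cs_\kappa-g(\eta^\phi,\theta)\sn_\kappa$, which gives $z^*\le z_\kappa(p,\theta)$ and $J^{\phi}_\kappa\ge 0$ on $[z^*,z_\kappa]$. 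Thus extending the right-hand integral from the true focal radius up to $\min\{r,z_\kappa(p,\theta)\}$ only adds nonnegative mass, and the stated inequality follows. The same remarks apply verbatim in the $s$-parametrization for part~(2).
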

 
 \begin{proof}
 Given a unit normal vector $\theta$ to $H$, let $\mathrm{foc}(\theta)$ be the supremum of the values of  $r$ such that the unique geodesic with initial velocity $\theta$ has no focal point to $H$ to distance $r$. We then have that 
 \begin{align*}
 \mathrm{vol}_f (T(H, r))= \int_N \left( \int_{S^{n-m-1}} d\theta \int_{0}^{\min\{foc(\theta), r\}} e^{-f}|\det(d\exp^{\perp}_{\theta})| dr\right) dvol_N
 \end{align*}
 On the other hand, we can estimate $|\det(d\exp^{\perp}_{\theta})|$ as 
 \begin{align*}
 |\det(d\exp^{\perp}_{\theta})| = \frac{|(d\exp^{\perp}_{\theta})(u_1) \wedge \dots \wedge (d\exp^{\perp}_{\theta})(u_n)|}{|u_1 \wedge \dots \wedge u_n|}
 \end{align*}
 Where $u_i$ is any basis of $T_{\theta} \nu(H)$. A natural choice for $u_i$ is a basis of $H$-Jacobi fields along the geodesic, which is achieved by taking $u_i$ to be suitable linear vector fields along the geodesic.  Then one has $(d\exp^{\perp}_{\theta})(u_i) = Y_i$ a normal Jacobi field.  Let $(\widehat{M}, \widehat{g})$ be the ``canonical"  metric  on $\nu(H)$ as described in \cite[3.1.1]{HeintzeKarcher78}, with a constant density.  Then  the function $J_{\kappa}$  is exactly 
 \begin{align*}
 |\det(d\exp^{\perp}_{\theta})| = \frac{|\widetilde{Y_i} \wedge \dots \wedge \widetilde{Y}_{n-1}|}{|u_1 \wedge \dots \wedge u_n|}.
 \end{align*} in $\widehat{M}$.    Since the $u_i$ are independent of the manifold chosen, Lemma  \ref{JacobiVolumeLemma} then gives a comparison between the volume forms in the corresponding spaces.
 \begin{align*}
 e^{-f}|\det(d\exp^{\perp}_{\theta})|  \leq e^{-f(p)}J^{\phi}_{\kappa}(p,r,  \theta) 
 \end{align*}
 This give the first part of the theorem.  For the second part of the theorem define $\mathrm{foc}_{s} (\theta)$ be the value of the integral $\int_0^{\mathrm{foc}(\theta)} e^{-2\phi(\gamma(t))} dt$  where $\gamma$ is the geodesic with $\gamma(0)=p$ and $\gamma'(0) = \theta$. Then we can write 
 \begin{align*}
 \mu(\widetilde{T}(H, s)) &= \int_N \left( \int_{S^{n-m-1}} d\theta \int_{0}^{\min\{foc_s(\theta), s(p, r, \theta)\}} e^{\frac{-(n+1)}{n-1} f}|\det(d\exp^{\perp}_{\theta})| dr\right) dvol_N
 \end{align*}
 Making the change of variable $ds = e^{\frac{-2f}{n-1}} dr$, along with using the volume element comparison as above, gives us 
  \begin{align*}
 \mu(\widetilde{T}(H, s)) &\leq\int_N \left( \int_{S^{n-m-1}} d\theta \int_{0}^{s} J_{\kappa}(p,s)ds\right) e^{- f(p)} dvol_N
 \end{align*}
 \end{proof}
 
We note that the advantage of the comparison (1) is that  it is in terms of the distance tubes, however the comparison integral on the right hand side is impossible to compute with out more information about $f$ as the functions $s(p,r, \theta)$ depend on $f$.  This comparison is useful, however if we assume some bounds on the function $f$.  On the other hand, in comparison (2) it is hard to compute the sets $\widetilde{T}(H, s)$,but the comparison function on the right hand side is computable and exactly the tube volume of the corresponding un-weighted model space.   Moreover, we  note that by Theorem 2.2 of \cite{WylieYeroshkin}, for example, if $\kappa>0$ then $\mathrm{sup}_{p,q \in M} s(p,q) \leq \frac{\pi}{\sqrt{\kappa}}$, so in this case we can use (2) to get a uniform upper bound on $\mu(M)$ in terms of the data on $H$. 

Using either (1) or (2) we obtain the following result when we assume $\phi$ is bounded.  
 \begin{corollary} \label{Cor:VolCompSubMan}
Suppose that $(M^n,g,\phi)$ is a compact  Riemannian manifold with $\overline{\sec}_{\phi} \geq \kappa e^{-4\phi}$, $|\phi|\leq B$, and $\mathrm{diam}(M) \leq D.$  Then for any submanifold $H^m$ of $M$ there is an explicit positive constant $C(n, m, \kappa, B, D, \Lambda^{\phi}(H))$ such that 
\[\mathrm{vol}(M) \leq C \vol(H). \]
\end{corollary}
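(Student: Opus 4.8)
The plan is to cover $M$ by a single tube around $H$ and feed this into Theorem~\ref{Thm:TubeVol}. Since $\mathrm{diam}(M) \leq D$ and $H$ is nonempty, picking any $p_0 \in H$ gives $d^g(x,H) \leq d^g(x,p_0) \leq D$ for every $x \in M$, so $M = T(H,D)$. Applying part~(1) of Theorem~\ref{Thm:TubeVol} with tube radius $D$ and $f = (n-1)\phi$ yields
\[
\mathrm{vol}_f(M) \;\leq\; \int_H \left( \int_{S^{n-m-1}} d\theta \int_{0}^{\min\{D,\, z(p,\theta)\}} J^{\phi}_{\kappa}(p,\rho,\theta)\, d\rho \right) e^{-f(p)}\, dvol_H,
\]
where $\rho$ denotes the distance variable along the normal geodesics.

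The next step is to extract an a priori bound on the right-hand side from $|\phi| \leq B$. Along any unit-speed $g$-geodesic the reparametrization is $s(\rho) = \int_0^\rho e^{-2\phi}\, dt$, so $e^{-2B}\rho \leq s(p,\rho,\theta) \leq e^{2B}\rho \leq e^{2B}D$; in particular the argument of $\sn_\kappa$ and $\cs_\kappa$ inside $J^\phi_\kappa$ stays in the fixed compact interval $[0, e^{2B}D]$, on which $|\sn_\kappa|$ and $|\cs_\kappa|$ are bounded by explicit constants depending only on $\kappa$, $B$, $D$. Combined with $|g(\eta^\phi(p),\theta)| \leq \Lambda^\phi(H)$, this produces an explicit $C_1 = C_1(n,m,\kappa,B,D,\Lambda^\phi(H))$ with $|J^\phi_\kappa(p,\rho,\theta)| \leq C_1$ for all $p \in H$, $\rho \in [0,D]$, and all unit normals $\theta$. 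Using $f = (n-1)\phi \geq -(n-1)B$ then gives
\[
\mathrm{vol}_f(M) \;\leq\; \vol(S^{n-m-1})\; D\; C_1\; e^{(n-1)B}\; \vol(H).
\]

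Finally I would convert $\mathrm{vol}_f$ back to $\vol$: since $f = (n-1)\phi \leq (n-1)B$, we have $\mathrm{vol}_f(M) = \int_M e^{-(n-1)\phi}\, dvol_g \geq e^{-(n-1)B}\, \vol(M)$. Combining with the previous display gives $\vol(M) \leq C\, \vol(H)$ with $C = e^{2(n-1)B}\, \vol(S^{n-m-1})\, D\, C_1$, which is explicit in the stated data.

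The point requiring care --- and the only one that is not bookkeeping --- is the issue flagged right after Theorem~\ref{Thm:TubeVol}: the comparison integral in part~(1) is not computable in closed form because the reparametrized distances $s(p,\rho,\theta)$ depend on $\phi$. What rescues the argument is precisely that a \emph{two-sided} bound $|\phi| \leq B$ simultaneously confines $s(p,\rho,\theta)$ to a fixed compact interval (making $\sn_\kappa$, $\cs_\kappa$ and the weighted mean-curvature term uniformly bounded there) and controls the two conversions between $\mathrm{vol}_f$ and $\vol$. Alternatively, one can run the same estimate through part~(2) of Theorem~\ref{Thm:TubeVol} applied to the reparametrized tube $\widetilde T(H, e^{2B}D) = M$, where the comparison integral is literally the model tube volume; the dependence on $B$ enters in the same way.
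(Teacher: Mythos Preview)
Your proof is correct and is precisely the argument the paper has in mind: the paper does not write out a detailed proof of this corollary, merely remarking that ``using either (1) or (2) we obtain the following result when we assume $\phi$ is bounded,'' and you have faithfully filled in those details. In particular, you have correctly identified that the two-sided bound $|\phi|\leq B$ is what makes the otherwise unwieldy $s(p,\rho,\theta)$-dependence in part~(1) harmless by confining the arguments of $\sn_\kappa$, $\cs_\kappa$ to a fixed compact interval, and that the same bound handles the conversions between $\mathrm{vol}_f$ and $\mathrm{vol}$; your alternative route via part~(2) is likewise exactly what the paper intends.
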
 

By applying the theorem to the conformal metric $\widetilde{g}$ we obtain the following result for closed geodesics. 

\begin{corollary} \label{Cor:WeightedCheeger} Let $(M^n, g, \phi)$ be a compact manifold with density such that $\overline{\sec}_{\phi} \geq \kappa e^{-4\phi}$, $|\phi|\leq B$, $\mathrm{diam}(M) \leq D$ and $\mathrm{vol}(M) \geq v$ then there is a constant $L=L(n,\kappa, B,D,v)$ such that any closed geodesic $\sigma$ in $M$ has length greater than or equal to $L$. 
\end{corollary}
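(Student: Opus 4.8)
The plan is to run Cheeger's classical argument bounding the length of a closed geodesic, but carried out in the conformally related triple $(M,\widetilde g,-\phi)$, where $\widetilde g = e^{-2\phi}g$. Recall that $(g,\phi)\mapsto(\widetilde g,-\phi)$ is the conformal involution discussed after \eqref{Eqn:IntrinsicHessian}, under which the sign of the weighted sectional curvature is preserved; more quantitatively, the bound $\overline{\sec}_{g,\phi}\ge \kappa e^{-4\phi}$ together with $|\phi|\le B$ transforms into $\overline{\sec}_{\widetilde g,-\phi}\ge \kappa' e^{-4(-\phi)}$ for an explicit $\kappa'=\kappa'(\kappa,B)$. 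Since $|\phi|\le B$, the metrics $g$ and $\widetilde g$ are $e^{2B}$--bi-Lipschitz, so $\mathrm{diam}_{\widetilde g}(M)\le e^{B}D=:D'$, $\mathrm{vol}_{\widetilde g}(M)\ge e^{-nB}\,\mathrm{vol}_g(M)\ge e^{-nB}v$, and $e^{-B}\ell_g(\gamma)\le \ell_{\widetilde g}(\gamma)\le e^{B}\ell_g(\gamma)$ for every curve $\gamma$. Thus $(M,\widetilde g,-\phi)$ satisfies the hypotheses of Corollary~\ref{Cor:VolCompSubMan} with comparison constant $\kappa'$, density bound $B$, and diameter bound $D'$.

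The conceptual heart of the argument is an observation about the weighted second fundamental form. For the triple $(M,\widetilde g,-\phi)$, the relevant conformal metric used to define $\II^{-\phi}$ (hence $\eta^{-\phi}$ and $\Lambda^{-\phi}$) is $e^{-2(-\phi)}\widetilde g=e^{2\phi}\widetilde g=g$. Therefore a closed $g$-geodesic $\sigma$, being $g$--totally geodesic as an isometrically immersed $1$--submanifold, has $\II^{-\phi}_{\dot\sigma}\equiv 0$, so its weighted mean curvature vanishes and $\Lambda^{-\phi}(\sigma)=0$. This is the precise sense in which the involution interchanges $g$ and $\widetilde g$: the $\widetilde g$--geodesics are the weighted-totally-geodesic curves for $(M,g,\phi)$, while the $g$--geodesics play that role for $(M,\widetilde g,-\phi)$.

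With this in hand the proof is short. Applying Corollary~\ref{Cor:VolCompSubMan} (which follows from Theorem~\ref{Thm:TubeVol}) to the triple $(M,\widetilde g,-\phi)$ with $H=\sigma$ produces a constant $C=C(n,1,\kappa',B,D',0)$, depending only on $n,\kappa,B,D$, such that $\mathrm{vol}_{\widetilde g}(M)\le C\,\mathrm{vol}_{\widetilde g}(\sigma)=C\,\ell_{\widetilde g}(\sigma)$. Combining this with the three bi-Lipschitz estimates above gives $e^{-nB}v\le \mathrm{vol}_{\widetilde g}(M)\le C e^{B}\ell_g(\sigma)$, hence $\ell_g(\sigma)\ge L:=v\,e^{-(n+1)B}/C$, which is the asserted bound. (One should note that a closed geodesic that is merely immersed still has its image covered by the normal exponential tube of radius $D'$, so the comparison applies verbatim.)

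The main obstacle is the bookkeeping in the first paragraph: verifying that the weighted sectional curvature lower bound really does pass to a bound of the form required by Corollary~\ref{Cor:VolCompSubMan} for $(M,\widetilde g,-\phi)$, with the comparison constant $\kappa'$ explicit in $\kappa$ and $B$. This rests on the conformal transformation law for $\overline{\sec}_\phi$ under the involution $(g,\phi)\mapsto(\widetilde g,-\phi)$ from \cite{Wylie15}; once that is available, the vanishing of $\Lambda^{-\phi}(\sigma)$ and the remaining bi-Lipschitz comparisons are entirely routine.
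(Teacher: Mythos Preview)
Your proof is correct and follows essentially the same approach as the paper: pass to the conformal triple $(M,\widetilde g,-\phi)$, observe that a $g$-geodesic has vanishing weighted second fundamental form there (since the relevant conformal metric is $e^{2\phi}\widetilde g = g$), transfer the curvature, diameter, and volume bounds via $|\phi|\le B$, and then invoke Corollary~\ref{Cor:VolCompSubMan}. Your write-up is in fact somewhat more explicit than the paper's about the bi-Lipschitz constants and about why $\Lambda^{-\phi}(\sigma)=0$.
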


\begin{proof}
Let $\sigma$ be a closed geodesic in $(M,g)$.  Then $\sigma$ has vanishing weighted second fundamental form in the  manifold with density $(M, \widetilde{g}, -\phi)$.  Computing the weighted curvature of $(\widetilde{g}, -\phi)$ we have the relation $\overline{\sec}_{ \widetilde{g}, -\phi}(X,Y) = e^{2\phi} \overline{\sec}_{g, \phi}(Y,X)$ (see Proposition 2.1 of \cite{Wylie15}).  Since $\phi$ is uniformly bounded this gives a uniform constant $\widetilde{k}$  such  that $\overline{\sec}_{ \widetilde{g}, -\phi} \geq \widetilde{k} e^{4\phi}$. We can also trivially estimate $\mathrm{diam}(M, \widetilde{g})$, and $\mathrm{vol}_{\widetilde g}(M)$ uniformly in terms of  $n, B, D$ and $v$.  Applying Corollary \ref{Cor:VolCompSubMan} gives a lower bound on the $\widetilde{g}$-length of $\sigma$. Since $|\phi|\leq B$ this also gives the desired bound on the $g$-length. 
\end{proof}

This result combined with the results above allows us to establish the most general finiteness theorem, Theorem \ref{IntroThm:GenFinite}. 

\begin{reptheorem}{IntroThm:GenFinite}
For given $n\geq 2$, $a,v, D, k >0$ the class of compact Riemannian manifolds with 
\[
\mathrm{diam(M)} \leq D, \quad \mathrm{vol}(M) \geq v, \quad  \overline{K}(a) \leq k, \quad \text{and} \quad  \underline{\kappa}(a) \geq -k
\]
contains only finitely many diffeomorphism types. 
\end{reptheorem}

\begin{proof}
Let $\phi$ be a function so that $\overline{\sec}_{\phi} \geq -2k e^{-4\phi}$ with $|d\phi|\leq a$.  Since the diameter is bounded and, by Remark \ref{Rem:Zero}, we can choose $\phi$ so that there is a point where $\phi(p) = 0$, there is a constant $B$ depending on $D$ and $a$ such that $|\phi| \leq B$.    Theorem \ref{Thm:HessianComp} then provides the required two sided bounds on the Hessian of the distance function, so we only require a lower bound on injectivity radius to prove $C^{\alpha}$ compactness. A classical result of Klingenberg states that the injectivity radius is the smaller of the conjugate radius and the length of the smallest closed geodesic.    Lemma \ref{Lemma:ConjPos} gives the  lower bound on the conjugate radius and Corollary \ref{Cor:WeightedCheeger} gives the lower bound on the length of closed geodesics. 

\end{proof}

\begin{remark}
We have only  delved into results that follow from standard convergence theory as an illustration of the application of our comparison results.  It seems likely that one can improve these finiteness theorems by more fully developing the convergence theory in the weighted setting.  One should also be able to relax the point-wise sectional curvature bounds to integral bounds on the curvature tensor, as is done in the un-weighted setting.  It also seems likely that the pointwise derivative bound we impose on the density can be relaxed to integral bounds.  A more interesting question is whether $C^0$ bounds on the potential function suffices for finiteness theorems. We have only used the $C^1$-bounds in controlling the Hessian, indicating that there may be a wiser choice of coordinates than the standard ``distance coordinates" that perhaps take the density function $\phi$ into account.  
\end{remark}

\subsection{Radial Curvature Equation}

In the exposition above we have chosen to present the comparison theory for weighted sectional curvatures in terms of  Jacobi field estimates. However, just as in the un-weighted setting, these results can also be interpreted in  terms of the variation of shape operators of hypersurfaces.  Though we don't take this approach in any our applications, we show how it can easily be done once we have the definition of weighted second fundamental form as described in the previous section. 

 Given a submanifold $H$ and a normal vector field $N$, recall that the  modified shape operator  is  
\[S^{\phi}_N(X)= \nabla^{\phi} _X N =  \nabla_X N - d\phi(X)N - d\phi(N) X. \]
The following equation shows how to compute the curvatures normal to a hypersurface from the modified shape operator. 
\begin{proposition}[Radial Curvature Equation]
  Let $H$ be a hypersurface and $N$ a normal vector to $H$, then 
  \begin{align*}
  \left(\nabla_N^{\phi} S_N^{\phi}\right)(X) + \left(S_N^{\phi}\circ S_N^{\phi}\right)(X) = \nabla_X^{\phi}\left(S_N^{\phi}(N)\right) - R^{\nabla^\phi}(X,N)N 
  \end{align*}
  \end{proposition}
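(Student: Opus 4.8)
The plan is to mimic the classical derivation of the radial curvature equation (see do Carmo or Petersen), but carried out entirely with the weighted connection $\nabla^\phi$ in place of the Levi-Civita connection $\nabla$. Since $\nabla^\phi$ is a torsion-free affine connection, its curvature tensor $R^{\nabla^\phi}$ satisfies all the usual Bianchi-type formal identities that only use torsion-freeness, and the weighted shape operator is simply $S^\phi_N(X) = \nabla^\phi_X N$ with $N$ extended suitably off $H$. The key observation is that $[X,N]$ can be taken to vanish (or, more carefully, one works with $N$ extended to a neighborhood so that it is the radial field of a family of parallel hypersurfaces, making $[X,N]$ tangential and controllable), which is exactly how the unweighted equation is proven.

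First I would fix the setup: extend $N$ to a local unit-type normal field along a family of hypersurfaces foliating a neighborhood of $H$, and extend $X$ to a vector field with $[X,N]=0$ (this is the standard choice, lifting a coordinate vector field on $H$ along the normal flow). Then I would write out the definition of the weighted curvature operator applied to the pair $(X,N)$ acting on $N$:
\begin{align*}
R^{\nabla^\phi}(X,N)N = \nabla^\phi_X \nabla^\phi_N N - \nabla^\phi_N \nabla^\phi_X N - \nabla^\phi_{[X,N]}N.
\end{align*}
Since $[X,N]=0$ the last term drops. Now $\nabla^\phi_N N = S^\phi_N(N)$ and $\nabla^\phi_X N = S^\phi_N(X)$ by definition, so the first term is $\nabla^\phi_X(S^\phi_N(N))$ and the second is $\nabla^\phi_N(S^\phi_N(X))$. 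The latter I would expand using the Leibniz rule for $\nabla^\phi$ acting on the $(1,1)$-tensor $S^\phi_N$ along the integral curve of $N$, namely
\begin{align*}
\nabla^\phi_N\big(S^\phi_N(X)\big) = \big(\nabla^\phi_N S^\phi_N\big)(X) + S^\phi_N\big(\nabla^\phi_N X\big),
\end{align*}
and then use $[X,N]=0$ together with torsion-freeness of $\nabla^\phi$ to replace $\nabla^\phi_N X$ by $\nabla^\phi_X N = S^\phi_N(X)$, which turns $S^\phi_N(\nabla^\phi_N X)$ into $(S^\phi_N\circ S^\phi_N)(X)$. Rearranging the resulting identity gives exactly
\begin{align*}
\big(\nabla^\phi_N S^\phi_N\big)(X) + \big(S^\phi_N\circ S^\phi_N\big)(X) = \nabla^\phi_X\big(S^\phi_N(N)\big) - R^{\nabla^\phi}(X,N)N,
\end{align*}
as claimed.

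The main obstacle I anticipate is purely bookkeeping rather than conceptual: one must be careful that $S^\phi_N$ is only a priori defined on $TH$, so to write $S^\phi_N(N)$ and to differentiate $S^\phi_N$ in the $N$-direction one needs the extension of $N$ off $H$ to be specified, and one must check that the statement is independent of the extension (it is, because the only derivatives of $N$ that enter are $\nabla^\phi_X N$ and $\nabla^\phi_N N$ along directions already present). A second subtlety is verifying that $\nabla^\phi$ really is torsion-free so that $\nabla^\phi_X N - \nabla^\phi_N X = [X,N]$; this follows immediately from the defining formula $\nabla^\phi_X Y = \nabla_X Y - d\phi(X)Y - d\phi(Y)X$, since the correction terms are symmetric in $X$ and $Y$. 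Once these points are settled, the computation is a three-line manipulation identical in structure to the unweighted case, and I would present it as such, perhaps remarking that unlike the Levi-Civita case there is no simplification from metric compatibility since $\nabla^\phi$ is not metric, which is precisely why the weighted shape operator rather than an orthogonal decomposition appears throughout.
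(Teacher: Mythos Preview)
Your proposal is correct and follows essentially the same route as the paper: expand $R^{\nabla^\phi}(X,N)N$ from its definition, identify $\nabla^\phi_X N = S^\phi_N(X)$ and $\nabla^\phi_N N = S^\phi_N(N)$, apply the Leibniz rule for $\nabla^\phi_N$ acting on $S^\phi_N$, and rearrange. The only cosmetic difference is that the paper does not impose $[X,N]=0$ but instead expands $\nabla^\phi_{[X,N]}N$ via torsion-freeness as $\nabla^\phi_{\nabla^\phi_X N - \nabla^\phi_N X}N$, which supplies the same two terms you obtain from your extension choice; this makes the tensoriality (independence of extension) that you flagged as a potential obstacle automatic.
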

  \begin{proof}
  Consider 
  \begin{align*}
  R^{\phi}(X,N)N  &= \nabla^{\phi}_X \nabla^{\phi}_N N - \nabla^{\phi}_{[X, N]} N \\
  &= \nabla^{\phi}_X\left(S_N^{\phi}(N) \right) - \nabla^{\phi}_N \left(S^{\phi}_N(X)\right) + \nabla^{\phi}_{\nabla^{\phi}_N X} N - \nabla^{\phi}_{\nabla^{\phi}_X N} N \\
  &=  - \left(\nabla^{\phi}_N S_N^{\phi}\right)(X) -\left(S_N^{\phi}\circ S_N^{\phi}\right)(X) +  \nabla^{\phi}_X\left(S_N^{\phi}(N) \right)
    \end{align*}
  \end{proof}
To see the connection to Jacobi fields and the Hessian of the distance function  we apply the Radial Curvature Equation to the case where $H$ is a distance tube.  Let $A$ be a closed subset of $M$, and let $d_A(\cdot)$ be the Riemannian distance to $A$.  In a neighborhood of a point where $d_A$ is smooth we can let $\frac{d}{ds}= e^{2\phi} \nabla r$.  Then $\frac{d}{ds}$ is a normal vector for the distance tubes of $A$, i.e $T_r (A) = \{ x : d_A(x) = r\}$ and is a geodesic field for $\nabla^{\phi}$.  Letting $N = \frac{d}{ds}$ and $S = S_N$ we obtain 
  \begin{align}
   \left(\nabla_{\frac{d}{ds}}^{\phi} S^{\phi}\right)(X) + \left(S^{\phi}\circ S^{\phi}\right)(X) &= - R^{\nabla^\phi}\left(X,\frac{d}{ds}\right)\frac{d}{ds} \label{eqnRadialCurv}
   \end{align}
   since $S^{\phi}\left( \frac{d}{ds} \right) = \nabla^{\phi}_{\frac{d}{ds}} \frac{d}{ds}=0$. 
   
In this case 
\begin{align*}
S^{\phi}(X)& = \nabla^{\phi}_X  \left(e^{2\phi} \nabla r \right) \\
&=\nabla_X  \left(e^{2\phi} \nabla r \right) - e^{2\phi} d\phi(X) \frac{\partial}{\partial r}  -  e^{2\phi} d\phi(\nabla r)X\\
&= e^{2\phi} \left( \nabla_X \nabla r + d\phi(X)\nabla r - d\phi(\nabla r)X\right)
\end{align*}

These equations give us the following estimate for the derivative of the  weighted second fundamental form with respect to $\frac{d}{ds}$. 

\begin{proposition} \label{Prop:EvolveSecondFunForm}
Let $\gamma$ be a standard reparametrization of a minimizing geodesic  and let $\II^{\phi}$ be the second fundamental form of the distance tube to $\gamma(0)$ so that $\frac{d}{ds} =  \dot{\gamma}$.  Let $X$ and $Y$ are parallel fields along $\gamma$ which are also perpendicular to $\gamma$, i.e  $\nabla_{\dot{\gamma}} X = \nabla_{\dot{\gamma}} Y = 0$, $g\left(\frac{d}{ds}, X\right) =  \left(\frac{d}{ds}, Y\right) = 0$ then 
\begin{align}
\frac{d}{ds} \left( \II^{\phi}(X,Y) \right) &=  -g\left(\left(S^{\phi}\circ S^{\phi}\right)(X), Y\right)-g\left( R^{\phi}\left(X,\frac{d}{ds}\right)\frac{d}{ds}, Y \right)  \label{EvolveSecondFunForm}
\end{align}
\end{proposition}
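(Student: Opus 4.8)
The plan is to specialize the Radial Curvature Equation (\ref{eqnRadialCurv}) to the parallel fields $X,Y$ and extract a scalar identity by pairing with $Y$. First I would recall that, since $\gamma$ is the standard reparametrization of a minimizing $g$-geodesic and $H = T_r(\gamma(0))$ is the corresponding distance tube, the vector $N = \tfrac{d}{ds} = \dot\gamma$ is both a unit-speed-type $\phi$-geodesic field (so $S^\phi(N) = \nabla^\phi_{\dot\gamma}\dot\gamma = 0$) and a normal field to the tubes, exactly as in the setup preceding (\ref{eqnRadialCurv}). Hence the term $\nabla^\phi_X\big(S^\phi_N(N)\big)$ in the general Radial Curvature Equation vanishes, leaving
\[
\big(\nabla^\phi_{\frac{d}{ds}} S^\phi\big)(X) + \big(S^\phi\circ S^\phi\big)(X) = -\,R^{\nabla^\phi}\!\Big(X,\tfrac{d}{ds}\Big)\tfrac{d}{ds}.
\]

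Next I would pair both sides with $Y$ using $g$, obtaining
\[
g\!\left(\big(\nabla^\phi_{\frac{d}{ds}} S^\phi\big)(X), Y\right) = -\,g\!\left(\big(S^\phi\circ S^\phi\big)(X), Y\right) - g\!\left(R^{\nabla^\phi}\!\Big(X,\tfrac{d}{ds}\Big)\tfrac{d}{ds}, Y\right),
\]
which is precisely the right-hand side of (\ref{EvolveSecondFunForm}). So the content of the proposition is the identity
\[
\frac{d}{ds}\big(\II^\phi(X,Y)\big) = g\!\left(\big(\nabla^\phi_{\frac{d}{ds}} S^\phi\big)(X), Y\right)
\]
when $X,Y$ are $g$-parallel and $g$-orthogonal to $\dot\gamma$ along $\gamma$. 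I would prove this by expanding $\tfrac{d}{ds}\big(\II^\phi(X,Y)\big) = \tfrac{d}{ds}\, g\!\left(S^\phi(X), Y\right)$ using compatibility of the Levi-Civita connection $\nabla$ with $g$: since $\nabla_{\dot\gamma}X = \nabla_{\dot\gamma}Y = 0$, this equals $g\!\left(\nabla_{\dot\gamma}(S^\phi X), Y\right)$. Then I would rewrite $\nabla_{\dot\gamma}$ in terms of $\nabla^\phi_{\dot\gamma}$ via $\nabla^\phi_U W = \nabla_U W - d\phi(U)W - d\phi(W)U$, and similarly unwind $\big(\nabla^\phi_{\dot\gamma} S^\phi\big)(X) = \nabla^\phi_{\dot\gamma}(S^\phi X) - S^\phi(\nabla^\phi_{\dot\gamma} X)$. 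The key bookkeeping point is that the correction terms involving $d\phi(\dot\gamma)$ and $d\phi(X)$ appearing in $\nabla_{\dot\gamma}(S^\phi X)$ versus $\big(\nabla^\phi_{\dot\gamma}S^\phi\big)(X)$, once paired against $Y$ with $Y\perp\dot\gamma$ and $Y\perp X$ (or more precisely using that $S^\phi X$ is tangent to the tube and $Y\perp\dot\gamma$), cancel so that $g\!\left(\nabla_{\dot\gamma}(S^\phi X), Y\right) = g\!\left(\big(\nabla^\phi_{\dot\gamma}S^\phi\big)(X), Y\right)$.

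The main obstacle I anticipate is precisely this last cancellation: one must be careful about whether $X,Y$ being $g$-parallel (rather than $\phi$-parallel) introduces $d\phi$-terms, and about the fact — emphasized in Section \ref{sec:Convexity} — that $\nabla r$ (equivalently $\dot\gamma$) is \emph{not} a null vector for the weighted/modified Hessian, so $S^\phi$ need not annihilate the radial direction in the naive way and $S^\phi X$ need not be exactly tangent to the level set in the weighted sense. I would handle this by working directly with the formula $S^\phi(X) = e^{2\phi}\big(\nabla_X\nabla r + d\phi(X)\nabla r - d\phi(\nabla r)X\big)$ displayed just before the proposition, tracking each term's pairing with $Y$, and using $\nabla_{\dot\gamma}X=0$, $g(X,\dot\gamma)=g(Y,\dot\gamma)=0$, together with the already-established facts that $S^\phi(\dot\gamma)=0$ and that $R^{\nabla^\phi}$ has the stated symmetry, to verify the identity term by term. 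This is routine but is the one place where the non-metric-compatibility of $\nabla^\phi$ requires genuine care rather than mechanical substitution.
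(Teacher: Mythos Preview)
Your plan is correct and follows essentially the same route as the paper: reduce to the radial curvature equation (\ref{eqnRadialCurv}), pair with $Y$, and verify the scalar identity $\tfrac{d}{ds}\,g(S^\phi X, Y) = g\big((\nabla^\phi_{\dot\gamma}S^\phi)(X), Y\big)$ by converting $\nabla_{\dot\gamma}$ to $\nabla^\phi_{\dot\gamma}$ and expanding $(\nabla^\phi_{\dot\gamma}S^\phi)(X) = \nabla^\phi_{\dot\gamma}(S^\phi X) - S^\phi(\nabla^\phi_{\dot\gamma}X)$, exactly as the paper does.

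One small correction to your bookkeeping narrative: the parenthetical ``using that $S^\phi X$ is tangent to the tube'' is not true in general --- from the displayed formula $S^\phi(X) = e^{2\phi}\big(\nabla_X\nabla r + d\phi(X)\nabla r - d\phi(\nabla r)X\big)$ one computes $g(S^\phi X, \nabla r) = e^{2\phi}\,d\phi(X)$, which need not vanish. Fortunately this does not matter: the correction term that arises when passing from $\nabla_{\dot\gamma}(S^\phi X)$ to $\nabla^\phi_{\dot\gamma}(S^\phi X)$ is $d\phi(S^\phi X)\,\dot\gamma$, and it is $Y\perp\dot\gamma$ (not any tangency of $S^\phi X$) that kills it upon pairing with $Y$. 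Likewise $Y\perp X$ is neither assumed nor needed. The three ingredients that make the cancellation go through are precisely $Y\perp\dot\gamma$, $\nabla_{\dot\gamma}X=0$ (so $\nabla^\phi_{\dot\gamma}X = -d\phi(\dot\gamma)X - d\phi(X)\dot\gamma$), and $S^\phi(\dot\gamma)=0$; with these the two $d\phi(\dot\gamma)\,g(S^\phi X,Y)$ terms cancel and the $d\phi(X)\,g(S^\phi\dot\gamma,Y)$ term vanishes, yielding the identity.
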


\begin{proof}

We have
\begin{align*}
g(S^{\phi}(X), Y) &=e^{2\phi} \left( \mathrm{Hess} r(X,Y) + d\phi(X) dr(Y) - d\phi(\nabla r)g(X,Y) \right)
\end{align*} 
So, for $X,Y \perp \dot{\gamma}$,   $g(S^{\phi}(X), Y)= \II^{\phi}(X,Y)$.  Then we obtain 
\begin{align*}
\frac{d}{ds} \left(g(S^{\phi}(X), Y)\right) &= g\left( \nabla_{\frac{d}{ds}} \left(S^{\phi}(X)\right), Y\right) \\
&= g\left( \nabla^{\phi}_{\frac{d}{ds}} \left(S^{\phi}(X)\right), Y\right) + d\phi\left( \frac{d}{ds} \right) g(S^{\phi}(X), Y) \\
&=g\left(  \left(\nabla^{\phi}_{\frac{d}{ds}}S^{\phi}\right)(X), Y\right) + g\left( S^{\phi}\left(\nabla^{\phi}_{\frac{d}{ds}} X \right), Y \right) + d\phi\left( \frac{d}{ds} \right) g(S^{\phi}(X), Y)\\
&=g\left(  \left(\nabla^{\phi}_{\frac{d}{ds}}S^{\phi}\right)(X), Y\right) -d\phi\left( \frac{d}{ds} \right) g(S^{\phi}(X), Y) - d\phi(X) g\left( S^{\phi}\left(\frac{d}{ds} \right), Y \right) \\
& \qquad + d\phi\left( \frac{d}{ds} \right) g(S^{\phi}(X), Y)\\
&= g\left(  \left(\nabla^{\phi}_{\frac{d}{ds}}S^{\phi}\right)(X), Y\right)
\end{align*}

Then from the radial curvature equation we have 
\begin{align*}
\frac{d}{ds}g(S^{\phi}(X), Y)  &=  -g\left(\left(S^{\phi}\circ S^{\phi}\right)(X), Y\right)-g\left( R^{\phi}\left(X,\frac{d}{ds}\right)\frac{d}{ds}, Y \right) 
\end{align*}

\end{proof}

\begin{remark}
Tracing (\ref{EvolveSecondFunForm}) over the orthogonal complement of the geodesic gives Lemma 4.1 on \cite{WylieYeroshkin}. It is not hard to see that this equation could also be used to derive Theorem \ref{Thm:HessianComp}. 
\end{remark}

\begin{bibdiv}
\begin{biblist}

\bib{Berard-Bergery76}{article}{
      author={Berard-Bergery, L.},
       title={Les vari\'et\'es riemanniennes homog\`enes simplement connexes de
  dimension impaire \`a courbure strictement positive},
        date={1976},
     journal={J. Math. Pures Appl. (9)},
      volume={55},
      number={1},
       pages={47\ndash 67},
}

\bib{BuragoIvanov94}{article}{
      author={Burago, D.},
      author={Ivanov, S.},
       title={Riemannian tori without conjugate points are flat},
        date={1994},
        ISSN={1016-443X},
     journal={Geom. Funct. Anal.},
      volume={4},
      number={3},
       pages={259\ndash 269},
         url={http://dx.doi.org/10.1007/BF01896241},
      review={\MR{1274115}},
}

\bib{Byers70}{article}{
      author={Byers, William~P.},
       title={On a theorem of {P}reissmann},
        date={1970},
     journal={Proc. Amer. Math. Soc.},
      volume={24},
       pages={50\ndash 51},
}

\bib{CrokeSchroeder86}{article}{
      author={Croke, C.},
      author={Schroeder, V.},
       title={The fundamental group of compact manifolds without conjugate
  points},
        date={1986},
        ISSN={0010-2571},
     journal={Comment. Math. Helv.},
      volume={61},
      number={1},
       pages={161\ndash 175},
         url={http://dx.doi.org/10.1007/BF02621908},
      review={\MR{847526}},
}

\bib{doCarmo76}{book}{
      author={do~Carmo, M.P.},
       title={Differential geometry of curves and surfaces},
   publisher={Prentice-Hall, Inc., Englewood Cliffs, N.J.},
        date={1976},
        note={Translated from the Portuguese},
      review={\MR{0394451 (52 \#15253)}},
}

\bib{DessaiWilking04}{article}{
      author={Dessai, A.},
      author={Wilking, B.},
       title={{Torus actions on homotopy complex projective spaces}},
        date={2004},
     journal={Math. Z.},
      volume={247},
       pages={505\ndash 511},
}

\bib{EschenburgKerin08}{article}{
      author={Eschenburg, J.-H.},
      author={Kerin, M.},
       title={Almost positive curvature on the {G}romoll-{M}eyer sphere},
        date={2008},
        ISSN={0002-9939},
     journal={Proc. Amer. Math. Soc.},
      volume={136},
      number={9},
       pages={3263\ndash 3270},
         url={http://dx.doi.org/10.1090/S0002-9939-08-09429-X},
      review={\MR{2407092}},
}

\bib{FangGrove16}{article}{
      author={Fang, F.},
      author={Grove, K.},
       title={Reflection groups in non-negative curvature},
        date={2016},
        ISSN={0022-040X},
     journal={J. Differential Geom.},
      volume={102},
      number={2},
       pages={179\ndash 205},
         url={http://projecteuclid.org/euclid.jdg/1453910453},
      review={\MR{3454545}},
}

\bib{FangRong05}{article}{
      author={Fang, F.},
      author={Rong, X.},
       title={{Homeomorphism classification of positively curved manifolds with
  almost maximal symmetry rank}},
        date={2005},
     journal={Math. Ann.},
      volume={332},
       pages={81\ndash 101},
}

\bib{GroveSearle94}{article}{
      author={Grove, K.},
      author={Searle, C.},
       title={{Positively curved manifolds with maximal symmetry rank}},
        date={1994},
        ISSN={0022-4049},
     journal={J. Pure Appl. Algebra},
      volume={91},
      number={1},
       pages={137\ndash 142},
}

\bib{GroveSearle97}{article}{
      author={Grove, K.},
      author={Searle, C.},
       title={{Differential topological restrictions curvature and symmetry}},
        date={1997},
     journal={J. Differential Geom.},
      volume={47},
      number={3},
       pages={530\ndash 559},
}

\bib{HeintzeKarcher78}{article}{
      author={Heintze, E.},
      author={Karcher, H.},
       title={A general comparison theorem with applications to volume
  estimates for submanifolds},
        date={1978},
        ISSN={0012-9593},
     journal={Ann. Sci. \'Ecole Norm. Sup. (4)},
      volume={11},
      number={4},
       pages={451\ndash 470},
         url={http://www.numdam.org/item?id=ASENS_1978_4_11_4_451_0},
      review={\MR{533065}},
}

\bib{IvanovKapovitch14}{article}{
      author={Ivanov, S.},
      author={Kapovitch, V.},
       title={Manifolds without conjugate points and their fundamental groups},
        date={2014},
        ISSN={0022-040X},
     journal={J. Differential Geom.},
      volume={96},
      number={2},
       pages={223\ndash 240},
         url={http://projecteuclid.org/euclid.jdg/1393424918},
      review={\MR{3178440}},
}

\bib{KennardWylie17}{article}{
      author={Kennard, L.},
      author={Wylie, W.},
       title={Positive weighted sectional curvature},
        date={2017},
        ISSN={0022-2518},
     journal={Indiana Univ. Math. J.},
      volume={66},
      number={2},
       pages={419\ndash 462},
         url={http://dx.doi.org/10.1512/iumj.2017.66.6013},
      review={\MR{3641482}},
}

\bib{LiXia}{article}{
      author={Li, J.},
      author={Xia, C.},
       title={An {I}ntegral {F}ormula for {A}ffine {C}onnections},
        date={2017},
        ISSN={1050-6926},
     journal={J. Geom. Anal.},
      volume={27},
      number={3},
       pages={2539\ndash 2556},
         url={http://dx.doi.org/10.1007/s12220-017-9771-x},
      review={\MR{3667440}},
}

\bib{Petersen16}{book}{
      author={Petersen, Peter},
       title={Riemannian geometry},
     edition={Third},
      series={Graduate Texts in Mathematics},
   publisher={Springer, Cham},
        date={2016},
      volume={171},
}

\bib{Petersen97}{incollection}{
      author={Petersen, Peter},
       title={Convergence theorems in {R}iemannian geometry},
        date={1997},
   booktitle={Comparison geometry ({B}erkeley, {CA}, 1993--94)},
      series={Math. Sci. Res. Inst. Publ.},
      volume={30},
   publisher={Cambridge Univ. Press, Cambridge},
       pages={167\ndash 202},
}

\bib{PetersenWilhelm-pre}{article}{
      author={Petersen, P.},
      author={Wilhelm, F.},
       title={{An exotic sphere with positive curvature}},
     journal={preprint},
      volume={arXiv:0805.0812v3},
}

\bib{Sakurai}{article}{
      author={Sakurai, Y.},
       title={Comparison geometry of manifolds with boundary under a lower
  weighted ricci curvature bound},
      volume={arXiv:1612.08483},
}

\bib{Wallach72}{article}{
      author={Wallach, N.R.},
       title={{Compact homogeneous Riemannian manifolds with strictly positive
  curvature}},
        date={1972},
        ISSN={0003-486X},
     journal={Ann. of Math.},
      volume={96},
      number={2},
       pages={277\ndash 295},
}

\bib{Wilhelm01}{article}{
      author={Wilhelm, F.},
       title={An exotic sphere with positive curvature almost everywhere},
        date={2001},
        ISSN={1050-6926},
     journal={J. Geom. Anal.},
      volume={11},
      number={3},
       pages={519\ndash 560},
         url={http://dx.doi.org/10.1007/BF02922018},
      review={\MR{1857856}},
}

\bib{Wilking02}{article}{
      author={Wilking, B.},
       title={Manifolds with positive sectional curvature almost everywhere},
        date={2002},
     journal={Invent. Math.},
      volume={148},
      number={1},
       pages={117\ndash 141},
}

\bib{Wilking03}{article}{
      author={Wilking, B.},
       title={{Torus actions on manifolds of positive sectional curvature}},
        date={2003},
     journal={Acta Math.},
      volume={191},
      number={2},
       pages={259\ndash 297},
}

\bib{WoolgarWylie16}{article}{
      author={Woolgar, E.},
      author={Wylie, W.},
       title={Cosmological singularity theorems and splitting theorems for
  {$N$}-{B}akry-\'{E}mery spacetimes},
        date={2016},
        ISSN={0022-2488},
     journal={J. Math. Phys.},
      volume={57},
      number={2},
       pages={022504, 12},
         url={http://dx.doi.org/10.1063/1.4940340},
      review={\MR{3453844}},
}

\bib{Wylie15}{article}{
      author={Wylie, W.},
       title={Sectional curvature for {R}iemannian manifolds with density},
        date={2015},
        ISSN={0046-5755},
     journal={Geom. Dedicata},
      volume={178},
       pages={151\ndash 169},
         url={http://dx.doi.org/10.1007/s10711-015-0050-3},
      review={\MR{3397488}},
}

\bib{Wylie16}{article}{
      author={Wylie, W.},
       title={Some curvature pinching results for {R}iemannian manifolds with
  density},
        date={2016},
        ISSN={0002-9939},
     journal={Proc. Amer. Math. Soc.},
      volume={144},
      number={2},
       pages={823\ndash 836},
         url={http://dx.doi.org/10.1090/proc/12853},
      review={\MR{3430857}},
}

\bib{Wylie}{article}{
      author={Wylie, W.},
       title={A warped product version of the {C}heeger-{G}romoll splitting
  theorem},
        date={2017},
        ISSN={0002-9947},
     journal={Trans. Amer. Math. Soc.},
      volume={369},
      number={9},
       pages={6661\ndash 6681},
         url={http://dx.doi.org/10.1090/tran/7003},
      review={\MR{3660237}},
}

\bib{WylieYeroshkin}{article}{
      author={Wylie, W.},
      author={Yeroshkin, D.},
       title={On the geometry of riemannian manifolds with density},
     journal={preprint},
      volume={arXiv:1602.08000},
}

\bib{WilkingZiller}{article}{
      author={Wilking, B.},
      author={Ziller, W.},
       title={Revisiting homogeneous spaces with positive curvature},
     journal={J. Reine Angew. Math.},
      volume={to appear, arXiv:1503.06256},
}

\end{biblist}
\end{bibdiv}

\end{document}